\theoremstyle{plain}
\newtheorem{theorem}{Theorem}[section]
\newtheorem{proposition}[theorem]{Proposition}
\newtheorem{lemma}[theorem]{Lemma}
\newtheorem{corollary}[theorem]{Corollary}
\newtheorem{remark}[theorem]{Remark}
\newtheorem{definition}[theorem]{Definition}
\newtheorem{notation}[theorem]{Notation}
\newtheorem{main theorem}[theorem]{Main Theorem}
\newtheorem{question}[theorem]{Question}
\newtheorem{convention}[theorem]{Convention}
\newcommand{\ZZ}{\mathbb{Z}}
\newcommand{\lp}{(\hskip -0.07cm (}
\newcommand{\rp}{)\hskip -0.07cm )}
\begin{document}

\title{Homotopically equivalent simple loops
on 2-bridge spheres in 2-bridge link complements (III)}
\author{Donghi Lee}
\address{Department of Mathematics\\
Pusan National University \\
San-30 Jangjeon-Dong, Geumjung-Gu, Pusan, 609-735, Republic of Korea}
\email{donghi@pusan.ac.kr}

\author{Makoto Sakuma}
\address{Department of Mathematics\\
Graduate School of Science\\
Hiroshima University\\
Higashi-Hiroshima, 739-8526, Japan}
\email{sakuma@math.sci.hiroshima-u.ac.jp}

\subjclass[2010]{Primary 20F06, 57M25 \\
\indent {The first author was supported by Basic Science Research Program
through the National Research Foundation of Korea(NRF) funded
by the Ministry of Education, Science and Technology(2012R1A1A3009996).
The second author was supported
by JSPS Grants-in-Aid 22340013 and 21654011.}}

\begin{abstract}
This is the last of a series of papers which give a necessary and sufficient condition for two essential simple loops on a 2-bridge sphere in a 2-bridge link complement to be homotopic in the link complement. The first paper of the series treated the case of the 2-bridge torus links, and the second paper treated the case of 2-bridge links of slope $n/(2n+1)$ and $(n+1)/(3n+2)$, where $n \ge 2$ is an arbitrary integer. In this paper, we first treat the case of 2-bridge links of slope $n/(mn+1)$ and $(n+1)/((m+1)n+m)$, where $m \ge 3$ is an arbitrary integer, and then treat the remaining cases by induction.
\end{abstract}
\maketitle

\section{Introduction}

Let $K$ be a 2-bridge link in $S^3$ and
let $S$ be a 4-punctured sphere in $S^3-K$
obtained from a $2$-bridge sphere of $K$.
The present paper is a continuation
of \cite{lee_sakuma_2} and \cite{lee_sakuma_3}
in a series of papers
which give a necessary and sufficient condition
for two essential simple loops on $S$
to be homotopic in $S^3-K$.
Ahead of this series,
the authors~\cite{lee_sakuma} gave a complete characterization of
those essential simple loops in $S$
which are null-homotopic in $S^3-K$.

The first paper~\cite{lee_sakuma_2}
of the series treated the case of a $2$-bridge link of slope $1/p=[p]$,
and the second paper~\cite{lee_sakuma_3} treated the case
of a 2-bridge link of slope $n/(2n+1)=[2,n]$
or slope $(n+1)/(3n+2)=[2,1,n]$.
On the other hand,
the first half of the present paper treats the case of a 2-bridge link
of slope $n/(mn+1)=[m,n]$ or slope $(n+1)/((m+1)n+m)=[m,1,n]$,
where $m \ge 3$ is an arbitrary integer.
These five families play special roles in our project in the sense that
the treatment of these links form a base step
of an inductive proof of the main theorem
for a $2$-bridge link of general slope $[m_1,m_2, \dots,m_k]$
to which the second half of the present paper contributes,
where the induction uses $k \ge 1$ as the parameter.

In the present paper,
we also give a complete characterization of those simple loops
in the $2$-bridge sphere of a hyperbolic $2$-bridge link
to be peripheral or imprimitive in the link complement
(see Theorems~\ref{main_corollary} and \ref{main_corollary2}).

This paper is organized as follows.
In Section~\ref{sec:main_result},
we describe the main results of this paper (Main Theorem~\ref{main_theorem}
and Theorems~\ref{main_corollary} and~\ref{main_corollary2}).
In Section~\ref{sec:technical_lemmas}, we establish
technical lemmas used for the proofs in Sections~\ref{sec:proof_of_main_theorem_1}
and \ref{sec:proof_of_main_theorem_3}.
Two special cases of Main Theorem~\ref{main_theorem}
namely, the cases of a 2-bridge link of slope $n/(mn+1)=[m,n]$
and a $2$-bridge link of slope $(n+1)/((m+1)n+m)=[m,1,n]$, where $m,n \ge 3$,
are treated in Sections~\ref{sec:proof_of_main_theorem_1} and
\ref{sec:proof_of_main_theorem_3}, respectively.
For the case of a $2$-bridge link of general slope,
we start with preliminary results in Section~\ref{sec:technical_lemmas_general}
and perform transformation so that we may apply the induction as discussed
in Section~\ref{sec:transformation}.
In Section~\ref{sec:result_for_induction}, we prove key results
for the induction, and finally the proof of Main Theorem~\ref{main_theorem}
for the general cases is contained in Section~\ref{sec:proof_for_general_2-bridge_links}.
In Section~\ref{sec:proof_of_main_corollary},
we prove Theorems~\ref{main_corollary} and \ref{main_corollary2}.

\section{Main result}
\label{sec:main_result}

This paper, as a continuation of \cite{lee_sakuma_2} and \cite{lee_sakuma_3},
uses the same notation and terminology as in
\cite{lee_sakuma_2} and \cite{lee_sakuma_3} without specifically mentioning.
We begin with the following question,
providing whose answer is the purpose of this series of papers.

\begin{question} \label{question}
Consider a $2$-bridge link $K(r)$ with $r\ne \infty$.
For two distinct rational numbers $s, s' \in I_1(r) \cup I_2(r)$,
when are the unoriented loops $\alpha_s$ and $\alpha_{s'}$
homotopic in $S^3-K(r)$?
\end{question}

By Schubert's classification of $2$-bridge links~\cite{Schubert},
we may assume that
$r$ is a rational number with $0 \le r \le 1/2$ or $r=\infty$.
If $r=0$ or $\infty$,
then $G(K(r))$ is the infinite cyclic group or the rank $2$ free group accordingly,
and we can easily obtain an answer to Question~\ref{question}
(see~\cite[Paragraph after Question~2.2]{lee_sakuma_2}).
So we may assume $0< r \le 1/2$.
In the first paper~\cite{lee_sakuma_2} and the second paper~\cite{lee_sakuma_3}
of this series, we gave a complete answer to the question,
respectively, for $r=1/p$ with $p \ge 2$
and for $r=n/(2n+1)=[2,n]$ or $r=(n+1)/(3n+2)=[2,1,n]$ with $n \ge 2$
(see \cite[Main Theorem~2.7]{lee_sakuma_2} and
\cite[Main Theorems~2.2 and 2.3]{lee_sakuma_3}).
Since there exist a homeomorphism from $(S^3,K(n/(2n+1)))$ to $(S^3, K(2/(2n+1)))$
and a homeomorphism from $(S^3,K((n+1)/(3n+2)))$ to $(S^3, K(3/(3n+2)))$
both of which send the upper/lower tangles to lower/upper tangles,
we obtain, from \cite[Main Theorems~2.2 and 2.3]{lee_sakuma_3},
an answer to Question~\ref{question} for $K(r)$
with $r=2/(2n+1)=[n, 2]$ or $r=3/(3n+2)=[n, 1, 2]$.

In the present paper, we solve
Question~\ref{question} for the remaining cases.

\begin{main theorem}
\label{main_theorem}
Suppose that $r$ is a rational number with $0 < r \le 1/2$ such that
$r \neq 1/n$, $r \neq n/(2n+1)$, $r \neq 2/(2n+1)$,
$r \neq (n+1)/(3n+2)$ and $r \neq 3/(3n+2)$,
where $n \ge 2$ is an integer.
Then, for any two distinct rational numbers $s, s' \in I_1(r) \cup I_2(r)$,
the unoriented loops $\alpha_s$ and $\alpha_{s'}$ are never
homotopic in $S^3-K(r)$.
\end{main theorem}

This theorem together with \cite[Main Theorem~2.7]{lee_sakuma_2} and
\cite[Main Theorems~2.2 and 2.3]{lee_sakuma_3}
implies the following complete answer to Question~\ref{question}.

\begin{theorem}
\label{summary_theorem}
Suppose that $r$ is a rational number such that $0 < r \le 1/2$.
For distinct $s, s' \in I_1(r)\cup I_2(r)$,
the unoriented loops $\alpha_s$ and $\alpha_{s'}$ are
homotopic in $S^3-K(r)$
if and only if one of the following holds.
\begin{enumerate}[\indent \rm (1)]
\item
$r=1/p$, where $p \ge 2$ is an integer,
and $s=q_1/p_1$ and $s'=q_2/p_2$ satisfy
$q_1=q_2$ and $q_1/(p_1+p_2)=1/p$, where $(p_i, q_i)$ is a pair of
relatively prime positive integers.
\item
$r=3/8$, namely $K(r)$ is the Whitehead link,
and the set $\{s, s'\}$ equals
either $\{1/6, 3/10\}$ or $\{3/4, 5/12\}$.
\end{enumerate}
\end{theorem}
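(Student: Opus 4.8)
The plan is to obtain Theorem~\ref{summary_theorem} by assembling Main Theorem~\ref{main_theorem} with the results of the first two papers of the series, arranged as a case analysis on the Schubert-normalized slope $r$ with $0<r\le 1/2$. There is no new geometric input needed; the content is in checking that the four available statements overlap correctly and exhaust all slopes.

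First suppose $r$ is \emph{not} of the form $1/n$, $n/(2n+1)$, $2/(2n+1)$, $(n+1)/(3n+2)$, or $3/(3n+2)$ for any integer $n\ge 2$. Then Main Theorem~\ref{main_theorem} applies and says that $\alpha_s$ and $\alpha_{s'}$ are never homotopic for distinct $s,s'\in I_1(r)\cup I_2(r)$, so the left-hand side of the asserted equivalence is false. One then only has to observe that the right-hand side is also false for such $r$: alternative (1) forces $r=1/p$, and alternative (2) forces $r=3/8=3/(3\cdot 2+2)$, both of which are excluded. Hence the equivalence holds vacuously in this range.

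Next I would dispose of the five exceptional families. For $r=1/p$ with $p\ge 2$, the equivalence is precisely \cite[Main Theorem~2.7]{lee_sakuma_2}, together with the trivial remark that $1/p\neq 3/8$, so alternative (2) never applies. For $r=n/(2n+1)=[2,n]$ or $r=(n+1)/(3n+2)=[2,1,n]$ with $n\ge 2$, I would quote \cite[Main Theorems~2.2 and~2.3]{lee_sakuma_3}, whose effect is that no two distinct $\alpha_s,\alpha_{s'}$ are homotopic unless $K(r)$ is the Whitehead link $K(3/8)=K([2,1,2])$, in which case the homotopic unordered pairs are exactly $\{1/6,3/10\}$ and $\{3/4,5/12\}$; since none of $1/6,3/10,3/4,5/12$ has the form $1/p$, this is exactly alternatives (1)--(2) restricted to these slopes. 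For $r=2/(2n+1)=[n,2]$ or $r=3/(3n+2)=[n,1,2]$ with $n\ge 2$, I would transport this conclusion along the homeomorphisms $(S^3,K(n/(2n+1)))\to(S^3,K(2/(2n+1)))$ and $(S^3,K((n+1)/(3n+2)))\to(S^3,K(3/(3n+2)))$ recalled in Section~\ref{sec:main_result}, which swap the upper and lower tangles and therefore carry $I_1\cup I_2$ for one slope onto the analogous set for the other; again the only slope in these families producing a nontrivial homotopy is $3/8$, with the same two pairs.

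Finally I would note that the five families together with the complement considered in the second paragraph cover $(0,1/2]$, and that the overlaps are consistent: $1/n$ is disjoint from the other four families, $n/(2n+1)$ meets $2/(2n+1)$ only at $r=2/5$ (where both assign ``no nontrivial homotopy''), and $(n+1)/(3n+2)$ meets $3/(3n+2)$ only at $r=3/8$ (where both assign the pairs $\{1/6,3/10\}$ and $\{3/4,5/12\}$). Gluing the cases then yields the stated equivalence. The only real obstacle here is bookkeeping rather than mathematics: one must make sure that the hypotheses and conclusions imported from \cite{lee_sakuma_2} and \cite{lee_sakuma_3} use the same normalization of $r$ and the same labeling of $I_1(r)$ and $I_2(r)$ as in the present paper, and that, across all five families, the Whitehead link is genuinely the unique source of homotopies beyond the $1/p$ series — so that nothing is omitted and nothing double-counted.
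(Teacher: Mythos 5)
Your proposal is correct and is essentially the paper's own derivation: the paper states Theorem~\ref{summary_theorem} as an immediate consequence of Main Theorem~\ref{main_theorem} together with \cite[Main Theorem~2.7]{lee_sakuma_2}, \cite[Main Theorems~2.2 and 2.3]{lee_sakuma_3}, and the tangle-swapping homeomorphisms for $[n,2]$ and $[n,1,2]$, exactly as you assemble them. One harmless slip: your remark that ``none of $1/6,3/10,3/4,5/12$ has the form $1/p$'' is false for $1/6$ and is anyway irrelevant --- what rules out alternative (1) for these slopes is that $r$ itself (e.g.\ $3/8$) is not of the form $1/p$, as you correctly note elsewhere.
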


The proof of the main theorem together with
\cite[Theorems~2.5 and 2.6]{lee_sakuma_3}
implies the following theorems,
which give a complete characterization of those simple loops
in the $2$-bridge sphere of a hyperbolic $2$-bridge link
to be peripheral or imprimitive in the link complement.

\begin{theorem}
\label{main_corollary}
Suppose that $r$ is a rational number with $0 < r \le 1/2$
such that $r \neq 1/n$ for any integer $n \ge 2$.
For a rational number $s \in I_1(r) \cup I_2(r)$,
the loop $\alpha_s$ is peripheral
if and only if one of the following holds.
\begin{enumerate}[\indent \rm (1)]
\item
$r=2/5$ and $s=1/5$ or
$s=3/5$.
\item
$r=n/(2n+1)=[2, n]$ for some integer $n \ge 3$,
and $s=(n+1)/(2n+1)$.
\item
$r=2/(2n+1)=[n,2]$ for some integers $n \ge 3$,
and $s=1/(2n+1)$.
\end{enumerate}
\end{theorem}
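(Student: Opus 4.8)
The plan is to combine Main Theorem~\ref{main_theorem} — and, more to the point, the word combinatorics developed to prove it — with the peripherality results \cite[Theorems~2.5 and~2.6]{lee_sakuma_3} for the $2$-bridge links of slope $n/(2n+1)=[2,n]$ and $(n+1)/(3n+2)=[2,1,n]$. \emph{First, the exceptional families.} Suppose $r$ is one of $n/(2n+1)$, $2/(2n+1)$, $(n+1)/(3n+2)$, $3/(3n+2)$ with $n\ge 2$ (recall $r\ne 1/n$, so $K(r)$ is hyperbolic). For $r=n/(2n+1)$ and $r=(n+1)/(3n+2)$ the assertion is exactly \cite[Theorems~2.5 and~2.6]{lee_sakuma_3}: the $[2,1,n]$-family carries no peripheral loop, while the $[2,n]$-family carries the single loop $\alpha_{(n+1)/(2n+1)}$ for $n\ge 3$ and the two loops $\alpha_{1/5},\alpha_{3/5}$ for $n=2$; here $2/5=2/(2\cdot 2+1)=[2,2]$ also lies in the $[n,2]$-family with $n=2$, which is why it carries two peripheral loops. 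Transporting these statements through the homeomorphisms $(S^3,K(n/(2n+1)))\to(S^3,K(2/(2n+1)))$ and $(S^3,K((n+1)/(3n+2)))\to(S^3,K(3/(3n+2)))$ recalled in Section~\ref{sec:main_result}, which exchange the two tangles and hence induce a bijection of the relevant parameter sets carrying peripheral loops to peripheral loops, yields the statement for $r=2/(2n+1)$ and $r=3/(3n+2)$: the $[n,2]$-family carries $\alpha_{1/(2n+1)}$ for $n\ge 3$ and again $\{\alpha_{1/5},\alpha_{3/5}\}$ for $n=2$, and the $[n,1,2]$-family carries none. This gives cases (1)--(3), so from now on assume $r$ is not of any of these forms (and $r\ne 1/n$).

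\emph{The generic case: no peripheral loop.} Since $K(r)$ is hyperbolic with torsion-free fundamental group, $\alpha_s$ is peripheral exactly when the element it represents lies in a conjugate of a peripheral subgroup; as this element is non-null-homotopic for $s\in I_1(r)\cup I_2(r)$ by \cite{lee_sakuma}, that is equivalent to its being parabolic, and also to some conjugate of it commuting with a meridian. Writing $u_s$ for the cyclic word over the standard generators that represents $\alpha_s$ in $\pi_1(S^3-K(r))$, this says that $u_s$ is conjugate in $\pi_1(S^3-K(r))$ to a nontrivial element $a^{i}\lambda^{j}$ of the peripheral subgroup at one of the (at most two) cusps, where $(a,\lambda)$ is a meridian--longitude pair. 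Geometrically such a configuration would produce, by pushing the peripheral curve across the $2$-bridge sphere onto the four-punctured sphere $S$ in two different ways, two essential simple loops on $S$ that are freely homotopic to each other, so that Main Theorem~\ref{main_theorem} forces them to coincide; but justifying this, and excluding the coincident case, is precisely the delicate point and is handled by running the same small-cancellation / $T$-condition analysis as in Sections~\ref{sec:proof_of_main_theorem_1}--\ref{sec:proof_for_general_2-bridge_links} on the commutator word $u_s\,a\,u_s^{-1}a^{-1}$. The outcome is that for $r$ outside the exceptional families the only $u_s$ with $s\in I_1(r)\cup I_2(r)$ that could be peripheral are a short list of candidates sitting at the endpoints of the intervals $I_1(r)$ and $I_2(r)$, and a direct check using the description of $I_1(r)\cup I_2(r)$ from \cite{lee_sakuma} shows that each of these represents a non-peripheral element. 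Hence no $\alpha_s$ is peripheral, completing the proof.

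\emph{The main obstacle.} The heart of the matter is the upgrade just described: passing from the machinery behind the proof of Main Theorem~\ref{main_theorem}, which decides when two cyclic words $u_s,u_{s'}$ are conjugate in the link group, to a tool detecting when a single cyclic word is conjugate into a peripheral subgroup, and then verifying that this never happens for the loops $\alpha_s$ once $r$ avoids the five listed families. Keeping this analysis uniform through the induction on the length of the continued fraction (Sections~\ref{sec:technical_lemmas_general}--\ref{sec:proof_for_general_2-bridge_links}) and correctly disposing of the finitely many boundary candidates is where the bulk of the work in Section~\ref{sec:proof_of_main_corollary} lies. (The companion statement on imprimitive loops, Theorem~\ref{main_corollary2}, is proved in parallel, with peripherality replaced throughout by the property of being a proper power.)
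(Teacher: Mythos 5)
Your treatment of the exceptional families (1)--(3) matches the paper: those cases are read off from \cite[Theorems~2.5 and~2.6]{lee_sakuma_3} and transported through the tangle-exchanging homeomorphisms. The gap is in the generic case. You correctly observe that Main Theorem~\ref{main_theorem} only compares \emph{distinct} $s,s'$ and so cannot by itself rule out a peripheral $\alpha_s$, but your proposed fix --- ``running the same small-cancellation analysis on the commutator word $u_s\,a\,u_s^{-1}a^{-1}$'' and then eliminating ``a short list of candidates sitting at the endpoints of the intervals $I_1(r)$ and $I_2(r)$'' --- is not carried out, and its asserted outcome (endpoint candidates) is not justified by anything in your sketch; as written it is a placeholder for the entire argument. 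It also misidentifies the relevant algebraic condition: what peripherality gives you is not conjugacy of $u_s$ into a subgroup generated by a meridian, but a nontrivial centralizer.

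The reduction you are missing is the one the paper uses. By \cite[Lemma~7.2]{lee_sakuma_3}, if $\alpha_s$ is peripheral (or imprimitive) then there is a nontrivial $w\in G(K(r))$ with $w\notin\langle u_s\rangle$ and $wu_sw^{-1}=u_s$; since $u_s$ is not a nontrivial cyclic permutation of itself this cannot hold in $F(a,b)$, so by \cite[Lemma~7.1]{lee_sakuma_3} it is realized by a \emph{nontrivial reduced annular $R$-diagram with outer label $u_s$ and inner label $u_s^{-1}$}. That is exactly the object analyzed in Sections~\ref{sec:technical_lemmas}--\ref{sec:proof_for_general_2-bridge_links}: those arguments never use $s\ne s'$, only the existence of such a diagram. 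Hence for $r=[m,n]$ or $[m,1,n]$ with $m,n\ge 3$ the results of Sections~\ref{sec:proof_of_main_theorem_1} and~\ref{sec:proof_of_main_theorem_3} show no such diagram exists, and for general $r$ the constructions behind Propositions~\ref{prop:induction_general_1} and~\ref{prop:induction_general_3} convert it into a nontrivial reduced self-conjugacy diagram over $G(K(\tilde r))$ with no mixing degree-$4$ vertex, which the induction (via Proposition~\ref{prop:induction_base} and the fact that all the diagrams occurring in \cite[Section~7]{lee_sakuma_3} contain mixing vertices) rules out. No new small-cancellation analysis of a commutator word, and no endpoint candidates, are involved.
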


\begin{theorem}
\label{main_corollary2}
Suppose that $r$ is a rational number with $0 < r \le 1/2$
such that $r \neq 1/n$, where $n \ge 2$ is an integer.
Then, for a rational number $s \in I_1(r) \cup I_2(r)$,
the free homotopy class $\alpha_s$ is primitive
with the following exceptions.
\begin{enumerate}[\indent \rm (1)]
\item $r=2/5$, and $s=2/7$ or $s=3/4$.
In this case, $\alpha_s$ is the third power of some primitive element
in $G(K(r))$.

\item $r=3/7$ and $s=2/7$.
In this case, $\alpha_s$ is the second power of some primitive element
in $G(K(r))$.

\item $r=2/7$ and $s=3/7$.
In this case, $\alpha_s$ is the second power of some primitive element
in $G(K(r))$.
\end{enumerate}
\end{theorem}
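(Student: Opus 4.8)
The plan is to derive the statement from Main Theorem~\ref{main_theorem} together with the results of \cite{lee_sakuma_3} in exactly the same way that Theorem~\ref{main_corollary} is obtained, reducing the question of primitivity of $\alpha_s$ to a question about homotopy among the loops $\alpha_{s'}$. Recall that a free homotopy class in $G(K(r))$ fails to be primitive precisely when the corresponding conjugacy class is a proper power, and that on the $2$-bridge sphere the loop $\alpha_s$ being imprimitive is detected by the existence of a nontrivial symmetry of the situation: concretely, if $\alpha_s$ is a $d$-th power then there must be a rational number $s'$ with $\alpha_{s'}$ homotopic to $\alpha_s$ arising from the covering translation, or $s$ itself must be fixed by the relevant involution. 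So the first step is to invoke the residually finite / small-cancellation machinery of \cite{lee_sakuma} and \cite{lee_sakuma_3} to show that, for $r$ not in the excluded families $1/n$, $n/(2n+1)$, $2/(2n+1)$, $(n+1)/(3n+2)$, $3/(3n+2)$, Main Theorem~\ref{main_theorem} forces $\alpha_s$ to be primitive — because imprimitivity would produce a coincidence $\alpha_s \simeq \alpha_{s'}$ with $s \ne s'$, contradicting the Main Theorem, unless $s$ lies over a special orbit.

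The second step is to handle the finitely many links in the excluded families that are not torus links (the torus links $K(1/n)$ are excluded from the statement, and for $n/(2n+1)$, $2/(2n+1)$, $(n+1)/(3n+2)$, $3/(3n+2)$ with $n$ large one cites \cite[Theorems~2.5 and 2.6]{lee_sakuma_3} directly, which already classify peripheral and imprimitive loops there). What remains is a short list of small-slope links — $r = 2/5$, $r=3/7$, $r=2/7$ — where sporadic imprimitive loops occur. For each such $r$ one computes $G(K(r))$ from its $2$-bridge presentation, writes down the word in $G(K(r))$ represented by each candidate $\alpha_s$ (here $s = 2/7, 3/4$ for $r=2/5$; $s=2/7$ for $r=3/7$; $s=3/7$ for $r=2/7$), and exhibits explicitly the primitive element of which it is the $d$-th power, with $d=3$ in the first case and $d=2$ in the remaining two. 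These are finite, mechanical verifications in a two-generator group; the identification of the root can be read off from the structure of the continued-fraction expansion and the known cyclic/symmetric structure of these particular complements (e.g., $K(2/5)$ is the $(2,4)$-torus link's sibling whose group has an obvious index-three phenomenon).

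The main obstacle is the first step: ruling out imprimitivity for \emph{all} non-exceptional $r$ simultaneously. The subtlety is that Main Theorem~\ref{main_theorem} as stated only prohibits $\alpha_s \simeq \alpha_{s'}$ for $s \ne s'$, whereas imprimitivity of $\alpha_s$ is a statement about the single element $\alpha_s$ being a proper power, which a priori need not manifest as such a coincidence. Bridging this gap requires the geometric input that the double branched cover and the orbifold structure relate a proper power of $\alpha_s$ to the loop $\alpha_{\iota(s)}$ for the relevant involution $\iota$ of $I_1(r)\cup I_2(r)$, so that imprimitivity \emph{does} force a coincidence among the $\alpha$'s after all — and then checking that for $r$ outside the excluded families the only fixed points of $\iota$ give primitive loops, which is where the hyperbolicity hypothesis and the structure theory of \cite{lee_sakuma_3} are used. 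I expect this reduction to occupy most of the argument, with the case-by-case computations for $r \in \{2/5, 3/7, 2/7\}$ being comparatively routine.
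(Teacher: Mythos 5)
Your overall strategy---reduce imprimitivity to a homotopy coincidence $\alpha_s\simeq\alpha_{s'}$ with $s\ne s'$ and then invoke Main Theorem~\ref{main_theorem}---is not the paper's route, and the step you yourself flag as the ``main obstacle'' is a genuine gap that your proposed fix does not close. Indeed the claimed reduction is false: in the exceptional cases of the theorem (e.g.\ $r=2/5$, $s=2/7$) the loop $\alpha_s$ \emph{is} a proper power, yet by Theorem~\ref{summary_theorem} there is no $s'\ne s$ in $I_1(r)\cup I_2(r)$ with $\alpha_{s'}$ homotopic to $\alpha_s$ (the only such coincidences occur for $r=1/p$ and $r=3/8$). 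So imprimitivity simply does not manifest as a coincidence among the $\alpha_{s'}$, and no appeal to an involution $\iota$ of $I_1(r)\cup I_2(r)$, covering translations, or the orbifold structure of the double branched cover can make it do so; residual finiteness plays no role anywhere.

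What the paper actually does is reduce imprimitivity (and peripherality) to a \emph{self}-conjugacy: if $u_s=v^d$ with $d\ge 2$, then $v$ centralizes $u_s$ and $v\notin\langle u_s\rangle$, so by \cite[Lemma~7.2]{lee_sakuma_3} there is a nontrivial $w\notin\langle u_s\rangle$ with $wu_sw^{-1}=u_s$, and by \cite[Lemma~7.1]{lee_sakuma_3} this identity is realized by a \emph{nontrivial} reduced annular $R$-diagram whose outer and inner labels are $u_s$ and $u_s^{-1}$. The diagram analysis of Sections~\ref{sec:technical_lemmas}--\ref{sec:proof_for_general_2-bridge_links} applies to such a diagram verbatim---it never uses $s\ne s'$, only the structure of $CS(s)$, $CS(s')$ and $CS(r)$---so for $r=[m,n]$ or $[m,1,n]$ with $m,n\ge 3$ no such diagram exists, and for general $r$ one runs the same induction via Propositions~\ref{prop:induction_general_1} and~\ref{prop:induction_general_3}. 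The exceptional cases (1)--(3) all have $r$ of the form $[2,n]$ or $[n,2]$ and are imported directly from \cite[Theorems~2.5 and 2.6 and Section~7]{lee_sakuma_3} together with the mirror symmetry, rather than by the ad hoc computations you sketch. If you replace your first step by this centralizer/annular-diagram reduction, the remainder of your outline becomes essentially the paper's proof.
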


We prove the above main theorem
by interpreting the situation
in terms of combinatorial group theory.
In other words, we prove that
two words representing the free homotopy classes of
$\alpha_s$ and $\alpha_{s'}$
are never conjugate in the $2$-bridge link group $G(K(r))$
for any two distinct rational numbers $s, s' \in I_1(r) \cup I_2(r)$.
The key tool used in the proof is small cancellation theory,
applied to two-generator and one-relator presentations
of $2$-bridge link groups.

\section{Technical lemmas for $r=[m, n]$ or $r=[m, 1, n]$ with
$m, n \ge 3$}
\label{sec:technical_lemmas}

Throughout the remainder of the this paper,
we simply write Hypotheses~A, B and C
instead of \cite[Hypothesis~A]{lee_sakuma_3}, \cite[Hypothesis~B]{lee_sakuma_3} and
\cite[Hypothesis~C]{lee_sakuma_3}, respectively.
Throughout this section, we assume that Hypothesis~A holds.
Then by \cite[Lemma~3.3]{lee_sakuma_3}, either \cite[Lemma~3.3(1)]{lee_sakuma_3}
or \cite[Lemma~3.3(2)]{lee_sakuma_3} holds, that is, either Hypothesis~B
or Hypothesis~C holds.
Accordingly as Hypothesis~B or Hypothesis~C holds,
we shall establish several technical lemmas
used for the proof of Main Theorem~\ref{main_theorem}
for $r=[m, n]$ and $r=[m, 1, n]$
in Sections~\ref{sec:proof_of_main_theorem_1}
and \ref{sec:proof_of_main_theorem_3}, respectively.

\subsection{The case when Hypothesis~B holds}

We first assume that Hypothesis~B holds. We begin with the following remark
before introducing technical lemmas concerning the cyclic sequence
$CS(\phi(\alpha))=CS(u_s)=CS(s)$.

\begin{remark}
\label{rem:(1)holds}
{\rm
(1) If $r=[m, n]$, where $m, n \ge 3$ are integers,
then, by \cite[Lemma~3.16(3)]{lee_sakuma_2},
$CS(r)=\lp m+1, (n-1) \langle m \rangle, m+1, (n-1) \langle m \rangle \rp$,
where $S_1=(m+1)$ and $S_2=((n-1) \langle m \rangle)$.
So, in Hypothesis~B,
both $S(\phi(\partial D_i^+))$ and $S(\phi(\partial D_i^-))$
are exactly of the form
$(\ell_1, n_1 \langle m \rangle, m+1, n_2 \langle m \rangle, \ell_2)$,
where $0 \le \ell_1, \ell_2 \le m-1$ and $0 \le n_1, n_2 \le n-1$
are integers such that if $n_j=n-1$ then $\ell_j$ is necessarily $0$
for $j=1, 2$. In particular, $S(y_{i,b})=(\ell)$ with $1 \le \ell \le m$,
unless $y_i$ is an empty word. The same is true for $S(z_{i,e})$, $S(y_{i,b}')$
and $S(z_{i,e}')$.

(2) If $r=[m, 1, n]$, where $m, n\ge 3$ are integers,
then, by \cite[Lemma~3.16(1)]{lee_sakuma_2},
$CS(r)=\lp n \langle m+1 \rangle, m, n \langle m+1 \rangle, m \rp$,
where $S_1=(n \langle m+1 \rangle)$ and $S_2=(m)$.
So, in Hypothesis~B,
both $S(\phi(\partial D_i^+))$ and $S(\phi(\partial D_i^-))$
are exactly of the form $(\ell_1, n \langle m+1 \rangle, \ell_2)$,
where $0 \le \ell_1, \ell_2 \le m$ are integers.
In particular, $S(w_{i,b})=S(w_{i,e})=(m+1)$ and $S(w_{i,b}')=S(w_{i,e}')=(m+1)$.
}
\end{remark}

\begin{lemma}
\label{lem:case1-1(a)}
Let $r=[m,n]$, where $m, n \ge 3$ are integers.
Under Hypothesis~B,
suppose that $v$ is a subword of the cyclic word represented by
$\phi(\alpha) \equiv y_1 w_1 z_1 y_2 w_2 z_2 \cdots y_t w_t z_t$
such that $v$ corresponds to a term of $CS(\phi(\alpha))=CS(s)$.
Then, after a cyclic shift of indices,
$v$ is equal to one of the following subwords:
\[
z_{0, e}w_1w_2\cdots w_qy_{q+1, b}, \quad
z_{0, e}w_1w_2\cdots w_q, \quad
w_1w_2\cdots w_qy_{q+1, b}, \quad
w_1w_2\cdots w_q,
\]
where $q\in \ZZ_+\cup\{0\}$ in the first three cases
and $q\in \ZZ_+$ in the last case.
In each of the above,
the ``intermediate subwords'' are empty;
to be precise, when we say that
$z_{0, e}w_1w_2\cdots w_qy_{q+1, b}$,
for example,
is a subword of $(u_s)$,
we assume that $y_1$, $z_iy_{i+1}$ ($1\le i\le q-1$) and $z_q$
are empty words.
\end{lemma}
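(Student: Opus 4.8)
The plan is to analyze the structure of $\phi(\alpha) \equiv y_1 w_1 z_1 \cdots y_t w_t z_t$ in the presence of Hypothesis~B, using the explicit form of $CS(r)$ recorded in Remark~\ref{rem:(1)holds}(1). Recall that for $r=[m,n]$ we have $CS(r)=\lp m+1, (n-1)\langle m\rangle, m+1, (n-1)\langle m\rangle\rp$, so every term of $CS(\phi(\alpha))=CS(s)$ is either $m$ or $m+1$, and the terms equal to $m+1$ occur with large gaps (blocks of $n-1$ consecutive $m$'s between them). First I would recall from \cite{lee_sakuma_3} the roles of the pieces: each $w_i$ is (a power of) one of the generators coming from the $D_i^\pm$ annuli, with $S(w_{i,b})$ and $S(w_{i,e})$ being single terms, while $y_i$ and $z_i$ are the ``connecting'' subwords, and by Hypothesis~B the $S$-sequences $S(\phi(\partial D_i^\pm))$ have the restricted form displayed in the Remark. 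In particular, by the last sentence of Remark~\ref{rem:(1)holds}(1), the $S$-sequence of each nonempty $y_{i,b}$, $z_{i,e}$ (and their primed versions) consists of a single term $(\ell)$ with $1\le \ell\le m$.

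The key step is a local analysis at a term of $CS(s)$. Fix a subword $v$ of the cyclic word corresponding to a single term of $CS(s)$; thus $v$ is a maximal ``syllable'' in the sense of the cyclic sequence. I would trace where the cut points of this syllable fall relative to the decomposition into $y$'s, $w$'s and $z$'s. Because each $w_i$ contributes a run of $m$'s or $m+1$'s whose $S$-value at its two ends is a single term, the syllable $v$ must be an alternating concatenation $z\,w\,w\cdots w\,y$ possibly truncated at either end, and — crucially — the ``intermediate'' pieces between consecutive $w$'s, i.e. the $z_i y_{i+1}$ for interior indices, must be empty, since a nonempty such piece would force an extra syllable boundary (its $S$-value being a separate single term $(\ell)$ with $\ell\le m$, which cannot be absorbed into the $m+1$-or-$m$ syllable structure without contradicting the form of $CS(r)$ in the Remark). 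Likewise $y_1$ and $z_q$ at the ends are forced to be empty, leaving only the possibility of a nonempty $z_{0,e}$ on the left and a nonempty $y_{q+1,b}$ on the right. This yields exactly the four listed forms, with the case distinction on $q$ (in the last form, $q\ge 1$ because a syllable must contain at least one $w$; in the first three, $q=0$ is permitted because $z_{0,e}$ and $y_{q+1,b}$ can themselves constitute the syllable).

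After establishing that the only possible shapes are the four listed, I would verify the converse direction and the ``intermediate subwords are empty'' clause by a direct bookkeeping argument: given the form of $CS(r)$, if any putative intermediate $z_i y_{i+1}$ ($1\le i\le q-1$) or the boundary $y_1$, $z_q$ were nonempty, its single $S$-term would sit strictly between two consecutive pieces of the syllable, splitting $v$ and contradicting the assumption that $v$ corresponds to one term. I expect the main obstacle to be the careful case analysis at the two \emph{ends} of the syllable — distinguishing when a nonempty $z_{0,e}$ (resp.\ $y_{q+1,b}$) can be glued onto $w_1\cdots w_q$ without creating a new syllable boundary — which requires invoking the precise numerics of Hypothesis~B (the constraints $0\le \ell_1,\ell_2\le m-1$, $0\le n_1,n_2\le n-1$, and the rule that $n_j=n-1$ forces $\ell_j=0$) rather than just the coarse ``$m$ or $m+1$'' description. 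The interior of the syllable is comparatively routine once one observes that every $w_i$ is squeezed between two single-term $S$-values, forcing emptiness of the connecting words.
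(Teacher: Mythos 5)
The paper's own proof of this lemma is a one-line citation to the analogous Lemma~3.6 of the second paper in the series, and your sign-change analysis --- using that Hypothesis~B forces $S(w_i)=(m+1)$ to be a complete term of $S(\phi(\partial D_i^{\pm}))$, so that any nonempty $z_i$ or $y_{i+1}$ abutting a $w$-block creates a syllable boundary there, and only the extreme syllable-parts $z_{0,e}$ and $y_{q+1,b}$ can merge in sign with the adjacent $w$'s --- is exactly the argument that citation stands for. Your proposal is correct in substance; the one caveat (inherited from the lemma's own statement rather than introduced by you) is that a term of $CS(s)$ lying entirely in the interior of a single $y_i$ or $z_i$ is not literally of one of the four displayed forms, but such a term has value $m$ and is immaterial in every application of the lemma, all of which concern terms of value at least $m+1$.
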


\begin{proof}
The proof can be done in the same way as in \cite[Lemma~3.6]{lee_sakuma_3}.
\end{proof}

Throughout the remainder of this paper, we will assume the following convention.

\begin{convention}
\label{con:figure}
{\rm
In Figures~\ref{fig.lemma-1-1}--\ref{fig.general_proof_1},
except for Figure~\ref{fig.converging},
the change of directions of consecutive arrowheads
represents the change from positive (negative, resp.) words
to negative (positive, resp.) words, and
a dot represents a vertex whose position is clearly identified.
Also an Arabic number represents the length of the corresponding positive
(or negative) word.
In Figures~\ref{fig.Lemma7_3_1}--\ref{fig.Lemma7_3_2}
and \ref{fig.S_1_occurs}--\ref{fig.S_1_occurs(c)},
the label $S_1$ or $S_2$ on an oriented segment
means that the $S$-sequence of the corresponding word
is equal to $S_1$ or $S_2$ accordingly.}
\end{convention}

\begin{lemma}
\label{lem:case1-1(b)}
Let $r=[m,n]$, where $m, n \ge 3$ are integers.
Under Hypothesis~B, the following hold for every $i$.
\begin{enumerate}[\indent \rm (1)]
\item
$S(z_{i, e}y_{i+1, b}) \ne (m+d)$ for any integer $d$ with $1 \le d \le m-1$.

\item
$S(w_iz_iy_{i+1, b}) \ne (m+1+d)$ and
$S(z_{i, e}y_{i+1}w_{i+1}) \ne (m+1+d)$
for any integer $d$ with $1 \le d \le m-1$.

\item
$S(w_iz_iy_{i+1, b}) \neq (2m+1)$
and $S(z_{i, e}y_{i+1}w_{i+1}) \neq (2m+1)$.

\item
$S(w_iz_iy_{i+1}w_{i+1}) \ne (2m+2)$.

\item $S(w_iz_iy_{i+1}w_{i+1}) \ne (m+1, m+1)$.
\end{enumerate}
\end{lemma}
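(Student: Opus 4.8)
The plan is to reduce every assertion to the rigid description of $CS(s)$ and of the cell labels given in Remark~\ref{rem:(1)holds}(1). From there, $CS(s)=CS(\phi(\alpha))$ is a cyclic concatenation of copies of $S_1=(m+1)$ and $S_2=((n-1)\langle m\rangle)$; in particular no term of $CS(s)$ exceeds $m+1$, and (as part of the structure of admissible $CS(s)$ for $s\in I_1(r)\cup I_2(r)$) two copies of the block $S_1$ are never adjacent, so each term $m+1$ is flanked on both sides by a term $m$. Moreover each of $\phi(\partial D_i^+),\phi(\partial D_i^-)$ has $S$-sequence exactly $(\ell_1,n_1\langle m\rangle,m+1,n_2\langle m\rangle,\ell_2)$ with $0\le\ell_1,\ell_2\le m-1$ and $0\le n_1,n_2\le n-1$, whence $y_{i,b}$ and $z_{i,e}$ have single-term $S$-sequence $(\ell)$ with $1\le\ell\le m$, and, by the way the factorization $\phi(\alpha)\equiv y_1w_1z_1\cdots y_tw_tz_t$ is set up, the length-$(m+1)$ run contributed by a given cell sits inside the corresponding $w_i$, not inside $y_i$ or $z_i$. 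The elementary observation used throughout is that each of the words in (1)--(5) is plainly a subword of the cyclic word $\phi(\alpha)$, and that a run of a subword of a cyclic word is always contained in a run of that word; hence every term of the $S$-sequence of such a subword is at most $\max CS(s)=m+1$.

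This at once disposes of (2), (3) and (4): the single-term values $(m+1+d)$ with $d\ge1$, $(2m+1)$ and $(2m+2)$ all exceed $m+1$. For (5), an $S$-sequence $(m+1,m+1)$ would force the subword $w_iz_iy_{i+1}w_{i+1}$ to consist of exactly two complete, adjacent runs of length $m+1$ in $\phi(\alpha)$, contradicting that $CS(s)$ has no two consecutive terms equal to $m+1$. In (1), the values $(m+d)$ with $2\le d\le m-1$ are excluded for the same reason, so the only surviving case is $S(z_{i,e}y_{i+1,b})=(m+1)$; since $z_{i,e}$ and $y_{i+1,b}$ each have length at most $m$, this forces them to be non-empty runs of the same type whose concatenation is a complete run of $\phi(\alpha)$ of length $m+1$, i.e.\ a copy of $S_1$ straddling the junction between the factors $z_i$ and $y_{i+1}$ of $\phi(\alpha)$.

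To rule this out I would argue as follows. The $z_i$--$y_{i+1}$ junction is a point at which $\alpha$ leaves the contribution of one cell and enters that of the next, and, by the form $(\ell_1,n_1\langle m\rangle,m+1,n_2\langle m\rangle,\ell_2)$, the length-$(m+1)$ run of each of those two cells is the unique run of length greater than $m$ in its relator label and lies inside $w_i$ resp.\ $w_{i+1}$. Hence the putative $S_1$-block $z_{i,e}y_{i+1,b}$ is not part of either cell's $(m+1)$-run, and would have to be manufactured at the seam out of a terminal fragment of a length-$m$ run of $\phi(\partial D_i^{\pm})$ together with an initial fragment of a length-$m$ run of $\phi(\partial D_{i+1}^{\pm})$. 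I would then run through the ways the relator labels of the two cells are matched along the edge they share under Hypothesis~B: in each case the two fragments either continue one another into a single run of length at most $m$, or are separated by a change of sign, so that no run of length $m+1$ is born at the junction, contrary to $S(z_{i,e}y_{i+1,b})=(m+1)$.

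The step I expect to be the main obstacle is precisely this last case analysis: enumerating the possible matchings of $\phi(\partial D_i^{\pm})$ with $\phi(\partial D_{i+1}^{\pm})$ along their common edge, keeping careful track of which of $\ell_1,\ell_2,n_1,n_2$ vanish for each of the two cells — the degenerate configurations, in which a cell's label near the junction reduces to a single $m$-run or to part of the $m+1$-run, deserve separate attention — and verifying in every case that the two $m$-run fragments on the two sides of the seam cannot amalgamate into a run of length $m+1$. Should the admissible shape of $CS(s)$ not already exclude two adjacent copies of $S_1$, part (5) would call for a completely parallel, and slightly easier, junction argument.
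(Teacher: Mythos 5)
There is a genuine gap, and it is one of circularity. Your entire argument rests on the premises that no term of $CS(s)$ exceeds $m+1$ and that two copies of $S_1$ are never adjacent in $CS(s)$. Neither of these is available at the point where Lemma~\ref{lem:case1-1(b)} is proved: the statement that $CS(s)$ consists of $m$ and $m+1$ is exactly Corollary~\ref{cor:case1-1}, which is deduced from Lemma~\ref{lem:case1-1} , which in turn is deduced from Lemma~\ref{lem:case1-1(b)}. At this stage the only a priori constraint on $CS(s)$ is \cite[Lemma~3.8]{lee_sakuma_2} (either $\lp \ell,\ell\rp$ or terms $\ell,\ell+1$ for some unknown $\ell$), together with the fact that some term is at least $m+1$ because $S(w_i)=(m+1)$. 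That leaves open, for example, the possibility that $CS(s)$ consists of $2m$ and $2m+1$, or equals $\lp 2m+2,2m+2\rp$ --- which is precisely why assertions (2)--(4) about values such as $2m+1$ and $2m+2$ require proof rather than following from a bound $\max CS(s)=m+1$. Indeed, even after Lemma~\ref{lem:case1-1} is established, the value $2m$ is still explicitly permitted by part (2) of that lemma and must be excluded separately in part (3); this alone shows your premise cannot be a formal consequence of Hypothesis~B and Remark~\ref{rem:(1)holds}(1). The adjacency of two $S_1$-blocks is likewise not excluded ``as part of the structure of admissible $CS(s)$''; it is the content of Lemma~\ref{lem:case1-1}(1), downstream of the present lemma.

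The missing idea is that the proof must be two-sided: it has to exploit the annular diagram $M$, not just the outer boundary word. In each part one assumes the forbidden $S$-sequence occurs and then reads off what this forces on the \emph{other} side of the outer boundary layer $J$ --- either on the inner boundary word $u_{s'}^{\pm1}$ (when $J=M$), where one finds two terms of $CS(s')$ differing by at least $2$ (e.g.\ a term $m-d$ together with a term $\ge m+1$), contradicting \cite[Lemma~3.8]{lee_sakuma_2} or \cite[Proposition~3.19(1)]{lee_sakuma_2}; or on the boundary label of a $2$-cell of the next layer (when $J\subsetneq M$), where one finds a term or subsequence (such as $m-d$, $2m$, or $2(n-1)\langle m\rangle$) that cannot occur in $CS(r)=\lp m+1,(n-1)\langle m\rangle,m+1,(n-1)\langle m\rangle\rp$. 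Your purely upper-boundary ``junction'' analysis for the residual case $S(z_{i,e}y_{i+1,b})=(m+1)$ does not engage with this mechanism and cannot, by itself, rule that case out either, since nothing about how the two relator labels meet along the shared edge of $D_i$ and $D_{i+1}$ prevents a fragment of a length-$m$ run on one side from amalgamating with a fragment on the other into a run of length $m+1$; the obstruction lives on the opposite boundary of the layer.
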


\begin{proof}
(1) Suppose on the contrary that the assertion does not hold.
Without loss of generality, we may assume that
$S(z_{1,e}y_{2,b})=(m+d)$ for some $1 \le d \le m-1$.
Then, since $0 \le |z_{1,e}|, |y_{2,b}| \le m$,
we have $|z_{1,e}|=k$ and $|y_{2,b}|=m+d-k$ for some
$k$ with $d \le k \le m$.
Here, we assume $d\le k\le m-1$ and hence $|z_{1,e}'|=m-k \ne 0$.
(The case when $k=m$ and hence $|z_{1,e}'|= 0$ can be treated similarly.)
Then $J$ is locally as illustrated in Figure~\ref{fig.lemma-1-1}(a)
which follows Convention~\ref{con:figure}.
The numbers $k$ and $m+d-k$ near the upper boundary
represent the lengths of the words $z_{1,e}$ and $y_{2,b}$,
respectively, whereas the numbers $m-k$ and $k-d$ near the lower boundary
represent the lengths of the words $z_{1,e}'$ and $y_{2,b}'$, respectively,
and the change of directions of consecutive arrowheads
represents the change from positive (negative, resp.) words
to negative (positive, resp.) words.

Suppose first that $J=M$.
Then we see from Figure~\ref{fig.lemma-1-1}(a) that
$CS(\phi(\delta^{-1}))=CS(u_{s'}^{\pm 1})=CS(s')$
involves a term $(m-k)+(k-d)=m-d$.
Moreover, $CS(s')$ also
involves a term of the form $m+1+c$
with $c \in \ZZ_+ \cup \{0\}$,
because $S(w_1')=S_1=(m+1)$.
Since $m-d \le m-1$, this is a
contradiction to \cite[Lemma~3.8]{lee_sakuma_2}
which says that either $CS(s')$ is equal to $\lp \ell,\ell \rp$
or $CS(s')$ consists of $\ell$ and $\ell+1$ for some $\ell \in \ZZ_+$.
Suppose next that $J \subsetneq M$.
Since none of $S(\phi(e_{1}'))$ and $S(\phi(e_{2}'))$ contains
$S_1=(m+1)$ as a subsequence (see \cite[Lemma~3.1(1)]{lee_sakuma_3}),
we see that the initial vertex of $e_2'$ lies in the interior
of the segment of $\partial D_1^-$
corresponding to $S_1=(m+1)$.
Similarly, the terminal vertex of $e_3'$
lies in the interior of the segment of $\partial D_2^-$
corresponding to $S_1=(m+1)$.
Hence, we see from Figure~\ref{fig.lemma-1-1}(b)
that $S(\phi(e_2'e_3'))$ contains a subsequence of the form $(\ell_1, m-d, \ell_2)$
with $\ell_1, \ell_2 \in \ZZ_+$.
This yields that a term $m-d$ occurs in
$CS(\phi(\partial D_1'))=CS(r)=\lp m+1, (n-1)\langle m \rangle, m+1, (n-1)\langle m \rangle\rp$,
which is obviously a contradiction.

\begin{figure}[h]
\includegraphics{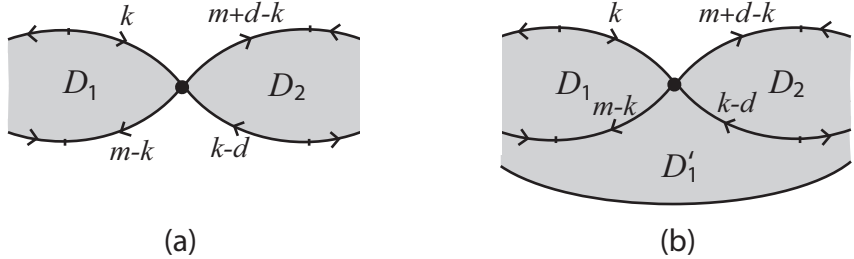}
\caption{
Lemma~\ref{lem:case1-1(b)}(1) where $S(z_{1,e}y_{2,b})=(k+(m+d-k))$}
\label{fig.lemma-1-1}
\end{figure}

(2) Suppose on the contrary that $S(w_1z_1y_{2, b})=(m+1+d)$
for some $1 \le d \le m-1$.
(The other case is treated similarly.)
Then, since $w_1$ and $z_1$ have different signs
when $z_1$ is nonempty
and since $|w_1|=m+1$ and $0 \le |y_{2,b}| \le m$,
the only possibility is that $|z_1|=0$, $|y_{2, b}|=d$
and $S(w_1y_{2,b})=(m+1+d)$.
If $J=M$, then we see from Figure~\ref{fig.lemma-1-2}(a) that
$CS(s')$ involves both a term $m-d$ and a term of the form $m+1+c$
with $c \in \ZZ_+ \cup \{0\}$. Since $m-d \le m-1$, this is a
contradiction to \cite[Lemma~3.8]{lee_sakuma_2}.
On the other hand, if $J \subsetneq M$,
then we see, by using \cite[Lemma~3.1(1)]{lee_sakuma_3}
as in the proof of Lemma~\ref{lem:case1-1(b)}(1),
that $S(\phi(e_2'e_3'))$ contains a subsequence of the form $(\ell_1, m-d, \ell_2)$
with $\ell_1, \ell_2 \in \ZZ_+$ (see Figure~\ref{fig.lemma-1-2}(b)).
This implies that $CS(\phi(\partial D_1'))=CS(r)$ has a term $m-d$,
a contradiction.

\begin{figure}[h]
\includegraphics{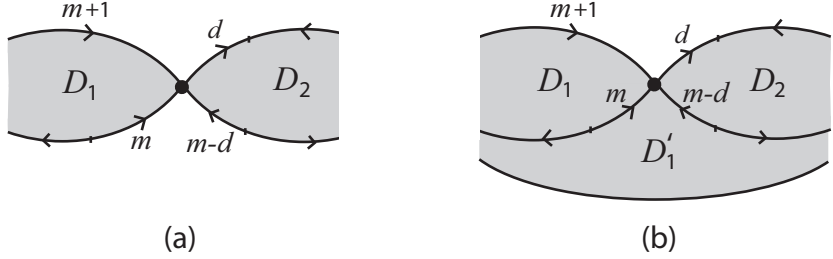}
\caption{
Lemma~\ref{lem:case1-1(b)}(2) where $S(w_1z_1y_{2, b})=((m+1)+0+d)$}
\label{fig.lemma-1-2}
\end{figure}

(3) Suppose on the contrary that $S(z_{1,e}y_2w_2)=(2m+1)$.
(The other case is treated similarly.)
Then $|z_{1,e}|=m$ and $|y_2|=0$.
We note that $z_1=z_{1,e}$.
Otherwise, $S(w_1z_1)$ contains a subword $(m+1,m,m)$
and hence $CS(\phi(\alpha))=CS(s)$ contains a term $m$ and $2m+1$,
a contradiction to \cite[Lemma~3.8]{lee_sakuma_2}.
Hence, we see that $CS(w_1'z_1'y_2'w_2')$ contains a term $2m$
(see Figure~\ref{fig.lemma-1-3}(a)).
If $J=M$,
then we see from Figure~\ref{fig.lemma-1-3}(a) that
$CS(\phi(\delta^{-1}))=CS(s')$ contains both
a term $m$ and a term $2m$. Since $m \ge 3$ implying that $m+2 \le 2m$,
this gives a contradiction to \cite[Lemma~3.8]{lee_sakuma_2}.
On the other hand, if $J \subsetneq M$,
then we see, by using \cite[Lemma~3.1(1)]{lee_sakuma_3}
as in the proof of Lemma~\ref{lem:case1-1(b)}(1),
that $S(\phi(e_2'e_3'))$ is of the form $(\ell_1, 2m, \ell_2)$
with $\ell_1, \ell_2 \in \ZZ_+$.
This implies that a term $2m$ occurs
in $CS(\phi(\partial D_1'))=CS(r)$, which is a contradiction.

(4) This can be proved by an argument parallel to the proof of (3)
(see Figure~\ref{fig.lemma-1-3}(b)).

\begin{figure}[h]
\includegraphics{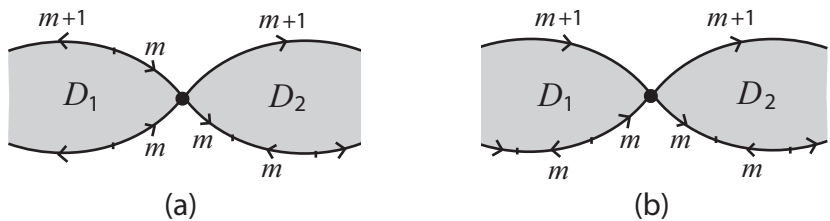}
\caption{
(a) Lemma~\ref{lem:case1-1(b)}(3) where $S(z_{1,e}y_2w_2)=(m+0+(m+1))$,
and (b) Lemma~\ref{lem:case1-1(b)}(4) where $S(w_1z_1y_2w_2)=((m+1)+0+0+(m+1))$}
\label{fig.lemma-1-3}
\end{figure}

(5) Suppose on the contrary that $S(w_1z_1y_2w_2)=(m+1, m+1)$.
Then $|z_1|=|y_2|=0$ and $S(w_1w_2)=(m+1, m+1)$.
If $J=M$ (see Figure~\ref{fig.lemma-1-8}(a)),
then $CS(s')$ contains both a subsequence
$((n-1) \langle m \rangle)$ and a term of the form $m+1+c$
with $c \in \ZZ_+ \cup \{0\}$.
Here, if $c=0$, then $s' \notin I_1(r) \cup I_2(r)$ by \cite[Proposition~3.19(1)]{lee_sakuma_2},
contradicting the hypothesis of the theorem,
while if $c>0$, then we have a contradiction to \cite[Lemma~3.8]{lee_sakuma_2}.
On the other hand, if $J \subsetneq M$ (see Figure~\ref{fig.lemma-1-8}(b)),
then we see, by using \cite[Lemma~3.1(1)]{lee_sakuma_3}
as in the proof of Lemma~\ref{lem:case1-1(b)}(1),
that $S(\phi(e_2'e_3'))$ is of the form $(\ell_1, 2(n-1) \langle m \rangle, \ell_2)$
with $\ell_1, \ell_2 \in \ZZ_+$.
This implies that a subsequence $(2(n-1) \langle m \rangle)$ occurs
in $CS(\phi(\partial D_1'))=CS(r)$, which is a contradiction.
\end{proof}

\begin{figure}[h]
\includegraphics{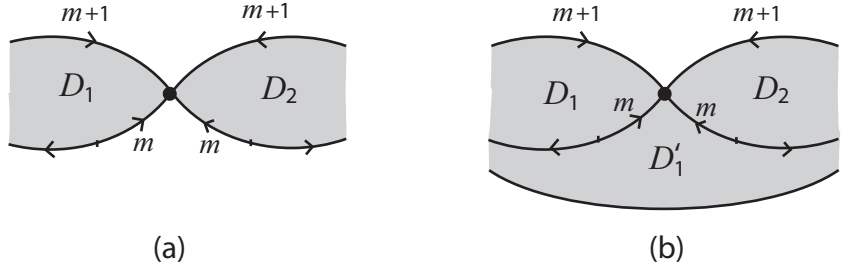}
\caption{
Lemma~\ref{lem:case1-1(b)}(5)
where $S(w_1z_1y_2w_2)=(m+1, m+1)$}
\label{fig.lemma-1-8}
\end{figure}

\begin{lemma}
\label{lem:case1-1}
Let $r=[m,n]$, where $m, n \ge 3$ are integers.
Under Hypothesis~B,
the following hold.
\begin{enumerate}[\indent \rm (1)]
\item No two consecutive terms of $CS(s)$ can be $(m+1, m+1)$.

\item No term of $CS(s)$ can be of the form $m+1+d$ except $2m$,
where $d \in \ZZ_+$.

\item No two consecutive terms of $CS(s)$ can be $(2m, 2m)$.
\end{enumerate}
\end{lemma}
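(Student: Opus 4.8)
The plan is to derive all three statements from the more refined information in Lemma~\ref{lem:case1-1(b)} together with the description of subwords of $\phi(\alpha)$ given by Lemma~\ref{lem:case1-1(a)} and the structural constraints of Remark~\ref{rem:(1)holds}(1). The point of Lemma~\ref{lem:case1-1(a)} is that every term of $CS(s)$ is realized by a subword of $\phi(\alpha) \equiv y_1 w_1 z_1 \cdots y_t w_t z_t$ of one of the four prescribed shapes $z_{0,e}w_1\cdots w_q y_{q+1,b}$, $z_{0,e}w_1\cdots w_q$, $w_1\cdots w_q y_{q+1,b}$, or $w_1\cdots w_q$, with all intermediate subwords empty; in particular, since each $S(w_i)=S_1=(m+1)$ and (by Remark~\ref{rem:(1)holds}(1)) each of $S(z_{0,e})$ and $S(y_{q+1,b})$ is a single term $(\ell)$ with $1\le\ell\le m$, the only way a term of $CS(s)$ can exceed $m$ is by the concatenation effect at the junctions $z_{0,e}w_1$, $w_i w_{i+1}$, or $w_q y_{q+1,b}$. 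So I would reduce each claim to a statement about such junctions and then quote the corresponding part of Lemma~\ref{lem:case1-1(b)}.

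First I would prove (1). A term of $CS(s)$ equal to $m+1$ realized by one of the above subwords forces $q=1$ and the subword to be $w_1$ exactly, i.e. both neighbouring intermediate subwords $z_{0,e}$ (or $y_1$) and $y_{q+1,b}$ (or $z_q$) empty, since any nonempty $z_{0,e}$ or $y_{q+1,b}$ would add at least $1$ to the length. Two \emph{consecutive} terms $(m+1,m+1)$ of $CS(s)$ would then be realized by consecutive such unit blocks, hence by a subword $w_i w_{i+1}$ with $z_i y_{i+1}$ empty and with the $S$-sequence splitting as $(m+1,m+1)$; but then $S(w_i z_i y_{i+1} w_{i+1}) = (m+1,m+1)$, contradicting Lemma~\ref{lem:case1-1(b)}(5). (One must also rule out the degenerate possibility that $t=1$ and the whole cyclic word is a single $w_1$ of $S$-length $m+1$ appearing ``twice'' as consecutive terms; but in that situation $CS(s)$ would be $\lp m+1\rp$ or $\lp m+1,m+1\rp$, which by \cite[Proposition~3.19(1)]{lee_sakuma_2} or \cite[Lemma~3.8]{lee_sakuma_2} is incompatible with $s\in I_1(r)\cup I_2(r)$, exactly as in the proof of Lemma~\ref{lem:case1-1(b)}(5).)

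Next, (2). Suppose some term of $CS(s)$ equals $m+1+d$ with $d\ge 1$ and $m+1+d\ne 2m$, i.e. $d\ne m-1$, so $1\le d\le m-2$ (the value $d=m-1$, giving $2m$, is explicitly allowed; values $d\ge m$ are handled along the way). Realizing this term by a subword of the Lemma~\ref{lem:case1-1(a)} list and examining where the extra length $d$ comes in: either it is absorbed entirely at a single junction, giving one of $S(z_{i,e}y_{i+1,b})=(m+d')$ with $d'\le d$ (ruled out by Lemma~\ref{lem:case1-1(b)}(1) once $d'\ge 1$; and $d'=0$ is impossible here since then the term would be $\le m$), $S(w_i z_i y_{i+1,b})=(m+1+d)$ or $S(z_{i,e}y_{i+1}w_{i+1})=(m+1+d)$ (ruled out for $1\le d\le m-1$ by Lemma~\ref{lem:case1-1(b)}(2), and for larger $d$ it would force $|z_{i,e}|$ or $|y_{i+1,b}|$ to exceed $m$, impossible), or $S(w_i z_i y_{i+1} w_{i+1})=(2m+2)$ (ruled out by Lemma~\ref{lem:case1-1(b)}(4)); or the subword involves $q\ge 3$ consecutive $w$'s, which would already force a $(m+1,m+1)$ among consecutive terms, contradicting part (1). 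The value $2m+1$ is covered by Lemma~\ref{lem:case1-1(b)}(3). Collecting cases, the only surviving value above $m+1$ is $2m$, which gives (2).

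Finally, (3) is the analogue of (1) one level up. Two consecutive terms $(2m,2m)$ of $CS(s)$: by the analysis in (2), a term equal to $2m$ can only be realized by a junction of the form $S(z_{i,e}y_{i+1}w_{i+1})$ or $S(w_i z_i y_{i+1,b})$ with the appropriate intermediate words empty and $|z_{i,e}|=m$ (resp.\ $|y_{i+1,b}|=m$), $|y_{i+1}|=0$ (resp.\ $|z_i|=0$) — exactly the configuration analyzed in the proof of Lemma~\ref{lem:case1-1(b)}(3). Two such blocks occurring consecutively would, after tracking the shared $w$, produce a subword $w_i z_i y_{i+1} w_{i+1}$ with $S(w_i z_i y_{i+1} w_{i+1})=(2m,2m)$; but this contains two consecutive terms each larger than $m+1$ sharing the middle $w$, and unwinding it (as in the proof of Lemma~\ref{lem:case1-1(b)}(3)–(4), via Figure~\ref{fig.lemma-1-3}) yields that $CS(r)=\lp m+1,(n-1)\langle m\rangle, m+1,(n-1)\langle m\rangle\rp$ would have to contain a term $\ge 2m$ (or that $CS(s')$ contains incompatible terms $m$ and $\ge 2m$ when $J=M$), a contradiction in either case by Remark~\ref{rem:(1)holds}(1) and \cite[Lemma~3.8]{lee_sakuma_2}.

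I expect the main obstacle to be the bookkeeping in (2): enumerating, without omission, every way the Lemma~\ref{lem:case1-1(a)} subwords can produce a term exceeding $m+1$, and checking that each such pattern matches (a sub-case of) one of Lemma~\ref{lem:case1-1(b)}(1)–(4) — in particular keeping careful track of the sign alternation between consecutive $w$'s and the intervening $z_i, y_{i+1}$, since that alternation is what forces most of the intermediate words to vanish and what makes the reduction to Lemma~\ref{lem:case1-1(b)} exact rather than merely suggestive.
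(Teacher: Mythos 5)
Your overall strategy---realize each term of $CS(s)$ by one of the subword shapes in Lemma~\ref{lem:case1-1(a)} and then quote the appropriate exclusion from Lemma~\ref{lem:case1-1(b)}---is the same as the paper's, and your treatment of part (2) essentially reproduces the paper's four-case analysis. But there are two genuine problems. In part (1) you assert that a term of $CS(s)$ equal to $m+1$ forces the realizing subword to be a single $w_i$; this is false a priori, since the shape $z_{0,e}w_1\cdots w_qy_{q+1,b}$ with $q=0$ allows a term $m+1$ to be realized by a junction $z_{i,e}y_{i+1,b}$ with $|z_{i,e}|+|y_{i+1,b}|=m+1$ and both factors nonzero. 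The paper's proof of (1) accordingly has three cases, $(v',v'')=(z_{1,e}y_{2,b},w_2)$, $(w_1,z_{1,e}y_{2,b})$ and $(w_1,w_2)$, and the first two are exactly the ones you omit; they are killed by Lemma~\ref{lem:case1-1(b)}(1) (with $d=1$), not by length-counting. This gap is easily repaired, but as written your case analysis for (1) is incomplete.

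Part (3) is more seriously wrong. You claim that a term $2m$ ``can only be realized by a junction of the form $S(z_{i,e}y_{i+1}w_{i+1})$ or $S(w_iz_iy_{i+1,b})$,'' i.e.\ one involving a $w$. This is exactly backwards: such a realization would give $S(w_iz_iy_{i+1,b})=(m+1+(m-1))$ (the word $z_i$ must be empty for the letters to form a single sign-run), which is forbidden by Lemma~\ref{lem:case1-1(b)}(2) with $d=m-1$; and the lengths you state, $|y_{i+1,b}|=m$, would produce $2m+1$, not $2m$. The only surviving realization of a $2m$ term is the one you exclude, namely $z_{i,e}y_{i+1,b}$ with $|z_{i,e}|=|y_{i+1,b}|=m$ and no $w$ involved at all (this is the value $d=m$ left open by Lemma~\ref{lem:case1-1(b)}(1)). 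Consequently there is no ``shared $w$'' to track, and your appeal to the proofs of Lemma~\ref{lem:case1-1(b)}(3)--(4) does not apply. The paper's argument is different: since no term equal to $2m$ can meet any $w_i$, two consecutive $2m$ terms would have to lie entirely inside a single block $z_iy_{i+1}$, but such a block contains only one junction $z_{i,e}y_{i+1,b}$ and hence carries at most one term equal to $2m$---a contradiction. You need to replace your part (3) with this (or an equivalent) argument.
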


\begin{proof}
(1) Suppose on the contrary that $CS(\phi(\alpha))=CS(u_s)=CS(s)$
contains $(m+1, m+1)$ as a subsequence.
Let $v=v'v''$ be a subword of the cyclic word $(u_s)$
corresponding to a subsequence $(m+1, m+1)$,
where $S(v')=S(v'')=(m+1)$.
By using Lemma~\ref{lem:case1-1(a)}
and the facts that $0 \le |z_{i,e}|, |y_{i,b}| \le m$ and $|w_i|=m+1$,
we see that one
of the following holds after a shift of indices.
\begin{enumerate}[\indent \rm (i)]
\item $(v',v'')=(z_{1,e}y_{2,b},w_2)$, where $S(z_{1,e}y_{2,b})=(m+1)$.
\item $(v',v'')=(w_1,z_{1,e}y_{2,b})$, where $S(z_{1,e}y_{2,b})=(m+1)$.
\item $(v',v'')=(w_1,w_2)$.
\end{enumerate}
However, (i) and (ii) are impossible
by Lemma~\ref{lem:case1-1(b)}(1),
and (iii) is impossible
by Lemma~\ref{lem:case1-1(b)}(5).

(2) Suppose on the contrary that $CS(s)$ contains a term $m+1+d$
except $2m$.
Let $v$ be a subword of the cyclic word $(u_s)$
corresponding to a term $m+1+d$ except $2m$.
By using Lemma~\ref{lem:case1-1(a)}
and the facts that $0 \le |z_{i,e}|, |y_{i,b}| \le m$ and $|w_i|=m+1$,
we see that one of the following holds
after a cyclic shift of indices.
\begin{enumerate}[\indent \rm (i)]
\item $v=z_{0, e}y_{1, b}$ with $|z_{0, e}|, |y_{1, b}| \neq 0$.

\item $v$ contains $z_{0, e}w_1$ with $|z_{0, e}| \neq 0$.

\item $v$ contains $w_1y_{2, b}$ with $|y_{2, b}| \neq 0$.

\item $v$ contains $w_1w_2$ with $|z_1|=|y_2|=0$.
\end{enumerate}
However, (i) is impossible by
Lemma~\ref{lem:case1-1(b)}(1).
By Lemma~\ref{lem:case1-1(b)}(2), $|z_{0, e}|=m$ provided (ii) occurs,
and $|y_{2, b}|=m$ provided (iii) occurs. Hence (ii) and (iii) are impossible
by Lemma~\ref{lem:case1-1(b)}(3). Also (iv) is impossible
by Lemma~\ref{lem:case1-1(b)}(4).

(3) Suppose on the contrary that $CS(s)$
contains $(2m, 2m)$ as a subsequence.
Let $v=v'v''$ be a subword of the cyclic word $(u_s)$
corresponding to a subsequence $(2m, 2m)$,
where $S(v')=S(v'')=(2m)$.
If $v'$ contains $w_i$ for some $i$,
then we see, by using Lemma~\ref{lem:case1-1(a)}
and the identity $|w_i|=m+1$, that
either $v'=w_iz_iy_{i+1,b}$ with
$(|z_i|,|y_{i+1,b}|)=(0,m-1)$
or $v'=z_{i-1,e}y_iw_i$ with
$(|z_{i-1}|,|y_i|)=(m-1,0)$.
However both cases are impossible by
Lemma~\ref{lem:case1-1(b)}(2).
Thus $v'$ cannot contain $w_i$.
Since $S(v')=(2m)$ is a term of $CS(s)$,
this implies that $v'$ is disjoint from $w_i$
for every $i$.
The same conclusion also holds for $v''$,
and hence for $v=v'v''$.
Thus $v$ is a subword of $z_iy_{i+1}$ for some $i$.
But then $S(v)$ contains a term $2m$ at most once,
a contradiction.
\end{proof}

\begin{corollary}
\label{cor:case1-1}
Let $r=[m,n]$, where $m, n \ge 3$ are integers.
Under Hypothesis~B,
$CS(s)$ consists of $m$ and $m+1$.
\end{corollary}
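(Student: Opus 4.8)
The plan is to obtain Corollary~\ref{cor:case1-1} by feeding the three obstructions of Lemma~\ref{lem:case1-1} into the general structural dichotomy \cite[Lemma~3.8]{lee_sakuma_2}, which says that $CS(s)$ is either equal to $(\ell,\ell)$ or consists of $\ell$ and $\ell+1$ for some $\ell\in\ZZ_+$. Thus it remains to pin down the relevant value of $\ell$ as $m$ and to rule out the homogeneous alternative.

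The first ingredient is that $m+1$ must actually occur as a term of $CS(s)$. Under Hypothesis~B the word $\phi(\alpha)\equiv y_1w_1z_1\cdots y_tw_tz_t$ has $t\ge 1$, and by the description of Hypothesis~B (see Remark~\ref{rem:(1)holds}(1)) each $w_i$ is a nonempty word with $|w_i|=m+1$ and $S(w_i)=S_1=(m+1)$; consequently any maximal run of the cyclic word $(u_s)$ that contains a letter of some $w_i$ has length at least $m+1$, so $CS(s)$ contains a term $\ge m+1$. The second ingredient is an upper bound: by Lemma~\ref{lem:case1-1}(2) every term of $CS(s)$ is either $\le m+1$ or equal to $2m$. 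Suppose some term equalled $2m$. Then by \cite[Lemma~3.8]{lee_sakuma_2} either $CS(s)=(2m,2m)$, contradicting Lemma~\ref{lem:case1-1}(3), or $CS(s)$ consists of $2m$ and one of $2m\pm 1$; but for $m\ge 3$ both $2m-1=(m+1)+(m-2)$ and $2m+1=(m+1)+m$ are terms of the form $m+1+d$ with $d\in\ZZ_+$ and $\ne 2m$, contradicting Lemma~\ref{lem:case1-1}(2). Hence no term equals $2m$, so every term of $CS(s)$ is $\le m+1$.

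Combining these two facts, $m+1$ is the maximal term of $CS(s)$. If $CS(s)=(\ell,\ell)$ then $\ell=m+1$, so $CS(s)$ has two consecutive terms $(m+1,m+1)$, which is impossible by Lemma~\ref{lem:case1-1}(1). Therefore $CS(s)$ consists of $\ell$ and $\ell+1$ with $\ell+1=m+1$, i.e.\ $\ell=m$, and so $CS(s)$ consists of $m$ and $m+1$ as claimed. Since the substance of the argument is already contained in Lemma~\ref{lem:case1-1}, no step here is really an obstacle; the only point needing a moment's attention is the observation that a length-$(m+1)$ syllable $w_i$ forces a term $\ge m+1$ in $CS(s)$ regardless of how $w_i$ abuts the neighbouring pieces $z_{i-1}y_i$ and $z_iy_{i+1}$.
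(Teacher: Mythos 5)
Your proof is correct and follows essentially the same route as the paper's: both arguments start from the dichotomy of \cite[Lemma~3.8]{lee_sakuma_2}, use the subword $w_i$ with $S(w_i)=(m+1)$ to force a term $\ge m+1$, and then invoke parts (1)--(3) of Lemma~\ref{lem:case1-1} to eliminate every value of $\ell$ other than $m$. The only difference is organizational (you rule out the exceptional value $2m$ once up front, while the paper excludes it separately inside each branch of the dichotomy), which does not change the substance.
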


\begin{proof}
By \cite[Lemma~3.8]{lee_sakuma_2},
either $CS(s)=\lp \ell, \ell \rp$ or $CS(s)$ consists of $\ell$ and $\ell+1$
with $\ell \in \ZZ_+$.
By Hypothesis~B together with
Remark~\ref{rem:(1)holds}(1), $\phi(\alpha)$ involves
a subword $w_i$ whose $S$-sequence is $(m+1)$,
so $CS(\phi(\alpha))=CS(u_s)=CS(s)$ must contain
a term of the form $m+1+c$, where $c \in \ZZ_+ \cup \{0\}$.
First consider the case where $CS(s)=\lp \ell, \ell \rp$.
Since $CS(s)$ contains a term $m+1+c$, we have $\ell \ge m+1$.
Here, by Lemma~\ref{lem:case1-1}(1), $\ell$ is not equal to $m+1$.
By Lemma~\ref{lem:case1-1}(2),
$\ell$ is not equal to $m+1+d$ for any $d \in \ZZ_+$
except $2m$. However, Lemma~\ref{lem:case1-1}(3)
implies that $\ell$ is not equal to $2m$, so that
there remains no possibility for $\ell$.
Next consider the case where $CS(s)$ consists of $\ell$ and $\ell+1$.
By Lemma~\ref{lem:case1-1}(2),
none of $\ell$ and $\ell+1$ is equal to
$m+1+d$ for any $d \in \ZZ_+$ except $2m$. So $\ell \le m$.
On the other hand, since $CS(s)$ contains a term $m+1+c$,
we have $\ell+1 \ge m+1$. Therefore $\ell=m$.
\end{proof}

Next, we study the case where $r=[m, 1, n]$
with $m, n \ge 3$.
Recall from Remark~\ref{rem:(1)holds}(2) that
$CS(r)=\lp n \langle m+1 \rangle, m, n \langle m+1 \rangle, m \rp$,
where $S_1=(n \langle m+1 \rangle)$ and $S_2=(m)$.
Recall also $S(w_{i,b})=S(w_{i,e})=(m+1)$
and $S(w_{i,b}')=S(w_{i,e}')=(m+1)$ for every $i$.

\begin{lemma}
\label{lem:case3-1_easy}
Let $r=[m,1,n]$, where $m, n \ge 3$ are integers.
Under Hypothesis~B, $CS(s)$ contains $m+1$ as a term.
\end{lemma}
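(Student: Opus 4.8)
The plan is to produce, under Hypothesis~B, an explicit syllable of the cyclic word $(u_s)$ whose length is exactly $m+1$; since the terms of $CS(s)=CS(\phi(\alpha))$ are precisely the syllable lengths of $(u_s)$, this proves the claim.

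First I recall that, just as in the proof of Corollary~\ref{cor:case1-1} for $r=[m,n]$, Hypothesis~B forces $\phi(\alpha)$ to contain at least one of the pieces $w_i$. By Remark~\ref{rem:(1)holds}(2) we have $CS(r)=\lp n\langle m+1\rangle, m, n\langle m+1\rangle, m\rp$ with $S_1=(n\langle m+1\rangle)$ and $S_2=(m)$, and $w_i$ is the subword of $\phi(\alpha)$ carrying the $S_1$-part; in particular $w_i$ consists of $n$ consecutive syllables, each of length $m+1$ (this is consistent with the recorded facts $S(w_{i,b})=S(w_{i,e})=(m+1)$ for its first and last syllables). Because $n\ge 3$, at least one of these $n$ syllables, say $\sigma$, is neither the first nor the last syllable of $w_i$.

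Such an interior syllable $\sigma$ is flanked inside $w_i$ by two syllables of the opposite generator, so it remains a maximal syllable of the whole word $\phi(\alpha)$ no matter how $w_i$ is situated in the product expressing $\phi(\alpha)$; hence $|\sigma|=m+1$ occurs as a term of $CS(\phi(\alpha))=CS(s)$, which is the assertion. The only delicate point is the identification of $w_i$ with the $S_1$-subword and the attendant claim $S(w_i)=(n\langle m+1\rangle)$ --- i.e.\ that $w_i$ really contributes $n$ syllables of length $m+1$ --- which is precisely what Remark~\ref{rem:(1)holds}(2) together with the definition of the decomposition of $\phi(\alpha)$ under Hypothesis~B provides, so I expect no genuine obstacle. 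Alternatively, one could first note via $S(w_{i,b})=(m+1)$ that $CS(s)$ has a term $\ge m+1$, invoke \cite[Lemma~3.8]{lee_sakuma_2} to get that $CS(s)=\lp\ell,\ell\rp$ with $\ell\ge m+1$ or that $CS(s)$ consists of $\ell$ and $\ell+1$ with $\ell\ge m$, and then exclude every term $\ge m+2$ by the $J=M$ versus $J\subsetneq M$ diagram dichotomy of Lemma~\ref{lem:case1-1(b)}; but that route is heavier and seems unnecessary for this lemma.
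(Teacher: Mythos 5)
Your proposal is correct and is essentially the paper's own argument: the paper's one-line proof says the assertion "immediately follows from the fact that $\phi(\alpha)$ involves a subword $w_i$ whose $S$-sequence is $(n\langle m+1\rangle)$ with $n\ge 3$," and your observation that $n\ge 3$ guarantees an interior syllable of $w_i$ that cannot merge with anything outside $w_i$ is precisely the content packed into that sentence. The alternative route you sketch at the end is indeed unnecessary here.
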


\begin{proof}
The assertion immediately follows from the fact that
$\phi(\alpha)=CS(u_s)=CS(s)$ involves a subword $w_i$ whose $S$-sequence is
$(n \langle m+1 \rangle)$ with $n \ge 3$.
\end{proof}

The following lemma is a counterpart of Lemma~\ref{lem:case1-1(b)}.

\begin{lemma}
\label{lem:case3-1(b)}
Let $r=[m,1,n]$, where $m, n \ge 3$ are integers.
Under Hypothesis~B, the following hold for every $i$.
\begin{enumerate}[\indent \rm (1)]
\item $S(z_iy_{i+1}) \neq (m+2)$.

\item $S(w_{i,e}z_iy_{i+1}) \neq (m+2)$
and $S(z_iy_{i+1}w_{i+1,b}) \neq (m+2)$.
\end{enumerate}
\end{lemma}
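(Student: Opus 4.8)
The plan is to mimic the structure of the proof of Lemma~\ref{lem:case1-1(b)}, translating each estimate into the $S_1=(n\langle m+1\rangle)$, $S_2=(m)$ setting recorded in Remark~\ref{rem:(1)holds}(2). Recall from that remark that here $|w_i|$ is the length of a word whose $S$-sequence is $(n\langle m+1\rangle)$, that $S(w_{i,b})=S(w_{i,e})=(m+1)$, and that each of $S(z_{i,e})$, $S(y_{i,b})$ (when nonempty) is a single term $(\ell)$ with $1\le\ell\le m$, since $CS(r)=\lp n\langle m+1\rangle,m,n\langle m+1\rangle,m\rp$ has $m$ as its only non-$(m+1)$ entry. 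So for (1), if $S(z_iy_{i+1})=(m+2)$ then, after a cyclic shift of indices, we may assume $S(z_1y_2)=(m+2)$; since $z_1$ and $y_2$ have opposite signs when $z_1$ is nonempty, the length constraints $0\le|z_{1,e}|,|y_{2,b}|\le m$ force $|z_1|=0$ and $|y_2|=m+2$, which already contradicts $|y_{2,b}|\le m$ together with $S(y_2)$ being a sub-$S$-sequence of $S_1$ or $S_2$; more carefully one argues as in Lemma~\ref{lem:case1-1(b)}(1) that the only possibility is $|z_{1,e}|=k$, $|y_{2,b}|=m+2-k$, and then splits into the cases $J=M$ and $J\subsetneq M$.

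In the case $J=M$ one reads off from the analogue of Figure~\ref{fig.lemma-1-1}(a) that $CS(s')=CS(\phi(\delta^{-1}))$ acquires a term of size strictly between $1$ and $m$, coming from the reversed complementary word $z_{1,e}'y_{2,b}'$, while at the same time $S(w_1')=S_1=(n\langle m+1\rangle)$ forces $CS(s')$ to contain a term $\ge m+1$; since $n\ge 3$ the two terms differ by more than one, contradicting \cite[Lemma~3.8]{lee_sakuma_2} (which allows $CS(s')$ to consist of $\ell$ and $\ell+1$ only). In the case $J\subsetneq M$ one uses \cite[Lemma~3.1(1)]{lee_sakuma_3} exactly as in the proof of Lemma~\ref{lem:case1-1(b)}(1): since neither $S(\phi(e_1'))$ nor $S(\phi(e_2'))$ contains $S_1=(n\langle m+1\rangle)$ as a subsequence, the relevant endpoints of $e_2'$ and $e_3'$ land in the interior of the $S_1$-segments of $\partial D_1^-$ and $\partial D_2^-$, so $S(\phi(e_2'e_3'))$ contains a subsequence $(\ell_1,c,\ell_2)$ with $\ell_1,\ell_2\in\ZZ_+$ and $c$ a ``short'' term not equal to $m$; this short term would then have to occur in $CS(\phi(\partial D_1'))=CS(r)=\lp n\langle m+1\rangle,m,n\langle m+1\rangle,m\rp$, which is absurd.

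For (2), the argument is the same with $w_{i,e}$ or $w_{i,b}$ grafted onto the front or back. Suppose $S(w_{1,e}z_1y_2)=(m+2)$; since $S(w_{1,e})=(m+1)$ and $w_{1,e}$, $z_1$ have opposite signs when $z_1$ is nonempty, the only way to total $m+2$ is $|z_1|=0$, $|y_2|=1$, and $S(w_{1,e}y_{2,b})=(m+2)$ — i.e. we are reduced, as in Lemma~\ref{lem:case1-1(b)}(2), to a single short block $y_{2,b}$ of length $1$ abutting $w_{1,e}$. Then in the case $J=M$ the picture (the analogue of Figure~\ref{fig.lemma-1-2}(a)) shows $CS(s')$ contains both a short term and a term $\ge m+1$ arising from $S(w_1')=S_1$, again contradicting \cite[Lemma~3.8]{lee_sakuma_2} since $n\ge 3$; and in the case $J\subsetneq M$ one again gets, via \cite[Lemma~3.1(1)]{lee_sakuma_3}, that $S(\phi(e_2'e_3'))$ contains a short subsequence forcing an impossible term in $CS(r)$. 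The symmetric case $S(z_1y_2w_{2,b})=(m+2)$ is identical after reflecting the figure.

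The main obstacle, such as it is, is bookkeeping rather than conceptual: one must verify that the length/sign constraints truly pin down $|z_i|=0$ (so that the offending block really is a single short word adjacent to a $w$), and that in the $J=M$ subcase the short term produced in $CS(s')$ is genuinely bounded above by $m$ — hence, using $n\ge 3$, separated by at least two from the $(m+1)$-term — so that \cite[Lemma~3.8]{lee_sakuma_2} applies. Once these are checked the two cases $J=M$ and $J\subsetneq M$ run in complete parallel to Lemma~\ref{lem:case1-1(b)}(1)--(2), with $CS(r)=\lp n\langle m+1\rangle,m,n\langle m+1\rangle,m\rp$ playing the role that $\lp m+1,(n-1)\langle m\rangle,m+1,(n-1)\langle m\rangle\rp$ played there.
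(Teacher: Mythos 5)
Your proposal is correct in substance and follows exactly the route the paper intends: the paper's own proof of Lemma~\ref{lem:case3-1(b)} consists of the single sentence that (1) and (2) are ``parallel'' to Lemma~\ref{lem:case1-1(b)}(1) and (2), and your translation into the setting $S_1=(n\langle m+1\rangle)$, $S_2=(m)$ — reducing to $|z_{1,e}|=k$, $|y_{2,b}|=m+2-k$ in (1) and to $|z_1|=0$, $|y_2|=1$ in (2), then splitting into $J=M$ (where $CS(s')$ acquires a term $m-2$, resp.\ $m-1$, alongside a term $m+1$ from the interior of $S(w_1')=S_1$, violating \cite[Lemma~3.8]{lee_sakuma_2}) and $J\subsetneq M$ (where \cite[Lemma~3.1(1)]{lee_sakuma_3} forces a forbidden short term into $CS(r)$) — is exactly the intended argument. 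One sentence in your treatment of (1) is wrong and should be deleted: $z_1$ and $y_2$ do \emph{not} have opposite signs (that is the relation between $w_i$ and $z_i$), and the length constraints do not force $|z_1|=0$, $|y_2|=m+2$; on the contrary, for $S(z_1y_2)$ to be the single term $(m+2)$ the two blocks must have the \emph{same} sign and both be nonempty, which is precisely what produces the decomposition $k+(m+2-k)$ with $2\le k\le m$ that you then (correctly) analyze. With that sentence removed, the ``more careful'' argument you give is the paper's proof; note also that the separation needed for \cite[Lemma~3.8]{lee_sakuma_2} comes from $m\ge 3$ (so that $m-2\ge 1$ and $m+1-(m-2)=3$), while $n\ge 3$ is only used to guarantee that $S_1$ contributes a genuine interior term equal to $m+1$.
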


\begin{proof}
The proofs of (1) and (2) are parallel to those of
Lemma~\ref{lem:case1-1(b)}(1) and (2), respectively.
\end{proof}

\begin{lemma}
\label{lem:case3-1}
Let $r=[m,1,n]$, where $m, n \ge 3$ are integers.
Under Hypothesis~B, no term of $CS(s)$ can be of the form $m+2$.
\end{lemma}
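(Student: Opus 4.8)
The plan is to run the argument of the proof of Lemma~\ref{lem:case1-1}(2) essentially verbatim, with Lemma~\ref{lem:case3-1(b)} playing the role that Lemma~\ref{lem:case1-1(b)} played in the $r=[m,n]$ case. Suppose, for a contradiction, that $CS(s)=CS(u_s)=CS(\phi(\alpha))$ contains a term equal to $m+2$, and let $v$ be a subword of the cyclic word $(u_s)$ corresponding to that term, so that $v$ is a maximal constant-sign subword of $\phi(\alpha)\equiv y_1w_1z_1y_2w_2z_2\cdots y_tw_tz_t$ with $|v|=m+2$. The first observation is structural: by Remark~\ref{rem:(1)holds}(2) each $w_i$ has $S$-sequence $(n\langle m+1\rangle)$, so every maximal run of $\phi(\alpha)$ lying in the interior of some $w_i$ has length exactly $m+1$, and $w_{i,b}$, $w_{i,e}$ are such runs of length $m+1$. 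Since $m\ge 3$ and $n\ge 3$ force $n(m+1)>m+2$ and $2(m+1)>m+2$, the subword $v$ can neither be contained in the interior of a single $w_i$ nor contain a whole $w$-run; and because consecutive runs inside a given $w_i$ have opposite signs, $v$ cannot run from $w_{i,e}$ or $w_{i,b}$ into the interior of that same $w_i$.

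Consequently $v$ meets at most one of the ``connector regions'' $z_iy_{i+1}$, and, by a direct analysis of the decomposition $\phi(\alpha)\equiv y_1w_1z_1\cdots y_tw_tz_t$ parallel to the proof of Lemma~\ref{lem:case1-1(a)} (using $0\le|z_i|,|y_{i+1}|\le m$ and $S(w_{i,b})=S(w_{i,e})=(m+1)$), one concludes that, after a cyclic shift of indices, $v$ is one of
\[
z_iy_{i+1},\qquad w_{i,e}z_iy_{i+1},\qquad z_iy_{i+1}w_{i+1,b},
\]
subject to the usual convention that the displayed intermediate subwords are empty. In the first case $|z_i|,|y_{i+1}|\ne 0$ (each being $\le m<m+2$) and $S(z_iy_{i+1})=(m+2)$ contradicts Lemma~\ref{lem:case3-1(b)}(1); in the remaining two cases $S(w_{i,e}z_iy_{i+1})=(m+2)$ or $S(z_iy_{i+1}w_{i+1,b})=(m+2)$, contradicting Lemma~\ref{lem:case3-1(b)}(2). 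This contradiction will prove the lemma.

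The only place that genuinely requires care is the case analysis for the location of $v$, and in particular the degenerate configurations in which some connector region $z_iy_{i+1}$ is empty: there one must check that $w_{i,e}$ and $w_{i+1,b}$ either carry opposite signs (so $v$ is just a single run of length $m+1$) or the same sign (so $v$ would be a run of length $2(m+1)\ne m+2$), and likewise that $v$ cannot spill from $w_{i,e}$ across an empty connector into the interior of $w_{i+1}$, since the runs of $w_{i+1}$ alternate in sign. This sign bookkeeping is identical in spirit to that in the $r=[m,n]$ case, so I do not expect a real obstacle; the substance of the statement is already concentrated in Lemma~\ref{lem:case3-1(b)}.
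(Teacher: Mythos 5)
Your proposal is correct and follows essentially the same route as the paper: reduce to the three possible positions of the subword $v$ relative to the decomposition $y_1w_1z_1\cdots y_tw_tz_t$ (the connector alone, or the connector together with $w_{i,e}$ or $w_{i+1,b}$), then invoke Lemma~\ref{lem:case3-1(b)}(1) for the first case and Lemma~\ref{lem:case3-1(b)}(2) for the other two. The extra sign-bookkeeping you flag is genuinely routine here because, by Remark~\ref{rem:(1)holds}(2), each of $y_i$ and $z_i$ is a single run of length at most $m$, so the case list closes exactly as you describe.
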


\begin{proof}
Suppose on the contrary that $CS(s)$ contains a term of the form $m+2$.
Let $v$ be a subword of the cyclic word $(u_s)$
corresponding to a term $m+2$.
Without loss of generality, we may assume that
\begin{enumerate}[\indent \rm (i)]
\item $v=z_0y_1$ with $|z_0|, |y_1| \neq 0$;

\item $v=z_0w_{1, b}$ with $|z_0| \neq 0$; or

\item $v=w_{1, e}y_2$ with $|y_2| \neq 0$.
\end{enumerate}
However, (i) is impossible by Lemma~\ref{lem:case3-1(b)}(1),
and (ii) and (iii) are impossible by Lemma~\ref{lem:case3-1(b)}(2).
\end{proof}

\begin{corollary}
\label{cor:case2-1}
Let $r=[m,1,n]$, where $m, n \ge 3$ are integers.
Under Hypothesis~B,
either $CS(s)=\lp m+1, m+1 \rp$ or $CS(s)$ consists of $m$ and $m+1$.
\end{corollary}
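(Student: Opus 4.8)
The plan is to follow the pattern of the proof of Corollary~\ref{cor:case1-1}, but the argument is considerably shorter here because Lemma~\ref{lem:case3-1_easy} already tells us outright that $m+1$ occurs as a term of $CS(s)$. First I would invoke \cite[Lemma~3.8]{lee_sakuma_2}, which asserts that either $CS(s)=\lp \ell, \ell \rp$ for some $\ell\in\ZZ_+$, or else $CS(s)$ consists of $\ell$ and $\ell+1$ for some $\ell\in\ZZ_+$. I will dispose of these two cases in turn, in each case using only the two established lemmas of this subsection for $r=[m,1,n]$.

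In the first case, all terms of $CS(s)$ equal $\ell$; since $m+1$ is one of them by Lemma~\ref{lem:case3-1_easy}, we get $\ell=m+1$, hence $CS(s)=\lp m+1, m+1 \rp$, which is one of the two permitted conclusions. In the second case, every term of $CS(s)$ equals $\ell$ or $\ell+1$; as $m+1$ is a term and $m\ge 3$, the only options are $\ell=m$ or $\ell=m+1$. If $\ell=m$, then every term is $m$ or $m+1$, i.e.\ $CS(s)$ consists of $m$ and $m+1$ (or, in the degenerate situation where only $m+1$ appears, $CS(s)=\lp m+1,m+1 \rp$), again as desired. If instead $\ell=m+1$, then every term of $CS(s)$ equals $m+1$ or $m+2$; but Lemma~\ref{lem:case3-1} rules out the term $m+2$, so every term equals $m+1$ and $CS(s)=\lp m+1,m+1 \rp$. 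Since these cases are exhaustive, the corollary follows.

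I do not expect a genuine obstacle here: unlike the $[m,n]$ situation of Corollary~\ref{cor:case1-1}, there is no exceptional large value such as $2m$ that needs separate handling, because the mere presence of the term $m+1$ (Lemma~\ref{lem:case3-1_easy}) together with \cite[Lemma~3.8]{lee_sakuma_2} already forces all terms into $\{m,m+1,m+2\}$. The only point demanding a moment's care is the subcase $\ell=m+1$ of the second case, where one must remember to apply Lemma~\ref{lem:case3-1} to collapse the possibility $\{m+1,m+2\}$ back to $\lp m+1,m+1 \rp$; everything else is immediate.
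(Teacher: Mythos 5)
Your proposal is correct and follows essentially the same route as the paper: the paper's proof likewise combines Lemma~\ref{lem:case3-1_easy} (the term $m+1$ occurs), Lemma~\ref{lem:case3-1} (no term $m+2$), and \cite[Lemma~3.8]{lee_sakuma_2} to reach the conclusion. Your write-up merely makes the resulting case analysis on $\ell$ explicit, which the paper leaves to the reader.
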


\begin{proof}
By Lemma~\ref{lem:case3-1_easy}, $CS(s)$ contains a term $m+1$.
Also by Lemma~\ref{lem:case3-1}, $CS(s)$ does not contain a term $m+2$.
Hence, by \cite[Lemma~3.8]{lee_sakuma_2},
we obtain the desired result.
\end{proof}

\subsection{The case when Hypothesis~C holds}

We next assume that Hypothesis~C holds. We also begin with the following
remark before introducing two technical lemmas concerning the sequence
$S(z_iy_{i+1})$ accordingly as $r=[m, n]$ and $r=[m, 1, n]$.

\begin{remark}
\label{rem:(2)holds}
{\rm
(1) If $r=[m, n]$, where $m, n \ge 3$ are integers,
then $CS(r)=\lp m+1, (n-1) \langle m \rangle, m+1, (n-1) \langle m \rangle \rp$,
where $S_1=(m+1)$ and $S_2=((n-1) \langle m \rangle)$.
So, in Hypothesis~C,
both $S(\phi(\partial D_i^+))$ and $S(\phi(\partial D_i^-))$
are exactly of the form $(\ell_1, (n-1) \langle m \rangle, \ell_2)$,
where $1 \le \ell_1, \ell_2 \le m$ are integers.

(2) If $r=[m, 1, n]$, where $m, n \ge 3$ are integers,
then $CS(r)=\lp n \langle m+1 \rangle, m, n \langle m+1 \rangle, m \rp$,
where $S_1=(n \langle m+1 \rangle)$ and $S_2=(m)$.
So, in Hypothesis~C,
both $S(\phi(\partial D_i^+))$ and $S(\phi(\partial D_i^-))$
are exactly of the form
$(\ell_1, n_1 \langle m+1 \rangle, m, n_2 \langle m+1 \rangle, \ell_2)$,
where $0 \le \ell_1, \ell_2 \le m$ and $0 \le n_1, n_2 \le n-1$ are integers
such that a pair $(\ell_j, n_j)$ cannot be $(0, 0)$ for $j=1,2$.
In particular, $S(y_{i,b})=(\ell)$ with $1 \le \ell \le m+1$.
The same is true for $S(z_{i,e})$, $S(y_{i,b}')$ and $S(z_{i,e}')$.
}
\end{remark}

\begin{lemma}
\label{lem:case1-2}
Let $r=[m,n]$, where $m, n \ge 3$ are integers.
Under Hypothesis~C, the following hold for every $i$.
\begin{enumerate}[\indent \rm (1)]
\item
$S(z_iy_{i+1})$ is not equal to $(m-1)$ nor $ (m)$.

\item $S(z_iy_{i+1})$ is not equal to $(m-1, m)$,
$(m, m-1)$ nor $(m, m)$.
\end{enumerate}
\end{lemma}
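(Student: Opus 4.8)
The plan is to mimic the proofs of Lemma~\ref{lem:case1-1(b)} and Lemma~\ref{lem:case3-1(b)}, since Lemma~\ref{lem:case1-2} is the analogue under Hypothesis~C of those contradiction-type results: in each case one assumes the forbidden $S$-sequence occurs, draws the resulting local picture of $J$ following Convention~\ref{con:figure}, and derives a contradiction by splitting into the cases $J=M$ and $J \subsetneq M$. The key structural input is Remark~\ref{rem:(2)holds}(1): under Hypothesis~C, $CS(r)=\lp m+1,(n-1)\langle m\rangle, m+1,(n-1)\langle m\rangle \rp$ with $S_1=(m+1)$, $S_2=((n-1)\langle m\rangle)$, and both $S(\phi(\partial D_i^{\pm}))$ have the form $(\ell_1,(n-1)\langle m\rangle,\ell_2)$ with $1\le \ell_1,\ell_2\le m$; in particular each $S(z_{i,e})$, $S(y_{i,b})$, $S(z_{i,e}')$, $S(y_{i,b}')$ is a single term $(\ell)$ with $1\le\ell\le m$. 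I would also record at the outset the precise constraints $0\le |z_{i,e}|,|y_{i+1,b}|\le m$ (from the $S$-sequence form) and that $z_i$ and $y_{i+1}$, when both nonempty, lie on segments whose $S$-sequences are pieces of $(n-1)\langle m\rangle$, so $|z_i|,|y_{i+1}|$ are each at most $m$ as well.

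For part (1): suppose $S(z_1y_{2})=(m-1)$ or $(m)$; since the pieces making up $z_1$ and $y_2$ each contribute terms that are multiples of $m$ except possibly at the ends, and since $|z_{1,e}|,|y_{2,b}|\le m$, one checks that the only way to get a single short term $m-1$ or $m$ is with $|z_1|+|y_2|=m-1$ or $m$ (so $z_1=z_{1,e}$ and $y_2=y_{2,b}$, no full $\langle m\rangle$ block is swallowed). Now consider the companion picture on the primed side. If $J=M$, reading off Figure-style data as in Lemma~\ref{lem:case1-1(b)}(1), one finds that $CS(s')$ contains a short term (of size $\le m$, coming from $z_{1,e}'y_{2,b}'$, whose lengths are $m-|z_{1,e}|$ and $m-|y_{2,b}|$ or similar) together with a term coming from $w_1'$ whose $S$-sequence contains $S_1=(m+1)$, hence a term $\ge m+1$; comparing with \cite[Lemma~3.8]{lee_sakuma_2} (which forces $CS(s')$ to be $\lp\ell,\ell\rp$ or to consist of $\ell$ and $\ell+1$) gives a contradiction once the short and long terms differ by more than $1$. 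If $J\subsetneq M$, then using \cite[Lemma~3.1(1)]{lee_sakuma_3} (that $S(\phi(e_i'))$ does not contain $S_1=(m+1)$ as a subsequence) exactly as in the earlier proofs, the endpoints of the relevant edges $e_2',e_3'$ lie in the interiors of the $S_1$-segments, forcing $S(\phi(e_2'e_3'))$ to contain a subsequence $(\ell_1, c, \ell_2)$ with $\ell_1,\ell_2\in\ZZ_+$ and $c$ the offending short value; this then makes $c$ appear as a term of $CS(\phi(\partial D_1'))=CS(r)$, which is impossible since every term of $CS(r)$ is $m$ or $m+1$ and $c\in\{m-1\}$ (for the $(m-1)$ case) — and for the $(m)$ case one instead contradicts the multiplicity/pattern of $m$'s in $CS(r)$, i.e.\ that $m$'s come only in blocks of length $n-1$ separated by $m+1$'s.

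For part (2): suppose $S(z_1y_2)$ equals $(m-1,m)$, $(m,m-1)$, or $(m,m)$. Here both terms are short so $z_1y_2$ spans a piece of $\partial D$ containing at most a tiny bit of $\langle m\rangle$ structure; again conclude $z_1=z_{1,e}$, $y_2=y_{2,b}$ and bound the total length. Run the same $J=M$ versus $J\subsetneq M$ dichotomy: in the $J=M$ case one exhibits in $CS(s')$ a term that is too small or a pair that breaks the ``$\ell,\ell$ or $\ell,\ell+1$'' dichotomy of \cite[Lemma~3.8]{lee_sakuma_2}; in the $J\subsetneq M$ case one forces $CS(r)$ to contain a forbidden subsequence such as $(m-1,m)$ or two $m$'s not part of an $(n-1)\langle m\rangle$ block flanked correctly, contradicting Remark~\ref{rem:(2)holds}(1). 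I expect the main obstacle to be the bookkeeping for the $J\subsetneq M$ subcase — carefully justifying, via \cite[Lemma~3.1(1)]{lee_sakuma_3}, that the initial/terminal vertices of $e_2',e_3'$ land in the interiors of the $S_1=(m+1)$ segments (rather than an $S_2$ segment), so that the concatenated $S$-sequence really does inject the forbidden pattern into $CS(r)$; once that placement is pinned down, the contradiction with the explicit form of $CS(r)$ is immediate. The rest is a routine adaptation of the length estimates already carried out for Lemma~\ref{lem:case1-1(b)}.
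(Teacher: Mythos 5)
Your overall skeleton (assume the forbidden $S$-sequence, split into $J=M$ and $J\subsetneq M$, contradict either \cite[Lemma~3.8]{lee_sakuma_2} or the explicit form of $CS(r)$) is the paper's strategy, but three of your concrete mechanisms are wrong, and one needed step is missing entirely.

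First, you have the Hypothesis~B/C dichotomy backwards. Under Hypothesis~C the words $w_i$ satisfy $S(w_i)=S_2=((n-1)\langle m\rangle)$, not $S_1=(m+1)$; consequently the relevant auxiliary fact is \cite[Lemma~3.1(2)]{lee_sakuma_3} (no $S(\phi(e_j'))$ contains $S_2$ in its interior), and in the $J\subsetneq M$ case the initial vertex of $e_2'$ and terminal vertex of $e_3'$ are pinned to the \emph{central $S_2$-segments} of $\partial D_1^-$ and $\partial D_2^-$, not to $S_1$-segments. The paper itself warns (in Case~2.c of Section~5) that one must appeal to Lemma~3.1(2) here rather than 3.1(1). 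Second, your length bookkeeping is off: since $S(\phi(\partial D_i^{\pm}))=(\ell_1,(n-1)\langle m\rangle,\ell_2)$ with $1\le\ell_j\le m$ and the two sides together fill the term $m+1$ of $CS(r)$, one has $|z_i|+|z_i'|=m+1$ and $|y_{i+1}|+|y_{i+1}'|=m+1$. So in part (1), if $S(z_1y_2)=(m)$ with $|z_1|=d$, the primed side gives $S(z_1'y_2')=(m+2)$ — a term that is too \emph{large} — while $w_1'$ contributes a term $m$ (since $S(w_1')=S_2$); the $J=M$ contradiction is ``$m$ and $m+2$'', and the $J\subsetneq M$ contradiction is that $CS(r)$ acquires a term $m+2+c$. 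Your description (a short term $\le m$ from the primed side against a term $\ge m+1$ from $w_1'$ containing $S_1$) does not match and would not close the argument.

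Third, and most seriously, your treatment of part (2) in the $J\subsetneq M$ case has a gap. For $S(z_1y_2)=(m-1,m)$ the primed side gives lengths $|z_1'|=2$, $|y_2'|=1$, and the direct ``inject a forbidden term into $CS(r)$'' move only rules out the configurations where $|\phi(e_2')|\ne 2$ or $|\phi(e_3')|\ne 1$. In the surviving configuration $|\phi(e_2')|=2$, $|\phi(e_3')|=1$, no illegal term appears in $CS(\phi(\partial D_1'))$ at that vertex; the paper must then argue about the \emph{rest} of the boundary cycle of $D_1'$, showing $\phi(e_2''e_3'')$ would contain a subword whose $S$-sequence has $(S_1,S_2)$ as a proper initial subsequence or $(S_2,S_1)$ as a proper terminal subsequence, contradicting \cite[Corollary~3.25(2)]{lee_sakuma_2}. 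Your proposal contains no analogue of this step, and without it the $(m-1,m)$ and $(m,m-1)$ cases do not close.
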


\begin{proof}
(1) Suppose on the contrary that $S(z_1y_2)=(m)$.
(The other case is treated analogously.)
Since $|z_1|,|y_2|>0$, we have
$|z_1|=d$ and $|y_2|=m-d$ for some $1 \le d \le m-1$.
Here, if $J=M$ (see Figure~\ref{fig.II-lemma-1-1}(a)),
then $CS(\phi(\delta^{-1}))=CS(s')$ contains both a term $m$ and
a term $m+2$, contradicting \cite[Lemma~3.8]{lee_sakuma_2}.
On the other hand, let $J \subsetneq M$.
Since none of $S(\phi(e_j'))$ contains $S_2$ in its interior
(see \cite[Lemma~3.1(2)]{lee_sakuma_3}),
wee see that the initial vertex of $e_2'$ lies in
the (central) segment of $\partial D_1^-$ corresponding to $S_2=((n-1) \langle m \rangle)$
and that the terminal vertex of $e_3'$ lies in
the (central) segment of $\partial D_2^-$ corresponding to $S_2=((n-1) \langle m \rangle)$.
Thus we see that $CS(\phi(\partial D_1'))=CS(r)$
contains a term of the form
$m+2+c$ with $c \in \ZZ_+ \cup \{0\}$,
as illustrated in Figure~\ref{fig.II-lemma-1-1}(b), a contradiction.

\begin{figure}[h]
\includegraphics{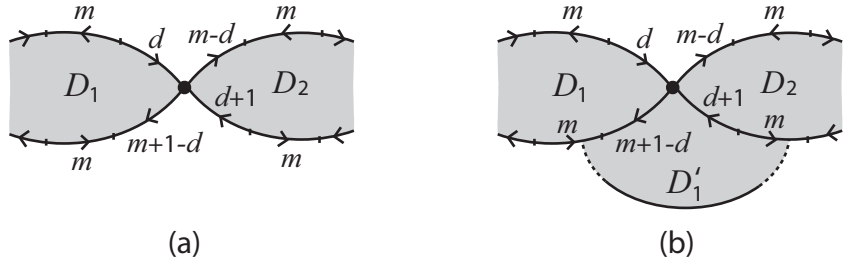}
\caption{
Lemma~\ref{lem:case1-2}(1) where $S(z_1y_2)=(d+(m-d))$}
\label{fig.II-lemma-1-1}
\end{figure}

(2) Suppose on the contrary that $S(z_1y_2)=(m-1, m)$.
(The other cases are treated similarly.)
Then $|z_1|=m-1$ and $|y_2|=m$.
Here, if $J=M$ (see Figure~\ref{fig.II-lemma-1-2}(a)),
then $CS(\phi (\delta^{-1}))=CS(s')$
contains both a term $1$ and a term $m$,
contradicting \cite[Lemma~3.8]{lee_sakuma_2}.
On the other hand, if $J \subsetneq M$
(see Figure~\ref{fig.II-lemma-1-2}(b)),
then $|\phi(e_2')|=2$ and $|\phi(e_3')|=1$,
for otherwise
we see, by using \cite[Lemma~3.1(2)]{lee_sakuma_3}
as in the proof of (1), that a subsequence of the form $(\ell_1, 1, \ell_2)$
or of the form $(\ell_1, 2, \ell_2)$ with $\ell_1, \ell_2 \in \ZZ_+$
would occur in $S(\phi(e_2'e_3'))$ which implies that
$CS(\phi(\partial D_1'))=CS(r)$ would contain a term $1$
or a term $2$, a contradiction.

Assuming that $e_2', e_3', {e_3''}^{-1}, {e_2''}^{-1}$ is
a boundary cycle of $D_1'$,
this implies that $\phi(e_2''e_3'')$ contains a subword $w$
such that $S(w)$ contains
$(S_1,S_2)=(m+1, (n-1) \langle m \rangle)$
as a proper initial subsequence or
$(S_2,S_1)=((n-1) \langle m \rangle,m+1)$ as
a proper terminal subsequence.
But this is impossible by \cite[Corollary~3.25(2)]{lee_sakuma_2}.
\end{proof}

\begin{figure}[h]
\includegraphics{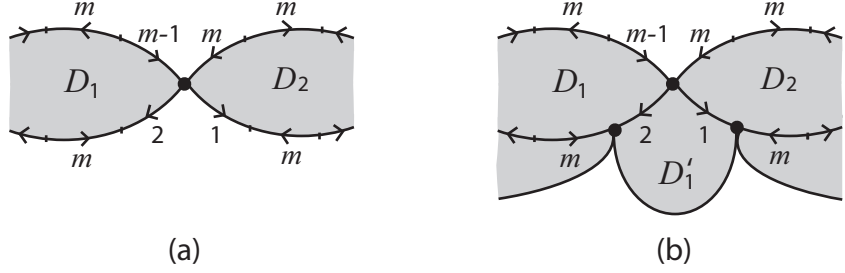}
\caption{
Lemma~\ref{lem:case1-2}(2) where $S(z_1y_2)=(m-1, m)$}
\label{fig.II-lemma-1-2}
\end{figure}

\begin{lemma}
\label{lem:case3-2}
Let $r=[m,1,n]$, where $m, n \ge 3$ are integers.
Under Hypothesis~C, the following hold for every $i$.
\begin{enumerate}[\indent \rm (1)]
\item $S(z_{i,e}y_{i+1,b})$ is not equal to $(m-1)$ nor $(m)$.

\item $S(z_{i,e}y_{i+1,b})$ is not equal to $(m-1, m)$, $(m, m-1)$ nor
$(m, m)$.

\item $S(z_{i,e}y_{i+1,b})$ is not equal to $(m-1, m-1)$.

\item $S(z_{i,e}y_{i+1,b})$ is not equal to $(m, m+1)$ nor $(m+1, m)$.
\end{enumerate}
\end{lemma}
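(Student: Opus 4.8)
The plan is to prove all four assertions by contradiction, running the argument in parallel with the proof of Lemma~\ref{lem:case1-2}, the only systematic change being that the word $z_iy_{i+1}$ there is replaced by the junction word $z_{i,e}y_{i+1,b}$. Parts (1) and (2) can in fact be obtained by arguments parallel to those of Lemma~\ref{lem:case1-2}(1) and (2), so the substance is in (3) and (4). In all cases I would start from the structural data of Remark~\ref{rem:(2)holds}(2): $S(z_{i,e})$ and $S(y_{i+1,b})$ are single terms of size at most $m+1$, so $S(z_{i,e}y_{i+1,b})$ has one or two terms and changes sign at most once, precisely when $z_{i,e}$ and $y_{i+1,b}$ have opposite signs. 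Assuming $S(z_{1,e}y_{2,b})$ equals one of the forbidden sequences, I would use this (together with the analogues of the length arguments in Lemma~\ref{lem:case1-2} which force the relevant pieces to be nonempty) to pin down $|z_{1,e}|$ and $|y_{2,b}|$, and then, via the duality between a word and its reflected partner, the lengths $|z_{1,e}'|$ and $|y_{2,b}'|$. In the two-term cases this forces $z_{1,e}$ and $y_{2,b}$ to run across a full $S_1=(n\langle m+1\rangle)$-block or $S_2=(m)$-block of $CS(r)=\lp n\langle m+1\rangle, m, n\langle m+1\rangle, m\rp$, which is what makes the reflected picture rigid.

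Next I would split into the two standard alternatives $J=M$ and $J\subsetneq M$. When $J=M$, one reads $CS(s')$ directly off the reflected picture; in each situation the reflected junction together with a neighbouring block of $CS(r)$ contributes to $CS(s')$ either two terms differing by at least $2$ (for (1) and (3) a term $\le m-1$ beside a term $m+1$; for (2) a term $1$ beside a term $m$ or $m+1$; for (4) a short term beside a term $m+1$, or a run of $m+1$'s longer than $n$), contradicting \cite[Lemma~3.8]{lee_sakuma_2}; in the one borderline subcase where only a term $m+1$ survives alongside a configuration forcing $s'\notin I_1(r)\cup I_2(r)$, one invokes \cite[Proposition~3.19(1)]{lee_sakuma_2} exactly as in the proof of Lemma~\ref{lem:case1-1(b)}(5). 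When $J\subsetneq M$, I would argue as in Lemma~\ref{lem:case1-2}: since by \cite[Lemma~3.1(2)]{lee_sakuma_3} none of the $S(\phi(e_j'))$ contains $S_2=(m)$ in its interior, the initial vertex of $e_2'$ and the terminal vertex of $e_3'$ are pinned into the interiors of the $S_2$-segments of $\partial D_1^-$ and $\partial D_2^-$; consequently $S(\phi(e_2'e_3'))$ contains a subsequence $(\ell_1, a, \ell_2)$ with $\ell_1,\ell_2\in\ZZ_+$ and $a\notin\{m,m+1\}$ (in cases (1)--(3)), or a subsequence of the form $(\ell_1, 2(n\langle m+1\rangle), \ell_2)$ (in case (4)). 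Since $CS(\phi(\partial D_1'))=CS(r)$ contains neither a term outside $\{m,m+1\}$ nor a run of $m+1$'s of length exceeding $n$, this is a contradiction.

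Finally, as in the proof of Lemma~\ref{lem:case1-2}(2), some of the cases will require a degenerate subcase of $J\subsetneq M$ in which $\phi(e_2')$ and $\phi(e_3')$ are too short to force the subsequences above; there one instead considers the boundary cycle $e_2', e_3', {e_3''}^{-1}, {e_2''}^{-1}$ of $D_1'$ and shows that $\phi(e_2''e_3'')$ would contain a subword $w$ whose $S$-sequence has $(S_1,S_2)$ as a proper initial, or $(S_2,S_1)$ as a proper terminal, subsequence, contradicting \cite[Corollary~3.25(2)]{lee_sakuma_2}. The part I expect to demand the most care is (4): here $m+1$ is a genuine term of $CS(r)$, so no contradiction can come merely from the appearance of the value $m+1$, and one must instead keep careful track of how long a run of $m+1$'s the forbidden junction forces on the reflected side --- showing in particular that $S(z_{1,e}y_{2,b})=(m,m+1)$ makes the $(m+1)$-part a complete $S_1$-block, so that the adjacent block contributes further $m+1$'s and the total run exceeds the length-$n$ runs allowed by $CS(r)$ (or by $CS(s')$ under \cite[Lemma~3.8]{lee_sakuma_2}); the case $S(z_{1,e}y_{2,b})=(m+1,m)$ is symmetric.
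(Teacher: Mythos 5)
Your treatment of (1)--(3) matches the paper's: (1) and (2) are indeed obtained by running the proofs of Lemma~\ref{lem:case1-2}(1) and (2) with $z_iy_{i+1}$ replaced by $z_{i,e}y_{i+1,b}$, and for (3) the paper likewise first forces $z_1=z_{1,e}$, $y_2=y_{2,b}$ (via the observation that otherwise $CS(s)$ would contain both $m-1$ and $m+1$), then reads off a term $2$ next to a term $m+1$ in $CS(s')$ when $J=M$, and a term $2$ in $CS(r)$ via the $S_2$-pinning of the vertices of $e_2'$ and $e_3'$ when $J\subsetneq M$. So far this is the same argument.

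The gap is in (4), precisely the part you flag as delicate, and your proposed mechanism there is not the right one. You assert that ``no contradiction can come merely from the appearance of a value'' and that one must instead exhibit a run of more than $n$ consecutive $m+1$'s, on the grounds that $|y_{2,b}|=m+1$ makes the $(m+1)$-part ``a complete $S_1$-block.'' But $S_1=(n\langle m+1\rangle)$ is a block of $n$ terms, and the single term $m+1$ contributed by $y_{2,b}$ is only one entry of it; nothing forces $2n$ consecutive $m+1$'s to appear anywhere, and indeed a long run of $m+1$'s in $CS(s')$ would not by itself contradict \cite[Lemma~3.8]{lee_sakuma_2} anyway. What actually happens is value-based after all, just with a different value: since $|z_{1,e}|=m$ and $|y_{2,b}|=m+1$, the complementary syllables on the reflected side merge into a single syllable of length $m+2$, so that when $J=M$ the cyclic sequence $CS(s')$ contains both a term $m$ and a term $m+2$ (contradicting \cite[Lemma~3.8]{lee_sakuma_2}), and when $J\subsetneq M$ the $S_2$-pinning argument puts a term of the form $m+2+c$, $c\in\ZZ_+\cup\{0\}$, into $CS(\phi(\partial D_1'))=CS(r)$, which is impossible since $CS(r)$ consists of $m$ and $m+1$. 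Your proposal never produces this term $m+2$, and without it case (4) is not closed.
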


\begin{proof}
The proofs of (1) and (2) are
parallel to those of Lemma~\ref{lem:case1-2}(1) and (2),
respectively.

(3) Suppose on the contrary that $S(z_{1,e}y_{2,b})=(m-1, m-1)$.
Then $|z_{1,e}|=|y_{2,b}|=m-1$.
Moreover, we must have $z_1=z_{1,e}$ and $y_2=y_{2,b}$.
To see this, suppose $z_1\ne z_{1,e}$. (The other case is treated similarly.)
Then, since $S_1=(n\langle m+1\rangle)$,
we see that $S(w_1z_1)$ is of the form $(S_2,m+1,*)$, where $*$ is nonempty.
So, $CS(\phi(\alpha))=CS(s)$ contains a term $m+1$.
This is a contradiction to \cite[Lemma~3.8]{lee_sakuma_2},
because $CS(\phi(\alpha))=CS(s)$ also contains $m-1$ by the assumption.

Here, if $J=M$ (see Figure~\ref{fig.II-lemma-3-3}(a)),
then $CS(\phi(\delta^{-1}))=CS(s')$ involves both a term $2$ and a term $m+1$.
Since $m+1 \ge 4$, we obtain a contradiction to \cite[Lemma~3.8]{lee_sakuma_2}.
On the other hand, if $J \subsetneq M$,
then by \cite[Lemma~3.1(2)]{lee_sakuma_3}
the initial vertex of $e_2'$ lies in
the segment of $\partial D_1^-$ corresponding to $S_2=(m)$
and that the terminal vertex of $e_3'$ lies in
the segment of $\partial D_2^-$ corresponding to $S_2=(m)$.
This implies that a subsequence of the form $(\ell_1, 2, \ell_2)$
with $\ell_1, \ell_2 \in \ZZ_+$
occurs in $S(\phi(e_2'e_3'))$ (see Figure~\ref{fig.II-lemma-3-3}(b)),
so in $CS(\phi(\partial D_1'))=CS(r)$ contains a term $2$,
a contradiction.

\begin{figure}[h]
\includegraphics{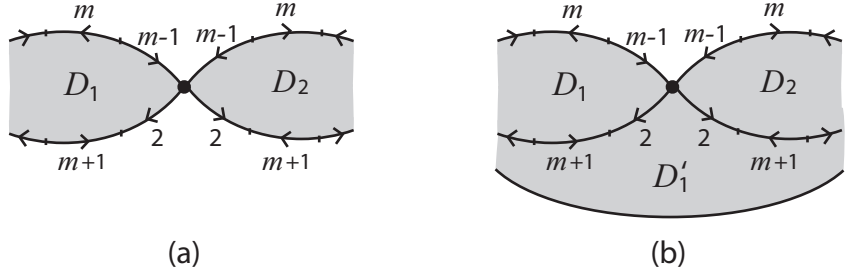}
\caption{
Lemma~\ref{lem:case3-2}(3) where $S(z_{1,e}y_{2,b})=(m-1, m-1)$}
\label{fig.II-lemma-3-3}
\end{figure}

(4) Suppose on the contrary that $S(z_{1,e}y_{2,b})=(m, m+1)$.
(The other case is treated analogously.)
Then $|z_{1,e}|=m$ and $|y_{2,b}|=m+1$.
Here, if $J=M$ (see Figure~\ref{fig.II-lemma-3-4}),
then $CS(\phi(\delta^{-1}))=CS(s')$ involves both a term $m$ and a term $m+2$,
contradicting \cite[Lemma~3.8]{lee_sakuma_2}.
On the other hand, if $J \subsetneq M$,
then we see, by using \cite[Lemma~3.1(2)]{lee_sakuma_3}
as in the proof of (3),
that a term of the form $m+2+c$ with $c \in \ZZ_+ \cup \{0\}$
occurs in $S(\phi(e_2'e_3'))$,
so in $CS(\phi(\partial D_1'))=CS(r)$ contains a term $m+2+c$,
a contradiction.
\end{proof}

\begin{figure}[h]
\includegraphics{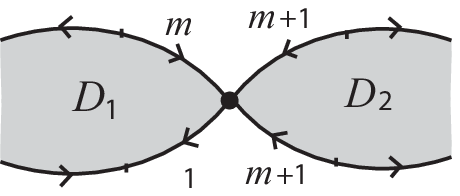}
\caption{
Lemma~\ref{lem:case3-2}(4) where $S(z_{1,e}y_{2,b})=(m, m+1)$}
\label{fig.II-lemma-3-4}
\end{figure}

\section{Proof of Main Theorem~\ref{main_theorem} for $r=[m,n]$
with $m, n\ge 3$}
\label{sec:proof_of_main_theorem_1}

Suppose $r=[m,n]$, where $m, n \ge 3$ are integers.
For two distinct elements $s, s' \in I_1(r) \cup I_2(r)$,
suppose on the contrary that the unoriented loops
$\alpha_s$ and $\alpha_{s'}$ are homotopic in $S^3-K(r)$,
namely suppose that Hypothesis~A holds.
We will derive a contradiction in each case to consider.
By \cite[Lemma~3.3]{lee_sakuma_3}, there are two big cases
to consider.

\medskip
\noindent
{\bf Case 1.} {\it Hypothesis~B holds.}
\medskip

By Corollary~\ref{cor:case1-1},
$CS(s)$ consists of $m$ and $m+1$.
Without loss of generality,
we may assume that a term $m$ occurs in $S(z_1y_2)$.
There are three possibilities:
\begin{enumerate}[\indent \rm (i)]
\item $S(z_1y_2)$ consists of only $m$, where
$S(z_1)=(n_1\langle m \rangle)$,
$S(y_2)=(n_2\langle m \rangle)$,
and $S(z_1y_2)=((n_1+n_2)\langle m \rangle)$
with $n_1,n_2\in\ZZ_+\cup\{0\}$;

\item $S(z_1y_2)$ consists of only $m$, where
$S(z_1)=(n_1\langle m \rangle,d)$,
$S(y_2)=(m-d,n_2\langle m \rangle)$,
and $S(z_1y_2)=((n_1+n_2+1)\langle m \rangle)$
with $n_1,n_2\in\ZZ_+\cup\{0\}$ and $d \in \{1, 2, \dots, m-1\}$;

\item $S(z_1y_2)$ consists of $m$ and $m+1$.
\end{enumerate}

First assume that (i) occurs.
Then $S(z_1'y_2')=((n_1'+n_2')\langle m \rangle)$
where $n_1'=(n-1)-n_1$ and $n_2'=(n-1)-n_2$.
So $n_1'+n_2'=2(n-1)-(n_1+n_2)$ and hence
either $S(z_1y_2)$ or $S(z_1'y_2')$ contains
$n-1$ consecutive $m$'s.
If $J=M$, then this implies that either $s \notin I_1(r) \cup I_2(r)$
or $s' \notin I_1(r) \cup I_2(r)$ by \cite[Proposition~3.19(1)]{lee_sakuma_2},
contradicting the hypothesis of the theorem.
On the other hand, if $J \subsetneq M$, then
the above observation implies that
either $S(z_1y_2)$
contains $n-1$ consecutive $m$'s and so $s \notin I_1(r) \cup I_2(r)$,
or otherwise $S(z_1'y_2')$ contains $n$ consecutive $m$'s.
The former case is impossible by the assumption.
In the latter case,
we see, by an argument using \cite[Lemma~3.1(1)]{lee_sakuma_3}
as in the last step of the proof of Lemma~\ref{lem:case1-1(b)}(1),
that a subsequence of the form $(\ell_1, n \langle m \rangle, \ell_2)$
with $\ell_1, \ell_2 \in \ZZ_+$
occurs in $S(\phi(e_2'e_3'))$, so in $CS(\phi(\partial D_1'))=CS(r)$,
a contradiction.
Next assume that (ii) occurs.
Then $S(z_1'y_2')=((n_1'+n_2'+1)\langle m \rangle)$,
where $n_1'=(n-2)-n_1$ and $n_2'=(n-2)-n_2$.
By using the identity
$n_1'+n_2'+1=2(n-1)-(n_1+n_2+1)$,
this case is treated as in the case when (i) occurs.
Finally assume that (iii) occurs.
Then we must have $S(z_{1, e}y_{2, b})=(m+1)$,
but this is a contradiction to Lemma~\ref{lem:case1-1(b)}(1).

\medskip
\noindent
{\bf Case 2.} {\it Hypothesis~C holds.}
\medskip

By Remark~\ref{rem:(2)holds}(1),
$CS(s)$ contains $((n-1)\langle m\rangle)$ as a proper subsequence.
Thus \cite[Lemma~3.8]{lee_sakuma_2} implies that
$CS(s)$ consists of $\{m-1,m\}$ or $\{m,m+1\}$.
Moreover, since $n\ge 3$, this together with \cite[Lemma~3.8]{lee_sakuma_2}
implies that $CS(s)$ does not contain
$(m-1,m-1)$ nor $(m+1,m+1)$ as a subsequence.

\medskip
\noindent
{\bf Case 2.a.}
{\it $CS(s)$ consists of $m-1$ and $m$.}
\medskip

In this case, a term $m-1$ should occur in $S(z_iy_{i+1})$ for some $i$.
Since $CS(s)$ does not contain $(m-1,m-1)$
and since each of $S(z_i)$ and $S(y_i)$ has length $1$,
we see that $S(z_iy_{i+1})$ is equal to $(m-1)$,
$(m,m-1)$ or $(m-1,m)$.
But, this is impossible by Lemma~\ref{lem:case1-2}.

\medskip
\noindent
{\bf Case 2.b.}
 {\it $CS(s)$ consists of $m$ and $m+1$.}
\medskip

In this case, $CS(s)$ contains both $S_1=(m+1)$ and
$S_2=((n-1)\langle m\rangle)$ as subsequences.
Hence, by \cite[Proposition~3.19(1)]{lee_sakuma_2},
$s \notin I_1(r) \cup I_2(r)$, contradicting the hypothesis of the theorem.
\qed

\section{Proof of Main Theorem~\ref{main_theorem} for $r=[m,1,n]$ with
$m, n \ge 3$}
\label{sec:proof_of_main_theorem_3}

Suppose $r=[m,1,n]$, where $m, n \ge 3$ are integers.
For two distinct elements $s, s' \in I_1(r) \cup I_2(r)$,
suppose on the contrary that the unoriented loops
$\alpha_s$ and $\alpha_{s'}$ are homotopic in $S^3-K(r)$,
namely suppose that Hypothesis~A holds.
We will derive a contradiction in each case to consider.
By \cite[Lemma~3.3]{lee_sakuma_3}, there are two big cases
to consider.

\medskip
\noindent
{\bf Case 1.} {\it Hypothesis~B holds.}
\medskip

By Corollary \ref{cor:case2-1}, we have the following two subcases.

\medskip
\noindent
{\bf Case 1.a.} {\it $CS(s)=\lp m+1, m+1 \rp$.}
\medskip

Since $\phi(\alpha)$ involves a subword $w_i$ whose $S$-sequence is
$(n \langle m+1 \rangle)$ and since $n\ge 3$, this is impossible.

\medskip
\noindent
{\bf Case 1.b.} {\it $CS(s)$ consists of $m$ and $m+1$.}
\medskip

In this case, we see that $CS(s)$ contains a subsequence
$S_1=(n \langle m+1 \rangle)$.
Since it also contains a subsequence $S_2=(m)$ by the assumption,
we see by \cite[Proposition~3.19(1)]{lee_sakuma_2} that
$s \notin I_1(r) \cup I_2(r)$, contradicting the hypothesis of the theorem.

\medskip
\noindent
{\bf Case 2.} {\it Hypothesis~C holds.}
\medskip

By Remark~\ref{rem:(2)holds}(2),
$CS(s)$ contains the term $m$.
So, by \cite[Lemma~3.8]{lee_sakuma_2},
Case~2 is reduced to the following three subcases:
$CS(s)=\lp m, m \rp$,
$CS(s)$ consists of $\{m-1, m\}$, or
$CS(s)$ consists of $\{m, m+1\}$.

\medskip
\noindent
{\bf Case 2.a.} {\it $CS(s)=\lp m, m \rp$.}
\medskip

In this case, there is only one possibility:
$J$ consists of one $2$-cell, namely
$CS(\phi(\alpha))=CS(\phi(\partial D_1^+))=\lp S(z_1y_1), S(w_1) \rp= \lp m, m \rp$.
Then $S(z_1y_1)=S(z_{1,e}y_{1,b})=(m)$, contradicting Lemma~\ref{lem:case3-2}(1).

\medskip
\noindent
{\bf Case 2.b.} {\it $CS(s)$ consists of $m-1$ and $m$.}
\medskip

In this case, a term $m-1$ must occur in $S(z_{i,e}y_{i+1,b})$ for some $i$.
But by Lemma~\ref{lem:case3-2}(1), (2) and (3), this is impossible.

\medskip
\noindent
{\bf Case 2.c.} {\it $CS(s)$ consists of $m$ and $m+1$.}
\medskip

Without loss of generality,
we may assume that $m+1$ occurs in $S(z_1y_2)$.
There are three possibilities:
\begin{enumerate}[\indent \rm (i)]
\item $S(z_1y_2)$ consists of only $m+1$, where
$S(z_1)=(n_1\langle m+1 \rangle)$,
$S(y_2)=(n_2\langle m+1 \rangle)$,
and $S(z_1y_2)=((n_1+n_2)\langle m+1 \rangle)$
with $n_1,n_2\in\ZZ_+\cup\{0\}$;

\item $S(z_1y_2)$ consists of only $m+1$, where
$S(z_1)=(n_1\langle m+1 \rangle,d)$,
$S(y_2)=(m+1-d,n_2\langle m+1 \rangle)$,
and $S(z_1y_2)=((n_1+n_2+1)\langle m+1 \rangle)$
with $n_1,n_2\in\ZZ_+\cup\{0\}$ and $d \in \{1, \dots, m\}$;

\item $S(z_1y_2)$ consists of $m$ and $m+1$.
\end{enumerate}
However, by an argument as in Case~1(i) and (ii)
in Section~\ref{sec:proof_of_main_theorem_1},
we can see that neither (i) nor (ii) can happen.
(In the above, we need to appeal to \cite[Lemma~3.1(2)]{lee_sakuma_3}
instead of \cite[Lemma~3.1(1)]{lee_sakuma_3}.)
So only (iii) can occur. But then a term $m$ must occur in $S(z_{1,e}y_{2,b})$,
a contradiction to Lemma~\ref{lem:case3-2}(1), (2) and (4).
\qed

\section{Preliminary results for the general cases}
\label{sec:technical_lemmas_general}

The remainder of this paper is devoted to the proof
of Main Theorem~\ref{main_theorem}
when $r$ is {\it general},
namely either $r=[m,m_2, \dots, m_k]$, where $m \ge 2$, $m_2 \ge 2$ and $k \ge 3$,
or $r=[m, 1, m_3, \dots, m_k]$, where $m \ge 2$ and $k \ge 4$.
In the remainder of this paper, we assume that $r$ is general.

\begin{remark}
\label{rem:general_decomposition}
{\rm
(1) Let $r=[m,m_2, \dots, m_k]$, where $m \ge 2$, $m_2 \ge 2$ and $k \ge 3$.
Then $\tilde{r}=[m_2-1, m_3, \dots, m_k]$ by \cite[Lemma~3.11]{lee_sakuma_2},
and so, by \cite[Proposition~3.12]{lee_sakuma_2},
$CS(\tilde{r})=\lp T_1,T_2,T_1,T_2 \rp$, where
$T_1$ begins and ends with $m_2$ and $T_2$ begins and ends with $m_2-1$.
Thus by \cite[Lemma~3.16(4)]{lee_sakuma_2}, we obtain that $CS(r)=\lp S_1, S_2, S_1, S_2 \rp$,
where $S_1$ begins and ends with $(m+1, (m_2-1) \langle m \rangle, m+1)$, and
$S_2$ begins and ends with $(m_2 \langle m \rangle)$.

(2) Let $r=[m,1,m_3, \dots, m_k]$, where $m \ge 2$ and $k \ge 4$.
Then $\tilde{r}=[m_3, \dots, m_k]$ by \cite[Lemma~3.11]{lee_sakuma_2},
and so, by \cite[Proposition~3.12]{lee_sakuma_2},
$CS(\tilde{r})=\lp T_1,T_2,T_1,T_2 \rp$, where
$T_1$ begins and ends with $m_3+1$ and $T_2$ begins and ends with $m_3$.
Thus by \cite[Lemma~3.16(2)]{lee_sakuma_2}, we obtain that $CS(r)=\lp S_1, S_2, S_1, S_2 \rp$,
where $S_1$ begins and ends with $((m_3+1) \langle m+1 \rangle)$,
and $S_2$ begins and ends with $(m, m_3 \langle m+1 \rangle, m)$.
}
\end{remark}

The aim of this section is to prove the following proposition.

\begin{proposition}
\label{prop:general_m_and_m+1}
Let $r=[m,m_2, \dots, m_k]$, where $m \ge 2$, $m_2 \ge 2$ and $k \ge 3$,
or $r=[m, 1, m_3, \dots, m_k]$, where $m \ge 2$ and $k \ge 4$.
Then under Hypothesis~A,
both $CS(s)$ and $CS(s')$ consist of $m$ and $m+1$.
Moreover $CS(s)$ contains $S_1$ or $S_2$ as a subsequence
accordingly as Hypothesis~B
or Hypothesis~C is satisfied.
\end{proposition}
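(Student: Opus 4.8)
The plan is to reproduce, for general $r$, the chain of arguments that in the base cases carries Hypothesis~A to Corollaries~\ref{cor:case1-1} and~\ref{cor:case2-1}, substituting for the explicit formulas of Remarks~\ref{rem:(1)holds} and~\ref{rem:(2)holds} the decomposition $CS(r)=\lp S_1,S_2,S_1,S_2\rp$ supplied by Remark~\ref{rem:general_decomposition}. First I would record the combinatorial facts about $CS(r)$ that will be used throughout: from Remark~\ref{rem:general_decomposition} one reads off that every term of $CS(r)$ is $m$ or $m+1$ and that both values occur; that $S_1$ always contains the term $m+1$ (and, in case~(2), a run of at least $m_3+1\ge 3$ consecutive $m+1$'s), while $S_2$ always contains the term $m$ (and, in case~(1), a run of at least $m_2\ge 2$ consecutive $m$'s); and that each of $S_1$ and $S_2$ has length $\ge 2$. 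By \cite[Lemma~3.8]{lee_sakuma_2}, $CS(s)$ is either $\lp\ell,\ell\rp$ or consists of $\ell$ and $\ell+1$ for some $\ell\in\ZZ_+$, and likewise for $CS(s')$; the task is to force $\ell=m$ and to exclude the two-term alternative.

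Next I would establish the general-case analogues of Remarks~\ref{rem:(1)holds} and~\ref{rem:(2)holds}: under Hypothesis~B one can write $\phi(\alpha)\equiv y_1w_1z_1\cdots y_tw_tz_t$ with $S(w_i)$ equal to the whole block $S_1$ for every $i$, while the pieces $y_{i,b},z_{i,e},y_{i,b}',z_{i,e}'$ are single blocks of length at most the longest block occurring in $S_2$; dually, under Hypothesis~C the roles of $S_1$ and $S_2$ are interchanged, so that $\phi(\alpha)$ contains, for each $2$-cell, a subword whose $S$-sequence is the whole block $S_2$. It follows at once that $CS(s)=CS(\phi(\alpha))$ contains $S_1$ as a subsequence under Hypothesis~B and $S_2$ under Hypothesis~C; this is the ``moreover'' assertion. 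Combined with the first paragraph, $CS(s)$ then contains the term $m+1$ under Hypothesis~B and the term $m$ under Hypothesis~C, and in either case $\phi(\alpha)$ contains a subword whose $S$-sequence is the long block $S_1$, resp.\ $S_2$, which by \cite[Lemma~3.8]{lee_sakuma_2} excludes $CS(s)=\lp\ell,\ell\rp$ (apart from a few low-complexity slopes, handled separately). Hence it remains only to show that no term of $CS(s)$ is $\ge m+2$ and none is $\le m-1$; granting this, the two-consecutive-values alternative of \cite[Lemma~3.8]{lee_sakuma_2} forces $CS(s)$ to consist exactly of $m$ and $m+1$.

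The bulk of the work---and the main obstacle---is these last exclusions, i.e.\ proving the general-$r$ counterparts of Lemmas~\ref{lem:case1-1(b)}, \ref{lem:case1-1}, \ref{lem:case3-1(b)}, \ref{lem:case3-1}, \ref{lem:case1-2} and~\ref{lem:case3-2}. Each is proved by the device of Section~\ref{sec:technical_lemmas}: from a hypothetical forbidden subword of $\phi(\alpha)$ one uses Lemma~\ref{lem:case1-1(a)} (and its analogue) together with the length bounds on $w_i$, $y_{i,b}$, $z_{i,e}$ to pin down the offending block, then splits into the cases $J=M$ and $J\subsetneq M$; in the first the block is transferred to $CS(\phi(\delta^{-1}))=CS(s')$, contradicting \cite[Lemma~3.8]{lee_sakuma_2}, and in the second one applies \cite[Lemma~3.1]{lee_sakuma_3} to locate the endpoints of the edges $e_j'$ inside the appropriate $S_1$- or $S_2$-segments of $\partial D_i^-$, producing a term or a forbidden subsequence of $CS(\phi(\partial D_1'))=CS(r)$ that Remark~\ref{rem:general_decomposition} prohibits. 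The delicate point is that, since $S_1$ and $S_2$ are now genuine words rather than single terms, all of this must be carried out at the level of subsequences (as in the $S_1/S_2$-labelled figures governed by Convention~\ref{con:figure}), and the counting identities such as $n_1'+n_2'=2(n-1)-(n_1+n_2)$ of Section~\ref{sec:proof_of_main_theorem_1} must be replaced by their analogues describing how the $w_i$ sit inside the doubled sequence $\lp S_1,S_2,S_1,S_2\rp$; the low-complexity slopes (e.g.\ $m_2=2$), where $S_1$ or $S_2$ is short, require a little extra care. Finally, since the hypothesis that $\alpha_s$ and $\alpha_{s'}$ are homotopic is symmetric in $s$ and $s'$, running the whole argument with $s$ and $s'$ interchanged shows that $CS(s')$ likewise consists of $m$ and $m+1$, which completes the proof.
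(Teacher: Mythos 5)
Your overall strategy is the paper's: reduce everything to \cite[Lemma~3.8]{lee_sakuma_2} plus a small number of diagram lemmas excluding forbidden terms, run the argument symmetrically to get the claim for $CS(s')$, and read the ``moreover'' clause off the fact that $S_1$ (resp.\ $S_2$) begins and ends with $m+1$ (resp.\ $m$). But two steps are genuine gaps rather than deferred routine work. First, your second paragraph is circular under Hypothesis~B: that some $w_i$ has $S(w_i)=S_1$ does \emph{not} ``at once'' make $S_1$ a subsequence of $CS(s)$, because the boundary terms $m+1$ of $S_1$ could a priori be absorbed into larger terms of $CS(\phi(\alpha))$ (e.g.\ a term $2m+2$ when $z_i$ and $y_{i+1}$ are empty). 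In the paper the ``moreover'' clause is deduced only \emph{after} one knows that $CS(s)$ consists of $m$ and $m+1$; what is available for free at the outset is merely that $CS(s)$ contains a term $\ge m+1$ (Hypothesis~B), or that it properly contains $S_2$ (Hypothesis~C, where the definition supplies nonempty words on both sides of $w_i$). You should run the argument in that order. Second, your exclusion of $CS(s)=\lp\ell,\ell\rp$ ``by Lemma~3.8 \dots apart from a few low-complexity slopes'' does not work: Lemma~3.8 permits $\lp\ell,\ell\rp$, and the problematic configuration is not a property of the slope $r$ but the possibility that $CS(s)=\lp m+1,m+1\rp$ with $J$ a single $2$-cell and $CS(\phi(\partial D_1^+))=\lp S_1\rp$, which can occur under Hypothesis~B for $r=[m,1,m_3,\dots,m_k]$. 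The paper kills this with a separate diagram argument (Corollary~\ref{cor:general_prelim_3-1}): then $CS(\phi(\partial D_1^-))=\lp S_2,S_1,S_2\rp$, so $S(\phi(e_2'e_1'))$ contains $(\ell_1,S_2,S_2,\ell_2)$, forcing either two consecutive $m$'s and two consecutive $(m+1)$'s in $CS(s')$ or a subsequence $(S_2,S_2)$ in $CS(r)$, both impossible. Your outline contains no substitute for this step.

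A further remark: your plan is also considerably heavier than what is actually needed. In two of the four cases --- Hypothesis~B with $m_2\ge 2$, where $S_1$ contains $(m+1,(m_2-1)\langle m\rangle,m+1)$, and Hypothesis~C with $r=[m,1,m_3,\dots,m_k]$, where $S_2$ contains $(m,m_3\langle m+1\rangle,m)$ --- the forced block already exhibits both the value $m$ and the value $m+1$, and \cite[Lemma~3.8]{lee_sakuma_2} finishes immediately with no diagram lemmas at all. Only two targeted exclusions require the $J=M$ versus $J\subsetneq M$ machinery: no term of the form $m+1+d$ under Hypothesis~B for $r=[m,1,m_3,\dots,m_k]$ (Lemmas~\ref{lem:general_prelim_3-1(b)}, \ref{lem:general_prelim_3-1(b)_addition} and \ref{lem:general_prelim_3-1}, which moreover split into the subcases $m=2$ and $m\ge 3$), and no term $m-1$ under Hypothesis~C for $r=[m,m_2,\dots,m_k]$ (Lemma~\ref{lem:general_prelim_1-2}); the bound on the other side then comes for free from Lemma~3.8 once a term $m$, or a term $\ge m+1$, is known to occur. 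Reproducing all six base-case lemmas for general $r$ is unnecessary, and in places not even meaningful, since the counting identities you cite are specific to $S_1$ and $S_2$ being single terms or single runs.
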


In the following, we prove the proposition only for $CS(s)$.
By applying the same argument to the annular diagram
reversing the outer and inner boundaries, we see that
the assertion also holds for $CS(s')$.

\subsection{The case when Hypothesis~B holds}

In this subsection, we study the case when
Hypothesis~B holds.

Suppose $r=[m,m_2, \dots, m_k]$, where $m \ge 2$, $m_2 \ge 2$ and $k \ge 3$.
Then $S_1$ begins and ends with $(m+1, (m_2-1) \langle m \rangle, m+1)$
by Remark~\ref{rem:general_decomposition}(1).
Thus Hypothesis~B implies that $CS(\phi(\alpha))=CS(s)$ contains a term
$m$ and a term $m+1+\ell$ for some $\ell \ge 0$.
Hence $CS(s)$ must consist of $m$ and $m+1$ by \cite[Lemma~3.8]{lee_sakuma_2}.
Moreover, since $S_1$ begins and ends with $m+1$,
this observation together with Hypothesis~B implies that
$CS(s)$ contains $S_1$ as a subsequence.
Thus Proposition~\ref{prop:general_m_and_m+1} holds in this case.

Suppose $r=[m, 1, m_3, \dots, m_k]$, where $m \ge 2$ and $k \ge 4$.
The following lemma is needed for the proof of Lemma~\ref{lem:general_prelim_3-1},
by which we prove Proposition~\ref{prop:general_m_and_m+1}
for this type of $r$.

\begin{lemma}
\label{lem:general_prelim_3-1(b)}
Let $r=[2, 1, m_3, \dots, m_k]$, where $k \ge 4$.
Under Hypothesis~B, the following hold.
\begin{enumerate}[\indent \rm (1)]
\item $S(z_{i, e}y_{i+1, b}) \neq (4)$.

\item $S(w_{i, e}z_iy_{i+1,b}) \neq (4)$ and $S(z_{i-1, e}y_iw_{i, b}) \neq (4)$.

\item $S(w_{i, e}z_iy_{i+1,b}) \neq (5)$ and $S(z_{i-1, e}y_iw_{i, b}) \neq (5)$.

\item $S(w_{i, e}z_iy_{i+1}w_{i+1, b}) \neq (6)$.
\end{enumerate}
\end{lemma}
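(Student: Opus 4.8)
The plan is to follow the template of the proofs of Lemmas~\ref{lem:case1-1(b)} and~\ref{lem:case3-1(b)}, specialised to $m=2$. First recall from Remark~\ref{rem:general_decomposition}(2), applied with $m=2$, that $CS(r)=\lp S_1,S_2,S_1,S_2\rp$, where $S_1$ begins and ends with $((m_3+1)\langle 3\rangle)$ and $S_2$ begins and ends with $(2,m_3\langle 3\rangle,2)$; in particular every term of $CS(r)$ is $2$ or $3$. Under Hypothesis~B, the general analogue of Remark~\ref{rem:(1)holds}(2) gives that $S(w_{i,b})=S(w_{i,e})=(3)$ and $0\le|z_{i,e}|,|y_{i+1,b}|\le2$ for every $i$, and that $w_i$ and $z_i$ (resp.\ $y_{i+1}$ and $w_{i+1}$) carry opposite signs whenever $z_i$ (resp.\ $y_{i+1}$) is nonempty. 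Recall finally from \cite[Lemma~3.8]{lee_sakuma_2} that $CS(s)$ is either $\lp\ell,\ell\rp$ or consists of $\ell$ and $\ell+1$ for some $\ell\in\ZZ_+$, and likewise for $CS(s')$.

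For each of (1)--(4) one argues by contradiction, first reducing to a single configuration. Using the sign constraints above together with $S(w_{i,e})=(3)$ and $0\le|z_{i,e}|,|y_{i+1,b}|\le2$, one finds: in (1), $|z_{i,e}|=|y_{i+1,b}|=2$ with $z_{i,e},y_{i+1,b}$ of equal sign; in (2) and (3), $z_i$ is empty and $w_{i,e}y_{i+1,b}$ is a single syllable of length $4$ or $5$; in (4), $z_i$ and $y_{i+1}$ are empty and $w_{i,e}w_{i+1,b}$ is a single syllable of length $6$ (the words $S(z_{i-1,e}y_iw_{i,b})$ in (2) and (3) being symmetric). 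In each case one has produced a subword of the cyclic word $(u_s)$ whose $S$-sequence is a single syllable of length $\ell\in\{4,5,6\}$, decomposed into its explicit pieces ($z_{i,e},y_{i+1,b},w_{i,e},w_{i+1,b}$, as the case may be) whose positions inside the terms of $CS(r)$ are recorded, mirroring the set-up of Figures~\ref{fig.lemma-1-2} and~\ref{fig.lemma-1-3}.

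Now, exactly as in the proof of Lemma~\ref{lem:case1-1(b)}, one splits into the cases $J=M$ and $J\subsetneq M$. If $J=M$, reading off the boundary of $M$ dual to the one carrying this syllable produces in $CS(\phi(\delta^{-1}))=CS(s')$ a term of size $\le 1$, which together with a term $\ge 3$ coming from some $w_j'$ contradicts \cite[Lemma~3.8]{lee_sakuma_2}; in borderline cases one instead invokes \cite[Proposition~3.19(1)]{lee_sakuma_2} to conclude $s'\notin I_1(r)\cup I_2(r)$, contrary to hypothesis. If $J\subsetneq M$, then by \cite[Lemma~3.1(1)]{lee_sakuma_3} the initial vertex of $e_2'$ and the terminal vertex of $e_3'$ must lie in the interiors of the segments of $\partial D_1^-$ and $\partial D_2^-$ corresponding to $S_1$ (whose extreme terms are $3$), so that $S(\phi(e_2'e_3'))$ contains a subsequence $(\ell_1,a,\ell_2)$ with $\ell_1,\ell_2\in\ZZ_+$ and $a\notin\{2,3\}$; hence $CS(\phi(\partial D_1'))=CS(r)$ acquires a term different from $2$ and $3$, a contradiction. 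The main obstacle is the bookkeeping within this dichotomy: one must verify, configuration by configuration and with Figures~\ref{fig.lemma-1-1}--\ref{fig.lemma-1-8} as templates, that the complementary (``bottom'') lengths really do assemble into a term of $CS(s')$ or of $CS(r)$ that is genuinely $\le 1$ or $\ge 4$, and that \cite[Lemma~3.1(1)]{lee_sakuma_3} locates the endpoints of $e_2'$ and $e_3'$ as claimed. Granting this, all four assertions follow, and Lemma~\ref{lem:general_prelim_3-1(b)} then feeds, as indicated before its statement, into Lemma~\ref{lem:general_prelim_3-1} and Proposition~\ref{prop:general_m_and_m+1}.
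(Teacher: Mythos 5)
Your overall strategy---reduce each assertion to an explicit local configuration and then run the $J=M$ versus $J\subsetneq M$ dichotomy, reading off either two incompatible terms of $CS(s')$ or a forbidden term of $CS(\phi(\partial D_1'))=CS(r)$---is exactly the paper's. But two of the steps you defer as ``bookkeeping'' are not mere bookkeeping, and one of them fails as stated. First, for (1) (and likewise in (2)--(4)) you record only that $|z_{i,e}|=|y_{i+1,b}|=2$, whereas the dual-side computation requires knowing that $z_i=z_{i,e}$ and $y_{i+1}=y_{i+1,b}$, i.e.\ that these words consist of a single syllable; otherwise $S(z_i')$ and $S(y_{i+1}')$ are not pinned down and the complementary lengths cannot be assembled. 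The paper establishes this first, by observing that if, say, $z_1\ne z_{1,e}$, then since $S_2$ begins with $(2,m_3\langle 3\rangle,2)$ the sequence $S(w_1z_1)$ has the form $(S_1,2,*)$ with $*$ nonempty, so $CS(s)$ would contain a term $2$ alongside the assumed term $4$ (or $5$, $6$), contradicting \cite[Lemma~3.8]{lee_sakuma_2}. Second, your description of the $J=M$ contradiction---``a term of size $\le 1$ \ldots together with a term $\ge 3$ coming from some $w_j'$''---matches only assertion (2). In assertions (1) and (4) the dual side produces a \emph{large} term ($6$, resp.\ $4$), and the incompatible partner exhibited by the paper is a term $2$, not merely a term $\ge 3$: for (4) in particular, a term $4$ coexisting with a term $3$ is perfectly consistent with \cite[Lemma~3.8]{lee_sakuma_2}, so the pairing you propose yields no contradiction there; one must locate the term $2$ supplied by $S_2$.

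It is also worth noting that assertion (3) needs no dichotomy at all: there $|z_1|=0$ and $y_2=y_{2,b}$ with $|y_2|=2$ force $S(y_2w_2)$ to begin with $(2,(m_3+1)\langle 3\rangle)$, so $CS(s)$ already contains a term $3$ together with the assumed term $5$, contradicting \cite[Lemma~3.8]{lee_sakuma_2} directly on the outer boundary. With the single-syllable reduction supplied and the $J=M$ term-pairings corrected case by case, your plan does reproduce the paper's proof.
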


\begin{proof}
(1) Suppose on the contrary that $S(z_{1, e}y_{2, b})=(4)$.
Then we must have that $z_1=z_{1, e}$ and $y_2=y_{2, b}$.
To see this, suppose that $z_1\ne z_{1, e}$. (The other case is treated similarly.)
Then, since $S_2$ begins with $(2, m_3 \langle 3 \rangle, 2)$,
we see that $S(w_1z_1)$ is of the form $(S_1,2,*)$, where $*$ is nonempty.
So, $CS(\phi(\alpha))=CS(s)$ contains a term $2$.
This is a contradiction to \cite[Lemma~3.8]{lee_sakuma_2}, because
$CS(\phi(\alpha))$ also contains a term $4$ by the assumption.
Hence, $S(z_1)=S(z_{1, e})=(2)$ and $S(y_2)=S(y_{2, b})=(2)$.
Since $S_2$ begins and ends with $(2, m_3 \langle 3 \rangle, 2)$,
$S(z_1'y_2')$ contains a subsequence of the form $(\ell_1, 6, \ell_2)$
with $\ell_1, \ell_2 \in \ZZ_+$
(see Figure~\ref{fig.Lemma7_3_1}(a)).
So if $J=M$, then $CS(\phi(\delta^{-1}))=CS(s')$ contains both a term $2$
and a term $6$, contradicting \cite[Lemma~3.8]{lee_sakuma_2}.
Suppose $J \subsetneq M$.
Then the above fact together with \cite[Lemma~3.1(1)]{lee_sakuma_3}
implies that $S(\phi(e_2'e_3'))$ contains the subsequence $(\ell_1, 6, \ell_2)$
(see Figure~\ref{fig.Lemma7_3_1}(b)).
Thus, a term $6$ occurs in $CS(\phi(\partial D_1'))=CS(r)$, which is a contradiction.

\begin{figure}[h]
\includegraphics{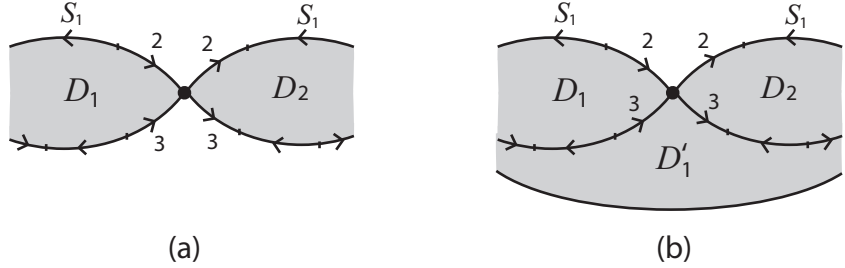}
\caption{
Lemma~\ref{lem:general_prelim_3-1(b)}(1) where $S(z_{1,e}y_{2,b})=(2+2)$}
\label{fig.Lemma7_3_1}
\end{figure}

(2) Suppose on the contrary that $S(w_{1, e}z_1y_{2,b})=(4)$.
(The other case is treated similarly.)
Then $|z_1|=0$ and $|y_{2, b}|=1$, because $|w_{1, e}|=3$.
Furthermore $y_2=y_{2, b}$ as in the proof of (1).
By using the fact that
$S_2$ begins and ends with $(2, m_3 \langle 3 \rangle, 2)$
and \cite[Lemma~3.1(1)]{lee_sakuma_3},
we see that
$S(\phi(e_2'e_3'))$ contains a subsequence of the form $(\ell_1, 2, 1, \ell_2)$
with $\ell_1, \ell_2 \in \ZZ_+$ (see Figure~\ref{fig.Lemma7_3_2}(a)).
So, if $J=M$, then $CS(\phi(\delta^{-1}))=CS(s')$ contains both
a term $1$ and a term $3$, contradicting \cite[Lemma~3.8]{lee_sakuma_2}.
On the other hand, if $J \subsetneq M$,
then a term $1$ occurs in $CS(\phi(\partial D_1'))=CS(r)$,
which is a contradiction.

(3) Suppose on the contrary that $S(w_{1, e}z_1y_{2,b})=(5)$.
(The other case is treated similarly.)
Then $|z_1|=0$ and $|y_{2, b}|=2$.
Furthermore $y_2=y_{2, b}$ as in the proof of (1).
So $S(y_2w_2)$ begins with a subsequence $(2, (m_3+1) \langle 3 \rangle)$,
which implies that $CS(\phi(\alpha))=CS(s)$ has a term $3$.
Since $CS(s)$ has a term $5$ by assumption,
this is a contradiction to \cite[Lemma~3.8]{lee_sakuma_2}.

(4) Suppose on the contrary that $S(w_{1, e}z_1y_2w_{2, b})=(6)$.
Then $|z_1|=|y_2|=0$ and $S(w_{1, e}w_{2, b})=(6)$.
By using the fact that
$S_2$ begins and ends with $(2, m_3 \langle 3 \rangle, 2)$
and \cite[Lemma~3.1(1)]{lee_sakuma_3}, we see that
$S(\phi(e_2'e_3'))$ contains a subsequence of the form $(\ell_1, 4, \ell_2)$
with $\ell_1, \ell_2 \in \ZZ_+$
(see Figure~\ref{fig.Lemma7_3_2}(b)).
So, if $J=M$, then $CS(\phi(\delta^{-1}))=CS(s')$ contains both
a term $2$ and a term $4$, contradicting \cite[Lemma~3.8]{lee_sakuma_2}.
On the other hand, if $J \subsetneq M$,
then a term $4$ occurs in $CS(\phi(\partial D_1'))=CS(r)$,
which is a contradiction.
\end{proof}

\begin{figure}[h]
\includegraphics{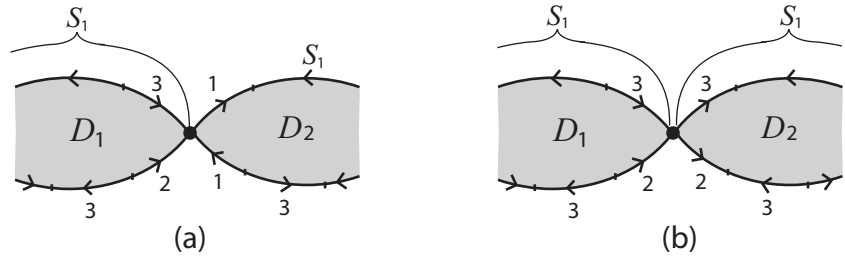}
\caption{
(a) Lemma~\ref{lem:general_prelim_3-1(b)}(2) where $S(w_{1, e}z_1y_{2,b})=(3+0+1)$, and
(b) Lemma~\ref{lem:general_prelim_3-1(b)}(4) where $S(w_{1, e}z_1y_2w_{2, b})=(3+0+3)$}
\label{fig.Lemma7_3_2}
\end{figure}

\begin{lemma}
\label{lem:general_prelim_3-1(b)_addition}
Let $r=[m, 1, m_3, \dots, m_k]$, where $m \ge 3$ and $k \ge 4$.
Under Hypothesis~B, the following hold.
\begin{enumerate}[\indent \rm (1)]
\item $S(z_{i, e}y_{i+1, b}) \neq (m+d)$ for any $d \in {\mathbb Z}_+$ with
$2 \le d \le m$.

\item $S(w_{i, e}z_iy_{i+1,b}) \neq (m+1+d)$ and $S(z_{i-1, e}y_iw_{i, b}) \neq (m+1+d)$
for any $d \in {\mathbb Z}_+$ with $1 \le d \le m$.

\item $S(w_{i, e}z_iy_{i+1}w_{i+1, b}) \neq (2m+2)$.
\end{enumerate}
\end{lemma}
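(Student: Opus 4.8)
The plan is to prove Lemma~\ref{lem:general_prelim_3-1(b)_addition} by the same strategy that worked for the ``$m=2$'' case in Lemma~\ref{lem:general_prelim_3-1(b)} and for Lemma~\ref{lem:case1-1(b)}: namely, assume the forbidden $S$-sequence occurs, pin down the lengths of the constituent words $z_i$, $y_{i+1}$, etc., using the bounds $0\le|z_{i,e}|,|y_{i+1,b}|\le m$ and $|w_{i,e}|=|w_{i,b}|=m+1$ (recall $S(w_{i,b})=S(w_{i,e})=(m+1)$ from Remark~\ref{rem:(1)holds}(2)), then transfer the configuration through the diagram $J$ into either $CS(s')$ (when $J=M$) or $CS(r)$ (when $J\subsetneq M$) via \cite[Lemma~3.1(1)]{lee_sakuma_3}, and in each case contradict either \cite[Lemma~3.8]{lee_sakuma_2} or the explicit form of $CS(r)$ recorded in Remark~\ref{rem:general_decomposition}(2), according to which $S_2$ begins and ends with $(m, m_3\langle m+1\rangle, m)$ and $S_1$ begins and ends with $((m_3+1)\langle m+1\rangle)$.

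First, for part (1), supposing $S(z_{1,e}y_{2,b})=(m+d)$ with $2\le d\le m$, I would argue as in Lemma~\ref{lem:general_prelim_3-1(b)}(1) that $z_1=z_{1,e}$ and $y_2=y_{2,b}$ — otherwise $S(w_1z_1)$ (or $S(z_1w_1)$) would have the shape $(S_1, m, \ast)$ with $\ast$ nonempty, forcing a term $m$ into $CS(s)$ alongside the term $m+d\ge m+2$, which contradicts \cite[Lemma~3.8]{lee_sakuma_2}. Then $|z_{1,e}|=k$, $|y_{2,b}|=m+d-k$ for suitable $k$, so on the ``primed'' side $|z_{1,e}'|=m-k$ and $|y_{2,b}'|=k-d$, and since $S_2$ begins and ends with $(m, m_3\langle m+1\rangle, m)$ the word $\phi(e_2'e_3')$ (or $z_1'y_2'$ when $J=M$) contains a subsequence of the form $(\ell_1, (m-d), \ell_2)$ with $\ell_1,\ell_2\in\ZZ_+$; since $2\le d\le m$ gives $0\le m-d\le m-2$, a term of size $m-d$ lands in $CS(r)$ (whose terms are all $\ge m$ by Remark~\ref{rem:general_decomposition}(2)) or, if $J=M$, in $CS(s')$ together with a term $\ge m+1$ coming from $S(w_1')\supseteq S_1$, again contradicting \cite[Lemma~3.8]{lee_sakuma_2}. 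Parts (2) and (3) follow the same template: in (2), $S(w_{1,e}z_1y_{2,b})=(m+1+d)$ with $|w_{1,e}|=m+1$ forces $|z_1|=0$, $|y_{2,b}|=d$, and then the ``defect'' $m-d$ (resp.\ $2m-(2m+2)+\text{something}$ in (3)) gets pushed into $CS(r)$ or $CS(s')$ and contradicts the dichotomy; in (3), $S(w_{1,e}z_1y_2w_{2,b})=(2m+2)$ forces $|z_1|=|y_2|=0$ and $S(w_{1,e}w_{2,b})=(2m+2)$, and transferring to the primed side produces a subsequence $(\ell_1, (2m-2), \ell_2)$ or similar that cannot sit inside $CS(r)$ (whose terms are $m$ and $m+1$) nor coexist in $CS(s')$ with a small term.

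The main obstacle I anticipate is the bookkeeping in the $J\subsetneq M$ subcase: one must locate the endpoints of the edges $e_2'$, $e_3'$ inside the correct subsegments of $\partial D_i^-$ using \cite[Lemma~3.1(1)]{lee_sakuma_3} (``no $S(\phi(e_j'))$ contains $S_1$ as a subsequence''), and then read off that $S(\phi(e_2'e_3'))$ picks up a too-small or too-large term; getting the precise numerology of $m-d$, $2m-2$, etc.\ right — and in particular checking that these values genuinely fall outside $\{m,m+1\}$ under the stated ranges for $d$ (which is exactly why the hypotheses $m\ge 3$ and $2\le d\le m$ are imposed, in contrast to the $m=2$ version) — is the delicate part. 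I would organize the proof so that the figures paralleling Figures~\ref{fig.Lemma7_3_1} and \ref{fig.Lemma7_3_2} carry the geometric content, and keep the prose to verifying the inequalities that make \cite[Lemma~3.8]{lee_sakuma_2} and the form of $CS(r)$ bite. Thus I expect the write-up to read: ``The proofs of (1), (2) and (3) are parallel to those of Lemma~\ref{lem:general_prelim_3-1(b)}(1), (2)--(3) and (4), respectively, with $2$ replaced by $m$ throughout,'' followed by the one or two places where the larger value of $m$ actually changes the estimate.
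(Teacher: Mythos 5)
Your overall template is the paper's: assume the forbidden $S$-sequence, pin down the lengths, and either push the configuration to the primed side (reaching $CS(s')$ or $CS(r)$ via \cite[Lemma~3.1(1)]{lee_sakuma_3}) or exploit what is forced in $CS(s)$, then contradict \cite[Lemma~3.8]{lee_sakuma_2} or the form of $CS(r)$. For $2\le d\le m-1$ in (1) and $1\le d\le m-1$ in (2) this is exactly what the paper does (it simply cites the proofs of Lemma~\ref{lem:case1-1(b)}(1),(2) as parallel), and your part (3) is also essentially the paper's argument, except that the term produced on the primed side is $2m$, not ``$2m-2$ or similar'': $S_2$ ends and begins with an $m$-block, and with $|z_1|=|y_2|=0$ these two blocks abut on the primed side.

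The genuine gap is at the endpoint $d=m$ in parts (1) and (2). You claim that the primed-side transfer yields a term $m-d$ with ``$0\le m-d\le m-2$'', but when $d=m$ this ``term'' has size $0$: in your own bookkeeping $|z_{1,e}'|=m-k$ and $|y_{2,b}'|=k-d$ are then both zero (since $k=m$ is forced), so there is no sign change and no small term appears on the primed side at all — the argument simply evaporates rather than producing a contradiction. This is precisely why the paper splits off $d=m$ and argues entirely on the outer boundary: for (1), $|z_1|=|y_2|=m$ gives a term $2m$ in $CS(s)$, while $S(y_2w_2)$ begins with $(m,(m_3+1)\langle m+1\rangle)$ so $CS(s)$ also has a term $m+1$, and $2m>m+2$ for $m\ge 3$ contradicts \cite[Lemma~3.8]{lee_sakuma_2}; for (2), the same reasoning with the term $2m+1$ in place of $2m$. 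Note that this is also where the hypothesis $m\ge 3$ is actually used in (1) and (2) — not, as you suggest, in checking that $m-d$ falls outside $\{m,m+1\}$ (which holds already for $m\ge 2$). Your write-up as proposed would therefore leave the cases $S(z_{i,e}y_{i+1,b})=(2m)$ and $S(w_{i,e}z_iy_{i+1,b})=(2m+1)$ unproved.
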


\begin{proof}
(1) Suppose on the contrary that $S(z_{1, e}y_{2, b})=(m+d)$ with $2 \le d \le m$.
As in the proof of Lemma~\ref{lem:general_prelim_3-1(b)}(1),
we must have $z_1=z_{1, e}$ and $y_2=y_{2, b}$,
for otherwise $CS(\phi(\alpha))=CS(s)$ would contain both a term $m$
and a term $m+d$
contradicting \cite[Lemma~3.8]{lee_sakuma_2}.
So if $2 \le d\le m-1$,
then the proof is parallel to that of
Lemma~\ref{lem:case1-1(b)}(1).
Now let $d=m$.
Then $|z_1|=|y_2|=m$, and so $CS(\phi(\alpha))=CS(s)$ has a term $2m$.
Moreover, $S(y_2w_2)$ begins with $(m, (m_3+1) \langle m+1 \rangle)$,
and hence $CS(s)$ has a term $m+1$,
as shown in Figure~\ref{fig.lemma-3-2}.
But since $m \ge 3$, we have $2m > m+2$.
This gives a contradiction to \cite[Lemma~3.8]{lee_sakuma_2}.

\begin{figure}[h]
\includegraphics{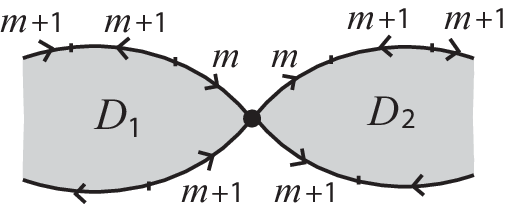}
\caption{
Lemma~\ref{lem:general_prelim_3-1(b)_addition}(1)
where $S(z_{1,e}y_{2,b})=(m+m)$}
\label{fig.lemma-3-2}
\end{figure}

(2) Suppose on the contrary that $S(w_{1, e}z_1y_{2,b})=(m+1+d)$.
(The other case is treated analogously.)
Then $|z_1|=0$ and $|y_{2, b}|=d$.
Furthermore $y_2=y_{2, b}$ as in the proof of (1).
So if $1 \le d\le m-1$, then the proof is parallel to that of
Lemma~\ref{lem:case1-1(b)}(2).
Now let $d=m$.
Then $|y_2|=m$ and $CS(s)$ has a term $2m+1$.
Also, as shown in the proof of (1),
$CS(s)$ has a term $m+1$.
But since $m \ge 3$, we have $2m+1 > m+2$.
This gives a contradiction to \cite[Lemma~3.8]{lee_sakuma_2}.

(3) Suppose on the contrary that $S(w_{1,e}z_1y_2w_{2,b})=(2m+2)$.
Then $|z_1|=|y_2|=0$ and $S(w_{1, e}w_{2, b})=(2m+2)$.
Assume first that $J=M$.
Then $CS(\phi(\delta^{-1}))=CS(s')$ contains both
a term $m+1$ and a term $2m$ as illustrated in Figure~\ref{fig.lemma-3-5}(a).
Since $m \ge 3$, this gives a contradiction to \cite[Lemma~3.8]{lee_sakuma_2}.
Assume next that $J \subsetneq M$.
Then by \cite[Lemma~3.1(1)]{lee_sakuma_3},
none of $S(\phi(e_{1}'))$ and $S(\phi(e_{2}'))$ contains
a subsequence $S_1$ which begins and ends with $((m_3+1) \langle m+1 \rangle)$.
This means that
the initial vertex of $e_2'$ lies in the interior
of the segment of $\partial D_1^-$ corresponding to $S_1$.
Similarly, the terminal vertex of $e_3'$
lies in the interior of the segment of $\partial D_2^-$
corresponding to $S_1$.
Hence, we see from Figure~\ref{fig.lemma-3-5}(b)
that $S(\phi(e_2'e_3'))$ contains a subsequence of the form $(\ell_1, 2m, \ell_2)$
with $\ell_1, \ell_2 \in \ZZ_+$. This implies that a term $2m$ occurs
in $CS(\phi(\partial D_1'))=CS(r)$, which is a contradiction.
\end{proof}

\begin{figure}[h]
\includegraphics{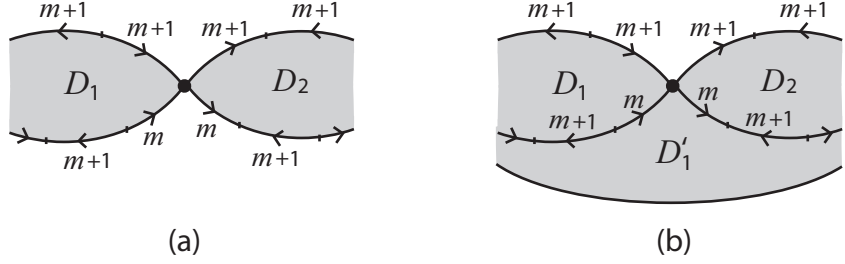}
\caption{
Lemma~\ref{lem:general_prelim_3-1(b)_addition}(3)
where $S(w_{1, e}z_1y_2w_{2, b})=((m+1)+0+0+(m+1))$}
\label{fig.lemma-3-5}
\end{figure}

\begin{lemma}
\label{lem:general_prelim_3-1}
Let $r=[m, 1, m_3, \dots, m_k]$, where $m \ge 2$ and $k \ge 4$.
Under Hypothesis~B,
no term of $CS(s)$ can be of the form $m+1+d$ with $d \in \ZZ_+$.
\end{lemma}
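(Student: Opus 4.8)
The plan is to argue by contradiction and reduce the problem, following the pattern of the proofs of Lemma~\ref{lem:case1-1}(2) and Lemma~\ref{lem:case3-1}, to the short list of local configurations that are precisely the ones forbidden by Lemmas~\ref{lem:general_prelim_3-1(b)} and \ref{lem:general_prelim_3-1(b)_addition}. So suppose on the contrary that $CS(\phi(\alpha))=CS(u_s)=CS(s)$ contains a term $m+1+d$ with $d\in\ZZ_+$, and let $v$ be a subword of the cyclic word $(u_s)$ corresponding to this term; thus $v$ is a maximal positive (or negative) subword of $\phi(\alpha)\equiv y_1w_1z_1\cdots y_tw_tz_t$ of length at least $m+2$. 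I would treat the cases $m=2$ and $m\ge 3$ in parallel, invoking Lemma~\ref{lem:general_prelim_3-1(b)} in the former and Lemma~\ref{lem:general_prelim_3-1(b)_addition} in the latter.

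The first step is to locate $v$ within the decomposition of $\phi(\alpha)$. By Remark~\ref{rem:general_decomposition}(2) together with the substitution rule of \cite[Lemma~3.16(2)]{lee_sakuma_2}, every term of $CS(r)=\lp S_1,S_2,S_1,S_2\rp$ equals $m$ or $m+1$; since each $w_i$ is a subword of the cyclic relator $u_r$, every syllable occurring inside a single $w_i$ has length $\le m+1$ (and its first and last syllables $w_{i,b},w_{i,e}$ have length exactly $m+1$), and by the description of the pieces under Hypothesis~B (as recorded, in the base case, in Remark~\ref{rem:(1)holds}(2)) every syllable of $y_i$ and of $z_i$ has length $\le m$. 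Hence $v$ can lie neither inside a single $w_i$ nor inside a single $y_i$ or $z_i$, so it runs across at least one junction where two consecutive pieces meet with the same sign; and once $v$ enters a $w_i$ it is stopped by the opposite sign of the next syllable of $w_i$, so $v$ meets each $w_i$ at most along $w_{i,b}$ or $w_{i,e}$. Carrying out the resulting case analysis --- and using, as in Lemma~\ref{lem:case1-1(a)}, that the ``intermediate pieces'' are empty --- I expect to conclude that, after a cyclic shift of indices, the syllable of length $\ge m+2$ inside $v$ is realized by one of
\[
z_{i,e}y_{i+1,b},\qquad w_{i,e}z_iy_{i+1,b},\qquad z_{i-1,e}y_iw_{i,b},\qquad w_{i,e}z_iy_{i+1}w_{i+1,b}.
\]

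It then remains to rule out each of these. In the first word $S(z_{i,e}y_{i+1,b})=(m+d')$ for some $2\le d'\le m$ (since $0\le|z_{i,e}|,|y_{i+1,b}|\le m$), which contradicts Lemma~\ref{lem:general_prelim_3-1(b)}(1) when $m=2$ and Lemma~\ref{lem:general_prelim_3-1(b)_addition}(1) when $m\ge 3$. In the second and third words the term has the form $m+1+d'$; when $1\le d'\le m$ this contradicts Lemma~\ref{lem:general_prelim_3-1(b)}(2)--(3) (for $m=2$) or Lemma~\ref{lem:general_prelim_3-1(b)_addition}(2) (for $m\ge 3$), while the only way to get $d'\ge m+1$ is via the third word with $z_{i-1}$ and $y_i$ both nonempty, in which case the term is at least $m+3$ and, together with the length-$(\le m+1)$ syllable of $w_i$ adjacent to $w_{i,b}$, it contradicts \cite[Lemma~3.8]{lee_sakuma_2}. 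In the fourth word $z_i=y_{i+1}$ are forced to be empty and $S(w_{i,e}z_iy_{i+1}w_{i+1,b})=(2m+2)$, contradicting Lemma~\ref{lem:general_prelim_3-1(b)}(4) when $m=2$ and Lemma~\ref{lem:general_prelim_3-1(b)_addition}(3) when $m\ge 3$. Since each configuration is impossible, $CS(s)$ has no term of the form $m+1+d$ with $d\in\ZZ_+$.

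The main obstacle is the reduction carried out in the second step: establishing that a syllable of $CS(s)$ of length $\ge m+2$ can be realized only by one of the four local words listed, and by no others. This requires careful bookkeeping of which neighbouring pieces are empty and of the signs at each junction; in particular one must deal with the possibility that a nonempty $y_i$ (necessarily a single syllable, of length $\le m$) merges with the neighbouring pieces on both of its sides, and in all such ``overlong'' subcases --- where the resulting syllable has length exceeding the range covered by Lemmas~\ref{lem:general_prelim_3-1(b)} and \ref{lem:general_prelim_3-1(b)_addition} --- one peels off an adjacent syllable of some $w_j$ of length $\le m+1$ and appeals directly to \cite[Lemma~3.8]{lee_sakuma_2}.
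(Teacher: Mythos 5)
Your proposal is correct and follows essentially the same route as the paper: reduce by contradiction to the four local configurations $z_{i,e}y_{i+1,b}$, $w_{i,e}z_iy_{i+1,b}$, $z_{i-1,e}y_iw_{i,b}$ and $w_{i,e}z_iy_{i+1}w_{i+1,b}$, and kill each by Lemma~\ref{lem:general_prelim_3-1(b)} or Lemma~\ref{lem:general_prelim_3-1(b)_addition} according as $m=2$ or $m\ge 3$. The extra ``overlong'' subcase you guard against via \cite[Lemma~3.8]{lee_sakuma_2} does not actually arise (the relevant pieces have length at most $m$, so the four configurations already exhaust the ranges covered by the cited lemmas), but including it does no harm.
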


\begin{proof}
Suppose on the contrary that $CS(s)$ contains a term $m+1+d$.
Let $v$ be a subword of the cyclic word $(u_s)$
corresponding to a term $m+1+d$.
Without loss of generality, we may assume that
\begin{enumerate}[\indent \rm (i)]
\item $v$ contains $z_{1, e} y_{2, b}$ with $|z_{1, e}|, |y_{2, b}| \neq 0$;

\item $v$ contains $w_{1, e}y_{2, b}$ with $|y_{2, b}| \neq 0$;

\item $v$ contains $z_{0, e}w_{1, e}$ with $|z_{0, e}| \neq 0$; or

\item $v$ contains $w_{1, e}w_{2, b}$ with $|z_1|=|y_2|=0$.
\end{enumerate}
However, (i) is impossible by Lemma~\ref{lem:general_prelim_3-1(b)}(1) or
Lemma~\ref{lem:general_prelim_3-1(b)_addition}(1) accordingly as $m=2$ or $m \ge 3$.
Also (ii) and (iii) are impossible by Lemma~\ref{lem:general_prelim_3-1(b)}(2)--(3)
or Lemma~\ref{lem:general_prelim_3-1(b)_addition}(2) accordingly as $m=2$ or $m \ge 3$.
Finally (iv) is impossible by Lemma~\ref{lem:general_prelim_3-1(b)}(4)
or Lemma~\ref{lem:general_prelim_3-1(b)_addition}(3) accordingly as $m=2$ or $m \ge 3$.
\end{proof}

\begin{corollary}
\label{cor:general_prelim_3-1}
Let $r=[m, 1, m_3, \dots, m_k]$, where $m \ge 2$ and $k \ge 4$.
Under Hypothesis~B, the conclusion of Proposition~\ref{prop:general_m_and_m+1} holds.
\end{corollary}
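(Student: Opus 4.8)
**

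The plan is to deduce Corollary~\ref{cor:general_prelim_3-1} from Lemma~\ref{lem:general_prelim_3-1} by exactly the same kind of argument that was used to pass from Lemma~\ref{lem:case3-1} to Corollary~\ref{cor:case2-1}, with one preliminary observation about the structure of $S_1$. First I would record that, by Remark~\ref{rem:general_decomposition}(2), $S_1$ begins and ends with $((m_3+1)\langle m+1\rangle)$; since $m_3\ge 2$ and since Hypothesis~B guarantees that $\phi(\alpha)$ contains a subword $w_i$ with $S$-sequence containing $S_1$ (so in particular containing the block $((m_3+1)\langle m+1\rangle)$ with $m_3+1\ge 3$), the cyclic sequence $CS(\phi(\alpha))=CS(u_s)=CS(s)$ must contain a term of the form $m+1+c$ for some $c\in\ZZ_+\cup\{0\}$ — in fact it contains $m+1$ itself.

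Next I would invoke \cite[Lemma~3.8]{lee_sakuma_2}, which says that either $CS(s)=\lp\ell,\ell\rp$ for some $\ell\in\ZZ_+$, or $CS(s)$ consists of two consecutive integers $\ell$ and $\ell+1$. Combining this with Lemma~\ref{lem:general_prelim_3-1}, which forbids any term of the form $m+1+d$ with $d\in\ZZ_+$, forces every term of $CS(s)$ to be at most $m+1$. On the other hand the previous paragraph shows $CS(s)$ has a term that is at least $m+1$, hence a term equal to $m+1$. In the first alternative of \cite[Lemma~3.8]{lee_sakuma_2} this gives $CS(s)=\lp m+1,m+1\rp$; but this contradicts the fact, already used in Case~1.a of Section~\ref{sec:proof_of_main_theorem_3}, that $\phi(\alpha)$ contains a subword $w_i$ whose $S$-sequence contains $S_1\supseteq((m_3+1)\langle m+1\rangle)$ with $m_3+1\ge 3$, forcing a term $m_3+1\ne m+1$ to appear (more directly: a block of at least three consecutive equal terms $m+1$ would then be all of $CS(s)$, yet $S_1$ also contains the entry $m$ at its two ends after joining with $S_2$, a contradiction). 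Therefore the second alternative holds: $CS(s)$ consists of $\ell$ and $\ell+1$ with $\ell+1\le m+1$ and $\ell+1\ge m+1$, i.e. $\ell=m$, so $CS(s)$ consists of $m$ and $m+1$.

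Finally I would verify the ``moreover'' clause of Proposition~\ref{prop:general_m_and_m+1}. Under Hypothesis~B, $\phi(\alpha)$ contains a subword whose $S$-sequence contains $S_1$ as a subsequence; since we have now shown all terms of $CS(s)$ lie in $\{m,m+1\}$ and $S_1$ begins and ends with $m+1$ (so in particular every entry of $S_1$ is $m$ or $m+1$, as $S_1$ is a subsequence of $CS(r)=\lp S_1,S_2,S_1,S_2\rp$ whose terms after the reduction are all $m$ or $m+1$ by Remark~\ref{rem:general_decomposition}(2)), the occurrence of this subword in $(u_s)$ exhibits $S_1$ as a subsequence of $CS(s)$. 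This completes the proof; the only genuinely delicate point, and the one I would spell out most carefully, is ruling out $CS(s)=\lp m+1,m+1\rp$, since that requires recalling precisely how $S_1$ sits inside $CS(\phi(\alpha))$ and why its neighbouring $S_2$-block forces a term different from $m+1$ to appear — everything else is a routine combination of Lemma~\ref{lem:general_prelim_3-1} with \cite[Lemma~3.8]{lee_sakuma_2}.

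\begin{proof}
By Remark~\ref{rem:general_decomposition}(2), $S_1$ begins and ends with
$((m_3+1)\langle m+1\rangle)$, and every term of $S_1$ is $m$ or $m+1$.
Under Hypothesis~B, $\phi(\alpha)$ contains a subword whose $S$-sequence
contains $S_1$ as a subsequence; since $m_3+1\ge 3$, it follows that
$CS(\phi(\alpha))=CS(u_s)=CS(s)$ contains a term of the form $m+1+c$
with $c\in\ZZ_+\cup\{0\}$.
By Lemma~\ref{lem:general_prelim_3-1}, no term of $CS(s)$ is of the form
$m+1+d$ with $d\in\ZZ_+$, so in fact $CS(s)$ contains the term $m+1$ and
no larger term.
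By \cite[Lemma~3.8]{lee_sakuma_2}, either $CS(s)=\lp\ell,\ell\rp$ or
$CS(s)$ consists of $\ell$ and $\ell+1$ for some $\ell\in\ZZ_+$.
In the first case $\ell=m+1$, so $CS(s)=\lp m+1,m+1\rp$; but then, since
$\phi(\alpha)$ contains a subword whose $S$-sequence contains $S_1$, and
$S_1$ sits inside $CS(r)=\lp S_1,S_2,S_1,S_2\rp$ with $S_2$ beginning and
ending with $(m, m_3\langle m+1\rangle, m)$, the junction of $S_1$ with
the adjacent copy of $S_2$ forces a term $m$ to appear in $CS(s)$, a
contradiction.
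Hence $CS(s)$ consists of $\ell$ and $\ell+1$; since $CS(s)$ contains
$m+1$ but no larger term, $\ell+1=m+1$, i.e.\ $CS(s)$ consists of $m$ and
$m+1$.
As noted above, $\phi(\alpha)$ contains a subword whose $S$-sequence
contains $S_1$ as a subsequence, and every term of $S_1$ is $m$ or $m+1$;
this exhibits $S_1$ as a subsequence of $CS(s)$.
Thus the conclusion of Proposition~\ref{prop:general_m_and_m+1} holds for
$CS(s)$ under Hypothesis~B.
\end{proof}
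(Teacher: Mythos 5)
Your overall skeleton matches the paper's: invoke \cite[Lemma~3.8]{lee_sakuma_2}, use Lemma~\ref{lem:general_prelim_3-1} to cap the terms at $m+1$, rule out the alternative $CS(s)=\lp \ell,\ell\rp$, and conclude $\ell=m$. The gap is in the step that excludes $CS(s)=\lp m+1,m+1\rp$, and it is a genuine one. First, you assume $m_3\ge 2$, but the hypothesis only gives $r=[m,1,m_3,\dots,m_k]$ with $m\ge 2$ and $k\ge 4$; the entry $m_3$ may equal $1$. When $k=4$ and $m_3=1$ (e.g.\ $r=[2,1,1,2]=5/13$, which is general), the block $T_1$ has length one and $S_1=((m_3+1)\langle m+1\rangle)=(m+1,m+1)$ exactly, so no "three consecutive $m+1$'s" argument is available and $CS(s)=\lp m+1,m+1\rp=\lp S_1\rp$ is not excluded by counting terms.

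Second, and independently of the value of $m_3$, your claim that "the junction of $S_1$ with the adjacent copy of $S_2$ forces a term $m$ to appear in $CS(s)$" is false. Under Hypothesis~B one has $\phi(\partial D_i^+)\equiv y_iw_iz_i$ with $S(w_i)=S_1$, and the flanking subwords $y_i,z_i$ (which are the only parts of the outer boundary label that could carry entries of $S_2$) are allowed to be empty; the $S_1$--$S_2$ junctions of $CS(r)$ then occur entirely on $\phi(\partial D_i^-)$, i.e.\ on the inner side of the $2$-cell, and contribute nothing to $CS(\phi(\alpha))=CS(s)$. This is precisely the configuration the paper has to confront: if $CS(s)=\lp m+1,m+1\rp$ then $J$ consists of a single $2$-cell with $CS(\phi(\partial D_1^+))=\lp S_1\rp$, hence $CS(\phi(\partial D_1^-))=\lp S_2,S_1,S_2\rp$, so $S(\phi(e_2'e_1'))$ contains a subsequence $(\ell_1,S_2,S_2,\ell_2)$; the contradiction is then extracted from the \emph{inner} boundary (if $J=M$, $CS(s')$ contains both two consecutive $m$'s and two consecutive $m+1$'s, contradicting \cite[Lemma~3.8]{lee_sakuma_2}; if $J\subsetneq M$, the subsequence $(S_2,S_2)$ appears in $CS(r)$, which is impossible). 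Your proof needs to be repaired by replacing the junction argument with this inner-boundary analysis. The final step, that $CS(s)$ consisting of $m$ and $m+1$ forces $S_1$ to survive as a subsequence of $CS(s)$ because $S_1$ begins and ends with $m+1$ (so its end terms cannot merge with neighbours without producing a term exceeding $m+1$), is fine, though stated a little loosely.
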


\begin{proof}
By \cite[Lemma~3.8]{lee_sakuma_2}, either $CS(s)=\lp \ell, \ell \rp$
or $CS(s)$ consists of $\ell$ and $\ell+1$, for some $\ell \in \ZZ_+$.
Since $CS(\phi(\alpha))=CS(s)$ must
contain a term of the form $m+1+c$ with $c \in \ZZ_+ \cup \{0 \}$,
we have $\ell \ge m+1$ in the first case and $\ell \ge m$ in the second case.
First, if $CS(s)=\lp \ell, \ell \rp$,
then $\ell=m+1$ by Lemma~\ref{lem:general_prelim_3-1},
namely $CS(s)=\lp m+1, m+1 \rp$.
This happens only when $J$ consists of only one
$2$-cell with $CS(\phi(\partial D_1^+))=\lp S_1 \rp =\lp m+1, m+1\rp$.
Then $CS(\phi(\partial D_1^-))=\lp S_2, S_1, S_2\rp$,
and so $S(\phi(e_2'e_1'))$ contains a subsequence of the form $(\ell_1, S_2, S_2, \ell_2)$
with $\ell_1, \ell_2 \in \ZZ_+$.
So, if $J=M$, then $CS(\phi(\delta^{-1}))=CS(s')$ contains two consecutive $m$'s
and two consecutive $m+1$'s, contradicting \cite[Lemma~3.8]{lee_sakuma_2}.
On the other hand, if $J \subsetneq M$, then a subsequence $(S_2, S_2)$
occurs in $CS(\phi(\partial D_1'))=CS(r)$, a contradiction.
Thus $CS(s) \neq \lp \ell, \ell \rp$, and so $CS(s)$ consists of $\ell$ and $\ell+1$.
By Lemma~\ref{lem:general_prelim_3-1}, we have $\ell+1 \le m+1$,
so that $\ell=m$, as desired.
Hence, $CS(s)$ consists of $m$ and $m+1$.
As already observed in the beginning of this subsection,
this implies that $CS(s)$ contains $S_1$ as a subsequence.
\end{proof}

\subsection{The case when Hypothesis~C holds}

We next assume that Hypothesis~C holds.
In this case, $CS(s)$ contains $S_2$ as a subsequence.

Suppose $r=[m, 1, m_3, \dots, m_k]$, where $m \ge 2$ and $k \ge 4$.
Then $S_2$ begins and ends with $(m, m_3 \langle m+1 \rangle, m)$
by Remark~\ref{rem:general_decomposition}(2).
Hence $CS(\phi(\alpha))=CS(s)$ contains terms $m$ and $m+1$,
and therefore $CS(\phi(\alpha))=CS(s)$ consists of $m$ and $m+1$
by \cite[Lemma~3.8]{lee_sakuma_2}. So, Proposition~\ref{prop:general_m_and_m+1} holds in this case.

On the other hand, for $r=[m,m_2, \dots, m_k]$, where $m \ge 2$, $m_2 \ge 2$ and $k \ge 3$,
we prove the following lemmas, by which we prove Proposition~\ref{prop:general_m_and_m+1}
for this type of $r$.

\begin{lemma}
\label{lem:general_prelim_1-2_easy}
Let $r=[m,m_2, \dots, m_k]$, where $m \ge 2$, $m_2 \ge 2$ and $k \ge 3$.
Under Hypothesis~C, $CS(s)$ consists of at least three terms including $m$.
\end{lemma}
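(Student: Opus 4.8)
The plan is as follows. Recall from the paragraph preceding the statement that under Hypothesis~C the cyclic sequence $CS(s)$ contains $S_2$ as a subsequence, and that by Remark~\ref{rem:general_decomposition}(1) the block $S_2$ begins and ends with $(m_2\langle m\rangle)$ with $m_2\ge 2$. In particular the term $m$ occurs in $S_2$, hence in $CS(s)$; this already settles the ``including $m$'' half of the assertion, so the remaining task is to show that $CS(s)$ consists of at least three terms.

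For this I would first run the case analysis supplied by \cite[Lemma~3.8]{lee_sakuma_2}. If $CS(s)$ had fewer than three terms, then, since $CS(s)$ is never a one-term cyclic sequence for $s\in I_1(r)\cup I_2(r)$, we would have $CS(s)=\lp\ell,\ell\rp$ for some $\ell\in\ZZ_+$; because $CS(s)$ contains $S_2$ and the two extremal terms displayed in $S_2$ both equal $m$, this forces $\ell=m$, so that $CS(s)=\lp m,m\rp$ and $S_2$ is a cyclic subword of $\lp m,m\rp$ (in particular $m_2=2$ and $S_2=(m,m)$). Thus the whole lemma reduces to excluding the possibility $CS(s)=\lp m,m\rp$. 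To do this I would argue by the same mechanism used throughout Section~\ref{sec:technical_lemmas}: if $CS(\phi(\alpha))=CS(s)=\lp m,m\rp$, then $\phi(\alpha)$ traverses only a very short arc of the $2$-cell boundaries of the annular diagram $J$; since $CS(r)=\lp S_1,S_2,S_1,S_2\rp$ with $S_1$ beginning and ending with $(m+1,(m_2-1)\langle m\rangle,m+1)$, the complementary arc along such a $2$-cell boundary carries a term $\ge m+1$ lying next to the block $(m,m)$, and one reads off --- according as $J=M$ or $J\subsetneq M$ --- that either $CS(\phi(\delta^{-1}))=CS(s')$ contains two values too far apart to be compatible with \cite[Lemma~3.8]{lee_sakuma_2}, or $CS(\phi(\partial D_1'))=CS(r)$ is forced to contain a sub-block that contradicts the shape $\lp S_1,S_2,S_1,S_2\rp$ of Remark~\ref{rem:general_decomposition}(1). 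Alternatively, one notes that $CS(s)=\lp m,m\rp$ pins $s$ down to a very restricted slope and then invokes \cite[Proposition~3.19(1)]{lee_sakuma_2} to conclude $s\notin I_1(r)\cup I_2(r)$, using that $r$ is general and in particular $r\ne 1/n$. In every case we reach a contradiction, so $CS(s)$ has at least three terms.

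The routine ingredients are the passage from ``$CS(s)$ contains $S_2$'' to ``$m$ is a term of $CS(s)$'' and the reduction via \cite[Lemma~3.8]{lee_sakuma_2}. The one genuinely delicate point, which I expect to be the main obstacle, is the exclusion of $CS(s)=\lp m,m\rp$: one must identify precisely which forbidden sub-block gets fed into $CS(r)$ (or $CS(s')$) by the complementary boundary arc --- equivalently, one must check that the slope with $CS(s)=\lp m,m\rp$ cannot belong to $I_1(r)\cup I_2(r)$ for a general $r$ --- and this requires a careful look at the block structure of $CS(r)$ in Remark~\ref{rem:general_decomposition}(1) rather than just a length count.
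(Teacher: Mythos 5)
Your first half (the term $m$ occurs in $CS(s)$) is fine, but the second half has a genuine gap. You reduce correctly, via \cite[Lemma~3.8]{lee_sakuma_2}, to excluding $CS(s)=\lp m,m\rp$, but you never actually exclude it: you offer two alternative strategies and assert that ``in every case we reach a contradiction'' without carrying either one out. The second alternative is in fact unlikely to work as stated --- \cite[Proposition~3.19(1)]{lee_sakuma_2} is invoked elsewhere in the paper only when $CS(s)$ contains \emph{both} $S_1$ and $S_2$, and a two-term sequence $\lp m,m\rp$ gives no such pair; the corresponding slope can perfectly well lie in $I_1(r)$. The first alternative (a diagram-chasing contradiction in $CS(s')$ or $CS(r)$) could presumably be made to work, but as written it identifies no specific forbidden sub-block, so it is a plan, not a proof.

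The idea you are missing is that Hypothesis~C gives strictly more than ``$CS(s)$ contains $S_2$'': it says $S(\phi(\partial D_i^+))=S(y_iw_iz_i)$ with $S(w_i)=S_2$ and $y_i,z_i$ \emph{nonempty}, so $CS(s)$ contains a subsequence of the form $(\ell,S_2,\ell')$ with $\ell,\ell'\in\ZZ_+$, i.e.\ it \emph{properly} contains $S_2$. Since $S_2$ begins with $(m_2\langle m\rangle)$ and $m_2\ge 2$, the block $S_2$ alone already has at least two terms, both equal to $m$, and the proper containment supplies at least one further term. This is the paper's entire (one-line) proof; no appeal to \cite[Lemma~3.8]{lee_sakuma_2} and no case analysis on $J=M$ versus $J\subsetneq M$ is needed. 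Note that the distinction matters: for $r=[m,2,2]$ one has $S_2=(m,m)$ exactly, so mere containment of $S_2$ cannot rule out $CS(s)=\lp m,m\rp$ --- only the proper containment does.
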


\begin{proof}
The assertion immediately follows from the fact that
$CS(\phi(\alpha))=CS(s)$ properly contains $S_2$ which begins and ends with
$(m_2 \langle m \rangle)$ (see Remark~\ref{rem:general_decomposition}(1)).
\end{proof}

\begin{lemma}
\label{lem:general_prelim_1-2}
Let $r=[m,m_2, \dots, m_k]$, where $m \ge 2$, $m_2 \ge 2$ and $k \ge 3$.
Under Hypothesis~C, the following hold for every $i$.
\begin{enumerate}[\indent \rm (1)]
\item $S(z_{i, e}y_{i+1, b}) \neq (m-1)$.

\item $S(z_{i, e}y_{i+1, b}) \neq (m-1, m-1)$.

\item $S(z_{i, e}y_{i+1, b}) \neq (m-1, m)$ and $S(z_{i, e}y_{i+1, b}) \neq (m, m-1)$.
\end{enumerate}
\end{lemma}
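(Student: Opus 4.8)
The plan is to follow the template of the proofs of Lemmas~\ref{lem:case1-2} and \ref{lem:case3-2}, of which this lemma is the general-slope counterpart. In each of the three parts I argue by contradiction; after a cyclic shift of indices we may assume that the offending $S$-sequence is realized by the junction word $z_{1,e}y_{2,b}$ lying between the $2$-cells $D_1$ and $D_2$ of $J$. By Remark~\ref{rem:general_decomposition}(1), under Hypothesis~C each of $S(\phi(\partial D_i^{+}))$ and $S(\phi(\partial D_i^{-}))$ contains the block $S_2$ as a central sub-sequence, where $S_2$ begins and ends with $(m_2\langle m\rangle)$ and $S_1$ begins and ends with $(m+1,(m_2-1)\langle m\rangle,m+1)$. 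In particular $z_{i,e}$ and $y_{i+1,b}$ are sub-words whose $S$-sequences are single terms not exceeding $m$; moreover if $z_1\ne z_{1,e}$ (resp.\ $y_2\ne y_{2,b}$), then $S(w_1z_1)$ (resp.\ $S(y_2w_2)$) already forces a term $\ge m+1$ into $CS(\phi(\alpha))=CS(s)$. This last remark lets me reduce parts (2) and (3) to the case $z_1=z_{1,e}$, $y_2=y_{2,b}$: otherwise $CS(s)$ would contain both a term $\le m-1$ and a term $\ge m+1$, contradicting \cite[Lemma~3.8]{lee_sakuma_2}.

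Once the lengths of $z_{1,e}$ and $y_{2,b}$ are pinned down in each case ($|z_{1,e}|+|y_{2,b}|=m-1$ in (1), $|z_{1,e}|=|y_{2,b}|=m-1$ in (2), and $\{|z_{1,e}|,|y_{2,b}|\}=\{m-1,m\}$ in (3)), I split into the two usual cases. If $J=M$, the local picture of the annular diagram, drawn following Convention~\ref{con:figure} in the style of Figures~\ref{fig.II-lemma-1-1}--\ref{fig.II-lemma-3-4}, shows that the complementary arcs $z_{1,e}'$, $y_{2,b}'$, together with the neighbouring pieces coming from the $m$-runs of $S_2$, force $CS(\phi(\delta^{-1}))=CS(s')$ to contain a ``small'' term (one of $1$, $2$, $m-d$) coming from the short complementary arc, together with a term $\ge m+1$ (of the form $m+1$ or $m+2+c$) coming from a segment labelled $S_1$ or from a full $m_2$-run of an $S_2$. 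Since $m\ge 2$ makes these two values differ by at least $2$, this contradicts \cite[Lemma~3.8]{lee_sakuma_2}.

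If $J\subsetneq M$, the essential input is \cite[Lemma~3.1(2)]{lee_sakuma_3}: none of the words $S(\phi(e_j'))$ contains $S_2$ in its interior. This forces the initial vertex of $e_2'$ to lie in the interior of the central $S_2$-segment of $\partial D_1^-$ and the terminal vertex of $e_3'$ to lie in the interior of the central $S_2$-segment of $\partial D_2^-$ (with the $S_1$-segment playing this role when it is the relevant block). Reading off the resulting picture, $S(\phi(e_2'e_3'))$ contains a sub-sequence of the form $(\ell_1,c,\ell_2)$ with $\ell_1,\ell_2\in\ZZ_+$ and $c\in\{1,2,m-d\}$, a value that does not occur in $CS(r)=\lp S_1,S_2,S_1,S_2\rp$; since this sub-sequence must occur in $CS(\phi(\partial D_1'))=CS(r)$, we get a contradiction. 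For the sub-case $S(z_{1,e}y_{2,b})=(m-1,m)$ in part (3), I argue instead exactly as in the proof of Lemma~\ref{lem:case1-2}(2): the edges $e_2'$ and $e_3'$ are forced to be short, so that $\phi(e_2''e_3'')$ must contain a subword $w$ with $S(w)$ having $(S_1,S_2)$ as a proper initial, or $(S_2,S_1)$ as a proper terminal, sub-sequence, which is impossible by \cite[Corollary~3.25(2)]{lee_sakuma_2}.

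The main obstacle, as in the base cases, is the $J\subsetneq M$ analysis: one must keep precise track of which boundary segment of $\partial D_i^-$ each endpoint of $e_2'$ and $e_3'$ falls into, together with the orientation conventions, in order to locate the forbidden short sub-sequence of $S(\phi(e_2'e_3'))$. The bookkeeping is more delicate than in Lemmas~\ref{lem:case1-2} and \ref{lem:case3-2} because $S_1$ and $S_2$ are now arbitrary blocks (merely known to begin and end with prescribed runs) rather than the explicit short sequences of the base cases; but the logical skeleton --- ``a term too small to live in $CS(s)$, $CS(s')$, or $CS(r)$'' --- is unchanged.
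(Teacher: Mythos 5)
Your overall skeleton (reduce to $z_1=z_{1,e}$, $y_2=y_{2,b}$, then read off the complementary junction on $\partial D_1^-\partial D_2^-$ and derive a forbidden term in $CS(s')$ or in $CS(r)=CS(\phi(\partial D_1'))$ via \cite[Lemma~3.1(2)]{lee_sakuma_3}) is exactly what the paper does for parts (1) and (3). But there are two concrete problems. First, for part (2) your diagram-chasing route does not close: when $S(z_{1,e}y_{2,b})=(m-1,m-1)$ the complementary junction terms are $(m+1)-(m-1)=2$ on each side, so the ``small'' term you would extract is $2$. Since the lemma allows $m=2$, this term equals $m$ and is a perfectly legal term of $CS(r)$, of $CS(s)$ and of $CS(s')$, so no contradiction results. (This is precisely why the analogous base-case statement, Lemma~\ref{lem:case3-2}(3), is only proved under the hypothesis $m\ge 3$, where the paper notes $m+1\ge 4$ before invoking \cite[Lemma~3.8]{lee_sakuma_2}.) The paper's actual proof of (2) is entirely different and needs no diagram: the assumption puts two consecutive $m-1$'s into $CS(s)$, while Hypothesis~C together with the fact that $S_2$ begins with $(m_2\langle m\rangle)$, $m_2\ge 2$, puts two consecutive $m$'s into $CS(s)$, and these cannot coexist by \cite[Lemma~3.8]{lee_sakuma_2}. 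You need this (or some other $m=2$-proof argument) to repair part (2).

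Second, your summary of the mechanism for part (1) is wrong in a way that suggests the arithmetic was not checked: when $S(z_{1,e}y_{2,b})=(m-1)$ there is no sign change at the junction, the complementary arcs merge, and the term produced on the other boundary is $|z_{1,e}'|+|y_{2,b}'|=2(m+1)-(m-1)=m+3$ --- a term too \emph{large}, not one of your listed values $1$, $2$, $m-d$, and not covered by your closing slogan ``a term too small to live in $CS(s)$, $CS(s')$, or $CS(r)$.'' The contradiction still comes out (a term $m+3$ alongside a term $m$ violates \cite[Lemma~3.8]{lee_sakuma_2}, and $m+3$ cannot occur in $CS(r)$), so this is repairable, but as written the plan for (1) would have you looking for the wrong thing. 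Also note that the reduction to $z_1=z_{1,e}$, $y_2=y_{2,b}$ is needed in part (1) as well, not only in (2) and (3). For part (3) your fallback to the \cite[Corollary~3.25(2)]{lee_sakuma_2} reducible-pair argument is unnecessary here: unlike the base case, the vertices of $e_2'$ and $e_3'$ are forced into the interiors of the (long) $S_2$-segments, so $S(\phi(e_2'e_3'))$ always contains $(\ell_1,2,1,\ell_2)$ with $\ell_1,\ell_2\in\ZZ_+$ and the term $1$ gives the contradiction directly; this is harmless but extra work.
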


\begin{proof}
(1) Suppose on the contrary that $S(z_{1, e}y_{2, b})=(m-1)$.
Then we have $z_1=z_{1, e}$ and $y_2=y_{2, b}$, for otherwise
$CS(\phi(\alpha))=CS(s)$ contains both a term $m-1$ and a term $m+1$,
contradicting \cite[Lemma~3.8]{lee_sakuma_2}.
By using \cite[Lemma~3.1(2)]{lee_sakuma_3} as in the proof of
Lemma~\ref{lem:case1-2}(1), we see that
$S(\phi(e_2'e_3'))$ contains a subsequence of the form
$(\ell_1, m+3, \ell_2)$ with $\ell_1, \ell_2 \in \ZZ_+$.
So, if $J=M$, then $CS(\phi(\delta^{-1}))=CS(s')$ contains both
a term $m$ and a term $m+3$, contradicting \cite[Lemma~3.8]{lee_sakuma_2}.
On the other hand, if $J \subsetneq M$, then a term $m+3$ occurs in
$CS(\phi(\partial D_1'))=CS(r)$, a contradiction.

(2) Suppose on the contrary that $S(z_{1, e}y_{2, b})=(m-1, m-1)$.
Then $CS(\phi(\alpha))=CS(s)$ involves two consecutive $m-1$'s.
On the other hand, since $CS(s)$ contains $S_2$,
which begins and ends with $(m_2 \langle m \rangle)$,
we see that $CS(s)$ also contains two consecutive $m$'s.
This is a contradiction to \cite[Lemma~3.8]{lee_sakuma_2}.

(3) Suppose on the contrary that $S(z_{1, e}y_{2, b})=(m-1, m)$.
(The other case is treated similarly.)
As in the proof of (1),
$|z_{1, e}|=m-1$, $|y_{2, b}|=m$,
$z_1=z_{1, e}$ and $y_2=y_{2, b}$.
By using \cite[Lemma~3.1(2)]{lee_sakuma_3} as in the proof of
Lemma~\ref{lem:case1-2}(2), we see that
$S(\phi(e_2'e_3'))$ contains a subsequence of the form
$(\ell_1, 2, 1, \ell_2)$ with $\ell_1, \ell_2 \in \ZZ_+$.
Hence, if $J=M$, then $CS(\phi(\delta^{-1}))=CS(s')$ contains both
a term $1$ and a term $m+1$.
Since $m+1 \ge 3$, we have a contradiction to \cite[Lemma~3.8]{lee_sakuma_2}.
On the other hand, if $J \subsetneq M$, then a term
$1$ occurs in $CS(\phi(\partial D_1'))=CS(r)$, a contradiction.
\end{proof}

\begin{corollary}
\label{cor:general_prelim_1-2}
Let $r=[m,m_2, \dots, m_k]$, where $m \ge 2$, $m_2 \ge 2$ and $k \ge 3$.
Under Hypothesis~C, the conclusion of Proposition~\ref{prop:general_m_and_m+1} holds.
\end{corollary}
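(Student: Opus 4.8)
The plan is to argue exactly as in Case~2.a of Section~\ref{sec:proof_of_main_theorem_1} and Case~2.b of Section~\ref{sec:proof_of_main_theorem_3}, with Lemmas~\ref{lem:general_prelim_1-2_easy} and \ref{lem:general_prelim_1-2} taking over the roles played there by the earlier technical lemmas. As noted after Proposition~\ref{prop:general_m_and_m+1}, it is enough to establish the assertion for $CS(s)$. First I would invoke \cite[Lemma~3.8]{lee_sakuma_2}: $CS(s)$ is either $\lp \ell, \ell \rp$ or consists of $\ell$ and $\ell+1$ for some $\ell \in \ZZ_+$; and by Lemma~\ref{lem:general_prelim_1-2_easy} it has at least three terms, one of which is $m$. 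Hence $CS(s)$ consists of $\ell$ and $\ell+1$ with $\ell \in \{m-1, m\}$. If $\ell = m$, then $CS(s)$ consists of $m$ and $m+1$, and since under Hypothesis~C it is already known that $CS(s)$ contains $S_2$ as a subsequence, the full conclusion of Proposition~\ref{prop:general_m_and_m+1} follows. So the whole problem reduces to excluding the possibility $\ell = m-1$.

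Suppose, then, that $CS(s)$ consists of $m-1$ and $m$. Since $CS(s)$ contains $S_2$, which by Remark~\ref{rem:general_decomposition}(1) begins and ends with $(m_2 \langle m \rangle)$ and $m_2 \ge 2$, the subsequence $(m,m)$ occurs in $CS(s)$; hence, by \cite[Lemma~3.8]{lee_sakuma_2}, the subsequence $(m-1,m-1)$ does not occur in $CS(s)$. Now a term $m-1$ does occur in $CS(s)$, and the corresponding maximal run of $\phi(\alpha)$ has length $m-1$, so it is disjoint from every ``interior'' block $w_i$ of the Hypothesis~C decomposition $\phi(\alpha) \equiv \cdots y_i w_i z_i y_{i+1} w_{i+1} z_{i+1} \cdots$, all of whose syllables have length $\ge m$ (this is the analogue for $r = [m,m_2,\dots,m_k]$ of the description of $S(\phi(\partial D_i^{\pm}))$ recorded in Remark~\ref{rem:(2)holds}(1)). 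As in Case~2.a of Section~\ref{sec:proof_of_main_theorem_1}, this forces the term $m-1$ into a connecting syllable: using that $S(z_{i,e})$ and $S(y_{i+1,b})$ are single terms and that $(m-1,m-1)$ does not occur in $CS(s)$, one gets that $S(z_{i,e}y_{i+1,b})$ equals $(m-1)$, $(m-1,m)$ or $(m,m-1)$ for some $i$. Each of these is impossible by Lemma~\ref{lem:general_prelim_1-2}, a contradiction, so $\ell \ne m-1$ and the proof is complete.

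I expect the one delicate point to be the bookkeeping behind the localisation of the $m-1$ term, namely pinning down the shape of $S(\phi(\partial D_i^{\pm}))$ under Hypothesis~C for a general slope $r = [m,m_2,\dots,m_k]$, so that the blocks $w_i$ carry only terms $m$ and $m+1$ while the syllables $z_{i,e}$, $y_{i,b}$ are single terms not exceeding $m$; this is what guarantees that a run of $CS(s)$ of length $m-1$ can only sit at a $z_i y_{i+1}$ junction. Granting this structural input, the run-length comparison and the appeal to Lemma~\ref{lem:general_prelim_1-2} are routine, exactly as in the $[m,n]$ and $[m,1,n]$ cases already treated in Sections~\ref{sec:proof_of_main_theorem_1} and \ref{sec:proof_of_main_theorem_3}.
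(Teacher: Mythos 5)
Your proposal is correct and follows essentially the same route as the paper: Lemma~\ref{lem:general_prelim_1-2_easy} plus \cite[Lemma~3.8]{lee_sakuma_2} to pin $CS(s)$ down to $\{m-1,m\}$ or $\{m,m+1\}$, then Lemma~\ref{lem:general_prelim_1-2} to kill the $m-1$ alternative, with the ``moreover'' clause coming from the standing observation that Hypothesis~C forces $S_2\subset CS(s)$. The only difference is that you spell out the localisation of an $m-1$ term to a junction $S(z_{i,e}y_{i+1,b})$ (and dispose of $(m-1,m-1)$ via the $(m,m)$-in-$S_2$ observation rather than citing Lemma~\ref{lem:general_prelim_1-2}(2) directly), a step the paper's proof leaves implicit.
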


\begin{proof}
By Lemma~\ref{lem:general_prelim_1-2_easy}, $CS(s)$ consists of at least three terms including $m$.
Also, Lemma~\ref{lem:general_prelim_1-2} shows that no term of $CS(s)$ can be $m-1$.
Hence by \cite[Lemma~3.8]{lee_sakuma_2}, $CS(s)$ must consist of $m$ and $m+1$.
Moreover, we have already observed that $CS(s)$ contains $S_2$ as a subsequence.
\end{proof}

Thus we have proved Proposition~\ref{prop:general_m_and_m+1}.

\section{Transformation of diagrams for the general cases}
\label{sec:transformation}

We first introduce a concept for a vertex of $M$ to be
converging, diverging or mixing.
To this end, we subdivide the edges of $M$ so that the label of
any oriented edge in the subdivision has length $1$.
We call each of the edges in the subdivision a {\it unit segment}
in order to distinguish them from the edges in the original $M$.

\begin{definition}
\label{def:vertex_type}
{\rm
(1) A vertex $x$ in $M$ is said to be {\it converging} (resp., {\it diverging})
if the set of labels of incoming
unit segments of $x$ is $\{a, b\}$ (resp., $\{a^{-1}, b^{-1}\}$).
See Figure~\ref{fig.converging} and its caption for description.

(2) A vertex $x$ in $M$ is said to be {\it mixing}
if the set of labels of incoming
unit segments of $x$ is $\{a, a^{-1},b, b^{-1}\}$.
See Figure~\ref{fig.impossible_tsequence} and its caption for description.
}
\end{definition}

\begin{figure}[h]
\includegraphics{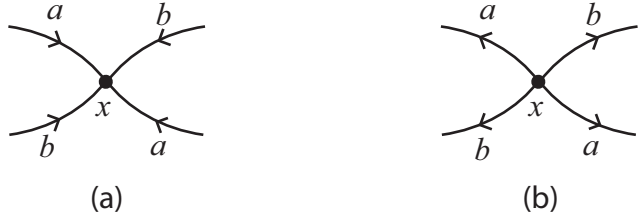}
\caption{
Orient each of the unit segment so that
the associated label is equal to $a$ or $b$.
Then a vertex $x$ is (a) converging (resp., (b) diverging)
if all unit segments incident on $x$
are oriented so that they are converging into $x$
(resp., diverging from $x$).}
\label{fig.converging}
\end{figure}

\begin{figure}[h]
\includegraphics{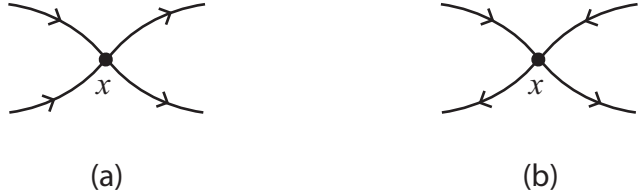}
\caption{
A vertex $x$ is mixing if it looks like as in the above when
we orient the segments as in Convention~\ref{con:figure}, namely,
the change of directions of consecutive arrowheads
represents the change from positive (negative, resp.) words
to negative (positive, resp.) words.}
\label{fig.impossible_tsequence}
\end{figure}

As we declared in the previous section,
we assume that $r$ is general.
The main purpose of this section
is to show that, under Hypothesis~A, we may modify $M$ so that
every vertex $x$ in $M$ with $\deg_M (x)=4$
is either converging or diverging
(see Corollary~\ref{cor:general_critical(aa)}).

\subsection{The case when vertices lie in the outer boundary layer}

We first treat a vertex $x$ in the outer boundary layer $J$ with $\deg_J (x)=4$.
To do this, we need several lemmas.

\begin{lemma}
\label{lem:converging}
Assume that $r$ is general.
Under Hypothesis~B,
suppose that the vertex between $D_i$ and $D_{i+1}$
is either converging or diverging.
Then none of the following occurs.
\begin{enumerate}[\indent \rm (1)]
\item $S(\phi(\partial D_i^+))$ ends with $S_1$.

\item $S(\phi(\partial D_{i+1}^+))$ begins with $S_1$.

\item $S(\phi(\partial D_i^-))$ ends with $S_1$.

\item $S(\phi(\partial D_{i+1}^-))$ begins with $S_1$.
\end{enumerate}
\end{lemma}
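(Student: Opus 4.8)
The plan is to argue by contradiction, treating the four cases symmetrically, so it suffices to dispose of case (1), say that $S(\phi(\partial D_i^+))$ ends with $S_1$. First I would record what ``converging or diverging'' means combinatorially at the vertex $x$ lying between $D_i$ and $D_{i+1}$: after subdividing into unit segments and orienting them as in Convention~\ref{con:figure}, the four unit segments incident on $x$ are all incoming with labels forming $\{a,b\}$ (converging) or all outgoing with labels $\{a^{-1},b^{-1}\}$ (diverging). In either case, the directions of the arrowheads on the two unit segments of $\partial D_i$ meeting $x$ agree with those on the two unit segments of $\partial D_{i+1}$ meeting $x$; in the language of Convention~\ref{con:figure} there is \emph{no} change of sign across $x$ on the side of $\partial D_i^+ \cup \partial D_{i+1}^+$ (nor on the side of $\partial D_i^- \cup \partial D_{i+1}^-$). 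Thus locally the positive boundary word reads through $x$ without a sign change, i.e.\ the terminal letter of the word along $\partial D_i^+$ and the initial letter of the word along $\partial D_{i+1}^+$ have the same sign.

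Next I would combine this with the hypothesis. Under Hypothesis~B, by Remark~\ref{rem:general_decomposition} the sequence $S_1$ both begins and ends with the block coming from $m+1$ (precisely, $S_1$ begins and ends with $(m+1, (m_2-1)\langle m\rangle, m+1)$ in the $[m,m_2,\dots,m_k]$ case, or with $((m_3+1)\langle m+1\rangle)$ in the $[m,1,m_3,\dots,m_k]$ case). If $S(\phi(\partial D_i^+))$ ends with $S_1$, then the word $\phi(\partial D_i^+)$ ends with a subword $w$ whose $S$-sequence is $S_1$, and in particular the last ``syllable'' of $\phi(\partial D_i^+)$ is a maximal positive or negative run of length $m+1$ (resp.\ $m_3+1$). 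Because $x$ is converging or diverging, this run is not terminated by a sign change at $x$; hence it must continue into $\phi(\partial D_{i+1}^+)$, forcing the run to have length strictly greater than $m+1$ (resp.\ $m_3+1$). This produces, in $CS(\phi(\alpha))=CS(s)$, a term strictly larger than $m+1$ — contradicting Proposition~\ref{prop:general_m_and_m+1} (together with \cite[Lemma~3.8]{lee_sakuma_2}), which says $CS(s)$ consists only of $m$ and $m+1$. A symmetric argument, reading along the inner boundary, handles cases (3) and (4); cases (2) and (4) are obtained from (1) and (3) by reversing the roles of $D_i$ and $D_{i+1}$ (equivalently, reversing orientation).

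I would present this with a figure in the style of Convention~\ref{con:figure} showing the local picture at a converging vertex with $S_1$ abutting it, making visible that the two maximal runs of equal sign on either side of $x$ amalgamate. The one subtlety to be careful about is the case $m=2$ in the second family, where the block at the end of $S_1$ has length $m_3+1 \ge 3$ while $CS(s)$ consists of $m=2$ and $m+1=3$; there the amalgamated run has length $\ge 4 > m+1$, so the contradiction with Proposition~\ref{prop:general_m_and_m+1} still goes through. I expect the main obstacle to be purely bookkeeping: pinning down exactly which letter (and of which sign) sits at the end of $\phi(\partial D_i^+)$ and at the start of $\phi(\partial D_{i+1}^+)$, and verifying that the convergence/divergence condition genuinely forbids a sign change there rather than merely constraining the labels; once that local claim is nailed down, the contradiction with $CS(s) \subseteq \{m,m+1\}$ is immediate.
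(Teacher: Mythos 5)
Your proof hinges on a local claim that is the opposite of what ``converging or diverging'' actually entails. At a converging vertex all four unit segments point \emph{into} $x$ (Figure~\ref{fig.converging}), so when you read the outer boundary word through $x$ you traverse the last unit segment of $\partial D_i^+$ \emph{with} its orientation (reading a positive letter) and the first unit segment of $\partial D_{i+1}^+$ \emph{against} its orientation (reading a negative letter): in the sense of Convention~\ref{con:figure} there \emph{is} a sign change at $x$ --- along the outer boundary, along the inner boundary, and along each of $\partial D_i$ and $\partial D_{i+1}$. (This is consistent with how the notion is used elsewhere: in the proof of Lemma~\ref{lem:general_critical(a)}, the non-amalgamated pattern $(m,d)$ is exactly what forces the vertex to be converging or diverging, while amalgamation into $(m+d)$ occurs precisely when it is not.) Consequently the terminal $(m+1)$-run of $S_1$ at the end of $\phi(\partial D_i^+)$ does \emph{not} amalgamate with anything in $\phi(\partial D_{i+1}^+)$; it contributes a term equal to $m+1$ to $CS(s)$, and no term exceeding $m+1$ is produced. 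The contradiction you aim for with Proposition~\ref{prop:general_m_and_m+1} therefore never materializes.

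The actual obstruction is of a different nature. If $S(\phi(\partial D_i^+))$ ends with $S_1$ then $z_i$ is empty and $S(\phi(\partial D_i^-))$ ends with $(S_1,S_2)$; because the vertex is converging or diverging, the sign changes at $x$ force this $S_2$ to survive as a genuine subsequence of the relevant cyclic $S$-sequence, and $S_1$ survives as well (since $CS(s)$ and $CS(s')$ consist of $m$ and $m+1$ while $S_1$ begins and ends with $m+1$). Hence $CS(s')$ (in cases (1),(2)) or $CS(s)$ (in cases (3),(4)) contains both $S_1$ and $S_2$ as subsequences, which contradicts $s,s'\in I_1(r)\cup I_2(r)$ by \cite[Proposition~3.19(1)]{lee_sakuma_2} --- but only when $J=M$. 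When $J\subsetneq M$, cases (1) and (2) require a separate mechanism: one shows that $D_i$ and the inner $2$-cell sharing the segment labelled $S_2$ form a reducible pair, contradicting the reducedness of $M$ (using \cite[Lemma~3.1(1)]{lee_sakuma_3} and \cite[Proposition~3.12]{lee_sakuma_2}). Neither of these two mechanisms appears in your proposal, so the gap is not a matter of bookkeeping but of a missing core idea, and the symmetry you invoke among the four cases also breaks down (cases (1),(2) and cases (3),(4) are handled by genuinely different arguments).
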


\begin{proof}
We may assume that $i=1$ and that the vertex is diverging.
Suppose on the contrary that (1) occurs,
namely suppose that $S(\phi(\partial D_1^+))$ ends with $S_1$.
Then $S(\phi(\partial D_1^-))$ ends with $(S_1,S_2)$.
Since the vertex is diverging,
$S(\phi(\partial D_1^-)\phi(\partial D_2^-))$ contains
a subsequence $(S_1,S_2,d)$ for some $d\in\ZZ_+$.
Suppose $J=M$.
Then $CS(\phi(\delta^{-1}))=CS(s')$ contains $S_2$.
(See Figure~\ref{fig.S_1_occurs}(a),
keeping in mind Convention~\ref{con:figure}.)
Moreover, the subsequence $S_1$ of $S(\phi(\partial D_1^-))$
also forms a subsequence of $CS(\phi(\delta^{-1}))=CS(s')$,
because $CS(s')$ consists of $m$ and $m+1$
(Proposition~\ref{prop:general_m_and_m+1}) and
$S_1$ begins and ends with $m+1$.
Thus $CS(s')$ contains both $S_1$ and $S_2$,
yielding that $s' \notin I_1(r) \cup I_2(r)$
by \cite[Proposition~3.19(1)]{lee_sakuma_2}, a contradiction.
Suppose $J \subsetneq M$.
Then we see in the following that
the two $2$-cells $D_1$ and $D_1'$ in Figure~\ref{fig.S_1_occurs}(b)
form a reducible pair, contradicting that
$M$ is a reduced diagram.
To see this, let $\gamma$ (resp., $\gamma'$)
be the boundary cycle of $D_1$ (resp., $D_1'$) which goes around
the boundary in clockwise (resp., counter-clockwise)
direction starting from the vertex between $D_1$ and $D_2$.
Let $\gamma_0$ be the common initial segment of $\gamma$ and
$\gamma'$ such that $S(\phi(\gamma_0))=S_2$.
Then we see by using \cite[Lemma~3.1(1)]{lee_sakuma_3}
that the terminal point of $\gamma_0$
($=$ the initial point of $e_2'$) is contained in the interior
of the segment of $\partial D_1^-$ corresponding to
the segment $S_1$ (see Figure~\ref{fig.S_1_occurs}(b)).
Thus, in both reduced words $\phi(\gamma)$ and $\phi(\gamma')$,
there are ``sign changes'' just before and after $\phi(\gamma_0)$.
Hence $S(\phi(\gamma_0))=S_2$ is a subsequence of
both $CS(\phi(\gamma))=CS(r)$ and $CS(\phi(\gamma'))=CS(r)$.
By \cite[Proposition~3.12]{lee_sakuma_2}(2)
and the fact that
$\phi(\gamma)$ and $\phi(\gamma')$ shares $\phi(\gamma_0)$
as a common initial word,
we must have $\phi(\gamma) \equiv \phi(\gamma')$; so
$D_1$ and $D_1'$ form a reducible pair.
Thus we have proved that (1) does not occur.
Similarly, we can prove that (2) does not occur.

\begin{figure}[h]
\includegraphics{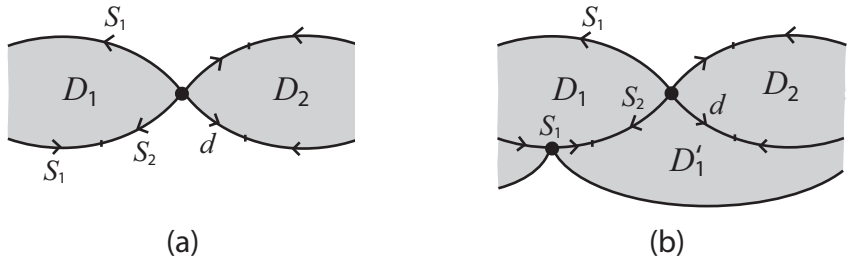}
\caption{
Lemma~\ref{lem:converging}(1)
where $S(\phi(\partial D_1^+))$ ends with $S_1$}
\label{fig.S_1_occurs}
\end{figure}

Suppose on the contrary that (3) occurs, i.e.,
$S(\phi(\partial D_1^-))$ ends with $S_1$.
Then $S(\phi(\partial D_1^+))$ ends with $(S_1,S_2)$.
Since the vertex is diverging, this subsequence $S_2$ of $S(\phi(\partial D_1^+))$
is also a subsequence of $CS(\phi(\alpha))=CS(s)$.
Moreover, we can see as in the proof of (1) that
the subsequence $S_1$ of $S(\phi(\partial D_1^+))$
forms a subsequence of $CS(\phi(\alpha))$.
Thus $CS(\phi(\alpha))$ contains both $S_1$ and $S_2$ as subsequences
and so $s \notin I_1(r) \cup I_2(r)$
by \cite[Proposition~3.19(1)]{lee_sakuma_2}, a contradiction.
The assertion (4) is proved similarly.
\end{proof}

\begin{lemma}
\label{lem:general_critical(a)}
Assume that $r$ is general.
Under Hypothesis~B, we may assume that the following hold
for every face $D_i$ of $J$.
\begin{enumerate}[\indent \rm (1)]
\item $S(\phi(\partial D_i^+))$ contains a subsequence of the form
$(\ell, S_1, \ell')$ with $\ell, \ell' \in \ZZ_+$.

\item $S(\phi(\partial D_i^-))$ contains a subsequence of the form
$(\ell, S_1, \ell')$ with $\ell, \ell' \in \ZZ_+$.
\end{enumerate}
To be precise, we can modify the reduced annular diagram $M$
into a reduced annular diagram $M'$
keeping the outer and inner boundary labels unchanged
so that every $2$-cell of the outer boundary layer of $M'$
satisfies the above conditions.
\end{lemma}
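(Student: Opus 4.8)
We prove assertion~(1); assertion~(2) is obtained by the same argument with parts~(1) and~(2) of Lemma~\ref{lem:converging} replaced by its parts~(3) and~(4), i.e.\ by running the argument on the arcs $\partial D_i^-$ in place of $\partial D_i^+$.

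First I would reformulate what it means for (1) to fail at a fixed face $D_i$ of $J$. Since $\phi(\partial D_i)$ is a cyclic conjugate of $r^{\pm1}$, Remark~\ref{rem:general_decomposition} gives $CS(\phi(\partial D_i))=CS(r)=\lp S_1,S_2,S_1,S_2\rp$. Under Hypothesis~B the arc $\partial D_i^+$ contains exactly one of the two $S_1$-blocks of $\partial D_i$ and meets each $S_2$-block only in a (possibly empty) terminal, resp.\ initial, subsequence; hence $S(\phi(\partial D_i^+))=(A,S_1,B)$, where $A$ is a terminal and $B$ an initial subsequence of an $S_2$-block. Every entry of $S_2$ is a genuine term $\ge 1$ (Remark~\ref{rem:general_decomposition}), so a nonempty $A$ yields a term $\ell\ge 1$ directly preceding $S_1$ and a nonempty $B$ a term $\ell'\ge 1$ directly following it. Thus (1) holds for $D_i$ unless $A=\emptyset$ or $B=\emptyset$, that is, unless $S(\phi(\partial D_i^+))$ begins with $S_1$ or ends with $S_1$.

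Suppose (1) fails for $D_i$; say $S(\phi(\partial D_i^+))$ ends with $S_1$ (the case where it begins with $S_1$ is symmetric, using Lemma~\ref{lem:converging}(2)). By the contrapositive of Lemma~\ref{lem:converging}(1), the vertex $x$ between $D_i$ and $D_{i+1}$ is neither converging nor diverging, hence is mixing (Definition~\ref{def:vertex_type}); in particular $\deg_M(x)\ge 4$ and $M$ looks near $x$ as in Figure~\ref{fig.impossible_tsequence}. Note also that, since the $S_1$-block of $\partial D_i^+$ is flush with $x$, the arc $\partial D_i^-$ leaves $x$ along the $S_2$-block of $\partial D_i$ that follows this $S_1$-block.

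I would then perform a local transformation of $M$ near $x$. Using that $S_1$ begins and ends with the same short initial block (Remark~\ref{rem:general_decomposition}) together with the mixing shape at $x$, one either uncovers a reducible pair --- a cell meeting $\partial D_i^-$ that is matched against $D_i$ along an arc whose $S$-sequence contains $S_2$, which contradicts reducedness of $M$ by \cite[Proposition~3.12]{lee_sakuma_2} exactly as in the last paragraph of the proof of Lemma~\ref{lem:converging} --- or one collapses a bigon or triangle at $x$, obtaining a reduced annular diagram with the same outer and inner boundary labels (only interior edges are moved) in which the flush-$S_1$ configuration at $x$ has disappeared. In the second case the transformation strictly decreases a fixed complexity of the diagram, for instance the lexicographic pair
\[
\bigl(\,\#\{\text{$2$-cells of $M$}\},\ \#\{\text{outer-layer cells whose }\partial D_i^{\pm}\text{ is flush with an }S_1\text{-block at a vertex}\}\,\bigr),
\]
so iterating it finitely often produces the desired diagram $M'$; the symmetric argument applied to the arcs $\partial D_i^-$ then yields (2) simultaneously.

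The step I expect to be the crux is the local transformation: making precise the redrawing at a mixing boundary vertex, and checking at once that it (i) leaves both boundary labels of the annulus unchanged, (ii) keeps the diagram reduced, and (iii) \emph{strictly} lowers the chosen complexity --- in particular, that it does not merely transfer the flush-$S_1$ defect to an adjacent $2$-cell.
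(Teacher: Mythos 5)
Your reduction of the problem to the four prohibited conditions of Lemma~\ref{lem:converging} matches the paper, but the step you yourself flag as the crux --- the local transformation at the bad vertex --- is exactly where the paper's proof does all of its work, and your sketch of it is both incomplete and partly wrong. The paper does not collapse a bigon or triangle (the number of $2$-cells never changes), and no reducible pair is uncovered at this stage. Instead, assuming $S(\phi(\partial D_1^+))$ ends with $S_1$, it first pins down the local letter structure: $|z_1|=0$ forces $S(z_1')=S_2$, hence $S(z_{1,e}')=(m)$, so $S(z_{1,e}'y_2'w_2')$ begins with $(m,d)$ or $(m+d)$; the first alternative would make the vertex converging or diverging (contradiction), and in the second alternative Proposition~\ref{prop:general_m_and_m+1} (that $CS(s)$ consists of $m$ and $m+1$) forces $d=1$ and $S(w_{1,e}y_{2,b})=(m+1,m)$. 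It is precisely this forced equality of adjacent terms ($c_2=d_1$ in the language of Figure~\ref{fig.transformation_2}) that makes the cut-and-reglue move of Figure~\ref{fig.S_1_occurs(b)} possible: one slides the common vertex of $D_1$ and $D_2$ by one segment, which keeps the boundary labels of $J$ (hence of $M$) unchanged. Without deriving this forced structure, there is no reason a label-preserving local move exists at all, so your proof has a genuine gap at its central step.

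Two further points. First, "neither converging nor diverging, hence mixing" is false: Definition~\ref{def:vertex_type} requires all four labels $\{a,a^{-1},b,b^{-1}\}$ to occur for a mixing vertex, and Figure~\ref{fig.vertex_type} exhibits several non-converging, non-diverging vertex types that are not mixing; the subsequent analysis genuinely depends on which type occurs. Second, your termination argument via a lexicographic complexity is unnecessary and, as stated, does not work: the first coordinate (number of $2$-cells) is unchanged by the correct transformation, and the decrease of the second coordinate --- i.e., that the defect is not merely transferred --- is exactly what you leave unproved. The paper avoids this by observing that the new vertex produced by the slide is converging or diverging, so Lemma~\ref{lem:converging} applies directly to rule out all four prohibited conditions at that vertex in the new diagram; one then simply repeats the move at each remaining bad vertex.
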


\begin{proof}
Suppose that the assertion does not hold.
Then one of the four (prohibited) conditions
in Lemma~\ref{lem:converging} holds.
In particular,
the vertex between $D_i$ and $D_{i+1}$
is not converging nor diverging.

Suppose that condition (1) in Lemma~\ref{lem:converging}
occurs. Then we may assume that
$S(\phi(\partial D_1^+))$ ends with $S_1$ and so $|z_1|=0$.
Then $S(z_1')=S_2$, and so $S(z_{1, e}')=(m)$.
Hence the sequence $S(z_{1, e}'y_2'w_2')$ begins with
a subsequence of the form either $(m, d)$ or $(m+d)$,
where $d \in \ZZ_+$.
Suppose that $S(z_{1, e}'y_2'w_2')$ begins with
a subsequence of the form $(m, d)$.
Then the vertex between $D_1$ and $D_2$ is either converging or diverging,
a contradiction.
(In fact, since $CS(s)=CS(\phi(\alpha))$ consists of $m$ and $m+1$
by Proposition~\ref{prop:general_m_and_m+1}
and since $S(\phi(\partial D_1^+))$ ends with $m+1$,
there is a ``sign change'' between
$\phi(\partial D_1^+)$ and $\phi(\partial D_2^+)$.
Thus $J$ is locally
as illustrated in Figure~\ref{fig.S_1_occurs}(a)
up to simultaneous change of the edge orientations.)
So $S(z_{1, e}'y_2'w_2')$ must begin with
a subsequence of the form $(m+d)$. Since $S(\phi(\partial D_1^+))$
ends with a term $m+1$ and since $CS(\phi(\alpha))=CS(s)$ consists of
$m$ and $m+1$, the only possibility is that $d=1$ and
$S(w_{1, e} y_{2, b})=(m+1, m)$.
Then, as illustrated in Figure~\ref{fig.S_1_occurs(b)},
we may transform $M$
so that $S(\phi(\partial D_1^+))$ ends with $(S_1, m)$.
To be precise, we cut $J$ at the black vertex in the left figure in
Figure~\ref{fig.S_1_occurs(b)} and then
identify the two white vertices.
The resulting diagram is illustrated
in the right figure in Figure~\ref{fig.S_1_occurs(b)},
where the black vertex is the image of the white vertices.
It should be noted that the boundary labels of $J$ are unchanged
by this operation
and so we can glue $M-J$ to this new $J$.
This modification does not change the boundary labels of $M$
and the new vertex of $J$ is converging or diverging.
Hence we see by Lemma~\ref{lem:converging}
that none of conditions (1)--(4) occurs at the vertex
between $D_1$ and $D_2$ in this new diagram.
Thus we have shown that
condition (1) in Lemma~\ref{lem:converging} may be assumed not to occur.
Similarly, we can show that
condition (2) in Lemma~\ref{lem:converging} may be assumed not to occur.

\begin{figure}[h]
\includegraphics{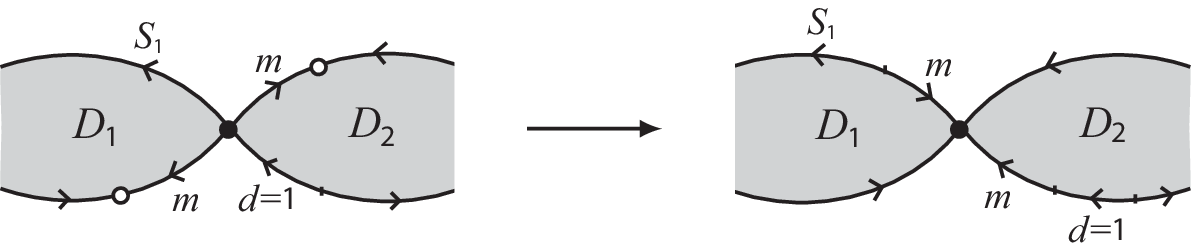}
\caption{
Lemma~\ref{lem:general_critical(a)}
where $S(z_{1, e}'y_2'w_2')$ begins with
$(m+d)$}
\label{fig.S_1_occurs(b)}
\end{figure}

Suppose that condition (3) in Lemma~\ref{lem:converging} occurs.
Then we may assume $S(\phi(\partial D_1^-))$ ends with $S_1$
and so $|z_1'|=0$.
Then $S(z_1)=S_2$, and so $S(z_{1, e})=(m)$.
Hence the sequence $S(z_{1, e}y_2w_2)$ begins with
a subsequence of the form either $(m, d)$ or $(m+d)$,
where $d \in \ZZ_+$.
If $S(z_{1, e}y_2w_2)$ begins with
a subsequence of the form $(m, d)$,
then we can see as in the previous case that
the vertex between $D_1$ and $D_2$ is
converging or diverging, a contradiction.
So $S(z_{1, e}y_2w_2)$ must begin with
a subsequence of the form $(m+d)$. Since $S(\phi(\partial D_1^+))$
ends with a term $m$ and since $CS(\phi(\alpha))=CS(s)$ consists of
$m$ and $m+1$, the only possibility is that $d=1$ and
$S(w_{1, e}' y_{2, b}')=(m+1, m)$.
Then, as illustrated in Figure~\ref{fig.S_1_occurs(c)},
we may transform $M$
so that $S(\phi(\partial D_1^-))$ ends with $(S_1, m)$.
Since the new vertex is either converging or diverging,
we see by Lemma~\ref{lem:converging}
that none of conditions (1)--(4) occurs at the common vertex
of $D_1$ and $D_2$ in this new diagram.
Thus we have shown that condition (3)
in Lemma~\ref{lem:converging} may be assumed not to occur.
Similarly, we can show that
condition (4) in Lemma~\ref{lem:converging} may be assumed not to occur.
This completes the proof of Lemma~\ref{lem:general_critical(a)}.
\end{proof}

\begin{figure}[h]
\includegraphics{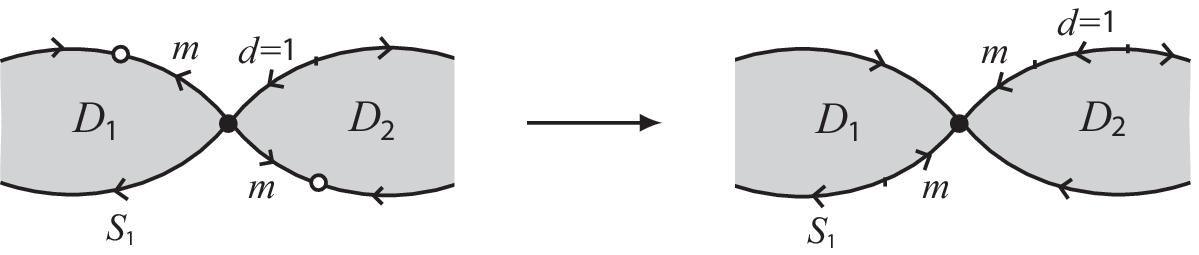}
\caption{
Lemma~\ref{lem:general_critical(a)}
where $S(z_{1, e}y_2w_2)$ begins with
$(m+d)$}
\label{fig.S_1_occurs(c)}
\end{figure}

In the remainder of this section,
when we assume Hypothesis~B,
we always assume that
the two conditions in Lemma~\ref{lem:general_critical(a)}
hold, namely the words $y_i$, $z_i$, $y_i'$ and $z_i'$
which appear in the expressions of $\phi(\partial D_i^+)$ and $\phi(\partial D_i^-)$
are nonempty. Note that when we assume Hypothesis~C,
the same conditions always hold by the hypothesis.

\begin{lemma}
\label{lem:general_critical(b)}
Assume that $r$ is general.
Under Hypothesis~B or Hypothesis~C,
the following hold for every $i$.
\begin{enumerate}[\indent \rm (1)]
\item If $S(z_{i, e} z_{i, e}'^{-1})=S(y_{i+1, b}'^{-1}y_{i+1, b})=(m)$,
then $S(z_{i, e}y_{i+1, b}) \neq (m+1)$.

\item If $S(z_{i, e} z_{i, e}'^{-1})=S(y_{i+1, b}'^{-1}y_{i+1, b})=(m+1)$,
then $S(z_{i, e}y_{i+1, b}) \neq (m)$.

\item If $S(z_{i, e} z_{i, e}'^{-1})=(m)$ and $S(y_{i+1, b}'^{-1}y_{i+1, b})=(m, m)$,
then $S(z_{i, e}y_{i+1, b}) \neq (m+1)$.

\item If $S(z_{i, e} z_{i, e}'^{-1})=(m)$ and $S(y_{i+1, b}'^{-1}y_{i+1, b})=(m+1, m)$,
then $S(z_{i, e}y_{i+1, b}) \neq (m+1)$.

\item If $S(z_{i, e} z_{i, e}'^{-1})=S(y_{i+1, b}'^{-1}y_{i+1, b})=(m+1)$,
then $S(z_{i, e}y_{i+1, b}) \neq (m, m)$.

\item If $S(z_{i, e} z_{i, e}'^{-1})=(m, m+1)$ and
$S(y_{i+1, b}'^{-1}y_{i+1, b})=(m+1)$,
then $S(z_{i, e}y_{i+1, b}) \neq (m, m)$.

\item If $S(z_{i, e} z_{i, e}'^{-1})=(m+1, m+1)$ and
$S(y_{i+1, b}'^{-1}y_{i+1, b})=(m+1)$,
then $S(z_{i, e}y_{i+1, b}) \neq (m+1, m)$.

\end{enumerate}
\end{lemma}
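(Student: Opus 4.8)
The plan is to establish each of the seven assertions (1)--(7) separately, by the contradiction scheme that has been used repeatedly in this paper (e.g.\ in the proofs of Lemmas~\ref{lem:case1-1(b)}, \ref{lem:general_prelim_3-1(b)} and~\ref{lem:general_prelim_1-2}). Fix $i$ and consider the vertex between $D_i$ and $D_{i+1}$. There the word running around the corner of $D_i$ is $z_{i,e}z_{i,e}'^{-1}$, the word running around the corner of $D_{i+1}$ is $y_{i+1,b}'^{-1}y_{i+1,b}$, the word along the outer boundary of $J$ through the vertex is $z_{i,e}y_{i+1,b}$, and the word along the inner boundary of $J$ through the vertex is $z_{i,e}'y_{i+1,b}'$. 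Under Hypothesis~B we are, by Lemma~\ref{lem:general_critical(a)} and the convention fixed after it, in the situation that $y_i,z_i,y_i',z_i'$ are nonempty, and under Hypothesis~C this holds automatically; hence, by Remark~\ref{rem:general_decomposition} and the corresponding decomposition of $\phi(\partial D_i^{\pm})$, each of $z_{i,e},z_{i,e}',y_{i+1,b},y_{i+1,b}'$ is a single positive or negative word whose length is bounded by $m$ or $m+1$ according to the sub-family and the hypothesis. Consequently each hypothesis of the form $S(z_{i,e}z_{i,e}'^{-1})=(\cdots)$ fixes both the lengths $|z_{i,e}|,|z_{i,e}'|$ and whether $z_{i,e}$ and $z_{i,e}'^{-1}$ have the same or opposite sign (and likewise for the $D_{i+1}$ corner), while assuming that $S(z_{i,e}y_{i+1,b})$ takes the prohibited value pins down $|z_{i,e}|,|y_{i+1,b}|$ and the relative sign of $z_{i,e}$ and $y_{i+1,b}$.

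The main bookkeeping step is then to chase these signs and lengths through the two corners and the outer transition in order to read off the word $z_{i,e}'y_{i+1,b}'$ along the inner boundary of $J$. First, exactly as in the proofs of Lemmas~\ref{lem:general_prelim_3-1(b)}(1) and~\ref{lem:general_prelim_1-2}(1), one reduces to the case $z_i=z_{i,e}$ and $y_{i+1}=y_{i+1,b}$: otherwise $S(w_iz_i)$ or $S(y_{i+1}w_{i+1})$ already forces a term outside $\{m,m+1\}$ into $CS(\phi(\alpha))=CS(s)$, contradicting Proposition~\ref{prop:general_m_and_m+1} together with \cite[Lemma~3.8]{lee_sakuma_2}. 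In each of the seven cases the resulting data is then inconsistent in the following sense: $z_{i,e}'y_{i+1,b}'$ is forced to be, or to contain, a word whose $S$-sequence exhibits a pattern that cannot occur in $CS(s')$ or in $CS(r)$ --- for instance a single term of length $m-1$, a term of length $1$, a term of length $\ge m+2$, or a subsequence such as $(m-1,m)$ --- the precise pattern depending on the case.

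With such a pattern in hand, the contradiction is obtained by the usual dichotomy. If $J=M$, then the inner boundary word of $J$ is a cyclic conjugate of $u_{s'}^{\pm1}$, so $CS(s')$ contains the offending pattern; since $CS(s')$ consists of $m$ and $m+1$ by Proposition~\ref{prop:general_m_and_m+1} and since \cite[Lemma~3.8]{lee_sakuma_2} restricts which pairs of consecutive terms may occur, this is a contradiction. If $J\subsetneq M$, then by \cite[Lemma~3.1]{lee_sakuma_3} the initial vertex of $e_2'$ and the terminal vertex of $e_3'$ lie in the interiors of the appropriate $S_1$- or $S_2$-segments of $\partial D_1^-$ and $\partial D_2^-$ (which of the two depends on the sub-family and on whether Hypothesis~B or~C holds), so $S(\phi(e_2'e_3'))$, and therefore $CS(\phi(\partial D_1'))=CS(r)$, contains the offending pattern; this contradicts the description of $CS(r)$ furnished by Remark~\ref{rem:general_decomposition}, according to which $CS(r)$ consists only of $m$ and $m+1$.

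The hard part is the sign/length chase of the second paragraph: for each of the seven cases one must carry it out correctly, keeping track of the different length bounds and of the different shapes of $S_1$ and $S_2$ in the two sub-families $r=[m,m_2,\dots,m_k]$ and $r=[m,1,m_3,\dots,m_k]$ and under each of Hypotheses~B and~C (in particular some of the seven corner configurations may turn out to be unrealizable in a given sub-family, in which case the corresponding assertion holds there vacuously). Once the offending pattern along the inner boundary has been identified, the geometric $J=M$ versus $J\subsetneq M$ argument is routine and runs exactly as in the earlier lemmas of this paper.
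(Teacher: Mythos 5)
Your proposal follows exactly the paper's strategy: at the degree-$4$ vertex between $D_i$ and $D_{i+1}$, the two corner hypotheses together with the assumed value of $S(z_{i,e}y_{i+1,b})$ determine, by length arithmetic and sign parity, a forced pattern in $S(z_{i,e}'y_{i+1,b}')$ on the inner side of $J$ --- a term $m-1$ in cases (1), (3), (4), a term $m+2$ in cases (2), (6), (7), and a term $1$ in case (5) --- and this pattern contradicts Proposition~\ref{prop:general_m_and_m+1} when $J=M$ and the form of $CS(r)$ when $J\subsetneq M$; the offending patterns you anticipate are the right ones. The main caveat is that you explicitly defer the seven sign/length chases, and those computations are the entire content of the paper's proof; they turn out to be essentially uniform across the two sub-families and both Hypotheses~B and~C (only sums of lengths and sign parities enter), so the sub-family-by-sub-family splitting and the ``vacuously true'' escape hatch you allow for are not needed. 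Two smaller points: in case (5) the $J\subsetneq M$ branch requires an extra step absent from your outline, namely ruling out that $S(\phi(e_2'e_3'))$ is exactly $(1,1)$ with no flanking positive terms, which the paper handles by invoking \cite[Lemma~3.2]{lee_sakuma_3}; and the preliminary reduction to $z_i=z_{i,e}$, $y_{i+1}=y_{i+1,b}$ that you import from Lemmas~\ref{lem:general_prelim_3-1(b)} and~\ref{lem:general_prelim_1-2} is not used (and not needed) in the paper's argument here, since the contradiction is read off from the corner data alone. Note also that a single-term corner hypothesis such as $S(z_{i,e}z_{i,e}'^{-1})=(m)$ fixes only the sum $|z_{i,e}|+|z_{i,e}'|$ and the absence of a sign change, not the individual lengths --- harmless, since only the sum enters the computation.
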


\begin{proof}
(1) Let $S(z_{1, e} z_{1, e}'^{-1})=S(y_{2, b}'^{-1}y_{2, b})=(m)$.
Suppose on the contrary that $S(z_{1, e}y_{2, b})=(m+1)$
(see Figure~\ref{fig.general_critical_b1}(a)).
Then $S(\phi(e_2'e_3'))$ contains a subsequence of the form
$(\ell_1, m-1, \ell_2)$ with $\ell_1, \ell_2 \in \ZZ_+$.
Here, if $J=M$, then $CS(\phi(\delta^{-1}))=CS(s')$ contains a term $m-1$,
a contradiction to Proposition~\ref{prop:general_m_and_m+1}.
On the other hand, if $J \subsetneq M$,
then a term $m-1$ occurs in $CS(\phi(\partial D_1'))=CS(r)$,
a contradiction
(cf. Proof of Lemma~\ref{lem:case1-1(b)}(1) for the case $J \subsetneq M$).

(2) Let $S(z_{1, e} z_{1, e}'^{-1})=S(y_{2, b}'^{-1}y_{2, b})=(m+1)$.
Suppose on the contrary that $S(z_{1, e}y_{2, b})=(m)$
(see Figure~\ref{fig.general_critical_b1}(b)).
Then $S(\phi(e_2'e_3'))$ contains a term $m+2$.
Here, if $J=M$, then $CS(\phi(\delta^{-1}))=CS(s')$
contains a term $m+2$,
a contradiction to Proposition~\ref{prop:general_m_and_m+1}.
On the other hand, if $J \subsetneq M$,
then a term of the form $m+2+c$ with $c \in \ZZ_+ \cup \{0\}$
occurs in $CS(\phi(\partial D_1'))=CS(r)$,
a contradiction.

\begin{figure}[h]
\includegraphics{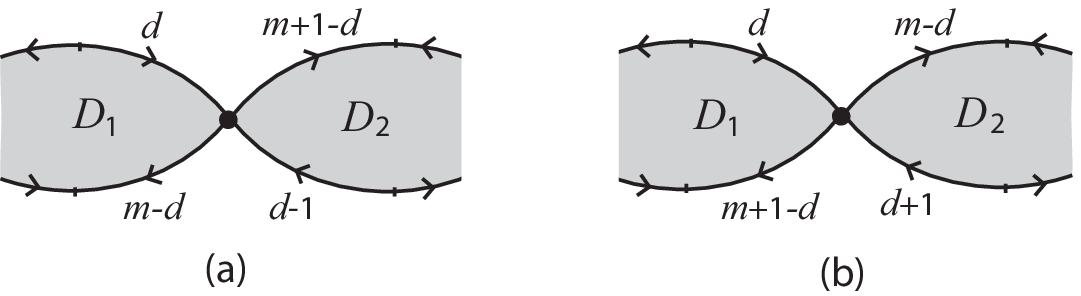}
\caption{
(a) Lemma~\ref{lem:general_critical(b)}(1),
and (b) Lemma~\ref{lem:general_critical(b)}(2)}
\label{fig.general_critical_b1}
\end{figure}

(3) Let $S(z_{1, e} z_{1, e}'^{-1})=(m)$ and $S(y_{2, b}'^{-1}y_{2, b})=(m, m)$.
Suppose on the contrary that $S(z_{1, e}y_{2, b})=(m+1)$
(see Figure~\ref{fig.general_critical_b3}(a)).
Then $S(\phi(e_2'e_3'))$ contains a subsequence of the form
$(\ell_1, m-1, \ell_2)$ with $\ell_1, \ell_2 \in \ZZ_+$.
Here, if $J=M$, then $CS(\phi(\delta^{-1}))=CS(s')$ contains
a term $m-1$, a contradiction to Proposition~\ref{prop:general_m_and_m+1}.
On the other hand, if $J \subsetneq M$,
then a term $m-1$ occurs in $CS(\phi(\partial D_1'))=CS(r)$,
a contradiction.

(4) Let $S(z_{1, e} z_{1, e}'^{-1})=(m)$ and $S(y_{2, b}'^{-1}y_{2, b})=(m+1, m)$.
Suppose on the contrary that $S(z_{1, e}y_{2, b})=(m+1)$
(see Figure~\ref{fig.general_critical_b3}(b)).
Then $S(\phi(e_2'e_3'))$ contains a subsequence of the form
$(\ell_1, m-1, \ell_2)$ with $\ell_1, \ell_2 \in \ZZ_+$.
So, arguing as in the proof of (3), we reach a contradiction.

\begin{figure}[h]
\includegraphics{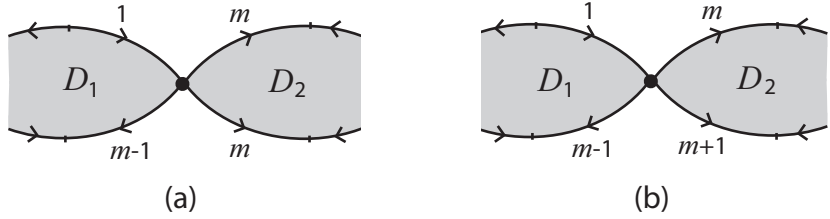}
\caption{
(a) Lemma~\ref{lem:general_critical(b)}(3),
and (b) Lemma~\ref{lem:general_critical(b)}(4)}
\label{fig.general_critical_b3}
\end{figure}

(5) Let $S(z_{1, e} z_{1, e}'^{-1})=S(y_{2, b}'^{-1}y_{2, b})=(m+1)$.
Suppose on the contrary that $S(z_{1, e}y_{2, b})=(m, m)$
(see Figure~\ref{fig.general_critical_b5}(a)).
Here, if $J=M$, then $CS(\phi(\delta^{-1}))=CS(s')$ contains a term $1$,
a contradiction to Proposition~\ref{prop:general_m_and_m+1}.
On the other hand, if $J \subsetneq M$,
then $S(\phi(e_2'e_3'))$ contains a subsequence of the form
$(\ell_1, 1, \ell_2)$ with $\ell_1, \ell_2 \in \ZZ_+$,
for otherwise $S(\phi(\partial {D_1'}^+))=S(\phi(e_2'e_3'))=(1, 1)$
which contains neither $S_1$ nor $(\ell, S_2, \ell')$ with $\ell, \ell' \in \ZZ_+$,
contradicting \cite[Lemma~3.2]{lee_sakuma_3}.
It the follows that a term $1$ occurs in $CS(\phi(\partial D_1'))=CS(r)$,
a contradiction.

(6) Let $S(z_{1, e} z_{1, e}'^{-1})=(m, m+1)$ and
$S(y_{2, b}'^{-1}y_{2, b})=(m+1)$.
Suppose on the contrary that $S(z_{1, e}y_{2, b})=(m, m)$
(see Figure~\ref{fig.general_critical_b5}(b)).
Then $S(\phi(e_2'e_3'))$ contains a term $m+2$.
Here, if $J=M$, then $CS(\phi(\delta^{-1}))=CS(s')$ contains a term $m+2$,
a contradiction to Proposition~\ref{prop:general_m_and_m+1}.
On the other hand, if $J \subsetneq M$,
then a term of the form $m+2+c$ with $c \in \ZZ_+ \cup \{0\}$
occurs in $CS(\phi(\partial D_1'))=CS(r)$,
a contradiction.

\begin{figure}[h]
\includegraphics{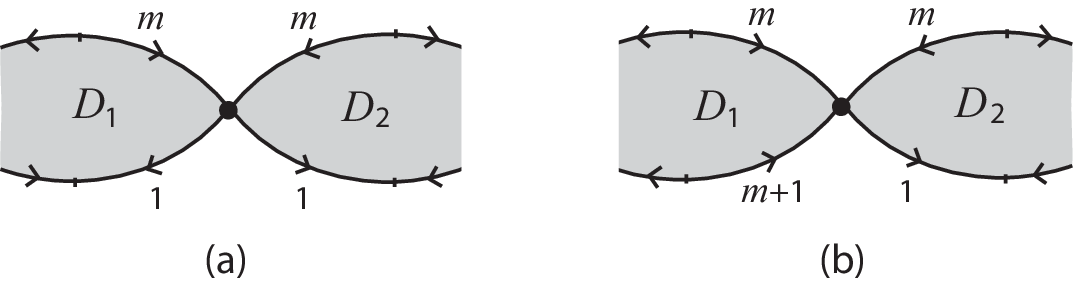}
\caption{
(a) Lemma~\ref{lem:general_critical(b)}(5),
and (b) Lemma~\ref{lem:general_critical(b)}(6)}
\label{fig.general_critical_b5}
\end{figure}

(7) Let $S(z_{1, e} z_{1, e}'^{-1})=(m+1, m+1)$ and
$S(y_{2, b}'^{-1}y_{2, b})=(m+1)$.
Suppose on the contrary that $S(z_{1, e}y_{2, b})=(m+1, m)$.
Then $S(\phi(e_2'e_3'))$ contains a term
$m+2$. So, arguing as in the proof of (6),
we reach a contradiction.
\end{proof}

We are now ready to prove the following.

\begin{proposition}
\label{prop:general_critical}
Assume that $r$ is general.
Under Hypothesis~B or Hypothesis~C,
we may assume that the following hold.
\begin{enumerate}[\indent \rm (1)]
\item For every face $D_i$ of $J$, the following hold.
\begin{enumerate}[\rm (a)]
\item If $S(\phi(\partial D_i^+))$ contains $S_1$,
then it contains a subsequence of the form
$(\ell, S_1, \ell')$ with $\ell, \ell' \in \ZZ_+$.

\item If $S(\phi(\partial D_i^-))$ contains $S_1$,
then it contains a subsequence of the form
$(\ell, S_1, \ell')$ with $\ell, \ell' \in \ZZ_+$.
\end{enumerate}

\item Every vertex $x$ in $J$ with $\deg_{J}(x)=4$
is either converging or diverging.
\end{enumerate}
\end{proposition}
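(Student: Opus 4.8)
Here is how I would approach the proof of Proposition~\ref{prop:general_critical}.

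The plan is to prove the two assertions in turn, with (1) quick and (2) carrying the weight; throughout we keep the notation of Sections~\ref{sec:technical_lemmas} and~\ref{sec:technical_lemmas_general} and use Proposition~\ref{prop:general_m_and_m+1}, so that $CS(s)$ and $CS(s')$ consist of $m$ and $m+1$, and $CS(r)=\lp S_1,S_2,S_1,S_2\rp$ is as in Remark~\ref{rem:general_decomposition}. For assertion~(1): under Hypothesis~B every $S(\phi(\partial D_i^{\pm}))$ contains $S_1$, and Lemma~\ref{lem:general_critical(a)}, whose modification of $M$ leaves the outer and inner boundary labels unchanged, shows it then contains a subsequence $(\ell,S_1,\ell')$ with $\ell,\ell'\in\ZZ_+$; this is exactly (1a)--(1b). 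Under Hypothesis~C every $S(\phi(\partial D_i^{\pm}))$ contains $S_2$ but, by the shape of $CS(r)$ in Remark~\ref{rem:general_decomposition} and the definition of Hypothesis~C, cannot contain $S_1$, so (1) holds vacuously; and were a stray occurrence of $S_1$ nevertheless to sit at an extremity of some $S(\phi(\partial D_i^{\pm}))$, the cut-and-reglue modification used in the proof of Lemma~\ref{lem:general_critical(a)} would remove it verbatim.

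For assertion~(2), suppose on the contrary that, even after the modification above, $J$ contains a vertex $x$ with $\deg_J(x)=4$ that is neither converging nor diverging; inspecting the labels around $x$ (Convention~\ref{con:figure}, Definition~\ref{def:vertex_type}) shows that $x$ must be mixing. Say $x$ lies between the consecutive faces $D_i$ and $D_{i+1}$ of $J$. The local picture of $M$ at $x$ is governed by the four words $z_{i,e}$, $y_{i+1,b}$ on the outer side and $z_{i,e}'$, $y_{i+1,b}'$ on the inner side, together with the two relator fragments straddling $x$, whose $S$-sequences are $S(z_{i,e}z_{i,e}'^{-1})$ and $S(y_{i+1,b}'^{-1}y_{i+1,b})$. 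By assertion~(1) every occurrence of $S_1$ inside the $S(\phi(\partial D_j^{\pm}))$ is interior, so these two fragments are short; using the explicit form of $S(\phi(\partial D_j^{\pm}))$ recorded in Remark~\ref{rem:(1)holds}, Remark~\ref{rem:(2)holds} and Remark~\ref{rem:general_decomposition}, each of them is one of
\[ (m),\quad (m+1),\quad (m,m),\quad (m,m+1),\quad (m+1,m),\quad (m+1,m+1), \]
while, since $CS(s)$ consists of $m$ and $m+1$, the outer word $z_{i,e}y_{i+1,b}$, and likewise the inner word, has $S$-sequence $(m)$, $(m+1)$, or a pair drawn from $\{m,m+1\}$. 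That $x$ is mixing fixes, for each such choice of data, how the outer and inner boundaries pass straight through $x$ while the two relator fragments reverse sign there; this is exactly the local combinatorics of Figures~\ref{fig.general_critical_b1}--\ref{fig.general_critical_b5}.

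It then remains to run through the resulting finite list of configurations. Whenever the triple $\bigl(S(z_{i,e}z_{i,e}'^{-1}),\ S(y_{i+1,b}'^{-1}y_{i+1,b}),\ S(z_{i,e}y_{i+1,b})\bigr)$ is one of the seven combinations appearing in Lemma~\ref{lem:general_critical(b)}, that lemma gives an immediate contradiction and the case is closed. In any remaining configuration, $x$ is not yet excluded, and there I would perform a cut-and-reglue modification of $J$ in the style of the proof of Lemma~\ref{lem:general_critical(a)} (cut $J$ along a suitable unit segment incident on $x$ and re-identify two vertices), arranged so that the new diagram has the same outer and inner boundary labels, still satisfies assertion~(1), and has $x$ converging or diverging while no other degree-$4$ vertex of $J$ stops being converging or diverging; the number of mixing degree-$4$ vertices of $J$ therefore drops. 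Iterating, one reaches a diagram in which every degree-$4$ vertex of $J$ is converging or diverging, which is assertion~(2).

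The main obstacle is the bookkeeping in these last two steps. One must (i) carry out the configuration analysis uniformly in Hypotheses~B and~C and for both shapes of general $r$, namely $r=[m,m_2,\dots,m_k]$ and $r=[m,1,m_3,\dots,m_k]$, taking account that $m=2$ and $m\ge 3$ can behave differently, as already in Section~\ref{sec:technical_lemmas_general}; (ii) verify that each cut-and-reglue step keeps $M$ reduced, keeps the boundary labels of $M$, keeps assertion~(1), and strictly lowers the chosen complexity so the iteration halts; and (iii) make the implication ``a mixing degree-$4$ vertex of $J$ realizes one of the seven configurations of Lemma~\ref{lem:general_critical(b)} or admits such a modification'' airtight, which needs a careful comparison of Definition~\ref{def:vertex_type} with Convention~\ref{con:figure}.
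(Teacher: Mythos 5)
Your overall architecture matches the paper's: assertion (1) comes from Lemma~\ref{lem:general_critical(a)} (and is vacuous under Hypothesis~C), and assertion (2) is proved by taking a degree-$4$ vertex $x$ of $J$ that is neither converging nor diverging, excluding some local configurations via Lemma~\ref{lem:general_critical(b)}, and removing the rest by a cut-and-reglue transformation in the style of Figure~\ref{fig.S_1_occurs(b)}. However, there is a genuine error at the pivot of your argument for (2): you assert that such a vertex ``must be mixing.'' That is false. Converging means the label set of incoming unit segments is $\{a,b\}$, diverging means $\{a^{-1},b^{-1}\}$, and mixing means it is all of $\{a,a^{-1},b,b^{-1}\}$; a vertex can fail all three, e.g.\ when the label set has three elements, or two elements other than $\{a,b\}$ and $\{a^{-1},b^{-1}\}$. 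The paper's proof rests on classifying a non-converging, non-diverging degree-$4$ vertex into \emph{five} types (Figure~\ref{fig.vertex_type}) according to whether this label set has $4$, $3$, or $2$ elements, and the types you would miss -- (c), (d), (e) -- are exactly the ones where the diagram transformation is needed. Moreover, the two genuinely mixing types are not both handled by Lemma~\ref{lem:general_critical(b)}: in type (a) the outer boundary passes straight through $x$ while both relator boundaries change sign there, so $c_1+c_2\ge 2m$ is a single term of $CS(s)$, and the contradiction comes directly from Proposition~\ref{prop:general_m_and_m+1}, not from the lemma. So your dichotomy ``either one of the seven configurations of Lemma~\ref{lem:general_critical(b)} (contradiction) or transformable'' is not exhaustive as stated.

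The second, related gap is that you leave unverified the one claim that actually makes the iteration work: that in every configuration not excluded by Lemma~\ref{lem:general_critical(b)} (or by the direct length argument above), the two segments to be swapped have \emph{equal} length, i.e.\ $c_2=d_1$ or $c_1=d_2$, without which the cut-and-reglue does not preserve the boundary labels of $M$. This is not routine bookkeeping; it is the content of the case enumerations for types (c), (d), (e), where the constraints ``$c_1+c_2$ or $(c_1,c_2)$ comes from $CS(s)$'' and ``$c_i+d_i$ is a term of $CS(r)$'' (all terms being $m$ or $m+1$) force either one of the seven forbidden triples or the equality needed for the transformation. Finally, the termination of your iteration does not need a separate complexity argument: once a vertex is made converging or diverging, Lemma~\ref{lem:converging} guarantees none of the four prohibited conditions recurs there, so assertion (1) is preserved and each vertex needs to be treated at most once.
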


\begin{proof}
By Lemma~\ref{lem:general_critical(a)},
we may assume that (1) is satisfied.
We prove that we can modify the annular diagram $M$ so that it satisfies (2).
Then the resulting annular diagram satisfies both (1) and (2),
because Lemma~\ref{lem:converging} guarantees that
if $M$ satisfies (2) then it also satisfies (1).

Since (1) is satisfied,
the subwords $y_i$ and $z_i$ of $\phi(\partial D_i^+)$
and the subwords $y_i'$ and $z_i'$ of $\phi(\partial D_i^-)$
are nonempty.
Suppose on the contrary that there is a vertex $x \in J$ with $\deg_J(x)=4$ such that
$x$ is neither converging nor diverging.
We may assume $x$ is the vertex between $D_1$ and $D_2$.
Then $x$ has one of the five types as depicted in Figure~\ref{fig.vertex_type},
where $c_i$ and $d_i$ ($i=1,2$) are positive integers,
up to simultaneous reversal of the edge orientations
and up to the reflection in the vertical edge passing through the vertex $x$.
To see this, let $L$ be the set of labels of incoming
unit segments of $x$, and
orient each of the unit segment so that
the associated label is equal to $a$ or $b$
as in Figure~\ref{fig.converging}.
If $L=\{a^{\pm 1}, b^{\pm 1}\}$, then
we obtain the situation (a) or (b) in Figure~\ref{fig.vertex_type}.
If $L$ consists of three elements,
then we may assume that $a$ and $a^{-1}$, respectively,
appear as the label of the upper left and lower right
incoming unit segments
and that $b$ or $b^{-1}$ does not belong to $L$.
Then we obtain the situation (c) or (d) in Figure~\ref{fig.vertex_type}.
If $L$ consists of two elements,
then we may assume both the upper left and lower right
incoming unit segments have label $a$,
and both the upper left and lower right
incoming unit segments have label $b^{-1}$,
because $x$ is not converging nor diverging.
In this case, we have the situation (e) in Figure~\ref{fig.vertex_type}.

\begin{figure}[h]
\includegraphics{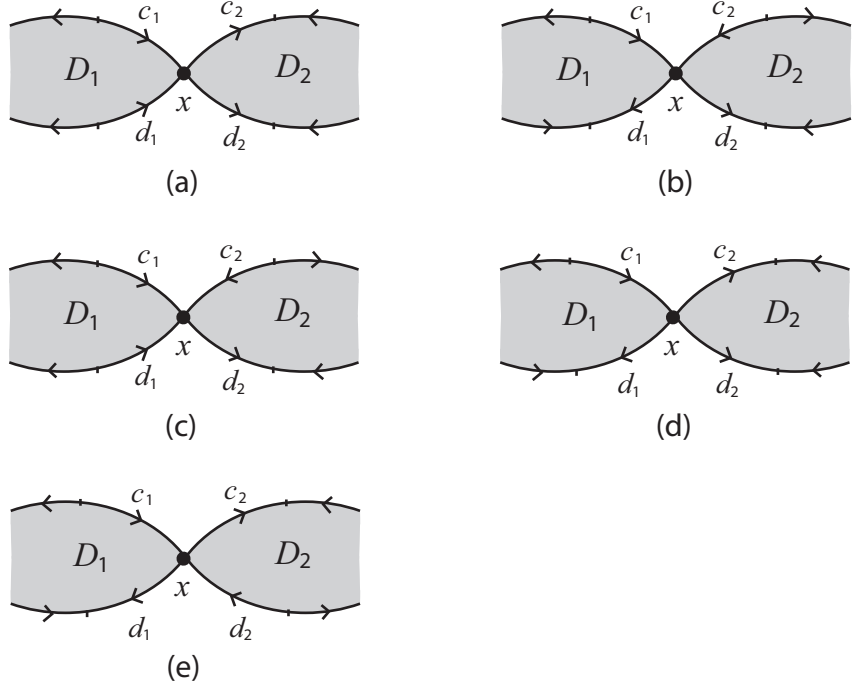}
\caption{
The five possible types of a vertex $x \in J$ with $\deg_J (x)=4$
such that $x$ is neither converging nor diverging}
\label{fig.vertex_type}
\end{figure}

Assume that $x$ is as depicted in Figure~\ref{fig.vertex_type}(a).
Then, for each $i=1,2$, $c_i$ is a term of $CS(\phi(\partial D_i))=CS(r)$
and so is equal to $m$ or $m+1$.
Hence the term, $c_1+c_2$, of $CS(\phi(\alpha))=CS(s)$ is at least $2m$.
This is a contradiction to Proposition~\ref{prop:general_m_and_m+1}.

Assume that $x$ is as depicted in Figure~\ref{fig.vertex_type}(b).
Then $(c_1,c_2)$ is a subsequence of $CS(\phi(\alpha))=CS(s)$.
Since $CS(\phi(\alpha))=CS(s)$ consists of $m$ and $m+1$
by Proposition~\ref{prop:general_m_and_m+1},
the only possibility is that
$c_1=c_2=m$ and $d_1=d_2=1$.
So, we must have
$S(z_{1, e} z_{1, e}'^{-1})=S(y_{2, b}'^{-1}y_{2, b})=(m+1)$
and $S(z_{1, e}y_{2, b}) = (m, m)$.
But this is impossible
by Lemma~\ref{lem:general_critical(b)}(5).

Assume that $x$ is as depicted in Figure~\ref{fig.vertex_type}(c).
Then $(c_1,c_2)$ is a subsequence of $CS(\phi(\alpha))=CS(s)$
and hence each of $c_1$ and $c_2$ is either $m$ or $m+1$.
Moreover, $c_2+d_2$ is a term of $CS(r)$ and hence
it is either $m$ or $m+1$.
So, we have the following four possibilities:
\begin{enumerate}[\indent \rm (i)]
\item $c_1=m$, $c_2=m$, $d_1=m$, $d_2=1$;

\item $c_1=m$, $c_2=m$, $d_1=m+1$, $d_2=1$;

\item $c_1=m+1$, $c_2=m$, $d_1=m$, $d_2=1$;

\item $c_1=m+1$, $c_2=m$, $d_1=m+1$, $d_2=1$.
\end{enumerate}
However, (ii) and (iv) are impossible by
Lemma~\ref{lem:general_critical(b)}(6)
and Lemma~\ref{lem:general_critical(b)}(7), respectively.
If (i) or (iii) happens, then $c_2=d_1$.
So we can transform $M$ so that $x$ is diverging
as in Figure~\ref{fig.transformation_2}
(cf. the argument in the proof of Lemma~\ref{lem:general_critical(a)}
appealing to Figure~\ref{fig.S_1_occurs(b)}).

\begin{figure}[h]
\includegraphics{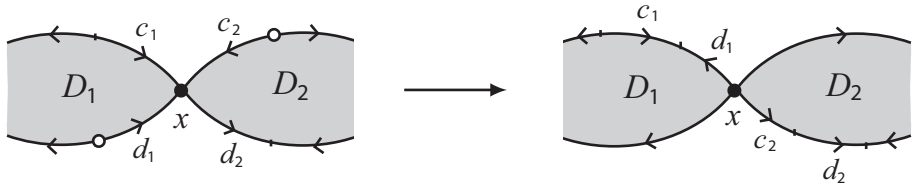}
\caption{
The transformation of Figure~\ref{fig.vertex_type}(c) when
$c_2=d_1$ so that $x$ is diverging}
\label{fig.transformation_2}
\end{figure}

Assume that $x$ is as depicted in Figure~\ref{fig.vertex_type}(d).
Then $c_1+c_2$ is a term of $CS(\phi(\alpha))=CS(s)$
and hence it is either $m$ or $m+1$.
Moreover, $c_1+d_1$ is a term of $CS(r)$ and hence
it is either $m$ or $m+1$.
So, we have the following four possibilities:
\begin{enumerate}[\indent \rm (i)]
\item $c_1=1$, $c_2=m$, $d_1=m-1$, $d_2=m$;

\item $c_1=1$, $c_2=m$, $d_1=m-1$, $d_2=m+1$;

\item $c_1=1$, $c_2=m$, $d_1=m$, $d_2=m$;

\item $c_1=1$, $c_2=m$, $d_1=m$, $d_2=m+1$.
\end{enumerate}
However, (i) and (ii) are impossible by Lemma~\ref{lem:general_critical(b)}(3)
and Lemma~\ref{lem:general_critical(b)}(4), respectively.
If (iii) or (iv) happens, then $c_2=d_1$.
So we can transform $M$ so that $x$ is converging
as in Figure~\ref{fig.transformation_1}.

\begin{figure}[h]
\includegraphics{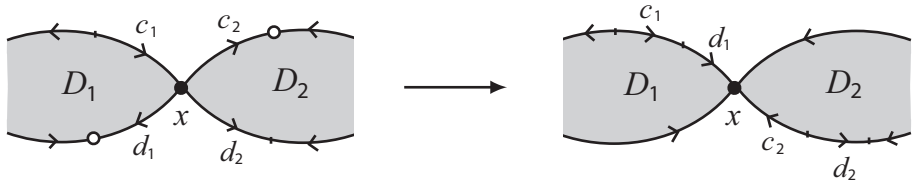}
\caption{
The transformation of Figure~\ref{fig.vertex_type}(d) when $c_2=d_1$
so that $x$ is converging}
\label{fig.transformation_1}
\end{figure}

Assume that $x$ is as depicted in Figure~\ref{fig.vertex_type}(e).
Then $c_1+c_2$ is a term of $CS(\phi(\alpha))=CS(s)$
and hence it is either $m$ or $m+1$.
Moreover, for each $i=1,2$, $c_i+d_i$ is a term of $CS(r)$ and hence
it is either $m$ or $m+1$.
So, we have the following eight possibilities:
\begin{enumerate}[\indent \rm (i)]
\item $c_1+c_2=m$, $c_1+d_1=m$, $c_2+d_2=m$;

\item $c_1+c_2=m$, $c_1+d_1=m$, $c_2+d_2=m+1$;

\item $c_1+c_2=m$, $c_1+d_1=m+1$, $c_2+d_2=m$;

\item $c_1+c_2=m$, $c_1+d_1=m+1$, $c_2+d_2=m+1$;

\item $c_1+c_2=m+1$, $c_1+d_1=m$, $c_2+d_2=m$;

\item $c_1+c_2=m+1$, $c_1+d_1=m$, $c_2+d_2=m+1$;

\item $c_1+c_2=m+1$, $c_1+d_1=m+1$, $c_2+d_2=m$;

\item $c_1+c_2=m+1$, $c_1+d_1=m+1$, $c_2+d_2=m+1$.
\end{enumerate}
However, (iv) and (v) are impossible by
Lemma~\ref{lem:general_critical(b)}(2)
and Lemma~\ref{lem:general_critical(b)}(1), respectively.
If (i), (ii), (vii) or (viii) happens, then $c_2=d_1$.
Thus, as illustrated in Figure~\ref{fig.transformation_1},
we may transform $M$ so that $x$ is converging
(cf. the argument in the proof of Lemma~\ref{lem:general_critical(a)}
appealing to Figure~\ref{fig.S_1_occurs(b)}).
If (iii) or (vi) happens, then $c_1=d_2$.
So we can transform $M$ so that $x$ is diverging
as in Figure~\ref{fig.transformation_2}.
\end{proof}

\subsection{The case when vertices lie in an arbitrary layer}

We next treat a general vertex $x$ in $M$ with $\deg_M (x)=4$
by using induction on the number of layers of $M$.
Note that the base step (i.e., the case when $M=J$) was already proved
in Proposition~\ref{prop:general_critical}(2).
We need several lemmas and new notations.

\begin{notation}
\label{not:layers}
{\rm
Under Hypothesis~A,
suppose that the number of layers of $M$ is $p+1$ with $p \ge 0$.
(Recall the characterization of $M$ in \cite[Theorem~4.11]{lee_sakuma_2}.)
For each $j=0, 1, \dots, p$, we define $J_j$ as follows:
$J_0=J$, and $J_{j}$ is the outer boundary layer of
$M-(J_0 \cup \cdots \cup J_{j-1})$ for $j \ge 1$.
Then $M=J_0 \cup J_1 \cup \cdots \cup J_p$.
We also define $\alpha_j$ and $\delta_j$ to be, respectively,
outer and inner boundary cycles of $J_j$ starting from a vertex
lying in both the outer and inner boundaries of $J_j$.
Let $\alpha_j=e_{j,1}, e_{j,2}, \dots, e_{j,2t}$ and
$\delta_j^{-1}=e_{j,1}', e_{j,2}', \dots, e_{j,2t}'$ be the
decompositions into oriented edges in $\partial J_j$.
Then clearly for each $i=1, \dots, t$,
there is a face $D_{j,i}$ of $J_j$ such that
$e_{j,2i-1}, e_{j,2i}, e_{j,2i}'^{-1}, e_{j,2i-1}'^{-1}$ are
consecutive edges in a boundary cycle of $D_{j,i}$.
We denote the path $e_{j,2i-1}, e_{j,2i}$ by $\partial D_{j,i}^+$ and
the path $e_{j,2i-1}', e_{j,2i}'$ by $\partial D_{j,i}^-$.
In particular, if $J_0 \cup \cdots \cup J_j \subsetneq M$,
then we may assume that $e_{j,2i}'$ and $e_{j,2i+1}'$ are
two consecutive edges in $\partial D_{j+1,i} \cap \delta_j^{-1}$.
}
\end{notation}

We can easily see that \cite[Lemmas~3.1 and 3.2]{lee_sakuma_3}
holds not only for the faces of $J_0=J$ but also
for every face of $M$.

\begin{lemma}
\label{lem:vertex_position:intermediate_layer}
Under Hypothesis~A and Notation~\ref{not:layers}, both of the following hold
for every face $D_{j,i}$ of $M$.
\begin{enumerate}[\indent \rm (1)]
\item None of $S(\phi(e_{j,2i-1}))$, $S(\phi(e_{j,2i}))$,
$S(\phi(e_{j,2i}'))$ and $S(\phi(e_{j,2i-1}'))$ contains
$S_1$ as a subsequence.

\item None of $S(\phi(e_{j,2i-1}))$, $S(\phi(e_{j,2i}))$,
$S(\phi(e_{j,2i}'))$ and $S(\phi(e_{j,2i-1}'))$ contains
a subsequence of the form $(\ell, S_2, \ell')$,
where $\ell, \ell' \in \ZZ_+$.
\end{enumerate}
\end{lemma}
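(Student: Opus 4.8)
The plan is to prove Lemma~\ref{lem:vertex_position:intermediate_layer} by induction on $j$, treating the two claims simultaneously. The base case $j=0$ is not new: it is precisely the statement that \cite[Lemma~3.1]{lee_sakuma_3} holds for the faces of the outer layer $J_0=J$, which we are entitled to invoke (indeed the excerpt already remarks that \cite[Lemmas~3.1 and 3.2]{lee_sakuma_3} hold for $J$). So the real content is the inductive step: assuming the conclusion holds for all faces of $J_0,\dots,J_{j-1}$, deduce it for the faces of $J_j$.

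For the inductive step I would argue as follows. Fix a face $D_{j,i}$ of $J_j$ with $j\ge 1$, and consider one of its four boundary edges, say $e_{j,2i-1}$. If $J_0\cup\cdots\cup J_{j-1}$ together with $J_j$ is not all of $M$, then by Notation~\ref{not:layers} this edge is shared with a face of the outer layer of $M-(J_0\cup\cdots\cup J_{j-1})$, i.e.\ it lies on $\delta_{j-1}^{-1}$ and hence is one of the edges $e_{j-1,2k}'$ or $e_{j-1,2k+1}'$ of the face $D_{j-1,k}$ of the previous layer; but then the inductive hypothesis applied to $D_{j-1,k}$ (part (1), resp.\ part (2)) gives exactly the desired statement for this edge. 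If instead $J_0\cup\cdots\cup J_j=M$, then $\delta_j$ is the inner boundary of $M$, so $e_{j,2i-1}'$ and $e_{j,2i}'$ are subwords of the cyclic word $u_{s'}$, while $e_{j,2i-1}$ and $e_{j,2i}$ lie on $\delta_{j-1}^{-1}$ and are handled as in the previous case. For the edges that lie on the inner boundary of $M$ one instead uses Proposition~\ref{prop:general_m_and_m+1}: $CS(s')$ consists of $m$ and $m+1$, so its subwords cannot contain $S_1$ (which, by Remark~\ref{rem:general_decomposition}, begins and ends with $m+1$ and has length $\ge 3$, forcing a term $m+1+\ell$ with $\ell\ge 1$ somewhere, e.g.\ the central block, or two consecutive non-$m$ terms — whichever the shape of $S_1$ dictates) and cannot contain $(\ell,S_2,\ell')$ with $\ell,\ell'\in\ZZ_+$ (which would force two extra terms flanking the all-$m$-and-$m+1$ block $S_2$, again yielding a forbidden term such as $S_2$ sitting strictly inside a term of $CS(s')$, contradicting that each term of $CS(s')$ is $m$ or $m+1$ and $S_2$ has length $\ge 2$). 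The outer boundary edges $e_{0,*}$ of $J_0$ were already dealt with in the base case.

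Concretely, the cleanest bookkeeping is: for each edge $e$ appearing in the boundary of a face of $M$, either $e$ lies in one of the two boundary circles of $M$, or $e$ is an interior edge of $M$ shared by two faces in adjacent layers. In the first subcase, $\phi(e)$ is a subword of $u_s$ or $u_{s'}$, and we appeal to Proposition~\ref{prop:general_m_and_m+1} together with the description of $S_1$ and $S_2$ in Remark~\ref{rem:general_decomposition} to rule out both patterns. In the second subcase, $e=e_{j-1,2k}'$ or $e=e_{j-1,2k+1}'$ for a face $D_{j-1,k}$ of the layer $J_{j-1}$, and we simply quote the inductive hypothesis for that face — this is where the induction on the number of layers does its work, reducing everything eventually to the base step \cite[Lemma~3.1]{lee_sakuma_3} for $J_0$.

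The main obstacle I anticipate is the boundary-circle subcase, i.e.\ making precise why a subword of a cyclic sequence consisting only of $m$ and $m+1$ cannot contain $S_1$ as a subsequence, nor $(\ell,S_2,\ell')$ as a subsequence. This requires unpacking the explicit shapes of $S_1$ and $S_2$ from Remark~\ref{rem:general_decomposition} in the two cases $r=[m,m_2,\dots,m_k]$ and $r=[m,1,m_3,\dots,m_k]$ and observing: (a) $S_1$ necessarily contains a term that is neither $m$ nor $m+1$ — in the first case the blocks $m+1,(m_2-1)\langle m\rangle,m+1$ force, when $m_2\ge 3$, a term $m_2-1$ strictly between the two $m+1$'s if one tries to realize $S_1$ inside a $\{m,m+1\}$-sequence, and more uniformly, $S_1$ has length $\ge 3$ with a prescribed internal structure incompatible with consecutive $m$/$m+1$ entries; in the second case $S_1=((m_3+1)\langle m+1\rangle)$ with $m_3+1\ge 3$ blocks is a string of $m+1$'s of length $\ge 3$, which cannot sit inside $CS(s')$ without two consecutive $m+1$'s, contradicting \cite[Lemma~3.8]{lee_sakuma_2}; and (b) $(\ell,S_2,\ell')$ with $\ell,\ell'\ge 1$ either produces a term that is too large or produces consecutive equal terms that \cite[Lemma~3.8]{lee_sakuma_2} forbids. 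I would handle the two families of $r$ in parallel short paragraphs, citing \cite[Lemma~3.8]{lee_sakuma_2} and Proposition~\ref{prop:general_m_and_m+1} at the key points, and keeping the case analysis light since each individual contradiction is immediate once the shape of $S_1$ or $S_2$ is written out.
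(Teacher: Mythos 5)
Your proposal misses the mechanism the paper actually uses, and neither of your two substitute arguments works. The paper proves this lemma in one stroke: every edge label $\phi(e_{j,i})$, $\phi(e_{j,i}')$ is a \emph{piece} --- for $1\le j\le p-1$ because $M$ is a reduced annular diagram, and for $j=0$ or $j=p$ by \cite[Convention~4.3]{lee_sakuma_2} --- and \cite[Corollary~3.25]{lee_sakuma_2} says that a piece can contain neither $S_1$ nor $(\ell,S_2,\ell')$ as a subsequence. Your outside-in induction cannot replace this. At step $j$ the outer edges $e_{j,2i-1},e_{j,2i}$ are indeed the inner edges of faces of $J_{j-1}$ and are covered by the inductive hypothesis, but the inner edges $e_{j,2i-1}',e_{j,2i}'$ of a face in an intermediate layer ($1\le j\le p-1$) are interior edges shared with $J_{j+1}$, which the induction has not yet reached; your ``bookkeeping'' paragraph sends such an edge back to ``the inductive hypothesis for the face of the previous layer,'' which is exactly the claim that was left unproved at the previous step. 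The induction never closes except at $j=p$.

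Your treatment of the edges on the boundary circles via Proposition~\ref{prop:general_m_and_m+1} is also incorrect. By Remark~\ref{rem:general_decomposition} the sequences $S_1$ and $S_2$ consist entirely of $m$'s and $(m+1)$'s (note that $(m_2-1)\langle m\rangle$ denotes $m_2-1$ copies of the term $m$, not a term $m_2-1$), so there is no conflict between ``$CS(s')$ consists of $m$ and $m+1$'' and ``a subword of $u_{s'}$ contains $S_1$ or $(\ell,S_2,\ell')$.'' Indeed Proposition~\ref{prop:general_m_and_m+1} itself asserts that $CS(s)$ \emph{does} contain $S_1$ or $S_2$ as a subsequence under Hypothesis~B or~C, so your argument would prove too much. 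The content of the lemma is not that these patterns are absent from the boundary labels, but that no \emph{single edge} carries a whole such pattern; that is a smallness statement about pieces, not about the shape of $CS(s)$ or $CS(s')$. To repair the proof, drop the induction and the appeal to Proposition~\ref{prop:general_m_and_m+1}: verify that every edge label is a piece (reducedness of $M$ for interior edges, \cite[Convention~4.3]{lee_sakuma_2} for boundary edges) and quote \cite[Corollary~3.25]{lee_sakuma_2}, together with \cite[Proposition~3.22]{lee_sakuma_2} to see that $\phi(\partial D_{j,i}^{\pm})$ themselves are not pieces, which is what the companion Lemma~\ref{lem:two_cases:intermediate_layer} requires.
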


\begin{lemma}
\label{lem:two_cases:intermediate_layer}
Under Hypothesis~A and Notation~\ref{not:layers}, only one of the following holds
for each face $D_{j,i}$ of $M$.
\begin{enumerate}[\indent \rm (1)]
\item Both $S(\phi(\partial D_{j,i}^+))$ and $S(\phi(\partial D_{j,i}^-))$
contain $S_1$ as its subsequence.

\item Both $S(\phi(\partial D_{j,i}^+))$ and $S(\phi(\partial D_{j,i}^-))$
contain a subsequence of the form $(\ell, S_2, \ell')$,
where $\ell, \ell' \in \ZZ_+$.
\end{enumerate}
\end{lemma}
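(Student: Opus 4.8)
The plan is to follow the proof of \cite[Lemma~3.2]{lee_sakuma_3} line by line, now reading it for an arbitrary face $D_{j,i}$ of $M$ rather than only for the faces of $J_0=J$, with Lemma~\ref{lem:vertex_position:intermediate_layer} playing the role that \cite[Lemma~3.1]{lee_sakuma_3} played there. First I would recall that, by \cite[Theorem~4.11]{lee_sakuma_2}, $M$ is a reduced annular diagram over the two-generator one-relator presentation of $G(K(r))$, so the boundary label $\phi(\partial D_{j,i})$ of every face is a cyclic conjugate of the relator up to inversion; hence $CS(\phi(\partial D_{j,i}))=CS(r)=\lp S_1,S_2,S_1,S_2\rp$, and I would fix this description with the four syllables labelled $S_1^{(1)},S_2^{(2)},S_1^{(3)},S_2^{(4)}$ in cyclic order (Remark~\ref{rem:general_decomposition}). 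Along the boundary cycle of $D_{j,i}$ there are four vertices $v_0,v_1,v_2,v_3$, positioned so that $\partial D_{j,i}^+$ is the arc from $v_0$ through $v_1$ to $v_2$ and $\partial D_{j,i}^-$ is the arc from $v_0$ through $v_3$ to $v_2$; they cut the cyclic $S$-sequence $\lp S_1,S_2,S_1,S_2\rp$ into four consecutive sub-arcs, one per edge, and by Lemma~\ref{lem:vertex_position:intermediate_layer} no such sub-arc contains $S_1$ as a subsequence, nor a subsequence of the form $(\ell,S_2,\ell')$ with $\ell,\ell'\in\ZZ_+$.

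Next I would run a case analysis on the positions of the two shared vertices $v_0$ and $v_2$ within the cyclic pattern $S_1^{(1)}S_2^{(2)}S_1^{(3)}S_2^{(4)}$, using the rigidity of the decomposition $CS(r)=\lp S_1,S_2,S_1,S_2\rp$ coming from \cite[Proposition~3.12]{lee_sakuma_2}. If $v_0$ and $v_2$ lie in the interiors of the two distinct $S_1$-syllables, then each of $\partial D_{j,i}^+$ and $\partial D_{j,i}^-$ reads a word whose $S$-sequence consists of a nonempty proper suffix of $S_1$, then a whole $S_2$, then a nonempty proper prefix of $S_1$; thus each of $S(\phi(\partial D_{j,i}^+))$ and $S(\phi(\partial D_{j,i}^-))$ contains a subsequence of the form $(\ell,S_2,\ell')$ and neither contains a complete $S_1$, so alternative (2) holds and (1) fails. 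Symmetrically, if $v_0$ and $v_2$ lie in the interiors of the two distinct $S_2$-syllables, then alternative (1) holds and (2) fails. Every remaining position of the pair $\{v_0,v_2\}$ --- both in the same syllable, one in an $S_1$-syllable and the other in an $S_2$-syllable, or one of them at a syllable junction --- is impossible: in each such configuration I would place the remaining vertex $v_1$ or $v_3$ on the ``long'' arc and observe that, however it is placed, one of the four edges of $\partial D_{j,i}$ ends up with an $S$-sequence containing either a complete $S_1$ or a complete $S_2$ flanked on both sides by positive integers, contradicting Lemma~\ref{lem:vertex_position:intermediate_layer}. The ``only one'' clause is then immediate, since the two surviving configurations are disjoint and each yields exactly one of (1), (2); equivalently, a single half-boundary arc of $CS(r)$ cannot contain both a complete $S_1$ and a flanked complete $S_2$ without forcing $v_0$ or $v_2$ into one of the positions just excluded.

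The main obstacle, exactly as in \cite[Lemma~3.2]{lee_sakuma_3}, is bookkeeping rather than ideas: one must keep the orientations of the edges $e_{j,2i}'$ and their inverses straight when testing containment of a subsequence $(\ell,S_2,\ell')$ --- this turns out to be harmless because $S_1$ and $S_2$ are palindromic in all the cases listed in Remark~\ref{rem:general_decomposition} --- and one must organise the exclusion of the ``mixed'' and ``junction'' positions so that each reduction to a single edge is transparent. Once Lemma~\ref{lem:vertex_position:intermediate_layer} is available for every face of $M$, which is precisely the point at which the passage from $J$ to a general layer $J_j$ is absorbed, the rest of the argument is a transcription of the case $M=J$.
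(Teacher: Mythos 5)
Your route is not the one the paper takes, and as written it has a flaw in the case analysis. The paper's proof is structural rather than positional: it observes that each $\phi(e_{j,i})$ and $\phi(e_{j,i}')$ is a piece (for $1\le j\le p-1$ because $M$ is reduced, for $j=0,p$ by \cite[Convention~4.3]{lee_sakuma_2}), whereas $\phi(\partial D_{j,i}^+)$ and $\phi(\partial D_{j,i}^-)$ are \emph{not} pieces, since otherwise the cyclic word $(u_r^{\pm1})$ would be a product of three pieces, contradicting \cite[Proposition~3.22]{lee_sakuma_2}; feeding this into \cite[Corollary~3.25]{lee_sakuma_2}, which detects pieces by the absence of $S_1$ and of a flanked $S_2$ from the $S$-sequence, forces each half-boundary to contain $S_1$ or a flanked $S_2$, and the rigidity of $CS(r)=\lp S_1,S_2,S_1,S_2\rp$ rules out a mixed outcome. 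Your replacement of the non-piece fact by the ``long-arc'' argument (one of the two edges subdividing the long complementary arc must swallow a whole $S_1$ or a whole flanked $S_2$, contradicting Lemma~\ref{lem:vertex_position:intermediate_layer}) is a workable substitute, so the difference in route is in principle acceptable.

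The concrete problem is your trichotomy: it is false that every placement with ``one of $v_0,v_2$ at a syllable junction'' is impossible. Notation~\ref{not:layers2} explicitly allows $\phi(\partial D_{j,i}^+)\equiv w_{j,i}$ with $S(w_{j,i})=S_1$ and $y_{j,i},z_{j,i}$ empty; in that configuration both $v_0$ and $v_2$ sit exactly at syllable junctions, every edge label is a proper sub-arc of a single syllable (hence perfectly consistent with Lemma~\ref{lem:vertex_position:intermediate_layer}), and the face simply realizes alternative (1). More generally, any placement with $v_0$ in the \emph{closure} of one $S_2$-syllable and $v_2$ in the closure of the other yields (1), and dually for (2); by insisting on open interiors you have declared impossible configurations that genuinely occur, and the contradiction you would need to derive for them does not exist. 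The repair is to take closures in the two good cases and then check that the residual placements (both endpoints in the closure of one and the same syllable; one endpoint in the open interior of an $S_1$-syllable and the other in the open interior of an $S_2$-syllable; the degenerate junction placements not covered above) are exactly the ones killed by the long-arc argument. With that correction the argument, including the exclusivity clause, does go through, the latter because a half-arc realizing (1) meets each $S_2$-syllable only in a non-flanked sub-arc and no flanked copy of $S_2$ can straddle the canonical decomposition of $CS(r)$ --- which is precisely the information \cite[Corollary~3.25]{lee_sakuma_2} packages and which you should cite there rather than an unproved palindromicity of $S_1$ and $S_2$.
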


In fact, these lemmas are proved by using \cite[Corollary~3.25]{lee_sakuma_2}
and the following facts:
\begin{enumerate}[\indent \rm (i)]
\item
The words $\phi(e_{j,i})$ and $\phi(e_{j,i}')$ are pieces.
(If $1\le j\le p-1$, this follows from the assumption that $M$ is a reduced annular diagram.
If $j=0$ or $p$, this follows from \cite[Convention~4.3]{lee_sakuma_2}.)
\item
The words $\phi(\partial D_{j,i}^+)$ and $\phi(\partial D_{j,i}^-)$
are not pieces.
(Otherwise, the cyclic word $(\phi(\partial D_{j,i}))=(u_r^{\pm1})$ becomes a product
of three pieces, a contradiction to \cite[Proposition~3.22]{lee_sakuma_2}.)
\end{enumerate}

\begin{notation}
\label{not:layers2}
{\rm
Under Hypothesis~A and Notation~\ref{not:layers},
Lemma~\ref{lem:two_cases:intermediate_layer} implies that
we may decompose the word $\phi(\alpha_j)$ into
\[
\phi(\alpha_j) \equiv y_{j,1} w_{j,1} z_{j,1} y_{j,2} w_{j,2} z_{j,2} \cdots y_{j,t} w_{j,t} z_{j,t},
\]
where $\phi(\partial D_{j,i}^+) \equiv \phi(e_{j,2i-1}e_{j,2i}) \equiv y_{j,i}w_{j,i}z_{j,i}$,
and where
either $S(w_{j,i})=S_1$ or both $S(w_{j,i})=S_2$ and $y_{j,i}, z_{j,i}$ are nonempty words.
We also have the decomposition of the word $\phi(\delta_j^{-1})$
as follows:
\[
\phi(\delta_j^{-1}) \equiv y_{j,1}' w_{j,1}' z_{j,1}' y_{j,2}' w_{j,2}' z_{j,2}' \cdots y_{j,t}' w_{j,t}' z_{j,t}',
\]
where $\phi(\partial D_{j,i}^-) \equiv \phi(e_{j,2i-1}'e_{j,2i}') \equiv y_{j,i}'w_{j,i}'z_{j,i}'$,
and where
either $S(w_{j,i}')=S_1$ or both $S(w_{j,i}')=S_2$ and $y_{j,i}', z_{j,i}'$ are nonempty words.
Here, the indices for the $2$-cells are considered modulo $t$,
and the indices for the edges are considered modulo $2t$.
}
\end{notation}

\begin{lemma}
\label{lem:converging2}
Assume that $r$ is general.
Under Hypothesis~A and Notation~\ref{not:layers},
suppose that the vertex between $D_{j,i}$ and $D_{j,i+1}$
is either converging or diverging.
Then none of the following occurs.
\begin{enumerate}[\indent \rm (1)]
\item $S(\phi(\partial D_{j,i}^+))$ ends with $S_1$.

\item $S(\phi(\partial D_{j,i+1}^+))$ begins with $S_1$.

\item $S(\phi(\partial D_{j,i}^-))$ ends with $S_1$.

\item $S(\phi(\partial D_{j,i+1}^-))$ begins with $S_1$.
\end{enumerate}
\end{lemma}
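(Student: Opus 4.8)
The plan is to transcribe the proof of Lemma~\ref{lem:converging} almost verbatim, with the faces $D_i$ of $J$ replaced by the faces $D_{j,i}$ of the layer $J_j$, and with Lemma~\ref{lem:vertex_position:intermediate_layer} together with the facts recorded in the paragraph after Lemma~\ref{lem:two_cases:intermediate_layer} (that each $\phi(e_{j,i})$ is a piece while neither $\phi(\partial D_{j,i}^+)$ nor $\phi(\partial D_{j,i}^-)$ is a piece) playing the roles that \cite[Lemmas~3.1 and 3.2]{lee_sakuma_3} and the analogous facts about $J$ played there. As in that proof we may assume $i=1$ and that the vertex $x$ between $D_{j,1}$ and $D_{j,2}$ is diverging; the converging case is symmetric, and conditions (2) and (4) are the mirror images of (1) and (3). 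Nothing in the argument distinguishes Hypothesis~B from Hypothesis~C.

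First I would treat condition (1). Assuming $S(\phi(\partial D_{j,1}^+))$ ends with $S_1$, the facts that $\phi(\partial D_{j,1}^+)$ is not a piece, that $CS(\phi(\partial D_{j,1}))=CS(r)=\lp S_1,S_2,S_1,S_2\rp$, and Lemma~\ref{lem:vertex_position:intermediate_layer} (which locates the sign change at $x$ inside $\partial D_{j,1}$) give, exactly as in the opening lines of the proof of Lemma~\ref{lem:converging}, that $S(\phi(\partial D_{j,1}^-))$ ends with $(S_1,S_2)$, and hence that $S(\phi(\partial D_{j,1}^-)\phi(\partial D_{j,2}^-))$ contains a subsequence $(S_1,S_2,d)$ with $d\in\ZZ_+$ since $x$ is diverging. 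Now there are two cases according to the position of $J_j$. If $j=p$, so that $CS(\phi(\delta_p^{-1}))=CS(u_{s'}^{\pm1})=CS(s')$, then because $x$ is diverging the final $S_2$ of $S(\phi(\partial D_{p,1}^-))$ is not absorbed at $x$ and so is a subsequence of $CS(s')$, while the preceding $S_1$ is also a subsequence of $CS(s')$ since $CS(s')$ consists of $m$ and $m+1$ (Proposition~\ref{prop:general_m_and_m+1}) and $S_1$ begins and ends with $m+1$; thus $CS(s')$ contains both $S_1$ and $S_2$, so $s'\notin I_1(r)\cup I_2(r)$ by \cite[Proposition~3.19(1)]{lee_sakuma_2}, a contradiction. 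If instead $j<p$, then $\partial D_{j,1}^-$ is shared with the face $D_{j+1,1}$ of $J_{j+1}$ by Notation~\ref{not:layers}, and I would run the reducibility argument of the ``$J\subsetneq M$'' case of Lemma~\ref{lem:converging} verbatim: with $\gamma$, $\gamma'$ the boundary cycles of $D_{j,1}$, $D_{j+1,1}$ based at $x$ (clockwise, counter-clockwise) and $\gamma_0$ their maximal common initial segment, Lemma~\ref{lem:vertex_position:intermediate_layer}(1) forces the terminal point of $\gamma_0$ into the interior of the $S_1$-block of $\partial D_{j,1}^-$ and $S(\phi(\gamma_0))=S_2$, so there are sign changes just before and after $\phi(\gamma_0)$ in both $\phi(\gamma)$ and $\phi(\gamma')$; hence $S_2$ is a genuine subsequence of both $CS(\phi(\gamma))=CS(\phi(\gamma'))=CS(r)$, and since $\phi(\gamma)$ and $\phi(\gamma')$ share $\phi(\gamma_0)$ as a common initial word, \cite[Proposition~3.12]{lee_sakuma_2}(2) forces $\phi(\gamma)\equiv\phi(\gamma')$, i.e. $D_{j,1}$ and $D_{j+1,1}$ form a reducible pair, contradicting that $M$ is reduced. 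Condition (2) is symmetric.

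For conditions (3) and (4) the inner and outer sides of $J_j$ exchange roles: assuming $S(\phi(\partial D_{j,1}^-))$ ends with $S_1$ yields that $S(\phi(\partial D_{j,1}^+))$ ends with $(S_1,S_2)$, and this $S_1$ and $S_2$ both land in the cyclic word read along the outer boundary of $J_j$. If $j=0$, so that $CS(\phi(\alpha_0))=CS(s)$, the same survival argument (again using that $CS(s)$ consists of $m$ and $m+1$ and that $S_1$ begins and ends with $m+1$) shows that $CS(s)$ contains both $S_1$ and $S_2$, contradicting \cite[Proposition~3.19(1)]{lee_sakuma_2}. If $j>0$, then $\partial D_{j,1}^+$ is shared with a face of $J_{j-1}$, and the reducibility argument of the previous paragraph applies with $J_{j-1}$ in place of $J_{j+1}$. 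Note that the case $M=J$ (i.e. $p=0$) is exactly Lemma~\ref{lem:converging}.

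The genuinely delicate point, and the only real obstacle, is the sign-change bookkeeping: one must verify, using Lemma~\ref{lem:vertex_position:intermediate_layer} (no edge label contains $S_1$, and none contains $S_2$ with a term on each side), that the $S_1$- and $S_2$-blocks appearing at the end of $\partial D_{j,i}^{\pm}$ are not accidentally merged with neighbouring blocks at $x$ or at the endpoints of $\gamma_0$, so that they survive as honest subsequences of the relevant cyclic word $CS(s)$, $CS(s')$, or $CS(r)$. Granting this, the proof is a mechanical transcription of the proof of Lemma~\ref{lem:converging} with layer indices inserted, the only structural change being that the dichotomy there between $J=M$ and $J\subsetneq M$ becomes the dichotomy between $J_j$ being an extreme layer ($j=p$ on the inner side, $j=0$ on the outer side), which yields a $CS(s)$ or $CS(s')$ contradiction, and $J_j$ being interior on the relevant side, which yields a reducible pair of $2$-cells of $M$.
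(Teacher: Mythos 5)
Your proposal is correct and follows essentially the same route as the paper: it reduces everything to the two mechanisms of the proof of Lemma~\ref{lem:converging} (the $CS(s)$ or $CS(s')$ contradiction via \cite[Proposition~3.19(1)]{lee_sakuma_2} when the relevant side of $J_j$ is an extreme boundary of $M$, and the reducible-pair argument with a face of the adjacent layer otherwise), substituting Lemma~\ref{lem:vertex_position:intermediate_layer} for \cite[Lemma~3.1]{lee_sakuma_3}. The only difference is organizational --- the paper peels off $j=0$ and $j=p$ wholesale before treating $1\le j\le p-1$, whereas you split per condition according to which boundary of $J_j$ is involved --- and this does not change the substance of the argument.
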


\begin{proof}
We may assume that $i=1$ and that the vertex is diverging.
If $j=0$, then the assertion is nothing other than
Lemma~\ref{lem:converging}.
If $j=p$, then the assertion is proved by applying
the proof of Lemma~\ref{lem:converging}
to the inner boundary layer of $M$.
So, we may assume $1\le j\le p-1$.
Suppose on the contrary that (1) occurs,
namely suppose that $S(\phi(\partial D_{j,1}^+))$ ends with $S_1$.
Then $S(\phi(\partial D_{j,1}^-))$ ends with $(S_1,S_2)$.
Since the vertex is diverging,
$S(\phi(\partial D_{j,1}^-)\phi(\partial D_{j,2}^-))$ contains
a subsequence $(S_1,S_2,d)$ for some $d\in\ZZ_+$.
Thus we obtain a situation as illustrated in
Figure~\ref{fig.S_1_occurs}(b),
where $D_{j,1}$, $D_{j,2}$ and $D_{j+1,1}$, respectively,
correspond to $D_1$, $D_2$ and $D_1'$ in the figure.
Then by the argument in the proof of Lemma~\ref{lem:converging}
for the case where (1) occurs and $J \subsetneq M$,
we see that the two $2$-cells $D_{j,1}$ and $D_{j+1,1}$
form a reducible pair, a contradiction.
(Here we use Lemma~\ref{lem:vertex_position:intermediate_layer}(1)
instead of \cite[Lemma~3.1(1)]{lee_sakuma_3}.)
Hence (1) cannot occur.
By a similar argument, we can see that
(2), (3) and (4) cannot occur.
\end{proof}

Now we are ready to prove the following
generalization of Proposition~\ref{prop:general_critical}.

\begin{proposition}
\label{prop:general_critical(aa)}
Assume that $r$ is general.
Under Hypothesis~A and Notation~\ref{not:layers},
we may assume that the following hold for every $j$.
\begin{enumerate}[\indent \rm (1)]
\item For every face $D_{j,i}$ of $J_j$, the following hold.
\begin{enumerate}[\rm (a)]
\item If $S(\phi(\partial D_{j,i}^+))$ contains $S_1$,
then it contains a subsequence of the form
$(\ell, S_1, \ell')$ with $\ell, \ell' \in \ZZ_+$.

\item If $S(\phi(\partial D_{j,i}^-))$ contains $S_1$,
then it contains a subsequence of the form
$(\ell, S_1, \ell')$ with $\ell, \ell' \in \ZZ_+$.
\end{enumerate}

\item Every vertex $x$ in $J_j$ with $\deg_{J_j}(x)=4$
is either converging or diverging.
\end{enumerate}
\end{proposition}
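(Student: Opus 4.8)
The plan is to argue by induction on the number $p+1$ of layers of $M$ (equivalently, to process the layers $J_0,J_1,\dots,J_p$ from the outside inwards), the base case $p=0$, where $M=J=J_0$, being exactly Proposition~\ref{prop:general_critical}. So suppose $p\ge 1$ and that (1) and (2) have already been arranged for $J_0,\dots,J_{j-1}$; we treat $J_j$. Note first that every modification used to arrange (1) and (2) for a layer is one of the cut-and-reglue moves of Figures~\ref{fig.transformation_1} and~\ref{fig.transformation_2} (or those inside the proof of Lemma~\ref{lem:general_critical(a)}), each of which shifts a single vertex of one layer without changing either of that layer's two boundary labels; hence the layers already processed stay processed and those not yet processed stay intact.

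The one genuinely new ingredient is the following claim: \emph{once (1) and (2) hold for $J_{j-1}$, the cyclic $S$-sequence $CS(\phi(\alpha_j))=CS(\phi(\delta_{j-1}))$ consists of $m$ and $m+1$.} (For $j=0$ this is Proposition~\ref{prop:general_m_and_m+1}; for the innermost layer one has the extra input $CS(\phi(\delta_p))=CS(s')$, which also has this property by Proposition~\ref{prop:general_m_and_m+1}.) To prove the claim, use Lemma~\ref{lem:two_cases:intermediate_layer} and Notation~\ref{not:layers2} to write $\phi(\delta_{j-1}^{-1})\equiv y_{j-1,1}'w_{j-1,1}'z_{j-1,1}'y_{j-1,2}'w_{j-1,2}'z_{j-1,2}'\cdots$ with each $S(w_{j-1,i}')$ equal to $S_1$ or $S_2$. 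Since $CS(r)=\lp S_1,S_2,S_1,S_2\rp$ and $CS(r)$ consists of $m$ and $m+1$ (the first partial quotient of $r$ being $m$; see Remark~\ref{rem:general_decomposition} and the continued-fraction identities \cite[Lemma~3.16]{lee_sakuma_2}), every block of $\phi(\delta_{j-1})$ contained in some $w_{j-1,i}'$ has length $m$ or $m+1$; and since $\phi(\partial D_{j-1,i}^-)$ is an arc of $u_r^{\pm 1}$, any block lying inside a single $\phi(\partial D_{j-1,i}^-)$ has length at most $m+1$ as well. The remaining blocks would have to straddle a junction $z_{j-1,i}'y_{j-1,i+1}'$; but condition (2) for $J_{j-1}$ makes the vertex there converging or diverging, hence there is a sign change on the $\delta_{j-1}$-side at that vertex, so no such straddling block exists. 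With \cite[Lemma~3.8]{lee_sakuma_2} this proves the claim. This is the step I expect to be the main obstacle: it is the only place where genuinely new bookkeeping about how a layer's inner boundary label is assembled from its $2$-cells is required, refining the short argument that opens Section~\ref{sec:technical_lemmas_general}.

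Granting the claim, the rest is a transcription of Section~\ref{sec:transformation}'s outer-layer arguments to $J_j$. The facts \cite[Lemmas~3.1 and 3.2]{lee_sakuma_3} hold for every face of $M$ in the form of Lemmas~\ref{lem:vertex_position:intermediate_layer} and~\ref{lem:two_cases:intermediate_layer}, and Lemma~\ref{lem:converging2} is already the $J_j$-version of Lemma~\ref{lem:converging}. The proofs of Lemma~\ref{lem:general_critical(a)} and Lemma~\ref{lem:general_critical(b)} then go through verbatim for $J_j$ once one replaces Lemma~\ref{lem:converging} by Lemma~\ref{lem:converging2}, \cite[Lemma~3.1]{lee_sakuma_3} by Lemma~\ref{lem:vertex_position:intermediate_layer}, $CS(s)$ by $CS(\phi(\alpha_j))$, and — when $J_j$ is not the innermost layer — reads the concluding contradiction off $CS(\phi(\partial D_{j+1,1}))=CS(r)$ rather than off $CS(s')$ (when $J_j$ is innermost one uses $CS(s')$ exactly as before). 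In particular, after arranging (1) for $J_j$ via the $J_j$-analogue of Lemma~\ref{lem:general_critical(a)}, the subwords $y_{j,i},z_{j,i},y_{j,i}',z_{j,i}'$ are nonempty. Finally one runs, within $J_j$, the case analysis proving Proposition~\ref{prop:general_critical}(2): a degree-$4$ vertex $x$ of $J_j$ that is neither converging nor diverging has one of the five types of Figure~\ref{fig.vertex_type}; type (a) and the offending subcases of types (b)--(e) are excluded either because the amalgamated term $c_1+c_2\ge 2m>m+1$ would appear in the $S$-sequence of the relevant boundary (contradicting that $CS(\phi(\alpha_j))$, or $CS(r)$, consists of $m$ and $m+1$), or directly by the $J_j$-analogue of Lemma~\ref{lem:general_critical(b)}; in every surviving subcase one has $c_2=d_1$ or $c_1=d_2$, and the move of Figure~\ref{fig.transformation_1} or Figure~\ref{fig.transformation_2} makes $x$ converging or diverging while altering no boundary label, so performing these moves makes (2) hold for $J_j$. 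This completes the treatment of $J_j$, and hence, ranging over $j=0,\dots,p$, the induction.
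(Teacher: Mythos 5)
Your proposal is correct and follows essentially the same route as the paper: induction on the layer index $j$ with Proposition~\ref{prop:general_critical} as the base case, transcribing Lemmas~\ref{lem:general_critical(a)} and~\ref{lem:general_critical(b)} and the case analysis of Proposition~\ref{prop:general_critical}(2) to $J_j$, with Lemma~\ref{lem:converging2} and Lemma~\ref{lem:vertex_position:intermediate_layer} replacing Lemma~\ref{lem:converging} and \cite[Lemma~3.1]{lee_sakuma_3}, and reading the contradictions off $CS(r)$ via the next layer in. Your explicit "claim" that $CS(\phi(\alpha_j))$ consists of $m$ and $m+1$ (via sign changes at converging/diverging junctions blocking straddling blocks) is exactly the content the paper compresses into "by the inductive hypothesis," so you have if anything supplied slightly more detail than the original.
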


\begin{proof} We simultaneously prove
(1) and (2) by induction on $j \ge 0$.
The base step $j=0$ is already proved in
Proposition~\ref{prop:general_critical}. So fix $j \ge 1$.
By the inductive hypothesis,
$CS(\phi(\delta_{j-1}^{-1}))=CS(\phi(\alpha_j))$
consists of $m$ and $m+1$.

(1a)
Suppose on the contrary that $S(\phi(\partial D_{j,1}^+))$
ends with $S_1$ so that $|z_{j,1}|=0$
(see Notation~\ref{not:layers2}).
(The other case is treated similarly.)
Then $S(z_{j,1}')=S_2$ and so $S(z_{j,1,e}')=(m)$.
Note also that the assumption implies that
the vertex between $D_{j,1}$ and $D_{j,2}$ is not converging nor
diverging by Lemma~\ref{lem:converging2}.
Thus, arguing as in the proof of Lemma~\ref{lem:general_critical(a)}
when Lemma~\ref{lem:converging}(1) does not hold,
we see that $S(z_{j,1,e}'y_{j,2}'w_{j,2}')$ begins with
a subsequence of the form $(m+d)$ with $d \in \ZZ_+$.
Since $S(\phi(\partial D_{j,1}^+))=CS(\phi(\delta_{j-1}^{-1}))$
ends with a term $m+1$, and since $CS(\phi(\alpha_j))$ consists of
$m$ and $m+1$ by the inductive hypothesis,
the only possibility is that $d=1$ and
$S(w_{j,1,e} y_{j,2,b})=(m+1, m)$.
Then, as illustrated in Figure~\ref{fig.S_1_occurs(b)}, we may transform $M$
so that $S(\phi(\partial D_{j,1}^+))$ ends with $(S_1, m)$.
Since the new vertex of $M$ is either converging or diverging,
we see by Lemma~\ref{lem:converging2} that
none of the four (prohibited) conditions in the lemma holds.
Thus by repeating this argument at every degree $4$ vertex of $J_j$,
we obtain the desired result.

(1b) Suppose on the contrary that $S(\phi(\partial D_{j,1}^-))$
ends with $S_1$. (The other case is treated similarly.)
Then $|z_{j,1}'|=0$.
It follows that $S(z_{j,1})=S_2$, so that $S(z_{j,1,e})=(m)$.
Hence the sequence $S(z_{j,1,e}y_{j,2}w_{j,2})$ begins with
a subsequence of the form either $(m, d)$ or $(m+d)$,
where $d \in \ZZ_+$.
Here, if $S(z_{j,1,e}y_{j,2}w_{j,2})$ begins with
a subsequence of the form $(m, d)$,
then we see by an argument
as in the proof of Lemma \ref{lem:converging}(1) that
two $2$-cells $D_{j,1}$ and $D_{j-1,2}$ are a reducible pair,
contradicting that $M$ is a reduced diagram.
So $S(z_{j,1,e}y_{j,2}w_{j,2})$ must begin with
a subsequence of the form $(m+d)$.
Since $S(\phi(\partial D_{j,1}^+))$
ends with a term $m$, and since
$S(\phi(\partial D_{j,1}^+))=CS(\phi(\delta_{j-1}^{-1}))$
consists of $m$ and $m+1$ by the inductive hypothesis,
the only possibility is that $d=1$ and
$S(w_{j,1,e}' y_{j,2,b}')=(m+1, m)$.
Then, as illustrated in Figure~\ref{fig.S_1_occurs(c)},
we may transform $M$
so that $S(\phi(\partial D_{j,1}^-))$ ends with $(S_1, m)$.
Since the new vertex of $M$ is converging or diverging,
we see by Lemma~\ref{lem:converging2} that
none of the four (prohibited) conditions in the lemma holds.
Thus by repeating this argument at every degree $4$ vertex of $J_j$,
we obtain the desired result.

(2) As in the proof of Proposition~\ref{prop:general_critical},
we show that we can modify $M$ so that it satisfies (2).
Then it continues to satisfy (1) by Lemma~\ref{lem:converging2}.
To this end, note that
$CS(\phi(\alpha_j))$ consists of $m$ and $m+1$
by the inductive hypothesis.
By using this fact, we can see that
the statement of Lemma~\ref{lem:general_critical(b)} holds,
where $z_{i,e}$, $z_{i,e}'$, $y_{i+1,e}$ and $y_{i+1,e}'$ are replaced with
$z_{j,i,e}$, $z_{j,i,e}'$, $y_{j,i+1,e}$ and $y_{j,i+1,e}'$, respectively.
(The proof of Lemma~\ref{lem:general_critical(b)} works if
we appeal to the fact that
$CS(\phi(\alpha_j))$, instead of $CS(\phi(\alpha))$,
consists of $m$ and $m+1$.)
So we can follow the proof of Proposition~\ref{prop:general_critical}
and show that (2) holds.
\end{proof}

\begin{corollary}
\label{cor:general_critical(aa)}
Assume that $r$ is general.
Under Hypothesis~A and Notation~\ref{not:layers},
we may assume that the following hold.
\begin{enumerate}[\indent \rm (1)]
\item Every vertex $x$ in $M$ with $\deg_{M}(x)=4$
is either converging or diverging.
\item For every face $D_{j,i}$ of $M$,
one of the following hold.
\begin{enumerate}[\rm (a)]
\item Both $S(\phi(\partial D_{j,i}^+))$ and $S(\phi(\partial D_{j,i}^-))$
 contain $(m,S_1,m)$ as a subsequence.
\item Both $S(\phi(\partial D_{j,i}^+))$ and $S(\phi(\partial D_{j,i}^-))$
 contain $(m+1,S_2,m+1)$ as a subsequence.
\end{enumerate}
\item
For every $j$, both $CS(\phi(\alpha_j))$ and $CS(\phi(\delta_j^{-1}))$
consist of $m$ and $m+1$.
\end{enumerate}
\end{corollary}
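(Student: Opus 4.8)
The plan is to read all three assertions off from Proposition~\ref{prop:general_critical(aa)}, Proposition~\ref{prop:general_m_and_m+1} and Lemma~\ref{lem:two_cases:intermediate_layer}; no further modification of $M$ is needed beyond the one already performed in Proposition~\ref{prop:general_critical(aa)}.

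First I would settle (3). For $j=0$ one has $CS(\phi(\alpha_0))=CS(\phi(\alpha))=CS(s)$ and $CS(\phi(\delta_0^{-1}))=CS(\phi(\alpha_1))$, while for $j=p$ one has $CS(\phi(\delta_p^{-1}))=CS(\phi(\delta^{-1}))=CS(s')$; the fact that $CS(s)$ and $CS(s')$ consist of $m$ and $m+1$ is Proposition~\ref{prop:general_m_and_m+1}. For the intermediate layers, the identity $\alpha_j=\delta_{j-1}^{-1}$ together with the induction carried out in the proof of Proposition~\ref{prop:general_critical(aa)} --- in which ``$CS(\phi(\alpha_j))=CS(\phi(\delta_{j-1}^{-1}))$ consists of $m$ and $m+1$'' figures as the running inductive hypothesis --- gives the claim for every $j$. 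Once (3) is in hand, (1) is immediate: any vertex $x$ of $M$ with $\deg_M(x)=4$ has degree $4$ within the layer $J_j$ containing it (this is clear from the description of $M$ in \cite[Theorem~4.11]{lee_sakuma_2} and Notation~\ref{not:layers}), so Proposition~\ref{prop:general_critical(aa)}(2) shows $x$ is converging or diverging.

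For (2) I would fix a face $D_{j,i}$ and apply Lemma~\ref{lem:two_cases:intermediate_layer}. If both $S(\phi(\partial D_{j,i}^+))$ and $S(\phi(\partial D_{j,i}^-))$ contain $S_1$, then Proposition~\ref{prop:general_critical(aa)}(1) upgrades each such occurrence to one of the form $(\ell,S_1,\ell')$ with $\ell,\ell'\in\ZZ_+$; by (3) every term of $S(\phi(\partial D_{j,i}^{\pm}))$ lies in $\{m,m+1\}$, and by Notation~\ref{not:layers2} together with the alternation $CS(r)=\lp S_1,S_2,S_1,S_2\rp$ the letters flanking the distinguished $S_1$-block are the nonempty remnants $y_{j,i},z_{j,i}$ (resp.\ $y_{j,i}',z_{j,i}'$) of the two neighbouring $S_2$-blocks, which begin and end with $m$ by Remark~\ref{rem:general_decomposition}; hence $\ell=\ell'=m$ and $(m,S_1,m)$ is a subsequence --- case (a). Otherwise Lemma~\ref{lem:two_cases:intermediate_layer} gives a subsequence $(\ell,S_2,\ell')$, $\ell,\ell'\in\ZZ_+$, of each of $S(\phi(\partial D_{j,i}^{\pm}))$; the flanking remnants are now parts of neighbouring $S_1$-blocks, which begin and end with $m+1$ by Remark~\ref{rem:general_decomposition}, so $\ell=\ell'=m+1$ and $(m+1,S_2,m+1)$ is a subsequence --- case (b).

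The only step calling for some care is this last identification in (2): one must verify that the term of $S(\phi(\partial D_{j,i}^{\pm}))$ immediately before (resp.\ after) the distinguished block really is the terminal (resp.\ initial) term of the adjacent block of $CS(r)$, and not an extra $m+1$ produced by the $S_1$-block itself meeting a value $\ell=m+1$. This is exactly where the nonemptiness of the relevant $y$- and $z$-words (supplied by Proposition~\ref{prop:general_critical(aa)}(1), i.e.\ Lemma~\ref{lem:general_critical(a)}) and the explicit shapes of $S_1$ and $S_2$ from Remark~\ref{rem:general_decomposition} enter. All remaining steps are direct quotations of Propositions~\ref{prop:general_m_and_m+1} and~\ref{prop:general_critical(aa)}.
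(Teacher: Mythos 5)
Your proposal is correct and follows essentially the same route as the paper's (very terse) proof: (1) is read off from Proposition~\ref{prop:general_critical(aa)}(2), (2) from Lemma~\ref{lem:two_cases:intermediate_layer} together with Proposition~\ref{prop:general_critical(aa)}(1) and the endpoint structure of $S_1$ and $S_2$ given in Remark~\ref{rem:general_decomposition}, and (3) from Proposition~\ref{prop:general_m_and_m+1} plus the inductive hypothesis running through the proof of Proposition~\ref{prop:general_critical(aa)}. The care you take in pinning down the flanking terms as $m$ (resp.\ $m+1$) is exactly the content the paper compresses into the phrase ``the fact that $S_1$ (resp., $S_2$) begins and ends with $m+1$ (resp., $m$)''.
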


\begin{proof}
(1) is nothing other than Proposition~\ref{prop:general_critical(aa)}(2).
(2) follows from Lemma~\ref{lem:two_cases:intermediate_layer},
Proposition~\ref{prop:general_critical(aa)}(1)
and the fact that $S_1$ (resp., $S_2$) begins and ends with $m+1$ (resp., $m$).
\end{proof}

\section{Key results for the induction}
\label{sec:result_for_induction}

In this section, we prove key results,
Propositions~\ref{prop:induction_general_1}
and \ref{prop:induction_general_3}
for $r=[m,m_2, \dots, m_k]$ with $m \ge 2$, $m_2 \ge 2$ and $k \ge 3$
and $r=[m,1,m_3, \dots, m_k]$ with $m \ge 2$ and $k \ge 4$, respectively,
used for the inductive proof of Main Theorem~\ref{main_theorem}
in Section~\ref{sec:proof_for_general_2-bridge_links}.
Throughout this section, we assume that Hypothesis~A holds
and that the annular diagram $M$ satisfies the conditions in
Corollary \ref{cor:general_critical(aa)}.

\subsection{The case for $r=[m,m_2, \dots, m_k]$ with $m \ge 2$, $m_2 \ge 2$ and $k \ge 3$}

We first establish a key result, Proposition~\ref{prop:induction_general_1},
for $r=[m,m_2, \dots, m_k]$, where $m \ge 2$, $m_2 \ge 2$ and $k \ge 3$.
Recall from Remark~\ref{rem:general_decomposition}(1) that
$CS(r)=\lp S_1, S_2, S_1, S_2 \rp$,
where $S_1$ begins and ends with $(m+1, (m_2-1) \langle m \rangle, m+1)$, and
$S_2$ begins and ends with $(m_2 \langle m \rangle)$.

\begin{lemma}
\label{lem:general_1(b)}
Let $r=[m,m_2, \dots, m_k]$, where $m \ge 2$, $m_2 \ge 2$ and $k \ge 3$.
Under Hypothesis~A and Notation~\ref{not:layers2},
the following hold for every $i$ and $j$.
\begin{enumerate}[\indent \rm (1)]
\item $S(z_{j,i,e}y_{j,i+1,b}) \neq (m+1, m+1)$.

\item $S(z_{j,i,e}'y_{j,i+1,b}') \neq (m+1, m+1)$.
\end{enumerate}
\end{lemma}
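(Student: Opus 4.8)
The plan is to argue by contradiction, following the template of the proof of Lemma~\ref{lem:case1-1(b)}(5) combined with the inner-layer transfer technique of Lemma~\ref{lem:general_critical(b)} and Lemma~\ref{lem:converging2}. It suffices to prove (1): assertion~(2) then follows verbatim by applying the argument to the annular diagram with the outer and inner boundaries interchanged, exactly as in the reduction stated just after Proposition~\ref{prop:general_m_and_m+1}. So suppose $S(z_{j,i,e}y_{j,i+1,b})=(m+1,m+1)$; after a cyclic shift of indices we may take $i=1$, so that this subword straddles the vertex $x$ between $D_{j,1}$ and $D_{j,2}$ on $\alpha_j$.

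First I would read off the local picture at $x$. Since $z_{j,1,e}$ and $y_{j,2,b}$ are single syllables separated by the sign change occurring at $x$, we get $S(z_{j,1,e})=S(y_{j,2,b})=(m+1)$ and $|z_{j,1,e}|=|y_{j,2,b}|=m+1$. By Corollary~\ref{cor:general_critical(aa)}(1), $x$ is converging or diverging, so on the inner side the two edges $e_{j,2}'$ and $e_{j,3}'$ (which bound $D_{j,1}$ and $D_{j,2}$ along $\delta_j$) fit together across the corresponding vertex of $\delta_j$ with a sign change as well. Using Corollary~\ref{cor:general_critical(aa)}(2) to locate the block $S_1$ or $S_2$ inside each of $S(\phi(\partial D_{j,1}^-))$ and $S(\phi(\partial D_{j,2}^-))$, and Lemma~\ref{lem:vertex_position:intermediate_layer} to force the endpoints of the path $e_{j,2}'e_{j,3}'$ into the interiors of those blocks, I would show that $S(\phi(e_{j,2}'e_{j,3}'))$ contains one of the following forbidden subsequences: a term strictly larger than $m+1$; a subsequence containing both $S_1$ and $S_2$; or the alternating-law violation $(S_2,S_2)$ of two consecutive full $S_2$-blocks. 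Which of these occurs depends on the converging/diverging type of $x$ together with the type~(a)/(b) of $D_{j,1}$ and of $D_{j,2}$ in Corollary~\ref{cor:general_critical(aa)}(2); this is the same dichotomy that appears in the proofs of Lemma~\ref{lem:case1-1(b)}(5) and Corollary~\ref{cor:general_prelim_3-1}, now with the two ends of $S_2$ playing the role of the long run of $m$'s.

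Finally I would close the two global cases exactly as before. If $J_0\cup\cdots\cup J_j=M$, then $e_{j,2}'e_{j,3}'$ is a subpath of $\delta_j^{-1}=\delta^{-1}$, so the forbidden subsequence lies in $CS(\phi(\delta^{-1}))=CS(s')$, which is impossible: $CS(s')$ consists of $m$ and $m+1$ by Proposition~\ref{prop:general_m_and_m+1}, and if it contained both $S_1$ and $S_2$ then $s'\notin I_1(r)\cup I_2(r)$ by \cite[Proposition~3.19(1)]{lee_sakuma_2}, contradicting the hypothesis of the theorem. If $J_0\cup\cdots\cup J_j\subsetneq M$, then the path $e_{j,2}'e_{j,3}'$ lies on the outer boundary of the next layer $J_{j+1}$, and the reducibility-versus-position analysis used in Lemma~\ref{lem:converging2} shows the forbidden subsequence is inherited by $CS(\phi(\partial D_{j+1,\ell}))=CS(r)$ for some face $D_{j+1,\ell}$, which is impossible since $CS(r)=\lp S_1,S_2,S_1,S_2\rp$ uses only $m$ and $m+1$ and alternates $S_1$ with $S_2$. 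The main obstacle I anticipate is the middle step: matching the type of $x$ with the types of $D_{j,1}$ and $D_{j,2}$ and checking, in each of the resulting handful of cases, that the forced subsequence of $S(\phi(e_{j,2}'e_{j,3}'))$ genuinely cannot embed in $CS(r)$. The degenerate possibilities that $\deg_{J_j}(x)=3$, or that $J_j$ consists of a single $2$-cell so that $D_{j,1}=D_{j,2}$ after the identification, must also be dismissed directly, as in Case~2.a of Section~\ref{sec:proof_of_main_theorem_3}.
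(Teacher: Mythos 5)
There is a genuine gap, and it sits exactly where you anticipate trouble: the ``middle step'' fails, because the forbidden subsequence you hope to extract from $S(\phi(e_{j,2}'e_{j,3}'))$ need not exist. Consider the configuration in which $D_{j,1}$ and $D_{j,2}$ are both of type (b) in Corollary~\ref{cor:general_critical(aa)}(2), with $S(w_{j,1})=S(w_{j,2})=S_2$ and the minimal padding $S(z_{j,1})=S(y_{j,2})=(m+1)$ permitted by $(m+1,S_2,m+1)$. Then $S(z_{j,1}z_{j,1}'^{-1})=S(y_{j,2}'^{-1}y_{j,2})=S_1$ forces $S(z_{j,1}')$ to be $S_1$ with its last term deleted and $S(y_{j,2}')$ to be $S_1$ with its first term deleted, so (with the sign change at $x$ guaranteed by the converging/diverging condition) the inner junction reads $(\dots,m+1,(2m_2-2)\langle m\rangle,m+1,\dots)$. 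For $m_2=2$ this is $(m+1,m,m,m+1)$, which is perfectly legal: e.g.\ for $r=[2,2,2]=5/12$ one has $CS(r)=\lp 3,2,3,2,2,3,2,3,2,2\rp$, which contains $(3,2,2,3)$, and the pattern is of course also compatible with $CS(s')$ consisting of $m$ and $m+1$. None of your three candidate obstructions (a term exceeding $m+1$, a full $S_1$ together with a full $S_2$, or $(S_2,S_2)$) is produced, so pushing the hypothesis inward to $\delta_j$ simply loses the contradiction; one can check that this local picture at $x$ is entirely self-consistent.

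The contradiction has to be found on the outer side, which is what the paper does, in two quite short steps. For $j=0$ it stays on $\alpha_0$: the hypothesis puts $(m+1,m+1)$ into $CS(s)$, while a block $(m_2\langle m\rangle)$ with $m_2\ge 2$ also lies in $CS(s)$ --- under Hypothesis~C because $S(w_{0,1})=S_2$, and under Hypothesis~B because $S(z_{0,1,e})=(m+1)$ forces $S(z_{0,1})$ to run past the initial $(m_2\langle m\rangle)$ of $S_2$ --- so $CS(s)$ contains both $(m,m)$ and $(m+1,m+1)$, contradicting \cite[Lemma~3.8]{lee_sakuma_2}. For $j\ge 1$ it goes \emph{outward} rather than inward: $e_{j,2}e_{j,3}$ is the path $\partial D_{j-1,2}^-$ of the previous layer, so by Lemma~\ref{lem:vertex_position:intermediate_layer} the pattern $(m+1,m+1)$ embeds into $CS(\phi(\partial D_{j-1,2}))=CS(r)$, which contains no two consecutive $m+1$'s. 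Your global scaffolding (innermost layer versus next layer) is the right shape, but to repair the argument you would need to replace the inner-side local analysis by one of these two observations.
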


\begin{proof}
We prove only (1), because the proof of (2) is parallel.
Suppose on the contrary that $S(z_{j,1,e}y_{j,2,b})=(m+1, m+1)$
for some $j$.
First assume $j=0$.
If Hypothesis~B holds,
then $S(z_{0,1})$ begins with $(m_2 \langle m \rangle)$,
because $S_2$ begins and ends with $(m_2 \langle m \rangle)$
(see Remark~\ref{rem:general_decomposition}(1))
whereas $S(z_{0,1,e})=(m+1)$ by assumption.
This implies that $CS(\phi(\alpha_0))=CS(s)$ contains two consecutive $m$'s.
So $CS(s)$ contains two consecutive $m$'s and
two consecutive $m+1$'s, contradicting \cite[Lemma~3.8]{lee_sakuma_2}.
On the other hand, if Hypothesis~C holds,
then $CS(s)$ contains two consecutive $m$'s
(because it contains $S_2$)
and two consecutive $m+1$'s
by assumption, again contradicting \cite[Lemma~3.8]{lee_sakuma_2}.
Next assume $j \ge 1$.
By using Lemma~\ref{lem:vertex_position:intermediate_layer},
we can see that $S(\phi(e_{j,2}e_{j,3}))$ contains a subsequence $(m+1, m+1)$.
Thus $CS(\phi(\partial D_{j-1,2}))=CS(r)$ contains two consecutive $m+1$'s,
a contradiction.
\end{proof}

\begin{corollary}
\label{cor:general_1}
Let $r=[m,m_2, \dots, m_k]$, where $m \ge 2$, $m_2 \ge 2$ and $k \ge 3$.
Under Hypothesis~A and Notation~\ref{not:layers},
the following hold for every $j$.
\begin{enumerate}[\indent \rm (1)]
\item $CS(\phi(\alpha_j))$ does not contain $(m+1, m+1)$ as a subsequence.

\item $CS(\phi(\delta_j^{-1}))$ does not contain $(m+1, m+1)$ as a subsequence.
\end{enumerate}
\end{corollary}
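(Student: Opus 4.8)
The plan is to argue by contradiction and reduce everything to the junction estimate of Lemma~\ref{lem:general_1(b)}. First I would dispose of (2) by symmetry: the inner boundary cycle $\delta_j^{-1}$ carries the decomposition $\phi(\delta_j^{-1})\equiv y_{j,1}'w_{j,1}'z_{j,1}'\cdots y_{j,t}'w_{j,t}'z_{j,t}'$ of Notation~\ref{not:layers2} with exactly the same formal properties as $\phi(\alpha_j)$, and the ingredients needed for $\alpha_j$ — namely Corollary~\ref{cor:general_critical(aa)}(2) for $\partial D_{j,i}^-$, Corollary~\ref{cor:general_critical(aa)}(3) for $CS(\phi(\delta_j^{-1}))$, and Lemma~\ref{lem:general_1(b)}(2) in place of Lemma~\ref{lem:general_1(b)}(1) — are all available; so (2) will follow verbatim from the proof of (1). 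Hence I would only write out (1).

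So suppose $CS(\phi(\alpha_j))$ contains $\lp m+1,m+1\rp$ as a subsequence. By Corollary~\ref{cor:general_critical(aa)}(3), $CS(\phi(\alpha_j))$ consists of $m$ and $m+1$, so there is a subword $v\equiv v'v''$ of the cyclic word $(\phi(\alpha_j))$ with $S(v')=S(v'')=(m+1)$ and with $v'$, $v''$ of opposite signs, realizing the two consecutive terms $m+1$. Next I would locate $v$ relative to the decomposition $\phi(\alpha_j)\equiv y_{j,1}w_{j,1}z_{j,1}\cdots y_{j,t}w_{j,t}z_{j,t}$, in which each $w_{j,i}$ has $S$-sequence $S_1$ or $S_2$, where for $r=[m,m_2,\dots,m_k]$ with $m_2\ge 2$ one has $S_1$ beginning and ending with $(m+1,(m_2-1)\langle m\rangle,m+1)$ and $S_2$ beginning and ending with $(m_2\langle m\rangle)$ (Remark~\ref{rem:general_decomposition}(1)); in particular $CS(r)=\lp S_1,S_2,S_1,S_2\rp$ contains no two consecutive $m+1$'s, since an $m$ separates the blocks at the $S_1S_2$ and $S_2S_1$ junctions and, internally, consecutive $m+1$'s inside $S_1$ are separated by the $(m_2-1)\langle m\rangle$ forced by the recursive description of $CS(\tilde r)$. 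It follows at once that $v$ cannot lie inside a single $w_{j,i}$. Moreover, Corollary~\ref{cor:general_critical(aa)}(2) inserts an ``$m$-buffer'' immediately before and after each intact $S_1$-block and an ``$m+1$-buffer'' immediately before and after each $S_2$-block inside $\phi(\partial D_{j,i}^{\pm})$; since $CS(\phi(\alpha_j))$ has no term exceeding $m+1$, no boundary run of any $w_{j,i}$ merges with the neighbouring $y_{j,i}$ or $z_{j,i}$ material. Feeding these observations into the same case analysis as in the proof of Lemma~\ref{lem:case1-1}(1) (replacing the single terms $m+1$, $m$ there by the blocks $S_1$, $S_2$), I expect the two runs $v'$ and $v''$ to be forced to lie together in some $z_{j,i}y_{j,i+1}$, whence $S(v')=S(v'')=(m+1)$ gives $S(z_{j,i,e}y_{j,i+1,b})=(m+1,m+1)$, contradicting Lemma~\ref{lem:general_1(b)}(1); this completes (1), and then (2) follows by the mirror argument on $\delta_j^{-1}$ as above.

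The step I expect to be the main obstacle is precisely this localization: one must check, for $j=0$ (using \cite[Lemma~3.1]{lee_sakuma_3} under each of Hypotheses~B and~C) and for $j\ge 1$ (using Lemma~\ref{lem:vertex_position:intermediate_layer}), that no configuration places the pair $v'v''$ straddling a $w_{j,i}$-block — i.e. with one of the two $m+1$-runs formed by combining a boundary run of some $w_{j,i}$ with part of $z_{j,i}$ or $y_{j,i+1}$ — in a way not already excluded by Lemma~\ref{lem:general_1(b)}. Dispatching these borderline cases is where the buffering from Corollary~\ref{cor:general_critical(aa)}(2), the absence of $\lp m+1,m+1\rp$ in $CS(r)$, and the balancedness of $CS(\phi(\alpha_j))$ over $\{m,m+1\}$ (Corollary~\ref{cor:general_critical(aa)}(3), cf.\ \cite[Lemma~3.8]{lee_sakuma_2}) must all be used in concert; once they are, the reduction to Lemma~\ref{lem:general_1(b)} is immediate.
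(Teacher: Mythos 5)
Your proposal is correct and follows essentially the same route as the paper: by Corollary~\ref{cor:general_critical(aa)} and the fact that $CS(r)$ contains no $(m+1,m+1)$, the offending pair of terms can only occur at a junction $z_{j,i,e}y_{j,i+1,b}$, which is then ruled out by Lemma~\ref{lem:general_1(b)}(1), with (2) handled by the parallel argument on $\delta_j^{-1}$. The paper's proof is simply a terser version of your localization step, asserting directly that $S(\phi(\partial D_{j,i}^{+}))$ cannot contain $(m+1,m+1)$ because $S(\phi(\partial D_{j,i}))=S(r)$ does not.
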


\begin{proof}
We prove only (1), because the proof of (2) is parallel.
Suppose on the contrary that $CS(\phi(\alpha_j))$
contains a subsequence $(m+1, m+1)$ for some $j$.
Let $v$ be a subword of the cyclic word $(\phi(\alpha_j))$
corresponding to a subsequence $(m+1, m+1)$.
Note that $S(\phi(\partial D_{j,i}^+))$ does not contain $(m+1, m+1)$,
because $S(r)=S(\phi(\partial D_{j,i}))$ does not.
Thus the only possibility is that $v=z_{j,i,e}y_{j,i+1,b}$
for some $i$ by Corollary~\ref{cor:general_critical(aa)}.
But this is impossible by Lemma~\ref{lem:general_1(b)}(1).
\end{proof}

\begin{proposition}
\label{prop:induction_general_1}
Let $r=[m,m_2, \dots, m_k]$, where $m \ge 2$, $m_2 \ge 2$ and $k \ge 3$.
Suppose that there are two distinct rational numbers $s, s' \in I_1(r) \cup I_2(r)$
such that the unoriented loops $\alpha_{s}$ and $\alpha_{s'}$ are
homotopic in $S^3-K(r)$, namely suppose that Hypothesis~A holds.
Let $\tilde{r}=[m_2-1, m_3, \dots, m_k]$ be as in \cite[Lemma~3.11]{lee_sakuma_2}.
Then there are two distinct rational numbers
$\tilde{s}, \tilde{s}' \in I_1(\tilde{r}) \cup I_2(\tilde{r})$
such that the unoriented loops $\alpha_{\tilde{s}}$ and
$\alpha_{\tilde{s}'}$ are homotopic in $S^3-K(\tilde{r})$.
Moreover, there is a reduced
nontrivial
conjugacy diagram over $G(K(\tilde{r}))$ for
$\alpha_{\tilde{s}}$ and $\alpha_{\tilde{s}'}$
such that none of the degree $4$ vertices is mixing.
\end{proposition}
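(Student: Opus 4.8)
The plan is to reduce the problem for $K(r)$ with $r=[m,m_2,\dots,m_k]$ to one for $K(\tilde r)$ with $\tilde r=[m_2-1,m_3,\dots,m_k]$ by ``collapsing'' the outermost layer of the annular diagram $M$. By Corollary~\ref{cor:general_critical(aa)}, we may assume $M$ satisfies the trichotomy: every degree~$4$ vertex is converging or diverging, every face $D_{j,i}$ satisfies one of the two subsequence conditions in (2), and every $CS(\phi(\alpha_j))$, $CS(\phi(\delta_j^{-1}))$ consists of $m$ and $m+1$. The point is that the decomposition $\phi(\alpha_j)\equiv y_{j,1}w_{j,1}z_{j,1}\cdots y_{j,t}w_{j,t}z_{j,t}$ of Notation~\ref{not:layers2} records, via the $w_{j,i}$, the ``$S_1$-blocks'' and ``$S_2$-blocks'' of $r$; stripping the $m+1$'s that flank each occurrence of $S_1$ and the $m$'s that flank each occurrence of $S_2$ should transform the cyclic sequence $CS(\phi(\alpha_0))=CS(s)$ into $CS(\tilde s)$ for some $\tilde s\in I_1(\tilde r)\cup I_2(\tilde r)$, since $CS(\tilde r)=\lp T_1,T_2,T_1,T_2\rp$ with $T_1$ coming from $S_1$ and $T_2$ from $S_2$ as in Remark~\ref{rem:general_decomposition}(1). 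The first step, then, is to make this surgery precise: define a combinatorial operation on $M$ (removing layer $J_0$, or equivalently re-reading the boundary words after deleting the outer ``collar'') that produces an annular diagram $\tilde M$ over $G(K(\tilde r))$ whose outer and inner boundary words are $u_{\tilde s}^{\pm1}$ and $u_{\tilde s'}^{\pm1}$.

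Second, I would verify that $\tilde s\ne\tilde s'$. This should follow because the collapsing map on cyclic sequences is injective on the relevant range: distinct $s,s'$ give distinct $CS(s),CS(s')$ (both consisting of $m$ and $m+1$, hence determined by their combinatorics of $S_1$- and $S_2$-runs), and the induced $\tilde s,\tilde s'$ inherit the distinction. Here I expect to use Corollary~\ref{cor:general_1}: the absence of $(m+1,m+1)$ as a subsequence of $CS(\phi(\alpha_j))$ and $CS(\phi(\delta_j^{-1}))$ guarantees that the $S_1$-blocks (which begin and end with $m+1$) are ``isolated'' in the sequence, so the collapsing is well-defined and reversible, and the reduced conjugacy diagram $\tilde M$ is genuinely nontrivial (it does not degenerate to the trivial annulus). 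The nontriviality of $\tilde M$ as a conjugacy diagram — i.e.\ that $\alpha_{\tilde s}$ and $\alpha_{\tilde s'}$ really are homotopic, not merely that some degenerate diagram exists — is what forces $\tilde s\ne\tilde s'$ to be meaningful, and this is where the hypothesis that $M$ is reduced and has the structure of Corollary~\ref{cor:general_critical(aa)} is essential.

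Third, I would check that no degree~$4$ vertex of $\tilde M$ is mixing. A degree~$4$ vertex of $\tilde M$ arises either from a degree~$4$ vertex of $M$ lying in a layer $J_j$ with $j\ge1$, or from a vertex created by the collapsing operation along the old $J_0$--$J_1$ interface. For the former, Proposition~\ref{prop:general_critical(aa)}(2) already tells us every degree~$4$ vertex of each $J_j$ is converging or diverging, and a converging/diverging vertex is by definition not mixing (its incoming unit-segment labels form $\{a,b\}$ or $\{a^{-1},b^{-1}\}$, not all four). For the newly created vertices, I would argue that the collapsing is performed precisely along converging/diverging vertices (that is the content of Corollary~\ref{cor:general_critical(aa)}(1)), so the glued-up vertex again has incoming labels of only one sign-pattern; the transformations pictured in Figures~\ref{fig.transformation_1} and \ref{fig.transformation_2} are exactly the local moves that guarantee this. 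Thus mixing is avoided throughout.

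The main obstacle I anticipate is the first step: setting up the collapsing operation on the annular diagram so that it simultaneously (i) produces a \emph{reduced} diagram over $G(K(\tilde r))$ — one must rule out that new reducible pairs appear, which should follow from Lemma~\ref{lem:converging2} and the reducedness of $M$ — and (ii) correctly identifies the boundary words as $u_{\tilde s}^{\pm1}$, $u_{\tilde s'}^{\pm1}$ for genuine $\tilde s,\tilde s'\in I_1(\tilde r)\cup I_2(\tilde r)$, which requires matching the block structure of $CS(r)$ against that of $CS(\tilde r)$ via Remark~\ref{rem:general_decomposition}(1) and the characterization of the words $u_s$ from \cite{lee_sakuma_2}. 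Once the operation is correctly defined, the distinctness of $\tilde s,\tilde s'$ and the absence of mixing vertices are comparatively routine consequences of Corollaries~\ref{cor:general_1} and \ref{cor:general_critical(aa)} and Proposition~\ref{prop:general_critical(aa)}.
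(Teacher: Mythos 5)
Your high-level strategy (reduce to $\tilde r$ by a combinatorial surgery on $M$ controlled by Corollaries~\ref{cor:general_critical(aa)} and~\ref{cor:general_1}) matches the paper's, but the specific operation you propose --- removing the outer layer $J_0$, ``collapsing the outermost layer'' --- would not work. If you delete $J_0$, the remaining diagram is still a diagram over $G(K(r))$: every surviving $2$-cell is still labeled by $u_r^{\pm1}$, and the new outer boundary word $\phi(\delta_0^{-1})$ is just another cyclically alternating word whose $S$-sequence consists of $m$ and $m+1$; nothing has changed the relator to $u_{\tilde r}$. (The operation also degenerates completely when $M=J_0$.) What the paper actually does is keep the underlying map of $M$ \emph{unchanged} and relabel every edge: using Definition~\ref{def:T-sequence_1}, each label $\phi(e_{j,i})$ is replaced by an alternating word $\psi(\tilde e_{j,i})$ whose $S$-sequence is the $T$-sequence of the old one, i.e.\ the sequence of lengths of the runs of $m$'s separated by the isolated $m+1$'s. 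This run-length renormalization turns $CS(r)=\lp S_1,S_2,S_1,S_2\rp$ into $CT(r)=CS(\tilde r)$ on every face and $CS(s)$ into $CS(\tilde s)$ on the outer boundary, and it is well defined and consistent precisely because of the two facts you cite (no $(m+1,m+1)$ subsequences, and every degree-$4$ vertex converging or diverging). Your ``stripping the $m+1$'s flanking $S_1$ and the $m$'s flanking $S_2$'' is not this operation and does not produce $CS(\tilde r)$ from $CS(r)$.

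Two further gaps. First, you never verify that $\tilde s,\tilde s'$ actually lie in $I_1(\tilde r)\cup I_2(\tilde r)$; this is not automatic, and in the paper it is a separate Claim proved by an explicit continued-fraction computation: the absence of $(m+1,m+1)$ forces $s=[p_1,p_2,\dots,p_h]$ with $p_1=m$ and $p_2\ge 2$, one sets $\tilde s=[p_2-1,p_3,\dots,p_h]$, and one checks that the M\"obius transformation $s\mapsto 1/\bigl(1/(-m+1/s)-1\bigr)$ carries the endpoints of $I_1(r)$ and $I_2(r)$ to those of $I_2(\tilde r)$ and $I_1(\tilde r)$. (Distinctness of $\tilde s,\tilde s'$ is then immediate from injectivity of this map, so that part of your argument is fine.) Second, for reducedness of $\tilde M$ the relevant tool is not Lemma~\ref{lem:converging2} but the fact that an alternating word is determined by its initial letter together with its $S$-sequence, which lets one pull a reducible pair in $\tilde M$ back to a reducible pair in $M$. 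Finally, your discussion of ``newly created vertices along the old $J_0$--$J_1$ interface'' is an artifact of the layer-removal picture: in the correct construction the underlying map, hence the vertex set, does not change, and non-mixingness of every degree-$4$ vertex of $\tilde M$ is read off directly from the converging/diverging property of the corresponding vertex of $M$.
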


\begin{proof}
Recall from
Corollaries~\ref{cor:general_critical(aa)} and~\ref{cor:general_1}
that both $CS(\phi(\alpha_j))$ and $CS(\phi(\delta_j^{-1}))$ consist of $m$ and $m+1$
without a subsequence $(m+1, m+1)$ for every $j$.
In particular, both $CS(\phi(\alpha_0))=CS(s)$ and
$CS(\phi(\delta_p^{-1}))=CS(s')$ consist of $m$ and $m+1$
without a subsequence $(m+1, m+1)$.
This implies that if $s=[p_1, p_2, \dots, p_h]$ and $s'=[q_1, q_2, \dots, q_l]$,
where $p_i, q_j \in \ZZ_+$ and $p_h, q_l \ge 2$, then
$p_1=q_1=m$ and $p_2, q_2 \ge 2$.
Put $\tilde{s}=[p_2-1, p_3, \dots, p_h]$ and $\tilde{s}'=[q_2-1, q_3, \dots, q_l]$
as in \cite[Lemma~3.11]{lee_sakuma_2}.

\medskip
{\bf Claim.}
{\it Both $\tilde{s}$ and $\tilde{s}'$ belong to
$I_1(\tilde{r}) \cup I_2(\tilde{r})$.}

\begin{proof}{\it of Claim }
Since $p_1=q_1=m$, we have
\[
\tilde{s}=
\cfrac{1}{
\cfrac{1}{-p_1+
\cfrac{1}{s}}-1}
=
\cfrac{1}{
\cfrac{1}{-m+
\cfrac{1}{s}}-1}
\quad
\textrm{and}
\quad
\tilde{s}'=
\cfrac{1}{
\cfrac{1}{-q_1+
\cfrac{1}{s'}}-1}
=
\cfrac{1}{
\cfrac{1}{-m+
\cfrac{1}{s'}}-1}.
\]
Put $I_1(r)=[0,r_1]$ and $I_2(r)=[r_2,1]$.
Recall from \cite[Section~2]{lee_sakuma_2} that
\begin{align*}
r_1 &=
\begin{cases}
[m, m_2, \dots, m_{k-1}] & \mbox{if $k$ is odd,}\\
[m, m_2, \dots, m_{k-1}, m_k-1] & \mbox{if $k$ is even,}
\end{cases}\\
r_2 &=
\begin{cases}
[m, m_2, \dots, m_{k-1}, m_k-1] & \mbox{if $k$ is odd,}\\
[m, m_2, \dots, m_{k-1}] & \mbox{if $k$ is even.}
\end{cases}
\end{align*}
Also put $I_1(\tilde{r})=[0, t_1]$ and $I_2(\tilde{r})=[t_2,1]$;
then we have
\begin{align*}
t_1 &=
\begin{cases}
[m_2-1, \dots, m_{k-1}, m_k-1] & \mbox{if $k$ is odd,}\\
[m_2-1, \dots, m_{k-1}] & \mbox{if $k$ is even,}
\end{cases}
\\
t_2 &=
\begin{cases}
[m_2-1, \dots, m_{k-1}] & \mbox{if $k$ is odd,}\\
[m_2-1, \dots, m_{k-1}, m_k-1] & \mbox{if $k$ is even.}
\end{cases}
\end{align*}
It then follows that
\[
t_1=
\cfrac{1}{
\cfrac{1}{-m+
\cfrac{1}{r_2}}-1}
\quad
\textrm{and}
\quad
t_2=
\cfrac{1}{
\cfrac{1}{-m+
\cfrac{1}{r_1}}-1}.
\]
Therefore the fact $s, s' \in I_1(r) \cup I_2(r)$ yields
$\tilde{s}, \tilde{s}' \in I_1(\tilde{r}) \cup I_2(\tilde{r})$.
\end{proof}

Let $\tilde{R}$ be the symmetrized subset of $F(a, b)$ generated by the single relator
$u_{\tilde{r}}$ of the upper presentation $G(K(\tilde{r}))=\langle a, b \, | \, u_{\tilde{r}} \rangle$.
Then as described below,
we can construct a reduced
annular $\tilde{R}$-diagram $\tilde{M}$
such that $u_{\tilde{s}}$ is an outer boundary label and
$u_{\tilde{s}'}^{\pm 1}$ is an inner boundary label of $\tilde{M}$.
This proves that the unoriented loops
$\alpha_{\tilde{s}}$ and $\alpha_{\tilde{s}'}$
are homotopic in $S^3-K(\tilde{r})$.
\end{proof}

We shall describe the explicit construction of
a reduced annular $\tilde{R}$-diagram $\tilde{M}$
from $M$. To this end, we introduce the following definition.

\begin{definition}
\label{def:T-sequence_1}
\rm
Suppose $r=[m,m_2, \dots, m_k]$, where $m \ge 2$, $m_2 \ge 2$ and $k \ge 3$.
Let $w$ be an alternating word in $\{a,b\}$, and suppose that
$S(w)=(a_1,a_2,\cdots,a_k)$ is a finite sequence
consisting of $m$ and $m+1$, which does not contain $(m+1,m+1)$.
Then we define the $T$-sequence of $w$, denoted by $T(w)$, and the cyclic $T$-sequence,
denoted by $CT(w)$, as follows.
Express $S(w)$ as
\[
(*, t_1\langle m\rangle, m+1, t_2\langle m\rangle,
\dots,m+1, t_s\langle m\rangle,*'),
\]
where each of $*$ and $*'$ is either $m+1$ or $\emptyset$
and $(t_1,t_2,\dots,t_s)$ is a sequence of positive integers.
Then $T(w)$ is defined to be the sequence $(t_1,\cdots, t_s)$.
If precisely one of $*$ and $*'$ is $m+1$ and the other is $\emptyset$,
we define $CT(w)$ to be the cyclic sequence $\lp t_1,\cdots, t_s\rp$.
If this $w$ represents a reduced cyclic word $u=(w)$, then
we define the cyclic sequence $CT(u)$ by $CT(w)$.
\end{definition}

Under Hypothesis~A and Notation~\ref{not:layers},
by Corollaries~\ref{cor:general_critical(aa)} and~\ref{cor:general_1},
both $CS(\phi(\alpha_j))$ and $CS(\phi(\delta_j^{-1}))$
consist of $m$ and $m+1$ without a subsequence $(m+1, m+1)$,
so the cyclic sequences $CT(\phi(\alpha_j))$ and $CT(\phi(\delta_j^{-1}))$ are well-defined
for every $j$.
Recall that every vertex in $M$ of degree $4$ is
assumed to be converging or diverging by Corollary~\ref{cor:general_critical(aa)}.
Moreover, we can also assume,
by using Corollary~\ref{cor:general_critical(aa)}(2),
that every degree $2$ vertex of $M$ is
also either converging or diverging.
(In the new diagram, it may happen that some $\phi(e_{0,i})$ or some $\phi(e_{p,i}')$
is not a piece and so \cite[Convention~4.6]{lee_sakuma_2} is not satisfied.
But this does not affect the arguments in this section.)
Hence $S(\phi(e_{j,i}))$ is a subsequence of $S(\phi(\alpha_j))$,
which consists of $m$ and $m+1$ and does not contain $(m+1,m+1)$
as a subsequence.
Thus the $T$-sequence of $\phi(e_{j,i})$ is also well-defined for every $i$ and $j$.
Similarly the $T$-sequence of $\phi(e_{j,i}')$ is also well-defined for every $i$ and $j$.
By Corollary~\ref{cor:general_critical(aa)}(2),
we may assume that $T(\phi(e_{j,i}))$ and $T(\phi(e_{j,i}'))$
are nonempty for every $i$ and $j$.

Now we construct a reduced annular $\tilde{R}$-diagram $(\tilde M,\psi)$
from the annular $R$-diagram $(M,\phi)$
by taking $T$-sequences of the boundary labels, as follows.
Take the underlying map of $\tilde{M}$ being the same as that of $M$.
For every $i$ and $j$, by $\tilde{e}_{j,i}$ denote the edge of $\tilde{M}$
which corresponds to the edge $e_{j,i}$ of $M$, and assign an alternating word,
$\psi(\tilde{e}_{j,i})$, in $\{a,b\}$ to $\tilde{e}_{j,i}$ in the following order.

\medskip
\noindent
{\bf Step 1.}
For each $i=1, \dots, 2t$, assign $\psi(\tilde{e}_{0,i})$ so that
$\psi(\tilde{e}_{0,1} \cdots \tilde{e}_{0,i}):=
\psi(\tilde{e}_{0,1}) \cdots \psi(\tilde{e}_{0,i})$ is an alternating word such that
\[
S(\psi(\tilde{e}_{0,1} \cdots \tilde{e}_{0,i}))=T(\phi(e_{0,1} \cdots e_{0,i})).
\]
Once this assignment is done, we see the following.
\begin{enumerate}[\indent \rm (i)]
\item The word $\psi(\tilde{e}_{0,1} \cdots \tilde{e}_{0,2t})$ is cyclically alternating,
because the sum of the terms of $CT(s)=CS(\tilde{s})$ is even.

\item $CS(\psi(\tilde{e}_{0,1} \cdots \tilde{e}_{0,2t}))=
CT(\phi(e_{0,1} \cdots e_{0,2t}))=CT(\phi(\alpha_0))=CT(s)=CS(\tilde{s})$,
because $CS(\tilde{s})$ has even number of terms.
\end{enumerate}

\medskip
\noindent
{\bf Step 2.}
Assign $\psi(\tilde{e}_{0,1}')$ so that
$\psi(\tilde{e}_{0,1}^{-1} \tilde{e}_{0,1}')$ is an alternating word such that
$S(\psi(\tilde{e}_{0,1}^{-1} \tilde{e}_{0,1}'))=T(\phi(e_{0,1}^{-1} e_{0,1}'))$, and
assign $\psi(\tilde{e}_{0,2}')$ so that
$\psi(\tilde{e}_{0,1}' \tilde{e}_{0,2}')$ is an alternating word such that
$S(\psi(\tilde{e}_{0,1}' \tilde{e}_{0,2}'))=T(\phi(e_{0,1}' e_{0,2}'))$.
Once this assignment is done, we see the following.
\begin{enumerate}[\indent \rm (i)]
\item The word $\psi(\tilde{e}_{0,1} \tilde{e}_{0,2} \tilde{e}_{0,2}'^{-1} \tilde{e}_{0,1}'^{-1})$
is cyclically alternating,
because the sum of the terms of $CT(r)=CS(\tilde r)$ is even.

\item $CS(\psi(\tilde{e}_{0,1} \tilde{e}_{0,2} \tilde{e}_{0,2}'^{-1} \tilde{e}_{0,1}'^{-1}))
=CT(\psi(e_{0,1} e_{0,2}e_{0,2}'^{-1} e_{0,1}'^{-1}))=CT(r)=CS(\tilde{r})$,
because $CS(\tilde{r})$ has even number of terms.
\end{enumerate}

\medskip
\noindent
{\bf Step 3.}
For $i=2, \dots, t$, assign $\psi(\tilde{e}_{0,2i-1}')$
so that $\psi(\tilde{e}_{0,2i-1}^{-1} \tilde{e}_{0,2i-1}')$ is an alternating word such that
$S(\psi(\tilde{e}_{0,2i-1}^{-1} \tilde{e}_{0,2i-1}'))=T(\phi(e_{0,2i-1}^{-1} e_{0,2i-1}'))$,
and assign $\psi(\tilde{e}_{0,2i}')$ so that
$\psi(\tilde{e}_{0,2i-1}' \tilde{e}_{0,2i}')$ is an alternating word such that
$S(\psi(\tilde{e}_{0,2i-1}' \tilde{e}_{0,2i}'))=T(\phi(e_{0,2i-1}' e_{0,2i}'))$.
Then we see the following.
\begin{enumerate}[\indent \rm (i)]
\item $S(\psi(\tilde{e}_{0,1}' \cdots \tilde{e}_{0,2i-1}'))=T(\phi(e_{0,1}' \cdots e_{0,2i-1}'))$,
because of the reason described after this list.

\item The word $\psi(\tilde{e}_{0,2i-1} \tilde{e}_{0,2i} \tilde{e}_{0,2i}'^{-1} \tilde{e}_{0,2i-1}'^{-1})$
is cyclically alternating,
because the sum of the terms of $CT(r)=CS(\tilde r)$ is even.

\item $CS(\psi(\tilde{e}_{0,2i-1} \tilde{e}_{0,2i} \tilde{e}_{0,2i}'^{-1} \tilde{e}_{0,2i-1}'^{-1}))
=CT(\phi(e_{0,2i-1} e_{0,2i} e_{0,2i}'^{-1} e_{0,2i-1}'^{-1}))=CT(r)=CS(\tilde{r})$,
because $CS(\tilde{r})$ has even number of terms.

\item The word $\psi(\tilde{e}_{0,1}' \cdots \tilde{e}_{0,2t}')$ is cyclically alternating,
because the sum of the terms of $CT(\phi(\delta_0))$ is even.

\item $CS(\psi(\tilde{e}_{0,1}' \cdots \tilde{e}_{0,2t}'))=
CT(\phi(e_{0,1}' \cdots e_{0,2t}'))=CT(\phi(\delta_0))$,
because $CT(\phi(\delta_0))$ has even number of terms.
\end{enumerate}
In the above, condition (i) is verified as follows.
We explain the reason when $i=2$. (The other cases can be treated similarly.)
Since $CS(\phi(\alpha_0))=CS(s)$ and $CS(\phi(\partial D_1))=CS(\phi(\partial D_2))=CS(r)$
consist of $m$ and $m+1$ and do not contain $(m+1,m+1)$,
we have the four possibilities around the vertex between $D_1$ and $D_2$
as described in the left figures in Figure~\ref{fig.taking_tsequence_1},
up to reflection in the vertical line passing through the vertex.
In each of the right figure, we may assume without loss of generality
that the upper left segment is oriented
so that it is converging into the vertex.
Then the orientations of the three remaining segments
in each of the right figures are specified
by the requirements in Steps~1 and 2 and the the new requirement
$S(\psi(\tilde{e}_{0,3}^{-1} \tilde{e}_{0,3}'))=T(\phi(e_{0,3}^{-1} e_{0,3}'))$.
In each case, we can check that the condition
$S(\psi(\tilde{e}_{0,1}' \tilde{e}_{0,2}' \tilde{e}_{0,3}'))=
T(\phi(e_{0,1}'e_{0,2}' e_{0,3}'))$ holds.

\begin{figure}[h]
\includegraphics{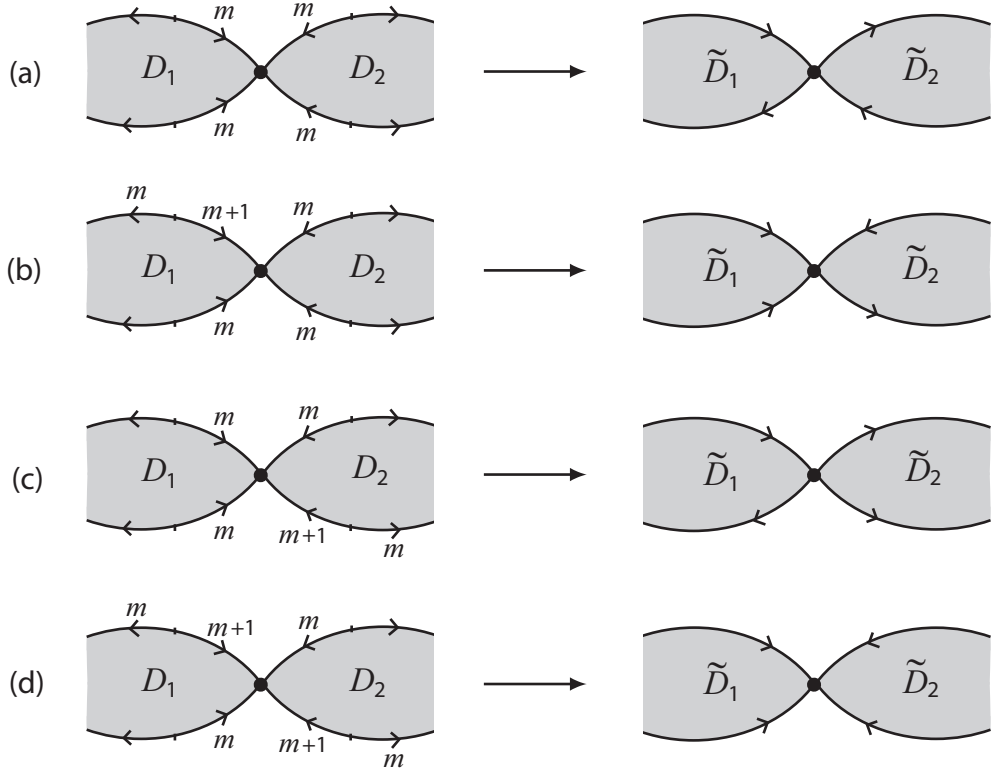}
\caption{
Step~3 of the construction of $\tilde{M}$ from $M$}
\label{fig.taking_tsequence_1}
\end{figure}

\medskip
\noindent
{\bf Step 4.}
For each $j=1, \dots, p$, repeat Steps~2 and 3 to obtain the following.
\begin{enumerate}[\indent \rm (i)]
\item The word $\psi(\tilde{e}_{j,2i-1} \tilde{e}_{j,2i} \tilde{e}_{j,2i}'^{-1} \tilde{e}_{j,2i-1}'^{-1})$
is cyclically alternating for every $i=1, \dots, t$.

\item $CS(\psi(\tilde{e}_{j,2i-1} \tilde{e}_{j,2i} \tilde{e}_{j,2i}'^{-1} \tilde{e}_{j,2i-1}'^{-1}))
=CT(\phi(e_{j,2i-1} e_{j,2i}e_{j,2i}'^{-1}e_{j,2i-1}'^{-1}))=CT(r)=CS(\tilde{r})$ for every $i=1, \dots, t$.

\item The word $\psi(\tilde{e}_{p,1}' \cdots \tilde{e}_{p,2t}')$ is cyclically alternating.

\item $CS(\psi(\tilde{e}_{p,1}' \cdots \tilde{e}_{p,2t}'))=CT(\phi(e_{p,1}' \cdots e_{p,2t}'))
=CT(\psi(\delta_p))=CT(s')=CS(\tilde{s}')$.
\end{enumerate}

It is obvious from the construction (see Figure~\ref{fig.taking_tsequence_1})
that none of the degree $4$ vertices of
the diagram $\tilde M$ constructed above is mixing.

Finally we show that $\tilde M$ is reduced.
Suppose on the contrary that $\tilde M$ is not reduced.
Then there is
a pair of faces, say ${\tilde D}$ and ${\tilde D}'$, in $\tilde M$
having a common edge, say ${\tilde e}=\partial {\tilde D} \cap \partial {\tilde D}'$,
such that $\psi(\delta_1) \equiv \psi(\delta_2)^{-1}$, where
${\tilde e} \delta_1$ and $\delta_2 {\tilde e}^{-1}$
are boundary cycles of ${\tilde D}$ and ${\tilde D}'$, respectively.
Then we see that the corresponding faces $D$ and $D'$ in $M$ have a common edge
$e=\partial D \cap \partial D'$ such that $S(\phi(e\gamma_1))=S(\phi(e\gamma_2^{-1}))$, where
$e \gamma_1$ and $\gamma_2 e^{-1}$ are boundary cycles of
$D$ and $D'$, respectively. So two words $\phi(e\gamma_1) \equiv \phi(e)\phi(\gamma_1)$
and $\phi(e\gamma_2^{-1}) \equiv \phi(e)\phi(\gamma_2)^{-1}$ have the same initial letter and
the same associated $S$-sequence. Then by \cite[Lemma~3.5(1)]{lee_sakuma_2}
$\phi(e)\phi(\gamma_1) \equiv \phi(e)\phi(\gamma_2)^{-1}$, so
$\phi(\gamma_1) \equiv \phi(\gamma_2)^{-1}$, which contradicts the fact that $M$ is reduced.

\subsection{The case for $r=[m,1,m_3, \dots, m_k]$ with $m \ge 2$ and $k \ge 4$}

We next establish a key result, Proposition~\ref{prop:induction_general_3},
for $r=[m,1,m_3, \dots, m_k]$, where $m \ge 2$ and $k \ge 4$.
Recall from Remark~\ref{rem:general_decomposition}(2) that
$CS(r)=\lp S_1, S_2, S_1, S_2 \rp$,
where $S_1$ begins and ends with $((m_3+1) \langle m+1 \rangle)$,
and $S_2$ begins and ends with $(m, m_3 \langle m+1 \rangle, m)$.

\begin{lemma}
\label{lem:general_3(b)}
Let $r=[m,1,m_3, \dots, m_k]$, where $m \ge 2$ and $k \ge 4$.
Under Hypothesis~A and Notation~\ref{not:layers2},
the following hold for every $i$ and $j$.
\begin{enumerate}[\indent \rm (1)]
\item $S(z_{j,i,e}y_{j,i+1,b}) \neq (m, m)$.

\item $S(z_{j,i,e}'y_{j,i+1,b}') \neq (m, m)$.
\end{enumerate}
\end{lemma}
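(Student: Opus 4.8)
## Proof Plan for Lemma~\ref{lem:general_3(b)}

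The plan is to mimic closely the proof of Lemma~\ref{lem:general_1(b)}, with the roles of $m$ and $m+1$ interchanged, since for $r=[m,1,m_3,\dots,m_k]$ the relevant forbidden pattern involves two consecutive copies of $m$ rather than of $m+1$. I will prove only (1); the proof of (2) is parallel, working with the inner boundary layers instead of the outer ones. So suppose for contradiction that $S(z_{j,1,e}y_{j,2,b})=(m,m)$ for some $j$.

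First I would handle the base case $j=0$. Here I split according to whether Hypothesis~B or Hypothesis~C holds. If Hypothesis~C holds, then by Remark~\ref{rem:general_decomposition}(2) the sequence $CS(\phi(\alpha_0))=CS(s)$ contains $S_2$, which begins and ends with $(m,m_3\langle m+1\rangle,m)$; since $m_3\ge 2$, this forces $CS(s)$ to contain two consecutive copies of $m+1$, while the assumption $S(z_{0,1,e}y_{0,2,b})=(m,m)$ gives two consecutive copies of $m$ — contradicting \cite[Lemma~3.8]{lee_sakuma_2}. If Hypothesis~B holds, I use that $S_1$ begins and ends with $((m_3+1)\langle m+1\rangle)$: since $S(z_{0,1,e})=(m)$ by the assumption (the term immediately preceding $y_{0,2,b}$ inside $z_{0,1}y_{0,2}$), the word $z_{0,1}$ must begin with a stretch of $m+1$'s coming from $S_1$, so $CS(\phi(\alpha_0))=CS(s)$ again contains two consecutive $m+1$'s, and together with the two consecutive $m$'s from the assumption this contradicts \cite[Lemma~3.8]{lee_sakuma_2}.

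For the inductive case $j\ge 1$, I would argue exactly as in the proof of Lemma~\ref{lem:general_1(b)}: using Lemma~\ref{lem:vertex_position:intermediate_layer} (which controls where the edge-paths $e_{j,i}$ sit relative to the segments labelled $S_1,S_2$), the hypothesis $S(z_{j,1,e}y_{j,2,b})=(m,m)$ propagates to show that $S(\phi(e_{j,2}e_{j,3}))$ contains the subsequence $(m,m)$, hence that $CS(\phi(\partial D_{j-1,2}))=CS(r)$ contains two consecutive copies of $m$. But by Remark~\ref{rem:general_decomposition}(2), $CS(r)=\lp S_1,S_2,S_1,S_2\rp$ where the $m$'s occur only as the isolated boundary terms of $S_2$ flanking blocks of $m+1$'s, so $CS(r)$ never contains $(m,m)$ — a contradiction.

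The main obstacle I anticipate is getting the base-case Hypothesis~B argument precise: one must verify carefully that the term $S(z_{0,1,e})=(m)$ together with the shape of $S_2$ (which begins with $m$) does not cause a clash, i.e.\ that the stretch of $m+1$'s forced into $CS(s)$ really does come from $S_1$ and really does have length $\ge 2$ because $m_3\ge 2$. This is a matter of unwinding the definition of the decomposition $\phi(\alpha)\equiv y_1w_1z_1\cdots$ (Notation~\ref{not:layers2}) and of $S_1,S_2$ as in Remark~\ref{rem:general_decomposition}(2), and should go through once one keeps track of which block the boundary term $m$ of $S(z_{0,1,e}y_{0,2,b})$ belongs to. Everything else reduces to the already-established Lemmas~\ref{lem:vertex_position:intermediate_layer} and \cite[Lemma~3.8]{lee_sakuma_2}.
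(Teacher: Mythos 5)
Your overall architecture --- splitting into $j=0$ and $j\ge 1$, using Lemma~\ref{lem:vertex_position:intermediate_layer} to push the pattern $(m,m)$ into $CS(r)$ when $j\ge1$, and invoking \cite[Lemma~3.8]{lee_sakuma_2} when $j=0$ --- is exactly the paper's, and the $j\ge 1$ part is fine. The gap is in the base case, where you have in effect interchanged which hypothesis admits the ``direct'' argument. Under Hypothesis~C you read off two consecutive $m+1$'s from $S_2$, which begins and ends with $(m, m_3\langle m+1\rangle, m)$; but the lemma only assumes $m_3\ge 1$ (e.g.\ $r=[2,1,1,2]$ is a legitimate general slope), and when $m_3=1$ the block $m_3\langle m+1\rangle$ is a single $m+1$, so no pair $(m+1,m+1)$ is produced and your contradiction evaporates. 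Your appeal to ``$m_3\ge 2$'' is not available.

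Moreover, your Hypothesis~B step rests on a false positional claim. Under Hypothesis~B it is $w_{0,1}$ that carries $S_1$, so $z_{0,1}$ corresponds to an initial portion of the $S_2$-part of the relator and begins with $m$'s, not with ``a stretch of $m+1$'s coming from $S_1$.'' The correct (and immediate) argument under Hypothesis~B is simply that $CS(s)$ contains $S_1$ as a subsequence by Proposition~\ref{prop:general_m_and_m+1}, and $S_1$ begins with $((m_3+1)\langle m+1\rangle)$ with $m_3+1\ge 2$. The positional argument is what is needed under Hypothesis~C instead: there $S(w_{0,1})=S_2$, so $z_{0,1}$ lies in the $S_1$-territory of the relator, and the hypothesis $S(z_{0,1,e})=(m)\ne(m+1)$ forces $S(z_{0,1})$ to begin with the full block $((m_3+1)\langle m+1\rangle)$, which supplies the two consecutive $m+1$'s for every $m_3\ge 1$. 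With the two cases repaired in this way the proof closes; as written, the Hypothesis~C case with $m_3=1$ is a genuine hole.
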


\begin{proof}
We prove only (1), because the proof of (2) is parallel.
Suppose on the contrary that $S(z_{j,1,e}y_{j,2,b})=(m, m)$
for some $j$.
First assume $j=0$.
If Hypothesis~B holds, then
$CS(\phi(\alpha_0))=CS(s)$ contains
two consecutive $m+1$'s (because it contains $S_1$)
and two consecutive $m$'s by assumption.
This contradicts \cite[Lemma~3.8]{lee_sakuma_2}.
On the other hand, if Hypothesis~C holds,
then $S(z_{0,1})$ begins with $((m_3+1) \langle m+1 \rangle)$,
since $S_1$ begins and ends with $((m_3+1) \langle m+1 \rangle)$
whereas $S(z_{0,1,e})=m$ by assumption.
This implies that $CS(s)$
contains two consecutive $m$'s and two consecutive $m+1$'s,
again contradicting \cite[Lemma~3.8]{lee_sakuma_2}.
Next assume $j \ge 1$.
By using Lemma~\ref{lem:vertex_position:intermediate_layer},
we can see that $S(\phi(e_{j,2}e_{j,3}))$ contains a subsequence of the form
$(\ell_1, m, m, \ell_2)$ with $\ell_1, \ell_2 \in \ZZ_+$.
Hence $CS(\phi(\partial D_{j-1,2}))=CS(r)$ contains two consecutive $m$'s,
a contradiction.
\end{proof}

\begin{corollary}
\label{cor:general_3}
Let $r=[m,1,m_3, \dots, m_k]$, where $m \ge 2$ and $k \ge 4$.
Under Hypothesis~A and Notation~\ref{not:layers},
the following hold for every $j$.
\begin{enumerate}[\indent \rm (1)]
\item No two consecutive terms of $CS(\phi(\alpha_j))$ can be $(m, m)$.

\item No two consecutive terms of $CS(\phi(\delta_j^{-1}))$ can be $(m, m)$
for every $j=0, \dots, p$.
\end{enumerate}
\end{corollary}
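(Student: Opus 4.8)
The plan is to run, essentially word for word, the proof of Corollary~\ref{cor:general_1}, with the roles of $m$ and $m+1$ interchanged and with Lemma~\ref{lem:general_3(b)} cited in place of Lemma~\ref{lem:general_1(b)}. As there, it suffices to prove (1): statement (2) is obtained by applying the identical argument to the inner boundary cycle $\delta_j^{-1}$ in place of $\alpha_j$, i.e.\ replacing $y_{j,i}, z_{j,i}$ by $y_{j,i}', z_{j,i}'$ throughout and invoking Lemma~\ref{lem:general_3(b)}(2) at the end. So I would argue by contradiction: assume that for some $j$ the cyclic sequence $CS(\phi(\alpha_j))$ has a subsequence $(m,m)$, and choose a subword $v=v'v''$ of the cyclic word $(\phi(\alpha_j))$ realizing it, with $S(v')=S(v'')=(m)$.

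The first step is to observe that $CS(r)$ itself contains no two consecutive $m$'s. Indeed, by Remark~\ref{rem:general_decomposition}(2), $CS(r)=\lp S_1,S_2,S_1,S_2\rp$ with $S_1$ beginning and ending with $m+1$ and $S_2$ beginning and ending with $(m, m_3\langle m+1\rangle, m)$, so every term $m$ of $CS(r)$ is adjacent on both sides to a term $m+1$ (here one uses $m_3\ge 1$). Since $S(\phi(\partial D_{j,i}^+))$ is a subsequence of $CS(\phi(\partial D_{j,i}))=CS(r)$ for every face $D_{j,i}$ of $J_j$, the subword $v$ cannot be contained in a single $\phi(\partial D_{j,i}^+)$. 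Hence the sign change between $v'$ and $v''$ occurs at the vertex of $J_j$ between two consecutive faces $D_{j,i}$ and $D_{j,i+1}$. Because that vertex is converging or diverging, and the words $y_{j,i}, z_{j,i}, y_{j,i+1}$ are nonempty with single-term $S$-sequences at the ends meeting the vertex (Corollary~\ref{cor:general_critical(aa)} together with Notation~\ref{not:layers2}), the only possibility is $v=z_{j,i,e}y_{j,i+1,b}$ with $S(z_{j,i,e}y_{j,i+1,b})=(m,m)$. But this contradicts Lemma~\ref{lem:general_3(b)}(1), completing the proof of (1); part (2) then follows by the parallel argument noted above.

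I do not anticipate a genuine obstacle, since the substantive work is already carried out in Lemma~\ref{lem:general_3(b)} (and, upstream, in Section~\ref{sec:transformation}, which supplies Corollary~\ref{cor:general_critical(aa)}). The only point that needs a little care is the second step: pinning down that an occurrence of $(m,m)$ in $CS(\phi(\alpha_j))$ must take the form $z_{j,i,e}y_{j,i+1,b}$. This uses, in combination, the absence of $(m,m)$ in $CS(r)$, the normal form of $\phi(\alpha_j)$ from Notation~\ref{not:layers2}, and the fact from Corollary~\ref{cor:general_critical(aa)} that every relevant degree~$4$ vertex is converging or diverging and that the boundary subwords $y_{j,i}, z_{j,i}$ are nonempty — exactly the same package of facts used in the proof of Corollary~\ref{cor:general_1}.
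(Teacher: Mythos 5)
Your proposal is correct and follows essentially the same route as the paper: reduce to part (1), note that $CS(r)$ (hence each $S(\phi(\partial D_{j,i}^+))$) contains no $(m,m)$, use Corollary~\ref{cor:general_critical(aa)} to pin the occurrence down to $v=z_{j,i,e}y_{j,i+1,b}$, and conclude with Lemma~\ref{lem:general_3(b)}(1). The extra detail you supply about why the occurrence must straddle a converging/diverging vertex is just an expansion of the paper's one-line appeal to Corollary~\ref{cor:general_critical(aa)}.
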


\begin{proof}
We prove only (1), because the proof of (2) is parallel.
Suppose on the contrary that $CS(\phi(\alpha_j))$
contains a subsequence $(m, m)$ for some $j$.
Let $v$ be a subword of the cyclic word $(\phi(\alpha_j))$
corresponding to a subsequence $(m, m)$.
Note that $S(\phi(\partial D_{j,i}^+))$ does not contain $(m, m)$,
because $S(r)=S(\phi(\partial D_{j,i}))$ does not.
Thus the only possibility is that $v=z_{j,i,e}y_{j,i+1,b}$
for some $i$ by
Corollary~\ref{cor:general_critical(aa)}.
But this is impossible by Lemma~\ref{lem:general_3(b)}(1).
\end{proof}

\begin{definition}
\label{def:T-sequence_2}
\rm
Suppose $r=[m,1,m_3, \dots, m_k]$, where $m \ge 2$ and $k \ge 4$.
Let $w$ be an alternating word in $\{a,b\}$, and suppose that
$S(w)=(a_1,a_2,\cdots,a_k)$ is a finite sequence
consisting of $m$ and $m+1$, which does not contain $(m,m)$.
Then we define the $T$-sequence of $w$, denoted by $T(w)$, and the cyclic $T$-sequence,
denoted by $CT(w)$, as follows.
Express $S(w)$ as
\[
(*, t_1\langle m+1\rangle, m, t_2\langle m+1\rangle,
\dots,m, t_s\langle m+1\rangle,*'),
\]
where each of $*$ and $*'$ is either $m$ or $\emptyset$
and $(t_1,t_2,\dots,t_s)$ is a sequence of positive integers.
Then $T(w)$ is defined to be the sequence $(t_1,\cdots, t_s)$.
If precisely one of $*$ and $*'$ is $m$ and the other is $\emptyset$,
we define $CT(w)$ to be the cyclic sequence $\lp t_1,\cdots, t_s\rp$.
If this $w$ represents a reduced cyclic word $u=(w)$, then
we define the cyclic sequence $CT(u)$ by $CT(w)$.
\end{definition}

Under Hypothesis~A and Notation~\ref{not:layers},
by Corollaries~\ref{cor:general_critical(aa)} and~\ref{cor:general_3},
both $CS(\phi(\alpha_j))$ and $CS(\phi(\delta_j^{-1}))$
consist of $m$ and $m+1$ without a subsequence $(m, m)$,
so the cyclic sequences $CT(\phi(\alpha_j))$ and $CT(\phi(\delta_j^{-1}))$ are well-defined
for every $j$.
Clearly we may assume that every degree $2$ vertex of $M$ is either
converging or diverging. Moreover since every vertex in $M$ of degree $4$ is
assumed to be converging or diverging by Corollary~\ref{cor:general_critical(aa)},
the $T$-sequence of $\phi(e_{j,i})$ is also well-defined for every $i$ and $j$.
Then as in the previous case, we can construct an annular $\tilde{R}$-diagram $\tilde M$
from the annular $R$-diagram $M$
by taking $T$-sequences of the boundary labels.

\begin{proposition}
\label{prop:induction_general_3}
Let $r=[m,1,m_3, \dots, m_k]$, where $m \ge 2$ and $k \ge 4$.
Suppose that there are two distinct rational numbers $s, s' \in I_1(r) \cup I_2(r)$
such that the unoriented loops $\alpha_{s}$ and $\alpha_{s'}$ are
homotopic in $S^3-K(r)$, namely suppose that Hypothesis~A holds.
Let $\tilde{r}=[m_3, \dots, m_k]$ be as in \cite[Lemma~3.11]{lee_sakuma_2}.
Then there are two distinct rational numbers
$\tilde{s}, \tilde{s}' \in I_1(\tilde{r}) \cup I_2(\tilde{r})$
such that the unoriented loops $\alpha_{\tilde{s}}$ and
$\alpha_{\tilde{s}'}$ are homotopic in $S^3-K(\tilde{r})$.
Moreover, there is a reduced
conjugacy diagram over $G(K(\tilde{r}))$ for
$\alpha_{\tilde{s}}$ and $\alpha_{\tilde{s}'}$
such that none of the degree $4$ vertices is mixing.
\end{proposition}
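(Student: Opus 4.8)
The plan is to run the argument of Proposition~\ref{prop:induction_general_1} with the roles of $m$ and $m+1$ systematically interchanged, using Corollary~\ref{cor:general_3} in place of Corollary~\ref{cor:general_1} and the $T$-sequence of Definition~\ref{def:T-sequence_2} in place of that of Definition~\ref{def:T-sequence_1}. First I would record, by Corollaries~\ref{cor:general_critical(aa)} and~\ref{cor:general_3}, that for every $j$ both $CS(\phi(\alpha_j))$ and $CS(\phi(\delta_j^{-1}))$ consist of $m$ and $m+1$ and contain no subsequence $(m,m)$; in particular this holds for $CS(\phi(\alpha_0))=CS(s)$ and for $CS(\phi(\delta_p^{-1}))=CS(s')$. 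Reading off the continued-fraction expansions then pins down the first two partial quotients, forcing $s=[m,1,p_3,\dots,p_h]$ and $s'=[m,1,q_3,\dots,q_l]$ for positive integers $p_i,q_j$ with $p_h,q_l\ge 2$, just as the analogous step for $r=[m,m_2,\dots,m_k]$ forces a leading partial quotient $m$ and a second partial quotient $\ge 2$. We then set $\tilde s=[p_3,\dots,p_h]$ and $\tilde s'=[q_3,\dots,q_l]$ as in \cite[Lemma~3.11]{lee_sakuma_2}; note $\tilde s\ne\tilde s'$ because $s\ne s'$.

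Next I would establish the Claim that $\tilde s,\tilde s'\in I_1(\tilde r)\cup I_2(\tilde r)$, exactly paralleling the corresponding Claim in the proof of Proposition~\ref{prop:induction_general_1}: express $\tilde s$ and $\tilde s'$ in terms of $s$ and $s'$ through the continued-fraction relation of \cite[Lemma~3.11]{lee_sakuma_2}, recall the explicit endpoints $r_1,r_2$ of $I_1(r),I_2(r)$ and $t_1,t_2$ of $I_1(\tilde r),I_2(\tilde r)$ from \cite[Section~2]{lee_sakuma_2}, and verify by direct computation that $s,s'\in I_1(r)\cup I_2(r)$ forces $\tilde s,\tilde s'\in I_1(\tilde r)\cup I_2(\tilde r)$. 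It remains to produce a reduced annular $\tilde R$-diagram with outer boundary label $u_{\tilde s}$ and inner boundary label $u_{\tilde s'}^{\pm1}$ in which no degree $4$ vertex is mixing. For this I would take the diagram $(\tilde M,\psi)$ already built, just before the statement, from $(M,\phi)$ by passing to $T$-sequences of the boundary labels in the sense of Definition~\ref{def:T-sequence_2}; this is legitimate since, by Corollary~\ref{cor:general_critical(aa)}, every degree $4$ vertex of $M$ is converging or diverging and the cyclic sequences $CS(\phi(\alpha_j))$, $CS(\phi(\delta_j^{-1}))$ and $CS(r)$ all consist of $m$ and $m+1$ without $(m,m)$, so that the $T$-sequences of $\phi(e_{j,i})$, $\phi(e_{j,i}')$, $\phi(\alpha_j)$, $\phi(\delta_j^{-1})$ and $\phi(\partial D_{j,i})$ are well-defined, and by Corollary~\ref{cor:general_critical(aa)}(2) nonempty. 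The verifications that this $\tilde M$ has the stated boundary labels (Steps~1--4, using $CT(r)=CS(\tilde r)$, $CT(\phi(\partial D_{j,i}))=CT(r)$, and the evenness of the numbers of terms of $CS(\tilde r)$ and of $CS(\tilde s)$ --- these numbers being the lengths of the cyclically alternating words $u_{\tilde r}$ and $u_{\tilde s}$), that no degree $4$ vertex of $\tilde M$ is mixing (immediate from the local pictures, the analogue of Figure~\ref{fig.taking_tsequence_1}), and that $\tilde M$ is reduced (a reducible pair in $\tilde M$ would, via \cite[Lemma~3.5(1)]{lee_sakuma_2}, yield a reducible pair in the reduced diagram $M$) all carry over verbatim from the case $r=[m,m_2,\dots,m_k]$, with Definition~\ref{def:T-sequence_2} replacing Definition~\ref{def:T-sequence_1}.

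The step I expect to require the most care is the local consistency of the $T$-sequence construction at degree $4$ vertices, i.e.\ the analogue of the case analysis summarised in Figure~\ref{fig.taking_tsequence_1}. In the case $r=[m,m_2,\dots,m_k]$ the ``long blocks'' are the maximal runs of $m$'s separated by isolated $m+1$'s, whereas now, under Definition~\ref{def:T-sequence_2}, they are the maximal runs of $m+1$'s separated by isolated $m$'s; so the admissible local configurations around a converging or diverging degree $4$ vertex, and the proof that the edge-by-edge assignments of Steps~1--3 patch together into a globally alternating labelling, must be re-enumerated with $m$ and $m+1$ interchanged. I do not anticipate a genuinely new phenomenon here --- the parity facts hold for the same reason, and Lemmas~\ref{lem:vertex_position:intermediate_layer} and~\ref{lem:two_cases:intermediate_layer} continue to control the faces in the intermediate layers --- but this is where the bookkeeping is delicate.
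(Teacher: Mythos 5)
Your proposal is correct and follows essentially the same route as the paper: the paper's proof likewise reads off $p_1=q_1=m$, $p_2=q_2=1$ from Corollaries~\ref{cor:general_critical(aa)} and~\ref{cor:general_3}, sets $\tilde{s}=[p_3,\dots,p_h]$, $\tilde{s}'=[q_3,\dots,q_l]$, and then constructs $\tilde{M}$ ``as in the previous case'' by taking $T$-sequences in the sense of Definition~\ref{def:T-sequence_2}, with the verification of membership in $I_1(\tilde{r})\cup I_2(\tilde{r})$, reducedness, and the absence of mixing degree~$4$ vertices all carried over from Proposition~\ref{prop:induction_general_1} exactly as you describe.
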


\begin{proof}
Recall from
Corollaries~\ref{cor:general_critical(aa)} and~\ref{cor:general_3} that
both $CS(\phi(\alpha_j))$ and $CS(\phi(\delta_j^{-1}))$ consist of $m$ and $m+1$
without a subsequence $(m, m)$, for every $j$.
In particular, both $CS(\phi(\alpha_0))=CS(s)$ and
$CS(\phi(\delta_p^{-1}))=CS(s')$ consist of $m$ and $m+1$
without a subsequence $(m, m)$.
This implies that if $s=[p_1, p_2, \dots, p_h]$ and $s'=[q_1, q_2, \dots, q_l]$,
where $p_i, q_j \in \ZZ_+$ and $p_h, q_l \ge 2$, then
$p_1=q_1=m$, $p_2=q_2=1$ and $h, l \ge 3$.
Put $\tilde{s}=[p_3, \dots, p_h]$ and $\tilde{s}'=[q_3, \dots, q_l]$
as in \cite[Lemma~3.11]{lee_sakuma_2}.
Let $\tilde{R}$ be the symmetrized subset of $F(a, b)$ generated by the single relator
$u_{\tilde{r}}$ of the upper presentation $G(K(\tilde{r}))=\langle a, b \, | \, u_{\tilde{r}} \rangle$.
Then, as in the previous case,
we can construct a reduced annular $\tilde{R}$-diagram
$(\tilde{M},\psi)$
such that $u_{\tilde{s}}$
is an outer boundary label and $u_{\tilde{s}'}^{\pm 1}$ is an inner boundary label
of $\tilde{M}$.
This proves that the unoriented loops $\alpha_{\tilde{s}}$ and $\alpha_{\tilde{s}'}$
are homotopic in $S^3-K(\tilde{r})$.
Moreover, we can also see that
$\tilde M$ is reduced and that
none of the degree $4$ vertices of
the diagram $\tilde M$ constructed above
is mixing.
\end{proof}

\section{Proof of Main Theorem~\ref{main_theorem} for the general cases}
\label{sec:proof_for_general_2-bridge_links}

In this section, we prove Main Theorem~\ref{main_theorem}
when $r$ is general.
To this end, we use the following terminology.
\begin{enumerate}[\indent \rm (1)]
\item A rational number $r$ with $0< r\le 1/2$ is {\it special},
if it is equal to $1/p=[p]$ with $p\ge 2$, $[m,n]$, or $[m,1,n]$
with $m,n\ge 2$.

\item A rational number $r$ with $1/2< r<1$ is {\it special},
if $1-r$ is special.

\item A rational number $r$ with $0< r<1$ is {\it general},
if it is not special.
\end{enumerate}

The following proposition
forms the starting point of the inductive proof of
Main Theorem~\ref{main_theorem}.

\begin{proposition}
\label{prop:induction_base}
Let $r$ be a special rational number with $0<r<1$.
Then the following is the complete list of
pairs of mutually distinct elements $\{s,s'\}$ of $I_1(r)\cup I_2(r)$
such that $\alpha_{s}$ and $\alpha_{s'}$ are homotopic in $S^3-K(r)$.
\begin{enumerate}[\indent \rm(1)]
\item $r=1/p$ and the set $\{s, s'\}$ equals
$\{q_1/p_1, q_2/p_2\}$,
where $p \ge 2$ is an integer, and
$s=q_1/p_1$ and $s'=q_2/p_2$ satisfy
$q_1=q_2$ and $q_1/(p_1+p_2)=1/p$, where $(p_i, q_i)$ is a pair of
relatively prime positive integers.

\item $r=3/8=[2, 1, 2]$ and the set $\{s, s'\}$ equals
either $\{1/6, 3/10\}$ or $\{3/4, 5/12\}$.

\item $r=1-1/p=[1,p-1]$ and the set $\{s, s'\}$ equals
$\{1-q_1/p_1, 1-q_2/p_2\}$,
where $p \ge 2$ is an integer, and
$q_1=q_2$ and $q_1/(p_1+p_2)=1/p$, where $(p_i, q_i)$ is a pair of
relatively prime positive integers.

\item $r=1-3/8=[1,1,1,2]$ and the set $\{s, s'\}$ equals
either $\{1-1/6, 1-3/10\}$ or $\{1-3/4, 1-5/12\}$.
\end{enumerate}
Moreover, any reduced conjugacy diagram over $G(K(r))$
for $\alpha_{s}$ and $\alpha_{s'}$ contains a vertex which is mixing.
\end{proposition}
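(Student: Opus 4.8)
The plan is to prove Proposition~\ref{prop:induction_base} in two stages: first the explicit list (1)--(4) of homotopic pairs, and then the final (``Moreover'') assertion, which is the statement that actually powers the induction for general $r$ in Section~\ref{sec:proof_for_general_2-bridge_links}.

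For the list, I would assemble what is already available. When $r=1/p$, the set of homotopic pairs is precisely \cite[Main Theorem~2.7]{lee_sakuma_2}, which is case~(1). When $r=[m,n]$ or $r=[m,1,n]$ with $m,n\ge 2$ and $\min\{m,n\}=2$ --- that is, $r$ is one of $[2,n]$, $[n,2]$, $[2,1,n]$, $[n,1,2]$ for some $n\ge 2$ --- I would quote \cite[Main Theorems~2.2 and 2.3]{lee_sakuma_3} together with the homeomorphisms recalled in Section~\ref{sec:main_result} that convert the slopes $[n,2]$ and $[n,1,2]$ into $[2,n]$ and $[2,1,n]$; reading off that answer shows that among these the only homotopic pairs are the two pairs for $r=3/8=[2,1,2]$, so one obtains case~(2) (in particular, $2/5=[2,2]$ contributes nothing). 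When $r=[m,n]$ or $r=[m,1,n]$ with $m,n\ge 3$, Main Theorem~\ref{main_theorem}, proved in Sections~\ref{sec:proof_of_main_theorem_1} and~\ref{sec:proof_of_main_theorem_3}, gives that there is no homotopic pair. Finally, for $1/2<r<1$ I would transport the answer through the orientation-reversing homeomorphism $(S^3,K(r))\to(S^3,K(1-r))$, which preserves the $2$-bridge sphere and carries $\alpha_s$ to $\alpha_{1-s}$; applied to cases~(1) and~(2) this yields cases~(3) and~(4). This stage is essentially bookkeeping; the one point needing care is to make sure the ``small'' slopes such as $2/5$, $3/7$ and $2/7$ have been accounted for, which I would cross-check against Theorems~\ref{summary_theorem}, \ref{main_corollary} and~\ref{main_corollary2}.

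For the ``Moreover'' assertion, the first step is to observe that, by the list just obtained, a reduced conjugacy diagram over $G(K(r))$ for two distinct loops $\alpha_s$ and $\alpha_{s'}$ can exist only when, up to the substitution $r\leftrightarrow 1-r$, one has $r=1/p$ or $r=3/8$; for every other special $r$ there is nothing to prove. Since the mirror homeomorphism above carries reduced conjugacy diagrams to reduced conjugacy diagrams, and since the property of a degree~$4$ vertex being mixing refers only to the multiset $\{a,a^{-1},b,b^{-1}\}$ of labels of its incoming unit segments (hence is unaffected by that correspondence), it suffices to treat $r=1/p$ and $r=3/8$. For $r=3/8$ the four loops $\alpha_{1/6},\alpha_{3/10},\alpha_{3/4},\alpha_{5/12}$ have short, explicit cyclic $S$-sequences, and --- using that the $2$-bridge relator satisfies the small-cancellation condition $C(4)$ (cf.\ \cite[Proposition~3.22]{lee_sakuma_2}) --- the reduced annular $R$-diagrams realizing the two conjugacies form an explicitly describable finite family; I would enumerate that family and check by inspection that each member has a mixing vertex. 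For $r=1/p$ I would take a reduced conjugacy diagram $M$ for $\alpha_s,\alpha_{s'}$ with $s=q/p_1\ne s'=q/p_2$ and $p_1+p_2=pq$, assume that no degree~$4$ vertex is mixing, and then adapt the transformation arguments of Section~\ref{sec:transformation} to this degenerate situation (the continued fraction of $r$ being the single term $[p]$): every degree~$4$ vertex of $M$ may then be assumed converging or diverging, so the cyclic $S$-sequences $CS(\phi(\alpha_j))$ and $CS(\phi(\delta_j^{-1}))$ propagate rigidly from each layer of $M$ (cf.\ \cite[Theorem~4.11]{lee_sakuma_2}) to the next; combined with the explicit form of $u_{1/p}$ and of the loops $\alpha_{q/p_i}$, this should force $CS(s)=CS(s')$ and ultimately $s=s'$, a contradiction. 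As a fall-back, one may instead invoke directly the explicit description of the reduced conjugacy diagrams for $r=1/p$ obtained inside the proof of \cite[Main Theorem~2.7]{lee_sakuma_2} and observe that each such diagram visibly contains a vertex at which all four labels $a,a^{-1},b,b^{-1}$ occur.

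I expect the $r=1/p$ case of the ``Moreover'' assertion to be the main obstacle: pinning down exactly how a mixing-free configuration must propagate through the layers of $M$, and showing that it genuinely collapses $s$ onto $s'$, amounts to a careful layer-by-layer bookkeeping of the $S$-sequences of the boundary words --- in effect a stripped-down version of the $T$-sequence construction of Section~\ref{sec:result_for_induction}. The $r=3/8$ case is finite but its enumeration of annular diagrams must be carried out carefully enough that no shape is overlooked. Once both are settled, the ``Moreover'' statement is exactly what makes the descent of Propositions~\ref{prop:induction_general_1} and~\ref{prop:induction_general_3} terminate in a contradiction, thereby completing the proof of Main Theorem~\ref{main_theorem} for general $r$.
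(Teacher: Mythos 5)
Your proposal is correct and, in substance, follows the paper's own proof: the list (1)--(4) is assembled exactly as you describe from \cite[Main Theorem~2.7]{lee_sakuma_2}, \cite[Main Theorems~2.2 and 2.3]{lee_sakuma_3} and Sections~\ref{sec:proof_of_main_theorem_1}--\ref{sec:proof_of_main_theorem_3}, and the case $1/2<r<1$ is transported through the homeomorphism $(S^3,K(r))\to(S^3,K(1-r))$, under which the image of a mixing vertex is again mixing. The only genuine divergence is your \emph{primary} plan for the ``Moreover'' assertion when $r=1/p$: a mixing-free propagation argument adapted from Section~\ref{sec:transformation}. That machinery is built on the decomposition $CS(r)=\lp S_1,S_2,S_1,S_2\rp$ with two distinct syllables $S_1$, $S_2$, which degenerates for $r=[p]$ (where $CS(r)=\lp p,p\rp$), so it is far from clear that the layer-by-layer rigidity you hope for can be extracted this way; you correctly flag this as the main obstacle. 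The paper avoids the issue entirely by doing what you list as your fall-back: it quotes the explicit description of all reduced conjugacy diagrams for $r=1/p$ from \cite[Section~5]{lee_sakuma_2} (every vertex of those diagrams is mixing), and likewise for $r=3/8$ it quotes the two explicit diagrams from \cite[Section~6]{lee_sakuma_3} rather than re-enumerating them by small cancellation as you propose. So your fall-back, not your primary route, is the proof; if you write this up, lead with the citation of the explicit diagrams and drop the propagation argument.
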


\begin{proof}
Suppose $0<r \le 1/2$.
Then by \cite[Main Theorem~2.7]{lee_sakuma_2},
\cite[Main Theorems~2.2 and~2.3]{lee_sakuma_3}
and the results in Sections~\ref{sec:proof_of_main_theorem_1} and \ref{sec:proof_of_main_theorem_3},
we see that (1) or (2) holds.
We prove the assertion for the conjugacy diagram in this case.
Suppose that (1) holds.
Then the outer boundary layer of the conjugacy diagram
should be as depicted
in Figure~\ref{fig.general_proof_0}
by \cite[Section~5]{lee_sakuma_2}.
Since all vertices of the diagram are mixing, we obtain the desired result.
Suppose that (2) holds.
Then, by \cite[Section~6, in particular Figures~21(a) and 23(a)]{lee_sakuma_3},
the conjugacy diagram should be as depicted in Figure~\ref{fig.general_proof_1}.
Again, every vertex of degree $4$ is mixing, and so
we obtain the desired result.

\begin{figure}[h]
\includegraphics{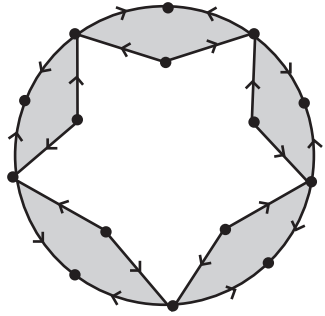}
\caption{
The outer boundary layer of any of the conjugacy diagrams for the case $r=1/p$}
\label{fig.general_proof_0}
\end{figure}

\begin{figure}[h]
\includegraphics{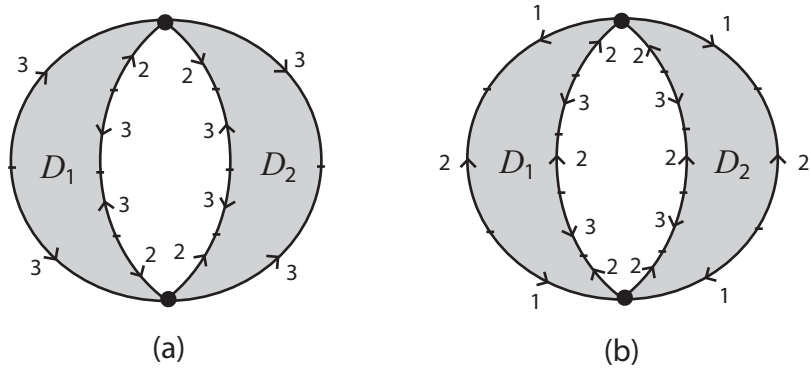}
\caption{
The conjugacy diagrams for the case $r=[2,1,2]$,
where $\{s,s'\}$ is (a) $\{1/6,3/10\}$ and
(b) $\{3/4,5/12\}$}
\label{fig.general_proof_1}
\end{figure}

Suppose $1/2< r<1$.
Note that there is a homeomorphism $f:(S^3,K(r))\to (S^3,K(1-r))$
preserving the bridge sphere such that
$f(\alpha_s)=\alpha_{1-s}$ and
that $f$ induces an isomorphism from
$G(K(r))$ to $G(K(1-r))$
sending the standard generators $a$ and $b$ to $a$ and $b^{-1}$,
respectively.
In fact, such a homeomorphism is obtained as the composition
of the natural homeomorphisms
\[
(S^3,K(r)) \to (S^3,K(-r))\to (S^3,K(1-r)),
\]
where the latter homeomorphism is explained in
\cite[the end of Section~3]{lee_sakuma}.
Thus,
by \cite[Main Theorem~2.7]{lee_sakuma_2},
\cite[Main Theorems~2.2 and~2.3]{lee_sakuma_3}
and the results in Sections
~\ref{sec:proof_of_main_theorem_1} and \ref{sec:proof_of_main_theorem_3},
we see that (3) or (4) holds.
Moreover, the conjugacy diagram over $G(K(r))$ is obtained as
the isomorphic image of the conjugacy diagram over $G(K(1-r))$.
Since the image of a mixing vertex by the isomorphism is again
a mixing vertex, we obtain the last assertion of the proposition.
\end{proof}

\begin{proof}{\it of Main Theorem~\ref{main_theorem} for the case when
$r$ is general }
Let $r$ be a general rational number with $0<r\le 1/2$,
namely either $r=[m,m_2, \dots, m_k]$, where $m \ge 2$, $m_2 \ge 2$ and $k \ge 3$,
or $r=[m, 1, m_3, \dots, m_k]$, where $m \ge 2$ and $k \ge 4$.
Suppose on the contrary that there exist two distinct
rational numbers $s, s' \in I_1(r) \cup I_2(r)$
such that $\alpha_s$ and $\alpha_{s'}$ are homotopic in $S^3-K(r)$,
namely suppose that Hypothesis~A is satisfied.
Let $\tilde{r}$ be as in \cite[Lemma~3.11]{lee_sakuma_2}.
Then by Propositions~\ref{prop:induction_general_1}
and~\ref{prop:induction_general_3},
there are two distinct rational numbers
$\tilde{s}, \tilde{s}' \in I_1(\tilde{r}) \cup I_2(\tilde{r})$
such that the unoriented loops $\alpha_{\tilde{s}}$ and
$\alpha_{\tilde{s}'}$ are homotopic in $S^3-K(\tilde{r})$.
Moreover, there is a reduced conjugacy diagram over $G(K(\tilde{r}))$ for
$\alpha_{\tilde{s}}$ and $\alpha_{\tilde{s}'}$,
such that none of the degree $4$ vertices is mixing.
Regardless of the type of $r$,
we put $\tilde{r}=[n_1, \dots, n_t]$, where $t \ge 2$,
each $n_i \in \ZZ_+$ and $n_t\ge 2$.
We proceed the proof by induction on $t \ge 2$.

\medskip
\noindent
{\bf Case 1.} {\it $t=2$, i.e., $\tilde{r}=[n_1,n_2]$.}
Then $\tilde r$ is special,
and hence Proposition~\ref{prop:induction_base} shows that
any reduced conjugacy diagram over $G(K(\tilde r))$ for
$\alpha_{\tilde{s}}$ and $\alpha_{\tilde{s}'}$ contains a
mixing vertex.
This contradicts to the fact observed in the above that there is a
reduced conjugacy diagram over $G(K(\tilde r))$ for
$\alpha_{\tilde{s}}$ and $\alpha_{\tilde{s}'}$
which has no mixing vertex.

\medskip
\noindent
{\bf Case 2.} {\it $t=3$, i.e., $\tilde{r}=[n_1,n_2,n_3]$.}
If $n_1 \ge 2$ and $n_2 \ge 2$, then $\tilde{r}$ is general
and the rational number $\tilde{\tilde{r}}=[n_2-1, n_3]$ is
as in Case 1.
So, this is impossible by the conclusion in Case 1.
If $n_1 \ge 2$ and $n_2=1$,
then $\tilde r$ is special and so
Proposition~\ref{prop:induction_base} implies that
any reduced conjugacy diagram over $G(K(\tilde r))$ for
$\alpha_{\tilde{s}}$ and $\alpha_{\tilde{s}'}$ contains a
mixing vertex, a contradiction.
If $n_1=1$, then $\tilde{r}$ is special, because
$1-\tilde{r}=[n_2+1,n_3]$ is special;
so we obtain a similar contradiction by Proposition~\ref{prop:induction_base}.
(To be precise, this rational number does not belong to the list in
Proposition~\ref{prop:induction_base}, which is also a contradiction.)

\medskip
\noindent
{\bf Case 3.} {\it $t=4$, i.e., $\tilde{r}=[n_1, n_2, n_3, n_4]$.}
If $n_1 \ge 2$, then $\tilde{r}$ is general and
the rational number $\tilde{\tilde{r}}$ is as in Case~1 or Case~2
according to whether $n_2=1$ or $n_2 \ge 2$.
So, this is impossible by the conclusions in Cases~1 and 2.
Hence we have $n_1=1$ and so let $\tilde{r}':=1-\tilde{r}=[n_2+1,n_3, n_4]$.
If $n_3 \ge 2$, then $\tilde{r}'$ is general
and the rational number $\tilde{\tilde{r}}'=[n_3-1,n_4]$ is as in Case~1.
So, this is impossible by the conclusion in Case~1.
If $n_3=1$, then $\tilde{r}$ is special, because
$1-\tilde{r}=\tilde{r}'$ is special;
so we obtain a contradiction by Proposition~\ref{prop:induction_base}.
(In this case, $\tilde{r}$ is as in
Proposition~\ref{prop:induction_base}(4).)

\medskip
\noindent
{\bf Case 4.} {\it $t \ge 5$.}
\medskip
If $n_1\ge 2$, then $\tilde{r}$ is general,
and the rational number $\tilde{\tilde{r}}$ is as in
the case for $t-2$ or $t-1$ according to whether $n_2=1$ or $n_2\ge 2$.
Thus this is impossible by the inductive hypothesis.
Hence we have $n_1=1$
and so let $\tilde{r}':=1-\tilde{r}=[n_2+1,n_3,\cdots, n_t]$.
Note that $\tilde{r}'$ is general
and the rational number $\tilde{\tilde{r}}'$ is as in the case for $t-3$ or $t-2$
according to whether $n_3=1$ or $n_3\ge 2$.
Thus this is impossible by the inductive hypothesis.

Main Theorem~\ref{main_theorem} is now completely proved.
\end{proof}

\section{Proof of Theorems~\ref{main_corollary} and \ref{main_corollary2}}
\label{sec:proof_of_main_corollary}

Consider a hyperbolic $2$-bridge link $K(r)$ with $0<r \le 1/2$,
and assume that the loop $\alpha_s$ with $s\in I_1(r) \cup I_2(r)$
is either peripheral or imprimitive.
Then, by \cite[Lemma~7.2]{lee_sakuma_3},
there is a nontrivial element $w\in G(K(r))$
such that $w\not\in \langle u_s\rangle$
and $wu_sw^{-1}=u_s$.
This identity cannot hold in $F(a,b)$,
since $u_s$ is not a nontrivial cyclic permutation of itself.
So by \cite[Lemma~7.1]{lee_sakuma_3},
the identity $wu_sw^{-1}=u_s$ in $G(K(r))$
is realized by a nontrivial reduced annular $R$-diagram, $M$,
with outer and inner labels $u_s$ and $u_s^{-1}$, respectively.
Then $M$ satisfies the assumption of \cite[Theorem~4.11]{lee_sakuma_2}
and hence its conclusion.

If $r=[2,n]$ or $[2,1,n]$ for some $n\ge 2$, then
Theorems~\ref{main_corollary} and \ref{main_corollary2}
are already proved
in \cite[Section~7]{lee_sakuma_3},
where all possible diagrams $M$ are described.
We note that we can observe that
all such diagram contain a mixing vertex of degree $4$
(see \cite[Figures~13, 14(b), 15(b) and 16]{lee_sakuma_3}).

If $r=[n,2]$ or $[n,1,2]$ with $n\ge 2$, then
the link $K(r)$ is equivalent to the link whose slope is
of the previous type, and so
Theorems~\ref{main_corollary} and \ref{main_corollary2}
in this case are deduced from the results in \cite[Section~7]{lee_sakuma_3}.

If $r=[m,n]$ or $[m,1,n]$ with $m,n\ge 3$,
then by the results in Sections~\ref{sec:proof_of_main_theorem_1}
and~\ref{sec:proof_of_main_theorem_3},
we see that there are no such diagrams.
So, any $\alpha_s$ with $s\in I_1(r) \cup I_2(r)$
is neither peripheral nor imprimitive in this case.

Finally, suppose that $r$ is general.
Then, by the proof of
Propositions~\ref{prop:induction_general_1} and
~\ref{prop:induction_general_3},
we can construct from $M$
a non-trivial reduced conjugacy diagram $\tilde M$
over $G(K(\tilde r))$
with outer and inner labels $u_{\tilde s}$ and $u_{\tilde s}^{-1}$, respectively,
for some $\tilde s\in I_1(\tilde r)\cup I_2(\tilde r)$,
such that $\tilde M$ contains a mixing vertex of degree $4$.
However, we can inductively show that this is impossible
by using the preceding results.
Hence, any $\alpha_s$ with $s\in I_1(r) \cup I_2(r)$
is neither peripheral nor imprimitive in this case.

This completes the proof of
Theorems~\ref{main_corollary} and \ref{main_corollary2}.
\qed

\bibstyle{plain}
\bigskip

\end{document}